\newcommand{\Z}{\ensuremath{\mathbb{Z}}}
\newcommand{\R}{\ensuremath{\mathbb{R}}}
\newcommand{\CC}{\ensuremath{\mathbb{C}}}
\newcommand{\Tr}{\operatorname{tr}}
\newcommand{\Ind}{\operatorname{Ind}}
\newcommand{\cInd}{\operatorname{c-Ind}}
\newcommand*{\transp}[2][-3mu]{\ensuremath{\mskip1mu\prescript{\smash{\mathrm t\mkern#1}}{}{\mathstrut#2}}}	
\newcommand{\twomatrix}[4]{ \ensuremath{\bigl(\begin{smallmatrix} #1 & #2 \\ #3 & #4 \end{smallmatrix}\bigr)} }
\newcommand{\twobigmatrix}[4]{ \ensuremath{\begin{pmatrix} #1 & #2 \\ #3 & #4 \end{pmatrix}} }
\newcommand{\dd}{\mathop{}\!\mathrm{d}}
\newcommand{\Schw}{\ensuremath{\mathcal{S}}}	
\newcommand{\mes}{\operatorname{mes}}	
\newcommand{\lrangle}[1]{\ensuremath{\langle #1 \rangle}}
\newcommand{\Stab}{\ensuremath{\mathrm{Stab}}}
\newcommand{\identity}{\ensuremath{\mathrm{id}}}
\newcommand{\Hom}{\operatorname{Hom}}
\newcommand{\End}{\operatorname{End}}
\newcommand{\rightiso}{\ensuremath{\stackrel{\sim}{\rightarrow}}}
\newcommand{\Ker}{\operatorname{ker}}
\newcommand{\dotimes}[1]{\ensuremath{\underset{#1}{\otimes}}}
\newcommand{\Gm}{\ensuremath{\mathbb{G}_\mathrm{m}}}
\newcommand{\Ga}{\ensuremath{\mathbb{G}_\mathrm{a}}}
\newcommand{\Supp}{\operatorname{Supp}}
\newcommand{\GL}{\operatorname{GL}}
\newcommand{\SO}{\operatorname{SO}}
\newcommand{\SL}{\operatorname{SL}}
\newcommand{\Sp}{\operatorname{Sp}}
\newcommand{\Mp}{\ensuremath{\widetilde{\mathrm{Sp}}}}
\newcommand{\pr}{\ensuremath{\mathbf{p}}}
\newcommand{\bmu}{\ensuremath{\bm\mu}}
\theoremstyle{plain}
\newtheorem{proposition}{Proposition}
\newtheorem{lemma}[proposition]{Lemma}
\newtheorem{theorem}[proposition]{Theorem}
\newtheorem{corollary}[proposition]{Corollary}
\theoremstyle{definition}
\newtheorem{definition}[proposition]{Definition}
\newtheorem{definition-theorem}[proposition]{Definition--Theorem}
\newtheorem{definition-proposition}[proposition]{Definition--Proposition}
\newtheorem{remark}[proposition]{Remark}
\theoremstyle{plain}
\newtheorem{Thm}{Theorem}
\numberwithin{equation}{section}
\numberwithin{proposition}{subsection}
\numberwithin{Thm}{section}	
\newcommand{\dcate}[1]{\ensuremath{\text{-}\mathsf{#1}}}	
\renewcommand{\emptyset}{\ensuremath{\varnothing}}	
\title{Intertwining operators in the Takeda--Wood isomorphism}
\author{Fei Chen \quad Wen-Wei Li}
\date{}
\renewcommand{\l@section}{\@dottedtocline{1}{1.5em}{2.0em}}
\renewcommand{\l@subsection}{\@dottedtocline{2}{4.0em}{3.0em}}
\begin{document}

\maketitle

\begin{abstract}
	Over any non-Archimedean local field of characteristic not equal to $2$, Takeda and Wood constructed types for the two blocks containing the even and odd Weil representations of the metaplectic group $\tilde{G}$, and identified the resulting Hecke algebras $H_\psi^{\pm}$ with the Iwahori--Hecke algebras of odd orthogonal groups $G^{\pm}$ of the same rank. We describe normalized parabolic induction and Jacquet modules in terms of Hecke modules using a suitable variant of Bushnell--Kutzko theory. Furthermore, we match the standard intertwining operators of $\tilde{G}$ and $G^{\pm}$ by proving a variant of Gindikin--Karpelevich formula for $\tilde{G}$. As an application, we describe the behavior of normalized intertwining operators of $\tilde{G}$ in these blocks under Aubert involution, reducing everything to the $G^{\pm}$ side. This is mainly motivated by Arthur's local intertwining relations.
\end{abstract}



\setcounter{tocdepth}{1}
\tableofcontents

\section{Introduction}\label{sec:intro}
This work grows out of a desire to understand Arthur's \emph{local intertwining relations} (LIR) for metaplectic coverings, whose formulation for quasisplit classical groups can be found in \cite[\S 2.4]{Ar13}. The LIR is one of the pillars of Arthur's conjectures in Langlands program. Roughly speaking, it is a statement about the relation between intertwining operators and endoscopy, or the structure of Arthur packets under parabolic induction. The tempered LIR for metaplectic groups is settled in \cite{Is20} by reducing to odd orthogonal groups via $\theta$-lift. As for the non-tempered case, if one tries to adopt Arthur's strategy in \cite[Chapter 7]{Ar13}, some initial knowledge about LIR would be indispensable for bootstrapping the local-global argument. Such knowledge includes, but is not limited to the ``unramified case'', and one needs results alike at \emph{every} non-Archimedean place. This makes the metaplectic LIR for metaplectic groups more delicate than the classical counterparts, since one has to deal with $p=2$.

The aim of this work is to understand intertwining operators for genuine representations in the two simplest Bernstein blocks of metaplectic group over a non-Archimedean local field $F$ of \emph{arbitrary} residual characteristic $p$, namely the blocks containing the even and odd Weil representations, respectively. They are the metaplectic analogs of Iwahori-spherical Bernstein blocks. The hope is to transfer the problems to odd orthogonal groups as much as possible.

To fix notations, let $\mathfrak{o}$ (resp.\ $\varpi$) be the ring of integers (resp.\ a uniformizer) of $F$, and let $|\cdot|_F$ be the normalized absolute value on $F$. Let $(W, \lrangle{\cdot|\cdot})$ be a symplectic $F$-vector space with prescribed symplectic basis $e_1, \ldots, e_n, f_n, \ldots, f_1$, whence the standard Borel pair $(B^{\rightarrow}, T)$. The corresponding symplectic group $\Sp(W)$ is identified with its $F$-points. Fix an additive character $\psi$ of $F$ such that
\[ \psi_{4\mathfrak{o}} = 1, \quad \psi_{4\varpi^{-1}\mathfrak{o}} \not\equiv 1. \]

Let $\bmu_m$ be the group of $m$-th roots of $1$ in $\CC$. The twofold metaplectic covering is a central extension
\[ 1 \to \bmu_2 \to \operatorname{Mp}(W) \xrightarrow{\pr} \Sp(W) \to 1 \]
of locally compact groups. It arises from the Heisenberg group $H(W) = W \times F$ with multiplication $(w, t)(w', t') = (w + w', t + t' + \lrangle{w|w'})$.

Following \cite{Li11} and its sequels, we will push the covering out along $\bmu_2 \hookrightarrow \bmu_8$ to obtain the covering
\[ 1 \to \bmu_8 \to \Mp(W) \xrightarrow{\pr} \Sp(W) \to 1. \]
It carries the Weil representation $\omega_\psi = \omega_\psi^+ \oplus \omega_\psi^-$ (even $\oplus$ odd) which is genuine. Recall that a representation of $\Mp(W)$ is genuine if $\bmu_8$ acts tautologically.

Let $G := \Sp(W)$ and $\tilde{G} := \Mp(W)$. The main benefit of eightfold coverings is that given a Levi subgroup $M = \Sp(W^\flat) \times \prod_{i \in I} \GL(n_i)$ of $G$, where $W^\flat \subset W$ is a symplectic subspace, we have
\[ \tilde{M} := \pr^{-1}(M(F)) \simeq \Mp(W^\flat) \times \prod_{i \in I} \GL(n_i, F); \]
the canonical isomorphism above is given by the the Schrödinger model for $\omega_\psi$.

Moreover, it is a general property of coverings that the preimage of any parabolic $P = MU \subset G$ splits in $\tilde{G}$ into $\tilde{P} := \pr^{-1}(P(F)) = \tilde{M}U(F)$.

We are interested in the genuine smooth representations of  $\tilde{G}$. The category of such representation decomposes into Bernstein blocks. Let $\mathcal{G}_\psi^{\pm}$ be the blocks containing $\omega_\psi^\pm$.

Following the work \cite{GS12} of Gan and Savin (for $p > 2$), Takeda and Wood \cite{TW18} constructed types for $\mathcal{G}_\psi^{\pm}$ in the sense of Bushnell--Kutzko \cite{BK98}. They gave presentations for the corresponding Hecke algebras $H_\psi^{\pm}$ and established an isomorphism
\[ \mathrm{TW}: H^{\pm} \rightiso H_\psi^{\pm} \]
as Hilbert algebras. Here $H^{\pm} = H^{G^{\pm}}$ is the Iwahori--Hecke algebra for $G^{\pm} := \SO(V^{\pm})$, where $V^{\pm}$ is a quadratic $F$-vector space of dimension $2n+1$, discriminant $1$ and Hasse invariant $\pm 1$. Consequently, $\mathcal{G}_\psi^{\pm} \simeq \mathcal{G}^{\pm}$ where $\mathcal{G}^{\pm}$ denotes the Iwahori-spherical block for $G^{\pm}$.

Since we have split Levi subgroups $\tilde{M} \subset \tilde{G}$ into a smaller metaplectic group and $\GL$-factors, the results above extend to $\tilde{M}$ and the corresponding Levi subgroups $M^{\pm} \subset G^{\pm}$ (i.e.\ with the same pattern of $\GL(n_i)$'s): we have $\mathrm{TW}: H^{M^{\pm}} \rightiso H_\psi^{\tilde{M}, \pm} $.

For representations in $\mathcal{G}_\psi^{\pm}$, we wish to translate questions about intertwining operators to the $G^{\pm}$-side through $\mathrm{TW}$. The following issue has to be resolved first.
\begin{center}\begin{minipage}{0.8\textwidth}
	How to describe normalized Jacquet modules $r_{\tilde{P}}$ and parabolic inductions $i_{\tilde{P}}$ in terms of Hecke-modules?
\end{minipage}\end{center}

For the groups $G^{\pm}$ and the blocks $\mathcal{G}^{\pm}$, the corresponding question is solved in \cite{BK98} (see also \cite{BHK11}): there is a canonical embedding $\mathrm{t}^{\pm}_{\mathrm{nor}}: H^{M^{\pm}} \to H^{\pm}$ of Hilbert algebras, through which $r_{P^{\pm}}$ matches $(\mathrm{t}^{\pm}_{\mathrm{nor}})^*: H^{\pm}\dcate{Mod} \to H^{M^{\pm}}\dcate{Mod}$, where $P^{\pm} = M^{\pm} U^{\pm} \subset G^{\pm}$ is parabolic. By adjunction, $i_{P^{\pm}}$ matches $\Hom_{H^{M^{\pm}}}(H^{\pm}, \cdot)$.

On the metaplectic side, the problem is that the type constructed by Takeda--Wood for $\tilde{G}$ is \textsc{not} a \emph{cover} of that of $\tilde{M}$, in the sense of Bushnell--Kutzko \cite[\S 8]{BK98}, except in the $+$ case with $p > 2$ --- this can be seen from the fact that the inducing representation in the type for $\tilde{G}$ usually has dimension larger than its counterpart for $\tilde{M}$, for example. Instead, we define the embedding as
\[ \mathrm{t}_{\mathrm{nor}} := \text{the composition of}\; H_\psi^{\tilde{M}, \pm} \xrightarrow[\sim]{\mathrm{TW}^{-1}} H^{M^{\pm}} \xrightarrow{\mathrm{t}^{\pm}_{\mathrm{nor}}} H^{\pm} \xrightarrow[\sim]{\mathrm{TW}} H_\psi^{\pm}. \]

The main results in this article are summarized below. Each of them divides into $+$ (even) and $-$ versions that apply to representations in $\mathcal{G}_\psi^+$ and $\mathcal{G}_\psi^-$ respectively. The $-$ case is always more delicate to state and prove.

\begin{Thm}[= Theorem \ref{prop:Hecke-equivariance} and \eqref{eqn:r-compatibility}, \eqref{eqn:i-compatibility}]\label{prop:adjunction-Hecke}
	Let $P = MU$ be a reverse-standard parabolic subgroup of $G$. Via the equivalence $\mathbf{M}: \mathcal{G}_\psi^{\pm} \rightiso H_\psi^{\pm}\dcate{Mod}$ furnished by type theory and its avatar for $\tilde{M}$, the adjunction pair
	\[\begin{tikzcd}
		r_{\tilde{P}}: \mathcal{G}_\psi^{\pm} \arrow[shift left, r] & \mathcal{G}_{\psi}^{\tilde{M}, \pm} : i_{\tilde{P}} \arrow[shift left, l] 
	\end{tikzcd}\]
	corresponds to
	\[\begin{tikzcd}
		\mathrm{t}_{\mathrm{nor}}^*: H_\psi^{\pm}\dcate{Mod} \arrow[shift left, r] & H_\psi^{\tilde{M}, \pm}\dcate{Mod} : \Hom_{H_\psi^{\tilde{M}, \pm}}(H_\psi^{\pm}, \cdot) \arrow[shift left, l]
	\end{tikzcd}\]
	The canonical isomorphisms involved in this correspondence are explicit; see Proposition \ref{prop:Hecke-induction}.
\end{Thm}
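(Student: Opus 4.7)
The plan is to bootstrap from the known result on the $G^{\pm}$-side via the Takeda--Wood isomorphism. On the orthogonal side, the Iwahori type realizing $\mathcal{G}^{\pm}$ is a Bushnell--Kutzko cover of its parabolic-Levi counterpart, so \cite{BK98} directly yields that $r_{P^{\pm}}$ corresponds to $(\mathrm{t}^{\pm}_{\mathrm{nor}})^*$ and, by adjunction, $i_{P^{\pm}}$ to $\Hom_{H^{M^{\pm}}}(H^{\pm}, -)$. Since the embedding $\mathrm{t}_{\mathrm{nor}}$ on the metaplectic side is defined precisely by conjugating $\mathrm{t}^{\pm}_{\mathrm{nor}}$ through TW, it suffices to verify that the TW equivalences $\mathbf{M}$ and $\mathbf{M}^{\tilde{M}}$ intertwine the normalized Jacquet module $r_{\tilde{P}}$ with $r_{P^{\pm}}$.

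I would first reduce the statement to producing a natural isomorphism $\mathbf{M}^{\tilde{M}}(r_{\tilde{P}} V) \cong \mathrm{t}_{\mathrm{nor}}^* \mathbf{M}(V)$ of $H_\psi^{\tilde{M}, \pm}$-modules, functorial in $V \in \mathcal{G}_\psi^{\pm}$. Writing the type-theoretic equivalences as $\mathbf{M}(V) = \Hom_{J}(\lambda, V)$ and $\mathbf{M}^{\tilde{M}}(V') = \Hom_{J^M}(\lambda^M, V')$, this becomes a natural $H_\psi^{\tilde{M}, \pm}$-equivariant map
\[
\Hom_{J^M}\bigl(\lambda^M,\, \delta_P^{-1/2} V_U\bigr) \longrightarrow \Hom_{J}(\lambda, V),
\]
where $V_U$ denotes $U(F)$-coinvariants. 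To build this map I would use the Schr\"odinger model of $\omega_\psi$ together with the canonical splitting $\tilde{M} \simeq \Mp(W^\flat) \times \prod_i \GL(n_i, F)$: restricting $\lambda$ to $J \cap \tilde{M}$ and applying a Mackey-type analysis should exhibit $\lambda^M$ inside (a quotient of) this restriction, which together with the projection $V \twoheadrightarrow V_U$ gives the candidate transformation. The normalization $\delta_P^{-1/2}$ is precisely what aligns the transformation with the chosen $\mathrm{t}_{\mathrm{nor}}$.

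The main obstacle is that the TW type $(J, \lambda)$ is \emph{not} a BK cover of $(J^M, \lambda^M)$ outside the $+, p > 2$ case, so one cannot invoke BK's covering machinery or the geometric lemma to conclude the map above is an isomorphism. To surmount this I would match generators and relations: both $H_\psi^{\pm}$ and $H_\psi^{\tilde{M}, \pm}$ inherit Iwahori--Matsumoto and Bernstein presentations from $H^{\pm}$ and $H^{M^{\pm}}$ through TW, so compatibility can be checked on standard generators, reducing to explicit computations in the Weil representation. Alternatively, using that both functors in question are exact and colimit-preserving, it suffices to verify the isomorphism on the progenerator $\cInd^{\tilde{G}}_{J} \lambda$, where the calculation becomes concrete.

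Once Jacquet compatibility is established, the induction compatibility follows formally: both $\mathbf{M} \circ i_{\tilde{P}}$ and $\Hom_{H_\psi^{\tilde{M}, \pm}}(H_\psi^{\pm}, -) \circ \mathbf{M}^{\tilde{M}}$ are right adjoints to the (now identified) left adjoint, hence uniquely naturally isomorphic. I expect the $-$ case to demand more care than the $+$ case throughout: the TW type for $\mathcal{G}_\psi^-$ is induced from a larger representation carrying extra sign data from the odd Weil summand, so the $\delta_P^{\pm 1/2}$ modulus factors and the $\bmu_8$-cocycle twists must be tracked with added precision, and the matching of Bernstein generators on the $H_\psi^-$ side involves finer combinatorics than on $H_\psi^+$.
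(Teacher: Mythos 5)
Your outline gets the geometry of the problem right: you correctly construct the candidate map via Schr\"odinger-model restriction (this is the paper's Proposition \ref{prop:coinvariant-type} giving $\tau_{K_U} \simeq \tau_{\tilde M}$ and the map $\mathrm{q}$ of Proposition \ref{prop:q-map}), and you correctly flag that the TW type is not a BK cover of the Levi type, so the covering machinery cannot be invoked. But the proposal stops short at precisely the point where the argument becomes nontrivial, and the step you gloss over is where the paper does all its work.

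The missing mechanism is a two-stage calibration. First, one does \emph{not} simply ``match standard generators and check relations'': instead, the paper runs the Bushnell--Kutzko $P$-positivity argument (Lemma \ref{prop:Hecke-Phi-to-phi} and Proposition \ref{prop:Phi-to-phi-isom}) to show that for $P$-positive $w \in \Omega^M_{\mathrm{aff}}$, the map $\mathrm{q}$ intertwines $T_w$ with $T^{\tilde M}_w$ only \emph{up to an unknown scalar} $\lambda_w$; this scalar is there precisely because the type is not a cover. Second, the scalar is pinned down by evaluating both sides on $\pi = \omega_\psi^{\pm}$, using the explicit $H_\psi^{\pm}$-module structure of $\mathbf{M}(\omega_\psi^{\pm})$ (Proposition \ref{prop:Weil-rep-M}) and the explicit Jacquet module $r_{\tilde P}(\omega_\psi^{\pm}) \simeq \omega_\psi^{\pm,\flat} \boxtimes \delta_{P^{\pm}}^{-1/2}$ (Proposition \ref{prop:Jacquet-Weil}). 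The outcome of this calibration is $\lambda_w = \bigl(\delta_{P^{\pm}}(w)/\delta_P(w)\bigr)^{1/2}$. Your proposal assumes the normalization ``$\delta_P^{-1/2}$ is precisely what aligns things'' --- but this is false as written: the correct factor involves the modulus $\delta_{P^{\pm}}$ of the \emph{orthogonal} parabolic, not $\delta_P$, and the two disagree because $U$ and $U^{\pm}$ have different root-space patterns in the $\Sp$- versus $\SO$-factor. That discrepancy is exactly what $\mathrm{t}_{\mathrm{nor}} = \mathrm{TW}\circ\mathrm{t}^{\pm}\delta_{P^{\pm}}^{1/2}\circ\mathrm{TW}^{-1}$ encodes, and without the $\omega_\psi^{\pm}$ computation there is no way to see it. Finally, the fact that $\mathrm{q}$ is a linear isomorphism (Theorem \ref{prop:q-isom}) is a separate nontrivial input, relying on the Jacquet-lemma-type argument from \cite{TW18}; ``Mackey-type analysis'' does not automatically deliver this, and your ``verify on the progenerator'' alternative would require computing the Jacquet module of $\cInd^{\tilde G}_{\tilde I}\check\tau_0$, which is considerably heavier than the paper's single-representation calibration.
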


Being \emph{reverse-standard} means that $P \supset B^{\leftarrow}$ where $B^{\leftarrow}$ is the Borel subgroup of $G$ containing the standard maximal torus $T$ whose simple roots are
\[ \epsilon_{i+1} - \epsilon_i \quad (1 \leq i < n), \quad 2\epsilon_1; \]
the same terminology applies to parabolic subgroups of $G^{\pm}$ as well. In contrast, the definition of types for $\mathcal{G}_\psi^{\pm}$ uses the standard Iwahori subgroup $I$ of $G(F)$.

The idea of the proof goes as follows. There is a canonical linear map $\mathrm{q}: \mathbf{M}(\pi) \to \mathbf{M}(r_{\tilde{P}}(\pi))$ for any smooth representation $\pi$ of $\tilde{G}$. Based on the results in \cite[\S 3]{TW18}, one can prove that $\mathrm{q}$ is a linear isomorphism, and it is $\mathrm{t}_{\mathrm{nor}}$-equivariant up to constants which depend on elements $T_w^{\tilde{M}} \in H_\psi^{\tilde{M}, \pm}$ (where $w$ ranges over $P$-positive elements in the affine Weyl group of $M$) but not on $\pi$. These constants are then pinned down by computing the case of $\pi = \omega_\psi^{\pm}$. A calculation of co-invariants of types (Proposition \ref{prop:coinvariant-type}) is also needed, and this is why we require $P$ to be reverse-standard.

From this, it is easy to match Aubert involutions between $\mathcal{G}_\psi^{\pm}$ and $\mathcal{G}^{\pm}$, for which we refer to \cite[\S A.2]{KMSW} for a summary.

\begin{Thm}[= Corollary \ref{prop:TW-vs-Aubert}]
	Let $(\cdot)^\wedge$ denote the Aubert involution as a functor on given Bernstein blocks. The following diagram commutes up to a canonical isomorphism
	\[\begin{tikzcd}
		\mathcal{G}_\psi^{\pm} \arrow[d, "{(\cdot)^\wedge}"'] \arrow[r, "\sim"] & \mathcal{G}^{\pm} \arrow[d, "{(\cdot)^\wedge}"] \\
		\mathcal{G}_\psi^{\pm} \arrow[r, "\sim"'] & \mathcal{G}^{\pm}
	\end{tikzcd}\]
	where the horizontal equivalences are induced by $\mathrm{TW}$.
\end{Thm}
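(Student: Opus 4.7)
The plan is to exploit the fact that the Aubert involution on any Bernstein block can be expressed as an alternating sum (or, more functorially, a signed totalization) of composites $i_{\tilde{P}} \circ r_{\tilde{P}}$, where $P = MU$ ranges over reverse-standard parabolic subgroups of $G$. Since Theorem \ref{prop:adjunction-Hecke} already describes both $r_{\tilde{P}}$ and $i_{\tilde{P}}$ in terms of Hecke-module operations that manifestly survive the Takeda--Wood isomorphism, the desired compatibility reduces to bookkeeping on canonical isomorphisms.

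First, I would fix the formulation of $(\cdot)^\wedge$ recalled in \cite[\S A.2]{KMSW}: in the Grothendieck group of genuine smooth representations,
\[ \pi^\wedge \;=\; \sum_{P \supset B^\leftarrow} (-1)^{\dim(A_M / A_G)}\, i_{\tilde{P}}\, r_{\tilde{P}}(\pi), \]
and this upgrades to a self-equivalence of each Bernstein block. Exactly the same formula, now with $P^\pm$ running over reverse-standard parabolics of $G^\pm$, defines $(\cdot)^\wedge$ on $\mathcal{G}^\pm$. The two indexing sets are in natural bijection via their Levi patterns $\tilde{M} = \Mp(W^\flat) \times \prod_{i \in I} \GL(n_i, F)$ versus $M^\pm = \SO(V^{\pm, \flat}) \times \prod_{i \in I} \GL(n_i, F)$, and this bijection preserves the sign exponent $\dim(A_M / A_G)$.

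Second, I would combine Theorem \ref{prop:adjunction-Hecke} with the Bushnell--Kutzko description on the $G^\pm$ side. The former identifies $(r_{\tilde{P}}, i_{\tilde{P}})$ with $(\mathrm{t}_{\mathrm{nor}}^*,\, \Hom_{H_\psi^{\tilde{M}, \pm}}(H_\psi^\pm, \cdot))$, while the latter identifies $(r_{P^\pm}, i_{P^\pm})$ with $((\mathrm{t}_{\mathrm{nor}}^\pm)^*,\, \Hom_{H^{M^\pm}}(H^\pm, \cdot))$. By the very definition of $\mathrm{t}_{\mathrm{nor}}$, the Takeda--Wood isomorphisms for $G$ and for each Levi $M$ intertwine these two embeddings, and hence they intertwine the corresponding restriction-of-scalars and $\Hom$ functors. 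Forming the signed totalization over all reverse-standard $P$, the Aubert involutions on the two sides coincide under $\mathrm{TW}$ up to a canonical natural isomorphism.

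The main potential obstacle is verifying that the canonical isomorphisms furnished by Proposition \ref{prop:Hecke-induction} are compatible with the associativity and adjunction data gluing the individual $i_{\tilde{P}} \circ r_{\tilde{P}}$ into the signed complex that defines $(\cdot)^\wedge$; concretely, one needs the units and counits of the two adjunction pairs to correspond. Since everything is anchored in the explicit linear map $\mathrm{q}$ introduced in the proof of Theorem \ref{prop:adjunction-Hecke}, this step should be a matter of diagram-chasing rather than fresh computation.
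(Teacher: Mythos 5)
Your approach is essentially the same as the paper's: express $(\cdot)^\wedge$ through the signed family of composites $i_{\tilde{P}} r_{\tilde{P}}$ over reverse-standard $P$, match these term by term with their $G^\pm$ counterparts via Proposition~\ref{prop:ri-compatibility}, and observe that the gluing data (units, counits, and transition maps $r_{\tilde{P}} \to r_{\tilde{P}'}$) are all furnished by the comparison map $\mathrm{q}$ and its transitivity, so the quotient presentations of $\hat{\pi}$ and $\hat{\sigma}$ match.

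Two points are worth tightening. First, the bijection between parabolic subgroups is not a straight match of all reverse-standard $P \supset B^\leftarrow$ in the $-$ case: only those with $M \supset M^1$ (equivalently $n^\flat \geq 1$) correspond to parabolics $P^- \supset P_{\min}^-$. This is harmless because for any $\pi$ in $\mathcal{G}_\psi^-$ the Jacquet functor $r_{\tilde{P}}$ vanishes whenever $M \not\supset M^1$ (the cuspidal support sits on $M^1$), so the extraneous terms disappear, but one should say so rather than pretend the index sets agree on the nose. Second, the sign exponent in the paper's convention is $\dim A_{B^\leftarrow}/A_P$ (resp.\ $\dim A_{P_{\min}^\pm}/A_{P^\pm}$), not $\dim A_M/A_G$, and in the $-$ case these two normalizations differ by an overall $(-1)$; this shift is absorbed by the $\beta$-factors in the passage from the Grothendieck-level operator $\mathbf{D}$ to the lifted functor $(\cdot)^\wedge$, which is insensitive to a global sign. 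With those caveats registered, your reduction to Theorem~\ref{prop:adjunction-Hecke} is precisely the route the paper takes.
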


We proceed to the comparison of standard (non-normalized) intertwining operators. Let $\Omega_0$ (resp.\ $\Omega^M_0$) be the Weyl groups of $G$ (resp.\ $M$).

\begin{Thm}[= Corollaries \ref{prop:comparison-J-general} and \ref{prop:comparison-J-general-odd}]\label{prop:comparison-J-preview}
	Let us begin with the $+$ case. Consider reverse-standard parabolic subgroups $P = MU \subset G$ and $P^+ = M^+ U^+ \subset G^+$ that match each other in the sense that $M = \Sp(W^\flat) \times \prod_{i=1}^r \GL(n_i)$ and $M^+ = \SO(V^{+, \flat}) \times \prod_{i=1}^r \GL(n_i)$ with $\dim V^{+, \flat} = \dim W^{\flat} + 1$. Let $w \in \Omega_0$ be such that $wMw^{-1} = M$ and $w$ has minimal length in its $\Omega^M_0$-coset.
	
	Let $\pi$ (resp.\ $\sigma$) be of finite length in $\mathcal{G}_\psi^{\tilde{M}, +}$ (resp.\ $\mathcal{G}^{M^+}$) and fix a Hecke-equivariant (relative to $\mathrm{TW}$) isomorphism between their Hecke modules. Consider the standard intertwining operators
	\[ J_w(\pi \otimes \chi) \in \Hom_{\tilde{G}}\left(i_{\tilde{P}}(\pi \otimes \chi), i_{\tilde{P}}({}^{\tilde{w}}(\pi \otimes \chi))\right) \]
	viewed as a rational family in unramified characters $\chi: \prod_{i=1}^r \GL(n_i, F) \to \CC^{\times}$, and its avatar $J^+_w(\sigma \otimes \chi)$ for $M^+ \subset G^+$. By interpreting normalized parabolic induction in terms of
	\[ \Hom_{H_\psi^{\tilde{M}, +}}(H_\psi^+, \cdot), \quad \Hom_{H^{M^+}}(H^+, \cdot) \]
	using Theorem \ref{prop:adjunction-Hecke}, and identifying these spaces via $\mathrm{TW}$, we have
	\[ J_w(\pi \otimes \chi) \xleftrightarrow{\text{matches}} |2|_F^{t(w)/2} J^+_w(\sigma \otimes \chi) \]
	as rational families in $\chi$. Here $t(w)$ is the number of $B^{\leftarrow}$-positive long roots $2\epsilon_i$ such that $w(2\epsilon_i)$ is $B^{\leftarrow}$-negative.
	
	The same holds in the $-$ case under the extra assumption $M \supset M^1 := \Sp(2) \times \GL(1)^{n-1}$. The result is that
	\[ J'_w(\pi \otimes \chi) \xleftrightarrow{\text{matches}} |2|_F^{t(w)/2} J^-_w(\sigma \otimes \chi). \]
\end{Thm}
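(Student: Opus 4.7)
The plan is to reduce the comparison of $J_w$ with $|2|_F^{t(w)/2} J^+_w$ to a single rank-one computation, in which the variant Gindikin--Karpelevich formula for $\tilde{G}$ (mentioned in the abstract and supplied earlier in the paper) handles the metaplectic side and the classical Gindikin--Karpelevich formula handles the orthogonal side.

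First I would fix a reduced expression $w = s_1 \cdots s_\ell$ with each $s_j$ a simple reflection not in $\Omega^M_0$. The minimal-length hypothesis on $w$ ensures that each intermediate $M$-conjugate of $P$ remains reverse-standard, so the standard cocycle $J_{w_1 w_2} = J_{w_1} \circ J_{w_2}$ applies on both sides and factors $J_w$ and $J^+_w$ as compositions of rank-one operators. Since the Weyl group of type $C_n$ preserves root length, $t(w)$ equals the number of $j$ for which the simple root corresponding to $s_j$ is long. Theorem \ref{prop:adjunction-Hecke} reduces the target identity, via the Hecke-module dictionary, to a multiplicative statement in which the factor $|2|_F^{1/2}$ must appear exactly once for each long-root $s_j$ and not at all for short-root ones.

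Next I would split the rank-one case by root type. For a short-root simple reflection $s$, the intertwining integral is supported on a unipotent subgroup sitting inside a $\GL$-type wall of $M$, and the Schr\"odinger model identifies this piece of $\tilde{M}$ canonically with the corresponding piece of $M^+$; the two rank-one operators match after $\mathrm{TW}$ with no scaling, consistent with $t(s) = 0$. For the long-root simple reflection, the rank-one situation localizes inside the $\Sp(2)$ corner of $\tilde{G}$ versus a three-dimensional $\SO$-corner of $G^{\pm}$ of the appropriate Hasse invariant. The variant Gindikin--Karpelevich formula for $\tilde{G}$ gives an explicit rational function in $\chi$ on the metaplectic side, the classical formula gives its avatar on the orthogonal side, and a direct comparison --- most efficiently carried out by evaluating both on the spherical vector of $\omega_\psi^{\pm}$ in the Weil model and comparing with the analogue on $G^{\pm}$ --- produces the scalar ratio $|2|_F^{1/2}$.

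The main obstacle is this explicit rank-one long-root comparison, which must reconcile three normalizations simultaneously: the eightfold metaplectic cocycle restricted to $\Sp(2)$, the Takeda--Wood presentation of the rank-one piece of $H_\psi^{\pm}$, and the Iwahori--Hecke normalization on the rank-one $\SO$-factor. For the $-$ case the hypothesis $M \supset M^1 = \Sp(2) \times \GL(1)^{n-1}$ is essential, since it guarantees that the anchor computation still happens inside a rank-one $\Sp(2)$ corner of $\tilde{M}$, where the odd Weil representation remains available as a test object; once the rank-one constant is verified against $\omega_\psi^-$ there, the multiplicativity argument of the first step delivers the global statement.
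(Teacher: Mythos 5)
Your high-level strategy is right — reduce to a rank-one Gindikin--Karpelevich comparison and track the factor $|2|_F^{1/2}$ per long root — but there are two genuine gaps and one misdescription.

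First, the reduction from a general Levi $M$ to the anchor case is \emph{not} carried out by writing a reduced expression of $w$ in simple reflections of $\Omega_0$ lying outside $\Omega^M_0$. That condition does not hold in general for a minimal-length coset representative normalizing $M$, and the standard cocycle factorization for $J_w$ on $i_{\tilde P}(\pi\otimes\chi)$ with $\pi$ a representation of a non-minimal $M$ is relative to the reduced roots of $U$, not to the absolute simple roots. The paper instead first reduces $\pi$ to a full principal series by embedding $\pi\hookrightarrow i^{\tilde M}_{\tilde{B}^\leftarrow\cap\tilde M}(\eta)$, uses induction in stages to pass to $M=T$ (resp.\ $M=M^1$), and only then factorizes using reduced expressions in $\Omega_0$ (resp.\ $\Omega'_0$). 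Your Step 1 as stated does not quite make sense; you need this two-stage reduction.

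Second, in the $-$ case the rank-one anchor is \emph{not} an $\Sp(2)$-versus-$\SO(3)$ comparison. The odd block has $M^1 = \Sp(2)\times\GL(1)^{n-1}$ as its minimal Levi, and the ``long root'' reflection $t'_2$ (in the spherical subgroup $\Omega'_0$ of $\Omega'_{\mathrm{aff}}$) lives in a corner isomorphic to $\Mp(4)$, matched with $\SO(V^-)$ of dimension $5$. The anchor group is $\Mp(4)$, not $\Mp(2)$.

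Third, and most substantially: you claim the long-root rank-one comparison is carried out ``by evaluating both on the spherical vector of $\omega_\psi^{\pm}$ in the Weil model.'' This works in the $+$ case (Lemma \ref{prop:main-integral} is the explicit averaged-Weil-constant integral), but the direct Iwasawa decomposition and the full intertwining integral in the $\Mp(4)$ corner are \emph{not} computed directly in the paper --- the authors state explicitly that they could not do so. Instead they compute only the \emph{zeroth} Laurent coefficient $r'_0$ directly (Lemma \ref{prop:main-integral-odd}), and then show that the ratio $d'(w,\chi)/c'(w,\chi)$ has no zeros or poles anywhere by matching Harish-Chandra $\mu$-functions via $\mathrm{TW}$ and invoking Knapp--Stein theory; this forces the ratio to be the constant determined by the zeroth terms. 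Your proposal omits this essential workaround, and the direct computation you describe would very likely stall in the $-$ case exactly where the paper's authors say it did.
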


Some explanations are in order.
\begin{itemize}
	\item Let $z_0$ (resp.\ $z_1$) be the vertex in the standard apartment of the Bruhat--Tits building of $G$ with coordinate $(0, \ldots, 0)$ (resp.\ $(\frac{1}{2}, 0, \ldots, 0)$). The Haar measures on unipotent subgroups involved in $J_w(\pi \otimes \chi)$ (resp.\ $J'_w(\pi \otimes \chi)$) are chosen so that their intersections with $K_0 := \Stab_{G(F)}(z_0)$ (resp.\ $K_1 := \Stab_{G(F)}(z_1)$) have measure one.
	
	The Haar measures involved in $J_w^{\pm}(\sigma \otimes \chi)$ are normalized using suitable maximal compact $K^{\pm} \subset G^{\pm}(F)$; the $-$ case requires some care, see Remark \ref{rem:K-pm}.
	
	\item The definition of $J_w$ (resp.\ $J'_w$) requires a sensible choice of representative $\tilde{w} \in \widetilde{K_0}$ (resp.\ $\mathring{w} \in \widetilde{K_1}$) of $w$; see \S\ref{sec:statement-int-op-even} (resp.\ \S\ref{sec:statement-int-op-odd}).
	
	\item The isomorphism between the Hecke modules of $(\pi, \sigma)$ and that between the $w$-twists $({}^w \pi, {}^w \sigma)$ must be chosen compatibly. See Lemma \ref{prop:Hecke-Weyl}.
	
	\item Finally, a comparison using the same intertwining operators for both the $\pm$ cases is given in Remark \ref{rem:general-Levi-change}. A factor $(-q^{-1})^{t(w)}$ will appear where $q := |\mathfrak{o}/(\varpi)|$.
\end{itemize}

The proof proceeds by reducing to $M = T$ (resp.\ $M = M^1$) in the $+$ (resp.\ $-$) case. By \cite{Ca80}, the standard intertwining operators on the $G^{\pm}$-side can be understood from their effect on spherical vectors, which form a $1$-dimensional vector space. The same holds for $\tilde{G}$ provided that the spherical parts of Hecke modules are defined using a suitable idempotent $e^{\pm} \in H_\psi^{\pm}$ (Definition \ref{def:spherical-idempotent}), which matches its $G^{\pm}$-counterpart under $\mathrm{TW}$ (Proposition \ref{prop:matching-e}). We are then reduced to proving a \emph{Gindikin--Karpelevich formula} for $\tilde{G}$, and compare it with the known version for $G^{\pm}$.

The proof of Gindikin--Karpelevich formula is the (relatively) technical part of this article. First off, it is routine to reduce to the case $G = \Sp(2) \simeq \SL(2)$ (resp.\ $\Sp(4)$) in the $+$ (resp.\ $-$) case.
\begin{description}
	\item[The $+$ case (\S\ref{sec:int-op-even})] Using an explicit Iwasawa decomposition in $\widetilde{\SL}(2, F)$, we evaluate the intertwining integral for well-chosen test functions from the type, which boils down to an explicit $p$-adic integral (Lemma \ref{prop:main-integral}) concerning averaged Weil constants over $\mathfrak{o}^{\times}$.
	
	\item[The $-$ case (\S\ref{sec:int-op-odd})] The scenario is similar, but the Iwasawa decomposition and the relevant intertwining integrals are not amenable to direct computation, for the authors at least. Instead, we minimize the amount of computations by exploiting the matching of $\mu$-functions from \cite{TW18} together with Knapp--Stein theory, thus reduce the comparison of intertwining integrals to the ``zeroth term'' only (Lemma \ref{prop:main-integral-odd}).
\end{description}

In principle, a uniform proof using the matching of $\mu$-functions can be given for both cases.

For the next result, observe that since $\mu$-functions are matched via $\mathrm{TW}$, one can also match normalizing factors in the sense of Arthur \cite[\S 2]{Ar89-IOR1} between $\mathcal{G}_\psi^{\pm}$ and $\mathcal{G}^{\pm}$, up to positive constants depending solely on the parabolic subgroups in question (Proposition \ref{prop:matching-r}). In this manner, we obtain the normalized versions $R_w(\pi)$ (resp.\ $R'_w(\pi)$) of $J_w(\pi)$ (resp.\ $J'_w(\pi)$), where $\pi$ is an object of $\mathcal{G}_\psi^{\tilde{M, +}}$ (resp.\ $\mathcal{G}_\psi^{\tilde{M, -}}$) of finite length; see \eqref{eqn:R-operator}.

There are similar normalized operators $R_w^{\pm}(\sigma)$ on the $G^{\pm}$-side for $\sigma$ corresponding to $\pi$, defined by compatible normalizing factors. All normalized operators are unitary when the inducing data are unitarizable.

In the statements below, $\tilde{w} \in \tilde{G}$ (resp.\ $\mathring{w} \in \tilde{G}$) and $\ddot{w} \in G^{\pm}(F)$ will denote canonical representatives for $w$ in $\Omega_0$ (resp.\ in $\Omega'_0 := \Stab_{\Omega_0}(\epsilon_1)$ in the $-$ setting): the elements $\tilde{w}$ and $\mathring{w}$ have been explained after Theorem \ref{prop:comparison-J-preview}, and $\ddot{w}$ is similarly (and more easily) defined using $F$-pinnings.

\begin{Thm}[= Theorem \ref{prop:intertwining-relation}]\label{prop:int-rel-preview}
	Let $P = MU$ be a reverse-standard parabolic subgroup of $G$. Let $\pi$ be irreducible in $\mathcal{G}_\psi^{\tilde{M}, +}$ and $\sigma \in \mathcal{G}^{M^+}$ be the corresponding object. Assume that $\pi$ and its Aubert dual $\hat{\pi}$ are both unitarizable (equivalently, $\sigma$ and $\hat{\sigma}$ are both unitarizable).
	
	Given $w \in \Omega_0$ such that $wMw^{-1} = M$ and $w$ has minimal length in its $\Omega^M_0$-coset, we fix an isomorphism $A(\tilde{w}): {}^{\tilde{w}} \pi \rightiso \pi$; it induces $\hat{A}(\tilde{w}): {}^{\tilde{w}} \hat{\pi} \rightiso \hat{\pi}$ canonically. We also have $A(\ddot{w})$ and $\hat{A}(\ddot{w})$ on the $M^+$-side.

	Then there exists $c^+(w) \in \CC^{\times}$, depending on $\pi$ but independent of the choice of $A(\tilde{w})$ and $A(\ddot{w})$, such that
	\begin{align*}
		\Tr\left( \hat{A}(\tilde{w}) R_w(\hat{\pi}) i_{\tilde{P}}(\hat{\pi}, f) \right) & = c^+(w) \Tr\left( \left(A(\tilde{w}) R_w(\pi) \right)^{\wedge} \; i_{\tilde{P}}(\pi)^\wedge(f) \right), \\
		\Tr\left( \hat{A}(\ddot{w}) R^+_w(\hat{\sigma}) i_{P^+}(\hat{\sigma}, f^+) \right) & = c^+(w) \Tr\left( \left(A(\ddot{w}) R^+_w(\sigma)\right)^\wedge \; i_{P^+}(\sigma)^{\wedge} (f^+) \right)
	\end{align*}
	for all anti-genuine $f \in \mathcal{H}(\tilde{G})$ (resp.\ $f^+ \in \mathcal{H}(G^+)$). Here $\mathcal{H}(\tilde{G})$ (resp.\ $\mathcal{H}(G^{\pm})$) denotes the Hecke algebra of $\tilde{G}$ (resp.\ $G^{\pm}$), and being anti-genuine means that $f(zx) = z^{-1} f(x)$ for all $z \in \bmu_8$.

	The same holds in the $-$ case, with $w \in \Omega'_0$, representative $\mathring{w}$ and another constant $c^-(w) \in \CC^{\times}$.
\end{Thm}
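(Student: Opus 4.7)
The plan is to reduce the two trace identities to a single identity on the $G^+$-side, which will then lift back to the $\tilde{G}$-side through the matchings established in all the preceding theorems.

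First, I would fix compatible Hecke-module isomorphisms $\mathbf{M}(\pi) \cong \mathbf{M}(\sigma)$ and $\mathbf{M}(\hat{\pi}) \cong \mathbf{M}(\hat{\sigma})$; Corollary \ref{prop:TW-vs-Aubert} ensures the second can be chosen compatibly with the first via the Aubert involution. Theorem \ref{prop:adjunction-Hecke} identifies $i_{\tilde{P}}(\pi)$ and $i_{\tilde{P}}(\hat{\pi})$ with $i_{P^+}(\sigma)$ and $i_{P^+}(\hat{\sigma})$ as Hecke modules. Theorem \ref{prop:comparison-J-preview} combined with Proposition \ref{prop:matching-r} shows that the normalized operators $R_w(\pi)$ and $R_w(\hat{\pi})$ correspond to $R_w^+(\sigma)$ and $R_w^+(\hat{\sigma})$: the factor $|2|_F^{t(w)/2}$ from the unnormalized comparison is absorbed when passing from $J_w$ to $R_w$ via the matched normalizing factors. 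Because $\mathbf{M}$ is a categorical equivalence, it preserves endomorphism algebras, and hence trace distributions: the trace of a $\tilde{G}$-operator against anti-genuine $f \in \mathcal{H}(\tilde{G})$ equals the trace of the corresponding Hecke-module operator against the image of $f$ in $H_\psi^+$. Therefore the two distributions appearing in the $\tilde{G}$-identity correspond under $\mathrm{TW}$ to the two distributions appearing in the $G^+$-identity.

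It thus suffices to construct $c^+(w)$ and establish the $G^+$-identity. Since $\sigma$ is irreducible and ${}^{\ddot{w}}\sigma \cong \sigma$, Schur's lemma makes $\Hom_{M^+}({}^{\ddot{w}}\sigma, \sigma)$ one-dimensional; the functoriality of Aubert duality in $\sigma$ supplies a canonical isomorphism ${}^{\ddot{w}}\hat{\sigma} \rightiso \widehat{{}^{\ddot{w}}\sigma}$, through which $A(\ddot{w})^\wedge$ becomes an element of $\Hom_{M^+}({}^{\ddot{w}}\hat{\sigma}, \hat{\sigma})$. I define $c^+(w) \in \CC^\times$ as the unique scalar with $\hat{A}(\ddot{w}) = c^+(w) \cdot A(\ddot{w})^\wedge$ under this identification. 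Under the rescaling $A(\ddot{w}) \mapsto \lambda A(\ddot{w})$ both terms pick up the same factor $\lambda$, so $c^+(w)$ depends only on $\sigma$ (equivalently $\pi$) and $w$. Combined with the commutation $(R_w^+(\sigma))^\wedge = R_w^+(\hat{\sigma})$ under the canonical identification $i_{P^+}(\sigma)^\wedge \cong i_{P^+}(\hat{\sigma})$, this equality of endomorphisms yields the trace identity on the $G^+$-side; transport along the matchings then gives the $\tilde{G}$-identity with the same constant. Independence from $A(\tilde{w})$ is inherited from the $G^+$-side through the Hecke-module correspondence. The $-$ case is entirely parallel, substituting $w \in \Omega'_0$ and $\mathring{w}$, and invoking Corollary \ref{prop:comparison-J-general-odd}.

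The main obstacle I anticipate is making Aubert duality precise at the level of individual operators rather than Grothendieck-group classes. The involution is classically defined as an alternating sum of compositions $i_Q \circ r_Q$, and to extract honest isomorphisms $i_P(\pi)^\wedge \rightiso i_P(\hat{\pi})$, $(A)^\wedge$ for an individual intertwiner $A$, together with the commutations ${}^{\tilde{w}}\hat{\pi} \cong \widehat{{}^{\tilde{w}}\pi}$ and $R_w^\wedge = R_w$ used above, one must pin down all the normalizations consistently and check that the Hecke-module interpretation of Aubert involution supplied by Corollary \ref{prop:TW-vs-Aubert} intertwines them. Once this bookkeeping is in order --- following the conventions recalled in \cite[\S A.2]{KMSW} --- the remainder of the argument is purely formal.
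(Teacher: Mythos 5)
Your overall strategy is the right one, and you have correctly identified most of the ingredients, but there is a genuine gap at the point where you pass from Hecke modules back to the full Hecke algebra $\mathcal{H}(\tilde{G})$. You write that "Because $\mathbf{M}$ is a categorical equivalence, it preserves endomorphism algebras, and hence trace distributions: the trace of a $\tilde{G}$-operator against anti-genuine $f \in \mathcal{H}(\tilde{G})$ equals the trace of the corresponding Hecke-module operator against the image of $f$ in $H_\psi^+$." This is not a well-posed transfer: there is no natural "image of $f$ in $H_\psi^+$" for a general anti-genuine $f$. What the equivalence $\mathbf{M}$ (together with Proposition \ref{prop:trace-type}) gives you for free is the trace identity only for those $f$ of the form $\Upsilon_{\tau_0}(h \otimes 1)$ with $h \in H_\psi^+$, i.e.\ for $f \in e_{\tau_0} \star \mathcal{H}(\tilde{G}) \star e_{\tau_0}$. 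But the theorem asserts the identity for \emph{all} anti-genuine $f \in \mathcal{H}(\tilde{G})$. Closing this gap requires a further argument: one uses the unitarizability of $\pi, \hat{\pi}$ to make $i_{\tilde{P}}(\pi)$ and $i_{\tilde{P}}(\hat{\pi})$ semi-simple, rewrites the purported identity as $\sum_i c_i \Tr \pi_i(f) = 0$ with $\pi_i$ irreducible in $\mathcal{G}_\psi^+$, applies linear independence of irreducible characters of the unital algebra $e_{\tau_0} \star \mathcal{H}(\tilde{G}) \star e_{\tau_0}$ to conclude $\sum_i c_i [\pi_i] = 0$ in the corresponding Grothendieck group, and finally invokes the fact that $e_{\tau_0}$ is a \emph{special idempotent} in the sense of Bushnell--Kutzko (cutting out exactly the block $\mathcal{G}_\psi^+$) to upgrade this to an identity in $\mathrm{Groth}(\tilde{G})$ and hence to the trace identity for all $f$. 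Without this step the proof only establishes the theorem for $f$ supported in $\tilde{I}$-bi-invariant Hecke elements, which is a strictly weaker statement.

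A secondary imprecision: you assert a commutation $(R_w^+(\sigma))^\wedge = R_w^+(\hat{\sigma})$ under "the canonical identification $i_{P^+}(\sigma)^\wedge \cong i_{P^+}(\hat{\sigma})$." But the natural Aubert compatibility is $i_{P^+}(\sigma)^\wedge \cong i_{\overline{P^+}}(\hat{\sigma})$, with the opposite parabolic $\overline{P^+}$. To land back on $P^+$ one conjugates by the normalized intertwining operator $R_{P^+|\overline{P^+}}(\hat{\sigma})$, using the cocycle and inversion properties $(\mathrm{R})_1, (\mathrm{R})_2$; the trace identity then follows because traces are conjugation-invariant. This is more than a change of label — the rational-families argument showing that $c^+(w)$ exists and has neither pole nor zero at the given point runs exactly through this $\overline{P^+}$ identification, so it needs to be spelled out rather than folded into a claimed equality of operators on $i_{P^+}$.
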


The equivalence of unitarizability follows from the fact that $\mathrm{TW}$ preserves Hilbert structures; see \cite[Corollary C]{BHK11}. The proof of the Theorem \ref{prop:int-rel-preview} proceeds by combining all the previous results, which implies the desired trace identities for $f$ coming from $H_\psi^{\pm}$. We then extend them to all $f$ using type theory.

The behavior of traces of intertwining operators under Aubert involution is crucial for Arthur's approach to LIR; see \cite[\S 7.1]{Ar13} and \cite[\S A.3]{KMSW}. Roughly speaking, the result above says that normalized intertwining operators (or their traces) commute with $(\cdot)^\wedge$ up to a constant.  The constants $c^{\pm}(w)$ are uniquely determined by the trace identities on $G^{\pm}$ when $\sigma$ (equivalently, $\pi$) is tempered; their precise description is part of Arthur's program, where endoscopy enters the picture.

The upshot is that the same constant works for both $\mathcal{G}_\psi^{\pm}$ and $\mathcal{G}^{\pm}$ if the inducing data match under $\mathrm{TW}$.

We close this introduction with several short remarks.

\begin{enumerate}
	\item The precise statement of the $-$ case of Theorem \ref{prop:int-rel-preview} uses the operators $R'_w(\pi)$, in which the representative $\mathring{w}$ and the Haar measures are chosen differently from the $+$ case. The difference turns out to be immaterial, see Remark \ref{rem:intertwining-relation-odd}.
	\item As in \cite{TW18}, to make all the results applicable $p=2$, we do not make any use of ``lattice models'' for Weil representations in this work.
	\item The conductor of $\psi$ is fixed throughout. In concrete applications, the results above should be used in a way that is independent of $\psi$. We do not pursue this point in the present work.
	\item In the $+$ case and $p > 2$, the description of parabolic inductions and Jacquet modules in terms of Hecke modules is already mentioned by Gan--Savin \cite{GS12}. Indeed, the theory of covers of Bushnell--Kutzko \cite[\S 8]{BK98} is applicable under these assumptions.
\end{enumerate}

\subsection*{Structure of this article}

In \S\S\ref{sec:Hecke-coinvariants}--\ref{sec:Mp-groups}, we review the basic definition of Hecke algebras, symplectic vector spaces and metaplectic groups. The relevant results are recorded for later use.

In \S\ref{sec:coinvariant}, we review the types constructed in \cite{TW18}, determine their co-invariants and record the Iwahori decomposition with respect to reverse-standard parabolic subgroups.

The Takeda--Wood isomorphism together with the relevant combinatorics are reviewed in \S\ref{sec:TW}; the determination of the $H_\psi^{\pm}$-modules associated with $\omega_\psi^{\pm}$ is also included.

The Hecke-equivariance of the co-invariant map $\mathrm{q}$ is proved in \S\ref{sec:Hecke-homomorphism}. It serves as the basis of the subsequent sections.

In \S\ref{sec:spherical-parts}, we define the idempotents $e^{\pm}$ and the spherical parts of $H_\psi^\pm$-modules. These idempotents are shown to match their counterparts for $G^{\pm}$ for explicitly chosen maximal compact subgroups $K^{\pm} \subset G^{\pm}(F)$. Consequently, the unramified principal series (in a suitable sense) of $\tilde{G}$ and $G^{\pm}$ are also matched.

In \S\S\ref{sec:int-op-even}--\ref{sec:int-op-odd}, we prove the Gindikin--Karpelevich formula for $\tilde{G}$, and deduce the matching of standard intertwining operators for $\tilde{G}$ and $G^{\pm}$ under the Takeda--Wood isomorphism. The aforementioned applications are given in \S\ref{sec:applications}.

\subsection*{Acknowledgements}
The authors are grateful to Kazuma Ohara, Shuichiro Takeda, and Shaoyun Yi for helpful conversations. Our thanks also go to the referee for numerous advices.

\subsection*{Conventions}
\paragraph*{Local fields}
Throughout this article, $F$ denotes a non-Archimedean local field with $\mathrm{char}(F) \neq 2$. Let
\begin{itemize}
	\item $\mathfrak{o}$: the ring of integers in $F$,
	\item $\mathrm{val}_F$: the normalized valuation of $F$,
	\item $\varpi \in \mathfrak{o}$: a chosen uniformizer,
	\item $p := \mathrm{char}(\mathfrak{o}/(\varpi))$,
	\item $q := |\mathfrak{o}/(\varpi)|$,
	\item $e := \mathrm{val}_F(2)$,
	\item $|\cdot|_F := q^{-\mathrm{val}_F(\cdot)}$: the normalized absolute value on $F$.
\end{itemize}

The symbol $\psi$ stands for a chosen non-trivial additive character $F \to \CC^{\times}$. Unless otherwise specified, it is assumed that
\[ \psi|_{4\mathfrak{o}} = 1, \quad \psi|_{4\varpi^{-1}\mathfrak{o}} \not\equiv 1. \]

\paragraph*{Varieties and groups}
For a scheme $X$ over a commutative ring, the set of $R$-points in $X$ is denoted by $X(R)$ for any commutative algebra $R$. In this article we only consider affine schemes of finite type, and $X(R)$ will be endowed with topology if $R$ is a topological ring.

Given a connected reductive group $G$ over $F$, we denote by $A_G$ the maximal split central torus in $G$. If $P \subset G$ is a parabolic subgroup, we set $A_P := A_M$ where $M$ is the Levi quotient of $P$. Let $X^*(G) := \Hom_{F\dcate{Grp}}(G, \Gm)$. If $T$ is an $F$-torus then $X_*(T) := \Hom_{F\dcate{Grp}}(\Gm, T)$.

If $U$ is a subscheme of $G$ and $g \in G(F)$, we use the shorthand
\[ {}^g U := gUg^{-1}, \quad U^g := g^{-1} U g; \]
the same notation applies to subsets of $G(F)$ as well.

Given a minimal Levi subgroup $M_0$ (resp.\ minimal parabolic subgroup $P_{\min}$) of $G$, a parabolic subgroup $P$ of $G$ is called semi-standard (resp.\ standard) if $P \supset M_0$ (resp.\ $P \supset P_{\min}$). Every semi-standard $P$ has a unique Levi decomposition $P = MU$ where $M \supset M_0$ is the Levi factor, and $U$ is the unipotent radical of $P$. Denote the opposite of $P$ by $\overline{P} = M\overline{U}$.

For a Levi subgroup $M$ of $G$, we denote by $\mathcal{P}(M)$ the set of parabolic subgroups with Levi factor $M$. We say $M$ is standard if $M \supset M_0$ and there exists $P \in \mathcal{P}(M)$ that is standard. These notions are relative to $P_{\min}$.

If $T$ is a maximal torus in $G$, and $\alpha \in X^*(T)$ is a root with coroot $\check{\alpha}$, we will denote any prescribed isomorphism from the additive group onto the corresponding root subgroup as
\[ x_\alpha: \Ga \hookrightarrow G. \]

The standard Borel pair of $\GL(n)$ is formed by upper triangular and diagonal matrices, respectively. For the non-Archimedean local field $F$, the standard Iwahori subgroup of $\GL(n, F)$ is defined by the same convention.

The letter $\Omega$, with appropriate decorations, will denote various Weyl groups.

\paragraph*{Fourier analysis}
Denote by $\Schw(F)$ the space of Schwartz functions on the given local field $F$. The subspace of even (resp.\ odd) functions is denoted as $\Schw^+(F)$ (resp.\ $\Schw^-(F)$).

In this article, the Fourier transform is defined to be
\[ \hat{f}(x) = \int_F f(y) \psi(2xy) \dd y, \quad f \in \Schw(F) \]
with respect to some Haar measure on $F$. There exists a unique Haar measure on $F$ such that $f^{\wedge\wedge}(x) = f(-x)$. In this case we say that the Haar measure is self-dual, and $f \mapsto \hat{f}$ is the unitary Fourier transform.

More generally, for every $n \in \Z_{\geq 1}$ we define $\Schw(F^n) \supset \Schw^{\pm}(F^n)$ and the unitary Fourier transform
\begin{equation}\label{eqn:unitary-Fourier}
	\phi \mapsto \hat{\phi}(x) := \int_{F^n} \psi(2 \transp{y} x) \phi(y) \dd\mu(y), \quad f \in \Schw(F^n),
\end{equation}
where $\mu$ is the self-dual Haar measure on $F^n$, equal to the product of self-dual measures on $F$.

Schwartz spaces are also defined for lattices in $F^n$, i.e.\ by restricting the supports of $f$, as well as quotients of lattices by sub-lattices, i.e.\ by imposing periodicity.

Note that these definitions work for any nontrivial additive character $\psi': F \to \CC^{\times}$. There is a corresponding notion of self-dual Haar measure on $F$ relative to $\psi'$.

\paragraph*{Weil constant}
For every $b \in F^{\times}$, let $\psi_b: F \to \CC^{\times}$ be the additive character $\psi_b(x) = \psi(bx)$. Let $\mu_b$ denote the self-dual Haar measure on $F$ relative to $\psi_b$, in the sense defined above.

Now let $a \in F^{\times}$. Following \cite{Weil64}, the Weil constant $\gamma_\psi(a)$ is defined to be
\[ \text{the principal value of}\quad \int_F \psi\left( \frac{ax^2}{2} \right) \dd\mu_{a/2}(x). \]
This is an $8$-th root of $1$ in $\CC^{\times}$, and depends only on the coset $a F^{\times 2}$. The principal value means the limit of $\int_{\varpi^{-k} \mathfrak{o}}$ as $k \to +\infty$.

The form of Weil constants given above is compatible with \cite{Li11} and its sequels, although our definition \eqref{eqn:unitary-Fourier} of Fourier transform differs.

\paragraph*{Covering groups}
Let $G$ be a connected reductive group over a local field $F$. Let $\bmu_m := \left\{z \in \CC^{\times} : z^m = 1 \right\}$. An $m$-fold covering group of $G(F)$ in this article means a central extension of locally compact groups
\[ 1 \to \bmu_m \to \tilde{G} \xrightarrow{\pr} G(F) \to 1. \]

For every subset $E$ of $G(F)$, we write $\tilde{E} := \pr^{-1}(E)$. If $H$ is a subgroup of $G$ in the algebraic sense, we write $\tilde{H} := \pr^{-1}(H(F)) \subset \tilde{G}$. By a Levi (resp.\ parabolic) subgroup of $\tilde{G}$, we mean $\tilde{M}$ (resp.\ $\tilde{P}$) where $M$ (resp.\ $P$) is a Levi (resp.\ parabolic) subgroup of $G$.

Following \cite[\S 2.5]{Li14a}, the Haar measures on $G(F)$ and $\tilde{G}$ will be matched in the manner that $\mes(\tilde{E}) = \mes(E)$ for all measurable subset $E \subset G(F)$. The modulus character $\delta_{\tilde{H}}$ of $\tilde{H}$ equals $\delta_H \circ \pr$ for all $H \subset G$.

For every $x \in \tilde{G}$ and $g \in G(F)$ with $\tilde{g} \in \pr^{-1}(g)$, the conjugate $\tilde{g}x\tilde{g}^{-1}$ depends only on $g$ and $x$, and can be denoted as $gxg^{-1}$. Given $g \in G(F)$ and a subset $V$ of $\tilde{G}$, we may write
\[ {}^g V := gVg^{-1}, \quad V^g := g^{-1} V g. \]

Unipotent radicals of parabolic subgroups split canonically and equivariantly in any covering by \cite[App.\ I]{MW94}; for the case $\mathrm{char}(F) = 0$, see also \cite[Proposition 2.2.1]{Li14a}. Therefore, if $P$ is a parabolic subgroup of $G$ with Levi decomposition $P = MU$, then $\tilde{P} = \tilde{M} U(F)$; we call this a Levi decomposition of $\tilde{P}$.

In this article, the homomorphism $x_\alpha: \Gm \to G$ attached to a root $\alpha$ will come with canonical lifting on the level of $F$-points, denoted as
\[ \tilde{x}_\alpha: F \to \tilde{G}. \]
It is a continuous homomorphism, and $\pr \circ \tilde{x}_\alpha = x_\alpha$.

\paragraph*{Representations}
The representations are realized on $\CC$-vector spaces in this article. For every locally profinite group $G$, we write
\[ G\dcate{Mod} := \text{the category of smooth representations of $G$}. \]
Similarly, for every unital $\CC$-algebra $A$, we write
\[ A\dcate{Mod} := \text{the category of (left) $A$-modules}. \]

For every homomorphism $\varphi: A \to B$, we obtain the change-of-rings functor $\varphi^*: B\dcate{Mod} \to A\dcate{Mod}$.

The underlying space of a representation $\tau$ is denoted by $V_\tau$. The contragredient of $\tau$ is denoted by $\check{\tau}$. The trivial representation of a group $G$ is denoted by $\mathbf{1}$.

If $\theta: G \to G'$ is an isomorphism, we denote by ${}^\theta \pi$ the representation of $G'$ on the same space $V_\pi$ with
\[ ({}^\theta \pi)(g') = \pi(\theta^{-1}(g')). \]
If $\theta$ is of the form $\theta(g) = wgw^{-1}$, the shorthand $({}^w \pi)(g) = \pi(w^{-1}g w)$ will also be employed.

In the case of a covering group
\[ 1 \to \bmu_m \to \tilde{G} \to G(F) \to 1, \]
we say an object $\pi$ of $\tilde{G}\dcate{Mod}$ is genuine if $\pi(z) = z \cdot \identity_{V_\pi}$ for all $z \in \bmu_m$. We say $f: \tilde{G} \to \CC$ is anti-genuine if $f(zx) = z^{-1} f(x)$ for all $z \in \bmu_m$.

\paragraph*{Operations on representations}
Given a locally profinite group $G$ and its closed subgroup $H$, the functor of smooth (resp.\ compactly supported) induction $H\dcate{Mod} \to G\dcate{Mod}$ is denoted by $\Ind^G_H$ (resp.\ $\cInd^G_H$).

For every $\pi$ in $G\dcate{Mod}$, the module of invariants (resp.\ co-invariants) under $H$-action is denoted by $\pi^H$ (resp.\ $\pi_H$): it is the maximal subobject (resp.\ quotient) on which $H$ acts trivially.

When $G$ is a connected reductive group over a non-Archimedean local field $F$ and $P$ is a parabolic subgroup with Levi quotient $M = P/U$, we have the adjoint pair
\[\begin{tikzcd}
	r_P: G(F)\dcate{Mod} \arrow[shift left, r] & M(F)\dcate{Mod}: i_P \arrow[shift left, l]
\end{tikzcd}\]
where
\begin{itemize}
	\item $i_P(\rho) = \Ind^{G(F)}_{P(F)}\left(\rho \otimes \delta_P^{\frac{1}{2}}\right)$ is the normalized parabolic induction, where $\rho$ is inflated to $P(F)$ first;
	\item $r_P(\pi) = \left( \pi \otimes \delta_P^{-\frac{1}{2}} \right)_{U(F)}$ is the normalized co-invariants, also known as Jacquet functor applied to $\pi$, where $\pi$ is restricted to $P(F)$ first.
\end{itemize}

In the case of a covering $\tilde{G}$ of $G(F)$, the same recipe gives the adjoint pair
\[\begin{tikzcd}
	r_{\tilde{P}}: \tilde{P}\dcate{Mod} \arrow[shift left, r] & \tilde{G}\dcate{Mod}: i_{\tilde{P}} \arrow[shift left, l]
\end{tikzcd}\]
by splitting $\pr: \tilde{G} \to G(F)$ canonically over $U(F)$.

\section{Hecke algebras and co-invariants}\label{sec:Hecke-coinvariants}
\subsection{Generalities on Hecke algebras}\label{sec:Hecke-generality}
The following is a quick review of \cite[\S 2]{BK98}; see also \cite[\S 1.5]{TW18}.

Throughout \S\S\ref{sec:Hecke-generality}---\ref{sec:Hecke-coinvariant}, we take $G$ to be a unimodular locally profinite group, and $K^i \subset G$ be compact open subgroups ($i=1,2$). Given finite-dimensional representations $\tau_i$ of $K^i$, we define
\[ \mathcal{H}(G; \tau_1, \tau_2) := \left\{\begin{array}{r|l}
	h: G \to \Hom_{\CC}(V_{\tau_1}, V_{\tau_2}) & \text{compactly supported} \\
	& \forall k_i \in K^i, \; g \in G \\
	& h(k_1 g k_2) = \tau_1(k_1) h(g) \tau_2(k_2)
\end{array}\right\}. \]
The support of $h \in \mathcal{H}(G; \tau_1, \tau_2)$ will be denoted by $\Supp(h)$.

Endow $G$ with the Haar measure such that $\mes(K^1)=1$. Every element of $\mathcal{H}(G; \tau_1, \tau_2)$ induces by convolution a $G$-equivariant map $\cInd_{K_1}^G(\tau_1) \to \cInd_{K_2}^G(\tau_2)$. The formula for the case $\tau_1 = \tau_2$ will be given below.

Now we take a compact open subgroup $K$ of $G$, a finite-dimensional representation $\tau$ of $K$, and use the Haar measure with $\mes(K)=1$. Our Hecke algebra in question is
\[ \mathcal{H}(G \sslash K, \tau) := \mathcal{H}(G; \check{\tau}, \check{\tau}). \]
This is a $\CC$-algebra under convolution $\star$. There is an isomorphism of algebras
\[\begin{tikzcd}[row sep=tiny]
	\mathcal{H}(G \sslash K, \tau) \arrow[r, "\sim"] & \End_G\left( \cInd_K^G \check{\tau} \right) \\
	h \arrow[mapsto, r] & {\left[ f \mapsto \int_G h(t) f(t^{-1} \bullet) \dd t\right]}.
\end{tikzcd}\]

We also have an isomorphism $\mathcal{H}(G \sslash K, \tau) \rightiso \mathcal{H}(G \sslash K, \check{\tau})^{\mathrm{op}}$ between algebras, mapping $f$ to $\check{f}: g \mapsto {}^{\mathrm{t}} f(g^{-1})$, where ${}^{\mathrm{t}}(\cdots)$ means the transposed operator on $V_\tau$.

For every smooth representation $\pi$ of $G$, we deduce a right action of $\mathcal{H}(G \sslash K, \check{\tau})$ on
\[ \Hom_G\left( \cInd_K^G \tau, \pi \right) \simeq \Hom_K(\tau, \pi). \]

In view of the anti-isomorphism $h \mapsto \check{h}$, this can be turned into a left action of $\mathcal{H}(G \sslash K, \tau)$. The precise description goes as follows: for every $\sigma \in \Hom_K(\tau, \pi)$ and $h \in \mathcal{H}(G \sslash K, \tau)$, we have
\[ (h\sigma)(w) = \int_G \pi(g) \sigma\left( \check{h}(g^{-1}) w \right) \dd g, \quad w \in V_\tau. \]

Yet another recipe is to identify $\Hom_K(\tau, \pi)$ with $(V_\pi \otimes V_\tau^\vee)^K$, on which $h \in \mathcal{H}(G \sslash K, \tau)$ acts on the left via
\begin{equation}\label{eqn:Hecke-action-tensor}
	\sum_i v_i \otimes e_i \mapsto \int_G \sum_i \pi(g)v_i \otimes h(g) e_i \dd g.
\end{equation}

We remind the reader that the isomorphism $\Hom_K(\tau, \pi) \rightiso \Hom_G(\cInd_K^G \tau, \pi)$ is given by mapping $\theta$ to $[f \mapsto \sum_{gK} \pi(g) \theta(f(g^{-1})) ]$, where $f \in \cInd_K^G \tau$.

\begin{lemma}\label{prop:Hecke-Ind}
	Let $H$ be another compact open subgroup of $G$ containing $K$, so that $\mathcal{H}(H \sslash K, \tau)$ is embedded into $\mathcal{H}(G \sslash K, \tau)$. There is a commutative diagram
	\[\begin{tikzcd}
		\mathcal{H}(H \sslash K, \tau) \arrow[hookrightarrow, d] \arrow[r, "\sim"] & \End_H\left( \cInd_K^H(\check{\tau}) \right) \arrow[d] \\
		\mathcal{H}(G \sslash K, \tau) \arrow[r, "\sim"] & \End_G\left( \cInd_K^G(\check{\tau}) \right)
	\end{tikzcd}\]
	where the rightmost vertical arrow comes from the functoriality of $\cInd^G_H$ and induction in stages.
\end{lemma}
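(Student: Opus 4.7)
The plan is to verify the commutativity by unpacking all four arrows explicitly and then comparing the two ways of traversing the square on a test function. Since both horizontal arrows are defined by the same convolution formula (integration against $h$), the content of the lemma lies in identifying how the right-hand vertical arrow (induction in stages) interacts with this integration.

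First I would make the maps concrete. The left vertical arrow sends $h \in \mathcal{H}(H \sslash K, \tau)$ to its extension by zero $\bar{h} \in \mathcal{H}(G \sslash K, \tau)$; this makes sense because $H$ is open in $G$ and $K \subset H$. For the Haar measures, I would invoke the standard convention that the Haar measure on $G$ normalized by $\mes(K)=1$ restricts to the Haar measure on $H$ with the same normalization, which is possible since $H$ is open and $K$ is a compact open subgroup of both.

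Next I would fix the induction-in-stages isomorphism $\Phi: \cInd_H^G \cInd_K^H \check{\tau} \rightiso \cInd_K^G \check{\tau}$ explicitly via $\Phi(F)(g) = F(g)(e)$, with inverse $(\Phi^{-1}f)(g)(x) = f(xg)$ for $x \in H$, $g \in G$. The right vertical arrow, given $\phi \in \End_H(\cInd_K^H \check{\tau})$, produces the endomorphism $\Phi \circ \cInd_H^G(\phi) \circ \Phi^{-1} \in \End_G(\cInd_K^G \check{\tau})$, where $\cInd_H^G(\phi)$ is defined pointwise by $F \mapsto [g \mapsto \phi(F(g))]$.

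The key computation is then straightforward. Take $h \in \mathcal{H}(H \sslash K, \tau)$ and $f \in \cInd_K^G \check{\tau}$; set $F := \Phi^{-1}(f)$, so $F(g)(x) = f(xg)$. Applying $\cInd_H^G(\phi_h^H)$ and then $\Phi$ yields at $g \in G$ the value
\[ \left[\phi_h^H(F(g))\right](e) = \int_H h(t)\, F(g)(t^{-1})\, \dd t = \int_H h(t)\, f(t^{-1} g)\, \dd t, \]
using the definition of the Hecke action on $\cInd_K^H \check{\tau}$. On the other hand, the bottom horizontal arrow applied to $\bar{h}$ at $f$ gives $g \mapsto \int_G \bar{h}(t) f(t^{-1} g)\, \dd t$, which collapses to an integral over $H$ precisely because $\bar{h}$ is supported in $H$. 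These two expressions coincide thanks to the compatibility of Haar measures, completing the verification.

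There is no serious obstacle; the lemma is a formal consequence of induction in stages combined with the fact that convolving by a function supported in $H$ only "sees" the $H$-structure. The only point that requires any care is getting the various conventions (left-versus-right actions in $\cInd$, normalization of Haar measures, and the direction of $\Phi$) mutually consistent, after which the identity falls out on the nose.
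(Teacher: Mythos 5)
Your proof is correct and carries out exactly the routine verification the paper points to but omits: it spells out the convolution formula for the horizontal isomorphisms, writes down the induction-in-stages isomorphism $\Phi$ explicitly, and checks that both paths around the square produce $g \mapsto \int_H h(t)\, f(t^{-1}g)\, \dd t$, with the Haar measure normalization $\mes(K)=1$ making the two integrals literally agree. All the details (the formula for $\Phi^{-1}$, the pointwise definition of $\cInd_H^G(\phi)$, the support argument collapsing $\int_G$ to $\int_H$) check out against the paper's conventions.
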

\begin{proof}
	This is a routine and unsurprising check using the description of the horizontal isomorphisms given above. We omit the details.
\end{proof}

\begin{lemma}\label{prop:idempotent-projector}
	Let $K'$ be a compact open subgroup of $G$ that contains $K$. Suppose that $\tau$ extends to a representation of $K'$. Define an element $e'$ of $\mathcal{H}(G \sslash K, \tau)$ as follows
	\[ e'(g) := \begin{cases}
		\mes(K')^{-1} \check{\tau}(g), & g \in K' \\
		0, & g \notin K'.
	\end{cases}\]
	Then $e'$ is an idempotent in $\mathcal{H}(G \sslash K, \tau)$. Its action on $\Hom_K(\tau, \pi)$ is a projection onto the subspace $\Hom_{K'}(\tau', \pi)$.
\end{lemma}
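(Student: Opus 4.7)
The plan is to verify all three claims (membership in $\mathcal{H}(G \sslash K, \tau)$, idempotence, and identification of the image as $\Hom_{K'}(\tau', \pi)$) by direct unwinding of definitions, using the explicit action formula \eqref{eqn:Hecke-action-tensor} for the last part. Nothing deep is needed; the whole content is that averaging over a compact group projects onto invariants, applied to the $K'$-action on $V_\pi \otimes V_\tau^\vee$ obtained from the extended $\tau'$.

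First I would check that $e'$ really lies in $\mathcal{H}(G \sslash K, \tau) = \mathcal{H}(G; \check{\tau}, \check{\tau})$. For $k_1, k_2 \in K$ and $g \in G$, we need $e'(k_1 g k_2) = \check{\tau}(k_1) e'(g) \check{\tau}(k_2)$. Since $K \subset K'$, membership of $g$ in $K'$ is equivalent to that of $k_1 g k_2$; when $g \in K'$ the identity reduces to multiplicativity of $\check{\tau}$ on $K'$, which holds because $\tau$ (and hence $\check{\tau}$) extends to a representation $\tau'$ of $K'$.

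Next I would verify idempotence by computing the convolution
\[
(e' \star e')(g) = \int_G e'(h)\, e'(h^{-1} g)\, \mathrm{d}h.
\]
The integrand is supported on $h \in K'$ with $h^{-1} g \in K'$; the second condition then forces $g \in K'$, so $e' \star e'$ vanishes off $K'$. For $g \in K'$, use $\check{\tau}(h)\check{\tau}(h^{-1} g) = \check{\tau}(g)$ to obtain
\[
(e' \star e')(g) = \mes(K')^{-2} \int_{K'} \check{\tau}(g)\, \mathrm{d}h = \mes(K')^{-1} \check{\tau}(g) = e'(g).
\]

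For the projector claim I would use the tensor description of the action. Identifying $\Hom_K(\tau, \pi)$ with $(V_\pi \otimes V_\tau^\vee)^K$, formula \eqref{eqn:Hecke-action-tensor} gives, for $x = \sum_i v_i \otimes e_i$,
\[
e' \cdot x = \int_{K'} \sum_i \pi(g) v_i \otimes \mes(K')^{-1} \check{\tau}(g) e_i \, \mathrm{d}g = \mes(K')^{-1} \int_{K'} (\pi \otimes \check{\tau}')(g)\, x \, \mathrm{d}g,
\]
where $\check{\tau}'$ denotes the extension of $\check{\tau}$ to $K'$. This is the standard averaging operator for the compact group $K'$ acting on $V_\pi \otimes V_\tau^\vee$; it is a projector onto the $K'$-invariants, which are exactly $(V_\pi \otimes V_\tau^\vee)^{K'} = \Hom_{K'}(\tau', \pi)$. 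The image therefore coincides with $\Hom_{K'}(\tau', \pi)$, and on that subspace $e'$ acts as the identity, completing the proof.

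There is no real obstacle here; the only mild point to be careful about is that the tensor-product action of $K'$ on $V_\pi \otimes V_\tau^\vee$ must be formed using the extended representation $\tau'$, so that the averaging integral can legitimately be interpreted as projecting onto $K'$-invariants rather than merely onto some $K$-stable subspace.
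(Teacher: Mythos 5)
Your proof is correct and follows the same route as the paper: membership and idempotence checked directly, and the projector claim handled via the identification $\Hom_K(\tau,\pi) \simeq (V_\pi \otimes V_\tau^\vee)^K$ together with the action formula \eqref{eqn:Hecke-action-tensor}, recognizing the resulting expression as the averaging operator over $K'$. You simply spell out the details (the convolution calculation, the support analysis, the $K'$-action via the extension $\tau'$) that the paper leaves as "clear."
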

\begin{proof}
	The fact that $e' \in \mathcal{H}(G \sslash K, \tau)$ and the idempotency of $e'$ is clear. Suppose that $\sigma \in \Hom_K(\tau, \pi)$ corresponds to $\sum_i v_i \otimes e_i \in (V_\pi \otimes V_\tau^\vee)^K$ as before, then $e' \sigma$ corresponds to
	\[ \mes(K')^{-1} \int_{K'} \sum_i \pi(g)v_i \otimes \check{\tau}(g) e_i \dd g. \]
	
	It is by now clear that $\sigma \mapsto e'\sigma$ is a projection onto the subspace of $K'$-invariants.
\end{proof}

Now comes the master functor of this work.

\begin{definition}\label{def:master-functor}
	Given $K \subset G$ and $\tau$ as above, we have the functor
	\[ \mathbf{M}_\tau := \Hom_K(\tau, \cdot): G\dcate{Mod} \to \mathcal{H}(G \sslash K, \tau)\dcate{Mod}. \]
\end{definition}

Each of the integrals over $G$ appeared above is a finite sum over $G/K$, so the theory can defined with values in fields or rings other than $\CC$. Nonetheless, the structure to be reviewed below are specific to complex coefficients. The reference is \cite[\S 4]{BHK11}.

Assume that $\tau$ is unitary, thus so is $\check{\tau}$. There is an anti-linear involution on $\mathcal{H}(G \sslash K, \tau)$ given by
\[ h \mapsto h^*(g) = {}^\dagger h(g^{-1}), \]
where $g \in G$ and ${}^\dagger (\cdot)$ denotes the Hermitian adjoint in $\End_{\CC}(V_{\check{\tau}})$. This makes $(\mathcal{H}(G \sslash K, \tau), *)$ into a Hilbert algebra (see \cite[\S 3.1]{BHK11} for this notion), the inner product being
\[ (h_1 | h_2) := (\dim \tau)^{-1} \Tr\left( h_1^* \star h_2 (1_G) \right). \]

One can then define the notions of unitarity, temperedness, discrete series for $\mathcal{H}(G \sslash K, \tau)$-modules; there is also a Plancherel decomposition. They match a piece of the counterparts on the group level; see \textit{loc.\ cit.} for a detailed discussion.

\subsection{Operations on Hecke algebras}\label{sec:operations-Hecke}
Various operations on Hecke algebras (resp.\ their modules) can be normalized to preserve the Hilbert structure (resp.\ unitarity). This often involves the following operation.

\begin{definition}\label{def:character-twist}
	Let $\chi: G \to \CC^{\times}$ be a continuous character, with $\chi|_K = 1$. Then the map $h \mapsto h\chi$ with $(h\chi)(g) := h(g) \chi(g)$ is an automorphism of the algebra $\mathcal{H}(G \sslash K, \tau)$.
\end{definition}

When $\chi$ is unitary, the automorphism $h \mapsto h\chi$ also preserves the Hilbert structure. We refer to \textit{loc.\ cit.} for the routine details.

Another basic operation is the transport of structures. Suppose that an isomorphism $\theta$ of locally compact groups
\[\begin{tikzcd}[row sep=tiny]
	G \arrow[r, "\sim", "\theta"'] & G' \\
	K \arrow[phantom, u, "\subset" description, sloped] \arrow[r, "\sim"'] & K' \arrow[phantom, u, "\subset" description, sloped]
\end{tikzcd}\]
is given, such that $\tau' = {}^\theta \tau$. Here $(G, K, \tau)$ and $(G', K', \tau')$ are as in \S\ref{sec:Hecke-generality}. Then $\theta$ induces an isomorphism of Hilbert algebras
\begin{equation}\label{eqn:Hecke-transport}\begin{aligned}
	\mathcal{H}(G \sslash K, \tau) & \rightiso \mathcal{H}(G' \sslash K', \tau') \\
	h & \mapsto h \circ \theta^{-1}.
\end{aligned}\end{equation}

\begin{proposition}\label{prop:Hecke-transport}
	Let $\pi$ be a smooth representation of $G$ and set $\pi' := {}^\theta \pi$. The identity map $\mathbf{M}_{\tau}(\pi) \xrightarrow{=} \mathbf{M}_{\tau'}(\pi')$ is equivariant with respect to \eqref{eqn:Hecke-transport}.
\end{proposition}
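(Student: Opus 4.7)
The statement is a compatibility check between the Hecke action on Hom-spaces and transport of structure, and I would treat it as an essentially mechanical verification once everything is carefully unpacked. My plan is to use the tensor-product description of the Hecke action given in \eqref{eqn:Hecke-action-tensor} and reduce the claim to a change of variables along $\theta$.

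First I would observe that the identity map $\mathbf{M}_\tau(\pi) \to \mathbf{M}_{\tau'}(\pi')$ is well-defined: as $\CC$-vector spaces $V_\tau = V_{\tau'}$ and $V_\pi = V_{\pi'}$, and a map $\sigma : V_\tau \to V_\pi$ satisfies $\sigma \circ \tau(k) = \pi(k) \circ \sigma$ for all $k \in K$ if and only if $\sigma \circ \tau'(k') = \pi'(k') \circ \sigma$ for all $k' \in K'$, because $\tau'(k') = \tau(\theta^{-1}(k'))$ and $\pi'(k') = \pi(\theta^{-1}(k'))$. Likewise, the identification $V_{\check\tau} = V_{\check{\tau'}}$ shows that $h \mapsto h \circ \theta^{-1}$ really lands in $\mathcal{H}(G' \sslash K', \tau')$, with the correct bi-equivariance.

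Next I would fix Haar measures. By convention $\mes(K) = \mes(K') = 1$, and since $\theta$ is a topological group isomorphism carrying $K$ onto $K'$, the pushforward $\theta_* \mu_G$ is a Haar measure on $G'$ with $\theta_*\mu_G(K') = 1$, hence equals $\mu_{G'}$ by uniqueness. This is the key ingredient that makes the change of variables work.

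Finally I would verify equivariance. Using the tensor presentation $\mathbf{M}_\tau(\pi) \simeq (V_\pi \otimes V_\tau^\vee)^K$, the action of $h \in \mathcal{H}(G \sslash K, \tau)$ on $\sum_i v_i \otimes e_i$ is
\[
\int_G \sum_i \pi(g) v_i \otimes h(g) e_i \dd g,
\]
while the action of $h' := h \circ \theta^{-1}$ on the same element (now viewed in $(V_{\pi'} \otimes V_{\tau'}^\vee)^{K'}$) is
\[
\int_{G'} \sum_i \pi'(g') v_i \otimes h'(g') e_i \dd g'.
\]
Substituting $g = \theta^{-1}(g')$ and using $\pi' = \pi \circ \theta^{-1}$, $h' = h \circ \theta^{-1}$, together with $\theta_* \mu_G = \mu_{G'}$, the second integral becomes the first. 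This yields the claimed equivariance. There is no real obstacle here; the only place where one must be slightly careful is the normalization of Haar measures, which is exactly what forces the requirement $\theta(K) = K'$ to be an honest isomorphism rather than merely a bijection on some subset.
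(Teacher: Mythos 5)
Your proof is correct and follows exactly the route the paper indicates: the paper's own proof simply says the equivariance is a "straightforward transport of structure, using the explicit formula \eqref{eqn:Hecke-action-tensor}," and your argument is precisely that unpacking (identification of underlying spaces, Haar-measure matching via $\theta(K)=K'$, and change of variables in the integral). No discrepancy.
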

\begin{proof}
	By the definition of $\mathbf{M}(\cdots)$, it is clear that $\mathbf{M}_{\tau}(\pi) = \mathbf{M}_{{}^\theta \tau}({}^\theta \pi) = \mathbf{M}_{\tau'}(\pi')$ as vector spaces. The equivariance is a straightforward transport of structure, using the explicit formula \eqref{eqn:Hecke-action-tensor}.
\end{proof}

Next, denote by $\mathcal{H}(G)$ the space of smooth and compactly supported functions on $G$, which is a (non-necessarily unital) algebra under the convolution product $\star$, with the Haar measure satisfying $\mes(K)=1$. We now state the following relation between $\mathcal{H}(G \sslash K, \tau)\dcate{Mod}$ and a certain unital subalgebra of $\mathcal{H}(G)$.

\begin{proposition}\label{prop:Hecke-Morita}
	Let $e_\tau \in \mathcal{H}(G)$ be given by
	\[ e_\tau(g) := \begin{cases}
		\dim\tau \cdot \Tr(\tau(g^{-1})), & g \in K \\
		0, & g \notin K.
	\end{cases}\]
	It is an idempotent, thus $e_\tau \star \mathcal{H}(G) \star e_\tau$ is a unital algebra. Moreover, we have an isomorphism of Hilbert algebras
	\begin{align*}
		\Upsilon_\tau: \mathcal{H}(G \sslash K, \tau) \otimes \End_{\CC}(V_\tau) & \rightiso e_\tau \star \mathcal{H}(G) \star e_\tau \\
		h \otimes a & \mapsto \dim\tau \cdot \Tr\left( h(\cdot) \; \transp{a} \right)
	\end{align*}
	where $\transp{a} \in \End_{\CC}(V_{\check{\tau}})$ denotes the transpose of $a$.
\end{proposition}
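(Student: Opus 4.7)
My plan is to verify four items in sequence: (i) $e_\tau$ is idempotent; (ii) $\Upsilon_\tau$ is a well-defined algebra homomorphism landing in $e_\tau \star \mathcal{H}(G) \star e_\tau$; (iii) $\Upsilon_\tau$ is bijective; and (iv) $\Upsilon_\tau$ respects the Hilbert-algebra structures. Throughout, $\tau$ is tacitly assumed to be irreducible unitary (as for the Takeda--Wood types used later), and the main computational input is Schur / Peter--Weyl orthogonality for matrix coefficients of $\check\tau$ on $K$ with the Haar measure of unit mass. For (i), rewrite $e_\tau(g) = \dim\tau \cdot \Tr(\check\tau(g)) \mathbf{1}_K(g)$ and observe that $e_\tau \star e_\tau$ is supported in $K$; for $g \in K$ the value
\[ (e_\tau \star e_\tau)(g) = (\dim\tau)^2 \int_K \Tr(\check\tau(k)) \Tr(\check\tau(k^{-1} g))\, dk \]
collapses by Schur orthogonality to $\dim\tau \cdot \Tr(\check\tau(g)) = e_\tau(g)$.

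For (ii), smoothness and compact support of $\Upsilon_\tau(h \otimes a)$ are inherited from $h$. That it lies in $e_\tau \star \mathcal{H}(G) \star e_\tau$ is verified by computing $e_\tau \star \Upsilon_\tau(h \otimes a) \star e_\tau$ directly and using the bi-equivariance $h(k_1 g k_2) = \check\tau(k_1) h(g) \check\tau(k_2)$ to collapse the $K$-integrals exactly as in (i). For multiplicativity, one evaluates both sides of
\[ \Upsilon_\tau(h_1 h_2 \otimes a_1 a_2)(g) = \bigl( \Upsilon_\tau(h_1 \otimes a_1) \star \Upsilon_\tau(h_2 \otimes a_2) \bigr)(g) \]
as integrals of traces; after translating the integration variable by an auxiliary $k \in K$ and averaging over $K$, the Schur identity $\int_K \check\tau(k)_{ij} \check\tau(k^{-1})_{lm}\, dk = (\dim\tau)^{-1} \delta_{im} \delta_{jl}$ reassembles the two separate traces on the right into the single trace of a composition on the left, matching the trace cyclicity built into the product of the two factors on the left.

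For (iii), one constructs an explicit two-sided inverse $\Phi$ by $K \times K$-averaging. Given $F \in e_\tau \star \mathcal{H}(G) \star e_\tau$, one extracts for each pair of indices $(i,j)$ (in a fixed basis of $V_{\check\tau}$) matrix-coefficient components by double-integration of $F(k_1 g k_2)$ against the entries of $\check\tau(k_1^{-1})$ and $\check\tau(k_2^{-1})$, and repackages the whole family into a finite sum $\sum_\alpha h_\alpha \otimes a_\alpha$ in $\mathcal{H}(G\sslash K, \tau) \otimes \End(V_\tau)$; the identities $\Phi \circ \Upsilon_\tau = \identity$ and $\Upsilon_\tau \circ \Phi = \identity$ (up to the normalization $\dim\tau$) then follow by yet another application of Schur orthogonality. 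Step (iv) is essentially formal: unitarity of $\check\tau$ pairs the involutions $h \mapsto h^*$ on $\mathcal{H}(G \sslash K, \tau)$ and $f(g) \mapsto \overline{f(g^{-1})}$ on $\mathcal{H}(G)$ through $\Upsilon_\tau$, and the normalizing factor $\dim\tau$ is exactly what is needed to match $(h_1 | h_2) = (\dim\tau)^{-1} \Tr((h_1^* \star h_2)(1_G))$ with the standard $L^2$ inner product on $e_\tau \star \mathcal{H}(G) \star e_\tau$. The main obstacle is the bookkeeping in (iii): a conceptually smoother alternative is to identify $\mathcal{H}(G) \star e_\tau$ with the canonical progenerator $\cInd_K^G(\check\tau)$ as a left $\mathcal{H}(G)$-module and invoke abstract Morita equivalence for the idempotent $e_\tau$, with $\End_G(\cInd_K^G(\check\tau)) \simeq \mathcal{H}(G \sslash K, \tau)$ supplying the other endpoint.
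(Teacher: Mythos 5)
The paper's ``proof'' is a bare citation of \cite[(2.12)]{BK98} (for the algebra isomorphism) and \cite[\S 4]{BHK11} (for the Hilbert structure), so your task was essentially to reconstruct what those references do, and you have done so correctly. Your direct computation via Schur orthogonality of matrix coefficients on $K$ is precisely the content of the BK98 argument: steps (i) and (ii) are straightforward applications of the orthogonality relation $\int_K \check\tau(k)_{ij}\,\check\tau(k^{-1})_{lm}\,dk = (\dim\tau)^{-1}\delta_{im}\delta_{jl}$, and the auxiliary averaging over $K$ that you describe in (ii) is exactly the manipulation needed to collapse the product of two traces into the trace of a composition; the inverse in (iii) and the Hilbert-algebra compatibility in (iv) go through likewise. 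One substantive point worth noting: your observation that $\tau$ must be taken irreducible is not a mere convenience --- it is a genuine hypothesis, since for reducible (even isotypic) $\tau$ the element $e_\tau$ as written fails to be idempotent; the paper inherits this hypothesis silently from \cite{BK98}, and in the actual applications $\tau \in \{\tau_0, \tau_1^-\}$ is irreducible. The Morita-theoretic alternative you sketch at the end --- identifying $\mathcal{H}(G)\star e_\tau$ with $\cInd_K^G(\check\tau)\otimes V_\tau$ and then invoking $e_\tau\star\mathcal{H}(G)\star e_\tau \simeq \End_{\mathcal{H}(G)}\!\bigl(\mathcal{H}(G)\star e_\tau\bigr)^{\mathrm{op}}$ --- is indeed the conceptually cleaner route and avoids the bookkeeping of (iii); either argument is acceptable, and both are consistent with the cited references.
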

\begin{proof}
	We refer to \cite[\S 4]{BHK11} for the part concerning Hilbert structures. The other assertions are all in \cite[(2.12)]{BK98}.
\end{proof}

Hence $\mathcal{H}(G \sslash K, \tau)\dcate{Mod}$ is equivalent to $e_\tau \star \mathcal{H}(G) \star e_\tau \dcate{Mod}$ by Morita theory.

Let $\pi$ be a smooth representation of $G$. Then $\pi^\tau := \pi(e_\tau) V_\pi$ is an $e_\tau \star \mathcal{H}(G) \star e_\tau$-module. We can consider the $\mathcal{H}(G \sslash K, \tau) \otimes \End_{\CC}(V_\tau)$-module $\Upsilon_\tau^* (\pi^\tau)$. The next result relates it to $\mathbf{M}_\tau(\pi)$ in the expected way.

\begin{proposition}\label{prop:trace-type}
	Let $\pi$ be a smooth representation of $G$. There is a functorial isomorphism of $\mathcal{H}(G \sslash K, \tau) \otimes \End_{\CC}(V_\tau)$-modules
	\begin{align*}
		\Psi: \mathbf{M}_\tau(\pi) \otimes \tau & \rightiso \Upsilon_\tau^* (\pi^\tau) \\
		\sigma \otimes \phi & \mapsto \sigma(\phi).
	\end{align*}
	
	Moreover, when $\pi$ is admissible and $E \in \End_G(\pi)$, we have
	\[ \dim\tau \cdot \Tr\left(\mathbf{M}_\tau(E) \mathbf{M}_\tau(\pi)(h) \right) = \Tr\left(E \pi( \Upsilon_\tau(h \otimes 1)) \right) \]
	for all $h \in \mathcal{H}(G \sslash K, \tau)$.
\end{proposition}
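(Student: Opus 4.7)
The plan is to establish $\Psi$ in three stages — well-definedness with target $\pi^\tau$, bijectivity, and equivariance — and then to deduce the trace identity by transport of structure. I take $\tau$ irreducible, which is the setting in which $e_\tau$ is precisely the Peter--Weyl projector onto the $\tau$-isotypic subspace of $\pi|_K$. For $\sigma \in \Hom_K(\tau, \pi)$ and $\phi \in V_\tau$, the vector $\sigma(\phi)$ lies in that subspace, which equals $\pi(e_\tau) V_\pi = \pi^\tau$; hence $\Psi$ lands in $\pi^\tau$. Bijectivity is the classical canonical isomorphism $\Hom_K(\tau, V) \otimes V_\tau \rightiso V^{(\tau)}$, $\sigma \otimes \phi \mapsto \sigma(\phi)$, applied to $V = V_\pi$.

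Next, for the $\mathcal{H}(G \sslash K, \tau) \otimes \End_{\CC}(V_\tau)$-equivariance, I would check the two tensor factors separately and combine them via the fact (Proposition \ref{prop:Hecke-Morita}) that $\Upsilon_\tau$ is an algebra isomorphism. On the $\End_{\CC}(V_\tau)$ factor, the formula unwinds via cyclicity and $\check\tau(g) = \transp{\tau(g^{-1})}$ to give $\Upsilon_\tau(\identity \otimes a)(g) = \dim\tau \cdot \Tr(a\tau(g^{-1})) \mathbf{1}_K(g)$; combining this with $\pi(k)\sigma(\phi) = \sigma(\tau(k)\phi)$ reduces the claim to the Schur orthogonality identity
\[
	\int_K \Tr(a\tau(k^{-1})) \tau(k) \dd k = \frac{a}{\dim\tau},
\]
yielding $\pi(\Upsilon_\tau(\identity \otimes a))\sigma(\phi) = \sigma(a\phi)$, as required.

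On the Hecke factor, I would start from $\pi(\Upsilon_\tau(h \otimes 1))\sigma(\phi) = \int_G \dim\tau \cdot \Tr(h(g)) \pi(g) \sigma(\phi) \dd g$, insert $\int_K \dd k = 1$, and substitute $g \mapsto gk$ by right Haar invariance. Using $h(gk) = h(g)\check\tau(k)$ and the identity $\Tr(h(g)\check\tau(k)) = \Tr(\transp{h(g)}\tau(k^{-1}))$ (again from $\check\tau(k) = \transp{\tau(k^{-1})}$ and cyclicity), a second application of Schur orthogonality, now with $a = \transp{h(g)}$, collapses the inner $k$-integral and produces $\int_G \pi(g) \sigma(\transp{h(g)}\phi) \dd g$; this is exactly $(h\sigma)(\phi) = \Psi((h \otimes 1)(\sigma \otimes \phi))$ by the explicit formula in the excerpt. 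Equivariance for arbitrary $h \otimes a$ then follows by multiplicativity of $\Upsilon_\tau$.

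For the \emph{moreover} part, once $\Psi$ is an isomorphism the identity is transport of structure. Admissibility makes $\pi^\tau$ finite-dimensional; $E$ preserves $\pi^\tau$ by $K$-equivariance, and $\pi(\Upsilon_\tau(h \otimes 1))$ is supported on $\pi^\tau$ because $\Upsilon_\tau(h \otimes 1) \in e_\tau \star \mathcal{H}(G) \star e_\tau$. Hence $\Tr_{V_\pi}(E \pi(\Upsilon_\tau(h \otimes 1))) = \Tr_{\pi^\tau}(E \circ \pi(\Upsilon_\tau(h \otimes 1)))$; via $\Psi$ the two factors become $\mathbf{M}_\tau(E) \otimes \identity_{V_\tau}$ and $\mathbf{M}_\tau(\pi)(h) \otimes \identity_{V_\tau}$ on $\mathbf{M}_\tau(\pi) \otimes V_\tau$, whose composition has trace $\dim\tau \cdot \Tr(\mathbf{M}_\tau(E) \mathbf{M}_\tau(\pi)(h))$. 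The main obstacle will be bookkeeping in the Hecke-equivariance step — distinguishing $V_\tau$ from $V_{\check\tau}$, correctly transposing operators between them, and invoking Schur orthogonality with the right normalization — but the underlying calculation is a routine averaging.
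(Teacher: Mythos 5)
Correct, and essentially the paper's approach: the paper delegates the isomorphism $\Psi$ and its equivariance to \cite[(2.13)]{BK98}, and you supply the underlying verification directly — the isotypic description of $\pi^\tau = \pi(e_\tau)V_\pi$ for well-definedness and bijectivity, Schur orthogonality for the $\End_{\CC}(V_\tau)$- and Hecke-factor equivariance — after which the trace identity is deduced by transport of structure exactly as in the paper's proof. Your irreducibility assumption on $\tau$ is the one implicit in Proposition \ref{prop:Hecke-Morita}, and the Schur identity you invoke (with the $\mes(K)=1$ normalization) and the transposition bookkeeping between $V_\tau$ and $V_{\check\tau}$ all check out against the formula $(h\sigma)(w) = \int_G \pi(g)\sigma(\check{h}(g^{-1})w)\dd g$ recorded in \S\ref{sec:Hecke-generality}.
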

\begin{proof}
	The isomorphism $\Psi$ is established in \cite[(2.13)]{BK98}. Also note that the action of $\mathbf{M}_\tau(E) \otimes \identity$ sends $\sigma \otimes \phi$ to $E\sigma \otimes \phi$, which matches the action of $E$ on $\sigma(\phi)$ via $\Psi$. 
	
	Under $\Psi$, the operator $\mathbf{M}_\tau(\pi)(h) \otimes \identity$ on $\mathbf{M}_\tau(\pi) \otimes \tau$ corresponds to $\Upsilon_\tau(h \otimes 1)$ acting on $\pi^\tau$, thus the trace identity follows. For the special case $E = \identity$, see also \cite[p.73]{BHK11}.
\end{proof}

\subsection{Co-invariants}\label{sec:Hecke-coinvariant}
Keep the conventions in \S\ref{sec:Hecke-generality}, and assume that
\begin{itemize}
	\item $G$ is either the group of $F$-points of a connected reductive group, or a covering thereof;
	\item a parabolic subgroup $P$ of $G$ and its Levi decomposition $P=MU$ are given;
	\item a compact open subgroup $K \subset G$ is given, and we assume that $K \cap P = K_M K_U$ where $K_M := K \cap M$ and $K_U := K \cap U$; note that this implies the multiplication map $K_M \times K_U \to K \cap P$ is a homeomorphism.
\end{itemize}

Therefore, we have functors of co-invariants, abbreviated in the same way as
\begin{align*}
	(\cdot)_U: G\dcate{Mod} & \to M\dcate{Mod}, \\
	(\cdot)_{K_U}: K\dcate{Mod} & \to K_M\dcate{Mod}.
\end{align*}

We emphasize that the co-invariants here are non-normalized.

\begin{remark}
	When constrained to finite-dimensional representations of $K$, the functor $(\cdot)_{K_U}$ commutes with taking smooth contragredient, since one can pass between co-invariants and invariants by compactness.
\end{remark}

Given a smooth representation $\pi$ of $G$, it makes sense to compare $\mathbf{M}_\tau(\pi)$ and $\mathbf{M}_{\tau_{K_U}}(\pi_U)$, at least as vector spaces. The following construction mimics \cite[(7.9)]{BK98}.

\begin{proposition}\label{prop:q-map}
	Under the assumptions above, we have a functorial $\CC$-linear map
	\[ \mathrm{q} = \mathrm{q}_U: \mathbf{M}_\tau(\pi) \to \mathbf{M}_{\tau_{K_U}}(\pi_U). \]
	It sends $f \in \Hom_K(\tau, \pi)$ to the $K_M$-equivariant map $\overline{\phi} \mapsto \overline{f(\phi)}$, where $\overline{\phi} \in V_{\tau_{K_U}}$ has a preimage $\phi \in V_\tau$, and $\overline{f(\phi)} \in V_{\pi_U}$ denotes the image of $f(\phi) \in V_\pi$.
	
	The following transitivity holds: if we have another parabolic $Q \subset P$ with Levi decomposition $Q = LV$ such that $L \subset M$ and
	\[ (K_M) \cap (M \cap Q) = (K_M)_L (K_M)_{M \cap V}, \]
	then $K \cap Q = K_L K_V$, and the composite of
	\[ \mathbf{M}_\tau(\pi) \xrightarrow{\mathrm{q}_U} \mathbf{M}_{\tau_{K_U}}(\pi_U) \xrightarrow{\mathrm{q}_{M \cap V}} \mathbf{M}_{(\tau_{K_U})_{(K_M)_{M \cap V}}}((\pi_U)_{M \cap V}) \]
	equals $\mathrm{q}_V: \mathbf{M}_\tau(\pi) \to \mathbf{M}_{\tau_V}(\pi_V)$.
\end{proposition}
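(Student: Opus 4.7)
The plan is to verify well-definedness, $K_M$-equivariance, functoriality in $\pi$, and transitivity in that order. The conceptual input behind the first three steps is that $K_U$ is normal in $K \cap P$: since $U$ is normal in $P$, we have $K_U = (K \cap P) \cap U \trianglelefteq K \cap P$, and in particular $K_M \subset K \cap P$ normalizes $K_U$. Hence the $K$-action on $V_\tau$, restricted to $K_M$, descends along the projection $V_\tau \twoheadrightarrow V_{\tau_{K_U}}$, making the target a $K_M$-module.

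For well-definedness of $\mathrm{q}$, if $\phi, \phi' \in V_\tau$ share the same image in $V_{\tau_{K_U}}$, then $\phi - \phi'$ lies in the span of $\tau(k)\psi - \psi$ for $k \in K_U$, $\psi \in V_\tau$. Since $f$ is $K$-equivariant, $f(\phi - \phi')$ lies in the span of $\pi(k) f(\psi) - f(\psi)$, which is killed by $V_\pi \twoheadrightarrow V_{\pi_U}$ because $K_U \subset U$. For $K_M$-equivariance, for $k \in K_M$ and any lift $\phi$ of $\overline{\phi}$, I compute
\[ \mathrm{q}(f)(k \cdot \overline{\phi}) = \overline{f(\tau(k)\phi)} = \overline{\pi(k) f(\phi)} = \pi(k) \cdot \mathrm{q}(f)(\overline{\phi}), \]
using only that the $M$-action on $V_{\pi_U}$ is induced from the $P$-action on $V_\pi$. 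Functoriality in $\pi$ is then immediate from the explicit formula for $\mathrm{q}$.

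For the transitivity statement, I first verify the auxiliary claim $K \cap Q = K_L K_V$. Since $M$ normalizes $U$, one has $V = (M \cap V) \ltimes U$, so every $v \in V$ decomposes uniquely as $v = m' u$ with $m' \in M \cap V$ and $u \in U$. If additionally $v \in K$, the hypothesis $K \cap P = K_M K_U$ combined with uniqueness of the decomposition in $P = MU$ forces $m' \in K_M \cap (M \cap V) = (K_M)_{M \cap V}$ and $u \in K_U$, giving $K_V = (K_M)_{M \cap V} \cdot K_U$. An analogous argument, using the assumed decomposition $K_M \cap (M \cap Q) = (K_M)_L (K_M)_{M \cap V}$ together with $M \cap Q = L(M \cap V)$, then yields $K \cap Q = K_L K_V$. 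The equality $\mathrm{q}_{M \cap V} \circ \mathrm{q}_U = \mathrm{q}_V$ is then tautological once one invokes the canonical isomorphisms $V_{\tau_{K_V}} \simeq V_{(\tau_{K_U})_{(K_M)_{M \cap V}}}$ and $V_{\pi_V} \simeq V_{(\pi_U)_{M \cap V}}$, which come from transitivity of co-invariants (for the $\pi$-side, one uses $V_\pi(V) = V_\pi(U) + V_\pi(M \cap V)$, valid because $V = (M \cap V)U$ and $M \cap V$ normalizes $U$). I anticipate no real obstacle; the proof is essentially a careful bookkeeping, and the only subtlety is tracking which Iwahori-type decomposition corresponds to which group at each stage.
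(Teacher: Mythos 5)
Your proof is correct and takes essentially the same approach as the paper: well-definedness and $K_M$-equivariance both come from unwinding the $K$-equivariance of $f$, and the transitivity statement reduces to the two set identities $K_V = (K_M)_{M\cap V}K_U$ and $K\cap Q = K_L K_V$. The only cosmetic difference is that you establish these two identities in the opposite order from the paper (and the paper spells out the $g = mu$, $m = gu^{-1} \in K\cap M\cap Q$ chain that your ``analogous argument'' leaves implicit), but the underlying mechanism is identical.
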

\begin{proof}
	Since $f$ is $K$-equivariant, if $\phi$ is replaced by $\phi + u\phi'-\phi'$ where $\phi' \in V_\tau$ and $u \in K_U$, then $f(\phi + u\phi'-\phi') = f(\phi) + uf(\phi') - f(\phi')$ has the same image as $f(\phi)$ in $V_{\pi_U}$. The linear map $\mathrm{q}$ is thus well-defined. The $K_M$-equivariance follows immediately, again from the $K$-equivariance of $f$. Functoriality in $\pi$ is clear.
	
	Consider transitivity. It is clear that $K \cap Q \supset K_L K_V$. On the other hand, every $g \in K \cap Q \subset K \cap P$ decomposes into $mu$ where $m \in K_M$ and $u \in K_U$; but then $u \in V$, thus $u \in K_V$. We now proceed to decompose $m = gu^{-1} \in K \cap M \cap Q$ using the assumption, proving that $g \in K_L K_V$. Hence $K \cap Q = K_L K_V$.
	
	We claim that $K_V = (K_M)_{M \cap V} K_U$. It suffices to prove $\subset$. Given $v \in K_V$, we deduce from $V = (M \cap V) U$ the decomposition $v = mu$. However, from $K \cap P = K_M K_U$ and $V \subset P$ we see $u \in K_U \subset K_V$, hence $m \in K \cap M \cap V$ as well.
	
	Now $\mathrm{q}_{M \cap V}$ and $\mathrm{q}_V$ both make sense, and $\mathrm{q}_V = \mathrm{q}_{M \cap V} \mathrm{q}_U$ follows immediately from the claim above.
\end{proof}

Unlike \cite[(7.9)]{BK98}, we do not claim $\mathrm{q}$ is equivariant; in fact, we do not have a natural homomorphism from $\mathcal{H}(M \sslash K_M, \tau_{K_U})$ to $\mathcal{H}(G \sslash K, \tau)$ in this generality.

\section{The metaplectic side}\label{sec:Mp-groups}
\subsection{Symplectic groups}\label{sec:Sp}
Let $F$, $\psi$, etc.\ be as in \S\ref{sec:intro}.

Consider a symplectic $F$-vector space $(W, \lrangle{\cdot|\cdot})$ with $n := \frac{1}{2}\dim W \in \Z_{\geq 1}$. Fix once and for all a symplectic basis for $W$, namely
\[ e_1, \ldots, e_n, f_n, \ldots, f_1, \quad \lrangle{e_i | f_j} = \delta_{i, j}, \quad \lrangle{e_i | e_j} = \lrangle{f_i | f_j} = 0, \]
where $\delta_{i, j}$ stands for Kronecker's delta. The Heisenberg group $H(W)$ associated with $(W, \lrangle{\cdot|\cdot})$ is the set $W \times F$ with multiplication
\[ (w_1, t_1) (w_2, t_2) := (w_1 + w_2, t_1 + t_2 + \lrangle{w_1|w_2}). \]
Its center is identified with $F$, via $t \mapsto (0, t) \in H(W)$.

The symplectic group associated with $(W, \lrangle{\cdot|\cdot})$ is denoted by $G := \Sp(W)$. We will occasionally denote it as $\Sp(2n)$ when there is no confusion. To avoid the clumsy notation $\Sp(W, F)$, we will identify $\Sp(W)$ with the group of its $F$-points throughout this article.

The symplectic basis gives rise to the standard maximal torus $T$ (resp.\ Borel subgroup $B^{\rightarrow}$) of $G$. Parabolic subgroups $P \supset B^{\rightarrow}$ are the stabilizers of totally isotropic flags of the form
\begin{align*}
	Fe_1 \oplus \cdots \oplus Fe_{n_1} & \subset Fe_1 \oplus \cdots \oplus Fe_{n_1 + n_2} \subset \cdots \\
	& \subset Fe_1 \oplus \cdots \oplus Fe_{\sum_i n_i}
\end{align*}
with $n^\flat + n_1 + \cdots + n_r = n$ and $n^\flat \geq 0$, $n_i \geq 1$. Accordingly, in the standard Levi decomposition $P = MU$ we have
\[ M \simeq \prod_{i=1}^r \GL(n_i) \times \Sp(W^\flat) \]
where $(W^\flat, \lrangle{\cdot|\cdot})$ is the $2n^\flat$-dimensional symplectic subspace spanned by $e_i, f_i$ with $n_1 + \cdots + n_r < i \leq n$.

Taking $n^\flat=0$ and $n_1 = \cdots = n_n = 1$ yields $B^{\rightarrow}$. Take the standard basis $\epsilon_1, \ldots, \epsilon_n$ for $X^*(T)$. The $B^{\rightarrow}$-positive roots are
\[ \epsilon_i \pm \epsilon_j \quad (1 \leq i < j \leq n), \quad 2\epsilon_i \quad (1 \leq i < n); \]
the $B^{\rightarrow}$-simple roots are
\[ \epsilon_i - \epsilon_{i+1} \quad (1 \leq i < n), \quad 2\epsilon_n. \]

There is another ``reverse-standard'' Borel subgroup $B^{\leftarrow}$ containing $T$, namely the stabilizer of the flag
\begin{equation}\label{eqn:Borel-flag-left}
	Fe_n \subset Fe_n \oplus F_{e_{n-1}} \subset \cdots \subset Fe_n \oplus \cdots \oplus Fe_1.
\end{equation}

The $B^{\leftarrow}$-positive roots are
\[ \epsilon_i \pm \epsilon_j \quad (1 \leq j < i \leq n), \quad 2\epsilon_i \quad (1 \leq i < n); \]
the $B^{\leftarrow}$-simple roots are
\[ \epsilon_{i+1} - \epsilon_i \quad (1 \leq i < n), \quad 2\epsilon_1. \]

The notation concerning $B^{\rightarrow}$ and $B^{\leftarrow}$ is self-explanatory. The parabolic subgroups $P \supset B^{\leftarrow}$ are stabilizers of flags of the form
\begin{equation}\label{eqn:Levi-flag-left}\begin{aligned}
	Fe_n \oplus \cdots \oplus Fe_{n - n_1 + 1} & \subset Fe_n \oplus \cdots Fe_{n - n_1 - n_2 + 1} \subset \cdots \\
	& \subset Fe_n \oplus \cdots \oplus Fe_{n - \sum_i n_i + 1}.
\end{aligned}\end{equation}
The Levi factor of $P$ containing $T$ will be written in the form
\[ M = \Sp(W^\flat) \times \prod_{i=1}^r \GL(n_i), \quad \frac{1}{2}\dim W^\flat = n^\flat := n - \sum_{i=1}^r n_i, \]
for obvious reasons. Note that $B^{\leftarrow}$ and $B^{\rightarrow}$ both contain $T$, but they are not opposite. We write
\[ B^{\rightarrow} = TU^{\rightarrow}, \quad B^{\leftarrow} = TU^{\leftarrow}. \]

\subsection{Metaplectic covering and Weil representation}
By the Stone--von Neumann theorem, $H(W)$ has a unique irreducible smooth representation $\rho_\psi$ of central character $\psi$, up to isomorphism. From this we construct the eightfold metaplectic covering as a topological central extension
\[ 1 \to \bmu_8 \to \Mp(W) \xrightarrow{\pr} \Sp(W) \to 1, \]
or written as $1 \to \bmu_8 \to \tilde{G} \to G(F) \to 1$. We refer to \cite[\S 2]{Li11} for detailed explanations about eightfold covers. The notation $\Mp(2n)$ will occasionally be used, with the convention $\Mp(0) = \bmu_8$.

An advantage of eightfold coverings is that, when a Levi subgroup $M$ decomposes as
\[ M = \prod_{i=1}^r \GL(n_i) \times \Sp(W^\flat), \quad \text{cf.\ \S\ref{sec:Sp}}, \]
we have
\begin{equation}\label{eqn:Levi-splitting}
	\tilde{M} = \prod_{i=1}^r \GL(n_i, F) \times \Mp(W^\flat)
\end{equation}
canonically, where $\Mp(W^\flat)$ is associated with $\psi \circ \lrangle{\cdot|\cdot}: W^\flat \times W^{\flat} \to \CC^\times$; see \textit{loc.\ cit.}

Recall the following key property \cite[II.5 Lemme]{MVW87}: for all $x, y \in \Mp(W)$, we have $xy=yx$ if and only if $\pr(x)\pr(y) = \pr(y)\pr(x)$.

We will be interested in genuine smooth representations of $\Mp(W)$. The most important instance is the Weil representation
\[ \omega_\psi = \omega_\psi^+ \oplus \omega_\psi^- \]
of $\tilde{G}$. The even $\omega_\psi^+$ and odd $\omega_\psi^-$ pieces of $\omega_\psi$ are both irreducible and genuine. All these constructions depend only on $W$ and $\psi \circ \lrangle{\cdot|\cdot}: W \times W \to \CC^{\times}$.

It should be emphasized that, in \cite{Li11} and its sequels leading up to \cite{Li21}, the multiplication in $H(W)$ was $\left(w_1, t_1) (w_2, t_2) = (w_1 + w_2, t_1 + t_2 + \frac{1}{2}\lrangle{w_1|w_2}\right)$ instead. As a result, the $\Mp(W)$ and $\omega_\psi$ considered here are associated with $W$ and $\psi\left( 2\lrangle{\cdot|\cdot} \right)$ from the viewpoint of \cite{Li11}.

Recall that for each root $\alpha$ of $T \subset G$, the corresponding $x_\alpha: \Ga \hookrightarrow G$ lifts canonically, on the level of $F$-points, to a homomorphism $\tilde{x}_\alpha: F \to \tilde{G}$.

In this article, $\omega_\psi$ is realized on $\Schw\left( \bigoplus_{i=1}^n Ff_i\right)$ via the Schrödinger model. For each $\phi$ in this space, $y = \sum_{i=1}^n y_i f_i$ and $t \in F$, we have
\begin{equation}\label{eqn:root-action}
	\begin{aligned}
		\left(\omega_\psi\left( \tilde{x}_{\epsilon_i - \epsilon_j}(t) \right)\phi\right)(y) & = \phi(y + ty_i f_j), \\
		\left(\omega_\psi\left( \tilde{x}_{\epsilon_i + \epsilon_j}(t) \right)\phi\right)(y) & = \psi(2t y_i y_j) \phi(y), \\
		\left(\omega_\psi\left( \tilde{x}_{2\epsilon_i}(t) \right)\phi\right)(y) & = \psi(ty_i^2) \phi(y),
	\end{aligned}
\end{equation}
where $1 \leq i, j \leq n$, $i \neq j$.

Moreover, if $a \in \GL(n, F) \hookrightarrow \Sp(W)$ (the Siegel Levi subgroup), acting on $F^n \simeq \bigoplus_{i=1}^n Fe_i$, then its image in $\Mp(W)$ under \eqref{eqn:Levi-splitting} acts by
\begin{equation}\label{eqn:Siegel-action}
	(\omega_\psi(a)\phi)(y) = |\det a|_F^{\frac{1}{2}} \phi\left( \transp{a} y \right)
\end{equation}
where the transpose $\transp{a}$ acts on $F^n \simeq \bigoplus_{i=1}^n Ff_i$. See \cite[II.6]{MVW87}.

Moreover, there is a canonical $\tilde{w}_{\mathrm{long}} \in \tilde{G}$ representing the longest Weyl element in $N_G(T)$, such that $\omega_\psi(\tilde{w}_{\mathrm{long}})$ acts as the unitary Fourier transform on $F^n \simeq \bigoplus_{i=1}^n Ff_i$; see \eqref{eqn:unitary-Fourier}.

Finally, the subrepresentation $\omega_\psi^\pm$ of $\omega_\psi$ is realized on $\Schw\left( \bigoplus_{i=1}^n Ff_i\right)^\pm$.

\subsection{Building and lattices}
The definitions below follow \cite{GS12, TW18}.

Let $G = \Sp(W)$ as before. Let $\check{\epsilon}_1, \ldots, \check{\epsilon}_n \in X_*(T)$ be the dual basis of $\epsilon_1, \ldots, \epsilon_n \in X^*(T)$. The standard apartment of the Bruhat--Tits building of $G$ is then identified with $X_*(T) \otimes \R = \bigoplus_{i=1}^n \R\check{\epsilon_i} \simeq \R^n$. The standard alcove therein has vertices $z_0, \ldots, z_n$ where
\[ z_0 = (0, \ldots, 0), \; z_1 = \left(\frac{1}{2}, 0, \ldots\right), \; \ldots, \; z_n = \left(\frac{1}{2}, \ldots, \frac{1}{2}\right) \]
with barycenter $\left(\frac{n}{2(n+1)}, \ldots, \frac{1}{2(n+1)} \right)$.

The Coxeter diagram for the Bruhat--Tits building is depicted below.
\[ \dynkin[extended, Coxeter, labels={0, 1, 2, ,,, n}, edge length=1cm] C{} \]
Specifically, the simple reflections are labeled as $s_0, \ldots, s_n$. The $s_1, \ldots, s_n$ are the reflections on $X_*(T) \otimes \R$ along the simple roots $\epsilon_1 - \epsilon_2, \ldots, 2\epsilon_n$, whilst $s_0$ is the affine reflection along $1 - 2\epsilon_1$.

Let $\Omega_{\mathrm{aff}} = X_*(T) \rtimes \Omega_0$ be the affine Weyl group. Here $\Omega_{\mathrm{aff}}$ (resp.\ $\Omega_0$) is generated by $s_0, \ldots, s_n$ (resp.\ $s_1, \ldots, s_n$).

For every $0 \leq i \leq n$, let $K_i$ be the stabilizer of the lattice
\[ \mathcal{L}_i := \bigoplus_{j=1}^n \mathfrak{o} e_j \oplus \bigoplus_{j=1}^i (\varpi) f_j \oplus \bigoplus_{j=i+1}^n \mathfrak{o} f_j \]
in $G(F)$. It is the maximal compact subgroup of $G(F)$ corresponding to $z_i$.

In particular, the origin $z_0$ corresponds to the standard hyperspecial subgroup
\[ K_0 := \Stab_{G(F)}\left( \bigoplus_{j=1}^n \mathfrak{o}e_j \oplus \mathfrak{o}f_j \right). \]

The standard Iwahori subgroup of $G(F)$ is thus $I = \bigcap_{j=0}^n K_j$. We have
\[ G(F) = \bigsqcup_{w \in \Omega_{\mathrm{aff}}} IwI, \quad K_0 = \bigsqcup_{w \in \Omega_0} IwI. \]
The length function $\ell$ on $\Omega_{\mathrm{aff}}$ satisfies
\[ |IwI/I| = q^{\ell(w)}, \quad w \in \Omega_{\mathrm{aff}}. \]

Assuming $n \geq 1$, set
\[ J := \bigcap_{j=1}^n K_i = I \sqcup I s_0 I. \]
We will also encounter the subgroups
\begin{align*}
	J_m := \bigcap_{\substack{0 < j \leq n \\ j \neq m}} K_j, \quad 1 \leq m < n.
\end{align*}

\section{Co-invariants of types}\label{sec:coinvariant}
\subsection{Types}
Following \cite[\S 1.4]{TW18}, we consider the following $K_i$-invariant subspaces under $\omega_\psi$, for each $0 \leq i \leq n$ (and with some abuse of notation):
\[ \Schw\left( \frac{\mathcal{L}_0 \cap \bigoplus_{j=1}^n F f_j }{2 \mathcal{L}_i \cap \bigoplus_{j=1}^n F f_j } \right) = \Schw\left( \bigoplus_{j=1}^i \frac{\mathfrak{o} f_j}{(2\varpi) f_j} \oplus \bigoplus_{j=i+1}^n \frac{\mathfrak{o} f_j}{(2) f_j} \right). \]
Denote the corresponding $K_i$-subrepresentation of $\omega_\psi$ as $\tau_i$. When $n=0$, we view $\tau_0$ as the tautological representation of $\bmu_8$ on $\CC$.

Furthermore, $\tau_i = \tau_i^+ \oplus \tau_i^-$ by breaking it into the pieces consisting of even ($+$) and odd ($-$) functions under the Schrödinger model. Note that
\begin{gather*}
	\tau_0^+ = \tau_0, \quad \tau_i \subset \tau_{i+1}, \\
	\dim\tau_i^{\pm} = \frac{1}{2} q^{en} (q^i \pm 1) \quad (e := \mathrm{val}_F(2)).
\end{gather*}

Now consider a parabolic subgroup $P \supset B^{\leftarrow}$ of $G = \Sp(W)$ associated with a flag as in \eqref{eqn:Levi-flag-left}. Take the Levi decomposition $P = MU$ with $M \supset T$, so that $M = \Sp(W^\flat) \times \prod_{i=1}^r \GL(n_i)$ and $\dim W^\flat = 2n^\flat$.

\begin{lemma}\label{prop:cpt-intersection-Levi}
	The intersection $I \cap M(F)$ is the product of the Iwahori subgroups of $\Sp(W^\flat)$ and of $\GL(n_1, F), \ldots, \GL(n_r, F)$, associated with the following data.
	\begin{itemize}
		\item For $\Sp(W^\flat)$, it is the standard Iwahori subgroup $I^\flat$ arising from the symplectic basis
		\[ e_1, \ldots, e_{n^\flat}, f_{n^\flat}, \ldots, f_1. \]
		\item For each $\GL(n_i)$, it is the standard Iwahori subgroup arising from the basis
		\[ e_{n - \sum_{j \leq i} n_j + 1}, \ldots, e_{n - \sum_{j < i} n_j} \]
		of the graded piece of the flag \eqref{eqn:Levi-flag-left}.
	\end{itemize}
	
	The same is true for $J \cap M(F)$ (resp.\ $J_m \cap M(F)$ when $1 \leq m < n^\flat$) if we assume $n^\flat$ is positive. One simply puts the avatar for $J$ (resp.\ $J_m$) on the $\Sp(W^\flat)$ factor, and the standard Iwahori subgroups on each $\GL(n_i, F)$, via the bases above.
\end{lemma}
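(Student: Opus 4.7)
The plan is to exploit the block-diagonal structure of $M$ and split every defining lattice $\mathcal{L}_j$ along the Levi decomposition. Concretely, the Levi $M = \Sp(W^\flat) \times \prod_{i=1}^r \GL(n_i)$ preserves the orthogonal (with respect to $\lrangle{\cdot|\cdot}$) decomposition
\[ W = W^\flat \oplus \bigoplus_{i=1}^r W_i, \quad W_i := E_i \oplus E_i^*, \]
where $E_i$ is spanned by those $e_k$ with $n - \sum_{j \leq i} n_j + 1 \leq k \leq n - \sum_{j < i} n_j$, $E_i^*$ is spanned by the corresponding $f_k$, and the $\GL(n_i)$-factor acts on $(E_i, E_i^*)$ by $(g, \transp{g}^{-1})$. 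Since all the indices appearing in any $W_i$ are strictly greater than $n^\flat$, a direct inspection gives
\[ \mathcal{L}_j = (\mathcal{L}_j \cap W^\flat) \oplus \bigoplus_{i=1}^r (\mathcal{L}_j \cap W_i), \quad 0 \leq j \leq n. \]
Therefore an element of $M(F)$ preserves $\mathcal{L}_j$ if and only if it preserves each summand separately, so $K_j \cap M(F)$ factors as a direct product of the corresponding stabilizers in $\Sp(W^\flat)$ and in each $\GL(n_i, F)$. Intersecting over $j$ gives the claimed product decomposition of $I \cap M(F)$.

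It remains to identify each factor. On the $\Sp(W^\flat)$-side, the chain $\{\mathcal{L}_j \cap W^\flat\}_{j=0}^n$ traverses exactly $\mathcal{L}_0^\flat \subset \mathcal{L}_1^\flat \subset \cdots \subset \mathcal{L}_{n^\flat}^\flat$ (repeated at the end), which is the lattice chain defining $I^\flat$. On each $W_i$, the intersection $\mathcal{L}_j \cap W_i$ is constant in $j$ except when $j$ sweeps through the range $[n-\sum_{j\le i} n_j+1, n-\sum_{j<i} n_j]$; setting $m = n - \sum_{j\le i} n_j$ and $j = m + t$ for $0 \leq t \leq n_i$, one finds $\mathcal{L}_j \cap W_i = \bigoplus_{k=m+1}^{m+n_i} \mathfrak{o} e_k \oplus \bigoplus_{k=m+1}^{m+t}(\varpi) f_k \oplus \bigoplus_{k=m+t+1}^{m+n_i} \mathfrak{o} f_k$. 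Under $(g, \transp{g}^{-1})$, preservation of these $n_i + 1$ lattices is equivalent to $g$ preserving $\bigoplus_{k=m+1}^{m+n_i} \mathfrak{o} e_k$ and acting upper-triangularly modulo $\varpi$ on the ordered basis $e_{m+1}, \ldots, e_{m+n_i}$, which is precisely the standard Iwahori subgroup of $\GL(n_i, F)$ with respect to the stated ordered basis.

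For $J$ and $J_m$ (with $n^\flat > 0$) we run the identical argument, noting that the only $K_j$ being removed correspond to indices $j \in \{0\}$ or $j = m \in [1, n^\flat)$, which in both cases affect only the chain on $W^\flat$ (all other $W_i$-chains are insensitive to removing these $j$'s, since their transitions occur at indices $> n^\flat$). Thus the $\GL(n_i)$-factors remain the full standard Iwahori, while the $\Sp(W^\flat)$-factor becomes the avatar $J^\flat$ or $J_m^\flat$ as claimed. No part of this is deep: the only real bookkeeping issue is checking that the Iwahori arising on each $\GL(n_i)$ really matches the ordered-basis convention (this is a straightforward duality computation between $E_i$ and $E_i^*$ using $\lrangle{e_k | f_l} = \delta_{kl}$), and that the range $1 \leq m < n^\flat$ in the $J_m$ case is indeed exactly what is needed to keep the $\GL$-factors untouched.
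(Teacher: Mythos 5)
Your proposal is correct and follows essentially the same route as the paper: both arguments work directly with the lattices $\mathcal{L}_h$, split them along the decomposition $W = W^\flat \oplus \bigoplus_i W_i$, and read off the conditions on the $\Sp(W^\flat)$- and $\GL(n_i)$-components of $g \in M(F)$. You are a bit more explicit than the paper on the duality bookkeeping — verifying that $\transp{g_i}^{-1}$ lower-triangular mod $\varpi$ in the $f$-basis is equivalent to $g_i$ upper-triangular mod $\varpi$ in the stated $e$-basis — but the underlying analysis is identical, and you correctly pinpoint why the restriction $m < n^\flat$ is what keeps the $\GL$-chains untouched.
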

\begin{proof}
	Although it is possible to give a building-theoretic proof, here we work with the lattices. We shall inspect the condition $g \mathcal{L}_h = \mathcal{L}_h$ defining $K_h$, for various $h$. Write $g \in M(F)$ as $(g^\flat, (g_i)_{i=1}^r)$. In view of the definition of $\mathcal{L}_h$, we may consider the conditions imposed on $g^\flat$ and each $g_i$ separately, as they act on different direct summands of $W$.
	
	First, the condition on $g^\flat \in \Sp(W^\flat)$ reads
	\[ g^\flat \mathcal{L}^\flat_{\min\{h, n^\flat\}} = \mathcal{L}^\flat_{\min\{h, n^\flat\}}, \quad 0 \leq h \leq n, \]
	where $\mathcal{L}^\flat_h$ means the lattice defined with respect to $W^\flat$. When $h$ ranges over $0, \ldots, n$ (for $I$), $1, \ldots, n$ (for $J$), or $\{0, \ldots, n\} \smallsetminus \{0, m\}$ (for $J_m$ when $1 \leq m \leq n^\flat$), these conditions cut out the corresponding subgroup of $\Sp(W^\flat)$.
	
	Secondly, consider $g_i \in \GL(n_i, F)$. Only the indices $j$ lying between $n - \sum_{j \leq i} n_j + 1$ and $n - \sum_{j < i} n_j$ matter in the discussion below; let $\mathbf{J}_i$ be the set of indices in this range.
	
	Note that $g_i$ (as an element of $G(F)$) stabilizes both $\bigoplus_{j \in \mathbf{J}_i} Fe_j$ and $\bigoplus_{j \in \mathbf{J}_i} Ff_j$. Identify $\GL(n_i)$ with the group of linear automorphisms on $\bigoplus_{j \in \mathbf{J}_i} Fe_j$. It follows from $g \mathcal{L}_h = \mathcal{L}_h$ that $g_i$ stabilizes both $\bigoplus_j \mathfrak{o}e_j$ and $\bigoplus_j \mathfrak{o}f_j$, hence $g_i \in \GL(n_i, \mathfrak{o})$.
	
	Therefore, the condition on $g_i$ imposed by $g\mathcal{L}_h = \mathcal{L}_h$ is equivalent to:
	\begin{itemize}
		\item $g_i \in \GL(n_i, \mathfrak{o})$, and
		\item ${}^{\mathrm{t}} g_i^{-1}$, viewed as a linear automorphism of $\bigoplus_{j \in \mathbf{J}_i} Ff_j$, stabilizes $\mathcal{L}_h \cap \bigoplus_{j \in \mathbf{J}_i} Ff_j$.
	\end{itemize}
	Note that the latter condition is redundant if $h \notin \mathbf{J}_i$.
	
	In particular, we see that when $h$ ranges over $0, \ldots, n$ (for $I$), $1, \ldots, n$ (for $J$), or $\{0, \ldots, n\} \smallsetminus \{0, m\}$ (for $J_m$ when $1 \leq m \leq n^\flat$), these conditions cut out the standard Iwahori subgroup of $\GL(n_i, F)$, as asserted.
\end{proof}

\begin{remark}\label{rem:K-cap-M}
	Suppose that $0 \leq i \leq n^\flat$, and let $K_i^\flat \subset \Sp(W^\flat)$ be the avatar of $K_i \subset \Sp(W)$. The arguments above actually showed
	\[ K_i^\flat \times \prod_{j=1}^r \GL(n_j, \mathfrak{o}) = K_i \cap M(F). \]
	On the level of coverings, by \eqref{eqn:Levi-splitting} we also have
	\[ \widetilde{K_i^\flat} \times \prod_{j=1}^r \GL(n_j, \mathfrak{o}) = \widetilde{K_i} \cap \tilde{M}. \]
\end{remark}

Next, for all $0 \leq i \leq n^\flat$ we define the avatars of $\tau_i$ and $\tau_i^\pm$ for $\tilde{M}$ as the representations
\begin{equation*}
	\tau_{i, \tilde{M}} := \tau_i^\flat \otimes \bigotimes_{i=1}^r \mathbf{1}_i	, \quad
	\tau_{i, \tilde{M}}^{\pm} := \tau_i^{\flat, \pm} \otimes \bigotimes_{i=1}^r \mathbf{1}_i
\end{equation*}
of $\widetilde{K_i^\flat} \times \prod_{j=1}^n \GL(n_j, \mathfrak{o}) = \widetilde{K_1} \cap \tilde{M}$, where $\tau_i^{\flat}$ is the avatar of $\tau_i$ for $W^\flat$, and $\mathbf{1}_i$ is the trivial representation of $\GL(n_i, F)$. Note that $\tau_{i, \tilde{M}}^\pm$ is realized on the space
\[ \Schw\left( \bigoplus_{j=1}^i \frac{\mathfrak{o} f_j}{(2\varpi) f_j} \oplus \bigoplus_{j=i+1}^{n^\flat} \frac{\mathfrak{o} f_j}{(2) f_j} \right)^\pm. \]

Denote by $\omega_\psi^\flat$ the Weil representation for $\Mp(W^\flat)$, realized on $\Schw(\bigoplus_{j=1}^{n^\flat} Ff_j)$.

For each $0 \leq i \leq n^\flat$, restriction of functions gives rise to
\begin{equation}\label{eqn:res-M}
	\begin{tikzcd}[row sep=tiny]
		\Schw\left( \bigoplus_{j=1}^i \frac{\mathfrak{o} f_j}{(2\varpi)f_j} \oplus \bigoplus_{j=i+1}^n \frac{\mathfrak{o} f_j}{(2) f_j} \right) \arrow[r, "{\mathrm{res}_{i, \tilde{M}}}"] & \Schw\left( \bigoplus_{j=1}^i \frac{\mathfrak{o} f_j}{(2\varpi) f_j} \oplus \bigoplus_{j=i+1}^{n^\flat} \frac{\mathfrak{o} f_j}{(2)f_j} \right) \\
		\phi \arrow[mapsto, r] & \phi(\cdot, \underbracket{0, \ldots, 0}_{n - n^\flat}).
	\end{tikzcd}
\end{equation}

Note that $\widetilde{K_i}$ (resp.\ $\widetilde{K_i^\flat}$) acts on the left (resp.\ right) hand side through $\omega_\psi$ (resp.\ $\omega_\psi^\flat$). We inflate the $\widetilde{K_i^\flat}$-action to $\widetilde{K_i} \cap \tilde{M}$ by letting each $\GL(n_j, \mathfrak{o})$ act trivially.

\begin{lemma}\label{prop:res-equivariance}
	The map $\mathrm{res}_{i, \tilde{M}}$ is surjective and preserves parity. Moreover, it is $\widetilde{K_i} \cap \tilde{M}$-equivariant.
\end{lemma}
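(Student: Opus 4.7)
The plan is to handle the three assertions in turn, the first two being formal. Surjectivity is immediate: given $\phi_0$ in the codomain, extend it by zero in the coordinates $y_{n^\flat+1}, \ldots, y_n$ to obtain a Schwartz function on the domain whose image under $\mathrm{res}_{i, \tilde{M}}$ is $\phi_0$ (the extension is Schwartz because the quotients $\mathfrak{o}/(2)$ are finite). Parity preservation is likewise automatic, since the involution $y \mapsto -y$ fixes the coordinate subspace cut out by $y_{n^\flat+1} = \cdots = y_n = 0$.

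For $\widetilde{K_i} \cap \tilde{M}$-equivariance, I would first invoke Remark \ref{rem:K-cap-M} to decompose $\widetilde{K_i} \cap \tilde{M} = \widetilde{K_i^\flat} \times \prod_{j=1}^r \GL(n_j, \mathfrak{o})$, then treat each factor separately. The $\GL(n_j, \mathfrak{o})$-part is direct: for $a \in \GL(n_j, \mathfrak{o}) \hookrightarrow \Sp(W)$, the Siegel Levi formula \eqref{eqn:Siegel-action} gives $(\omega_\psi(a)\phi)(y) = |\det a|_F^{1/2} \phi(\transp{a} y)$, and by construction of the Levi $M$ (see \eqref{eqn:Levi-flag-left}) the action of $\GL(n_j)$ on $\bigoplus_k Ff_k$ is supported on indices $n^\flat < k \leq n$. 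Hence $\transp{a}$ fixes the locus $y_{n^\flat+1} = \cdots = y_n = 0$, and $|\det a|_F = 1$, so $\mathrm{res}_{i, \tilde{M}}(\omega_\psi(a)\phi) = \mathrm{res}_{i, \tilde{M}}(\phi)$; this matches the trivial action of $\GL(n_j, \mathfrak{o})$ on the $\mathbf{1}_i$ factors of $\tau_{i, \tilde{M}}^\pm$.

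The key point is the $\widetilde{K_i^\flat}$-case, for which the essential input is that the Schrödinger model of $\omega_\psi$ is compatible with the canonical splitting \eqref{eqn:Levi-splitting}: under the tensor decomposition $\Schw(\bigoplus_{j=1}^n Ff_j) \simeq \Schw(\bigoplus_{j=1}^{n^\flat} Ff_j) \otimes \Schw(\bigoplus_{j=n^\flat+1}^n Ff_j)$ coming from the symplectic splitting $W = W^\flat \oplus W^{\flat,\perp}$, the subgroup $\Mp(W^\flat) \subset \Mp(W)$ acts on the first tensor factor via $\omega_\psi^\flat$ and trivially on the second. Granting this, evaluation at $y_{n^\flat+1} = \cdots = y_n = 0$ amounts to pairing the second tensor factor against the Dirac delta at $0$, which is $\Mp(W^\flat)$-equivariant, and the desired equivariance follows. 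The tensor factorization itself is straightforward to check on generators using \eqref{eqn:root-action} and \eqref{eqn:Siegel-action}: every root subgroup $\tilde{x}_\alpha$ for $\alpha$ supported on indices $\leq n^\flat$, the Siegel Levi $\GL(n^\flat) \subset \Sp(W^\flat)$, and a lift of the longest Weyl element of $\Sp(W^\flat)$ (performing a partial Fourier transform in $f_1, \ldots, f_{n^\flat}$) manifestly preserve the decomposition. The only subtlety I anticipate—and the step requiring genuine care—is confirming that the cocycle conventions used to define \eqref{eqn:Levi-splitting} introduce no stray eighth-root-of-unity when passing between $\omega_\psi|_{\Mp(W^\flat)}$ and $\omega_\psi^\flat$; once this is nailed down from \cite{Li11}, the verification is routine.
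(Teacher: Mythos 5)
Your proof is correct and takes essentially the same route as the paper: pass to restriction on the full Schwartz spaces $\Schw(\bigoplus_j Ff_j) \to \Schw(\bigoplus_{j\leq n^\flat} Ff_j)$ and verify equivariance there against the explicit formulas \eqref{eqn:root-action}, \eqref{eqn:Siegel-action}, the tensor factorization you invoke being just an organizing device for that check. The one caveat you flag at the end is moot: the isomorphism \eqref{eqn:Levi-splitting} is \emph{defined} via the Schrödinger model (as stated in the introduction), so the agreement of $\omega_\psi|_{\Mp(W^\flat)}$ with $\omega_\psi^\flat$ on the first tensor factor holds by construction and no cocycle needs to be chased.
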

\begin{proof}
	The map is clearly surjective and commutes with $\phi(y) \mapsto \phi(-y)$, hence parity-preserving. To prove equivariance, it suffices to notice that the restriction map
	\[\begin{tikzcd}[row sep=tiny]
		\Schw\left( \bigoplus_{j=1}^n Ff_j \right) \arrow[r] & \Schw\left( \bigoplus_{j=1}^{n^\flat} Ff_j \right) \\
		\phi \arrow[mapsto, r] & \phi(\cdot, 0, \ldots, 0)
	\end{tikzcd}\]
	is $\Mp(W^\flat) \times \prod_{j=1}^r \GL(n_j, \mathfrak{o})$-equivariant, where we let $\Mp(W^\flat)$ (resp.\ $\prod_{j=1}^r \GL(n_j, \mathfrak{o})$) act by $\omega_\psi^\flat$ (resp.\ trivially) on the right hand side. Indeed, this follows immediately from the explicit formulas for $\omega_\psi$ and $\omega_\psi^\flat$, since the $\GL(n_j, \mathfrak{o})$-action on $\phi$ does not affect $\phi(\cdot, 0, \ldots, 0)$; see \eqref{eqn:root-action} and \eqref{eqn:Siegel-action}.
\end{proof}

\subsection{Calculation of co-invariants}
Consider
\begin{itemize}
	\item a parabolic subgroup $P = MU \supset B^{\leftarrow}$ of $G$,
	\item a compact open subgroup $K$ of $G(F)$.
\end{itemize}

Write $M$ in the form $\Sp(W^\flat) \times \prod_{i=1}^r \GL(n_i)$. Put
\begin{equation}\label{eqn:KM}
	K_M := K \cap M(F), \quad K_U := K \cap U(F), \quad K_{\overline{U}} := K \cap \overline{U}(F).
\end{equation}
We assume that
\begin{equation}\label{eqn:coinvariant-K-assumption}
	K \cap P(F) = K_M K_U.
\end{equation}

Under the assumption above, the decomposition $\tilde{P} = \tilde{M}U(F)$ induces $\tilde{K} \cap \tilde{P} = \widetilde{K_M} K_U$, canonically.

The general formalism reviewed in \S\ref{sec:Hecke-coinvariant} furnishes the functor $(\cdot)_{K_U}: \tilde{K}\dcate{Mod} \to \widetilde{K_M}\dcate{Mod}$. No normalization is required here.

The following result describes $(\tau_i^{\pm})_{K_U}$. The argument is parallel to the well-known calculation of $r_{\tilde{P}}(\omega_\psi)$. Let $I_U := I \cap U(F)$.

\begin{proposition}\label{prop:coinvariant-type}
	Let $0 \leq i \leq n^\flat$. Suppose that $K$ satisfies \eqref{eqn:coinvariant-K-assumption}, $K \subset K_i$ and $K_U = I_U$. There is a unique isomorphism of $\widetilde{K_M}$-representations $\tau_{i, \tilde{M}}^{\pm} \rightiso (\tau_i^{\pm})_{K_U}$ making the following diagram commutative.
	\[\begin{tikzcd}
		\tau_i^{\pm} \arrow[twoheadrightarrow, d] \arrow[twoheadrightarrow, r, "{\mathrm{res}_{i, \tilde{M}}}"] & \tau_{i, \tilde{M}}^{\pm} \arrow[ld, "\sim"' sloped] \\
		(\tau_i^{\pm})_{K_U} &
	\end{tikzcd}\]
	We are abusing notation here: one should really replace $\tau_i^{\pm}$, etc.\ by the underlying vector spaces, as not all the maps are literally $\widetilde{K_M}$-equivariant.
\end{proposition}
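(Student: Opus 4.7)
The plan is to construct the desired isomorphism directly from the map $\mathrm{res}_{i,\tilde M}$ of Lemma~\ref{prop:res-equivariance}. By the universal property of coinvariants, it suffices to establish three points: (a) $\mathrm{res}_{i,\tilde M}$ vanishes on
\[ V_{\tau_i^\pm}(K_U) := \mathrm{span}_{\CC}\bigl\{ (\omega_\psi(u) - \identity)\phi : u \in K_U,\; \phi \in V_{\tau_i^\pm} \bigr\}, \]
and so descends to a map $(\tau_i^\pm)_{K_U} \to \tau_{i,\tilde M}^\pm$; (b) this descended map is surjective; (c) its kernel is zero, equivalently $\Ker(\mathrm{res}_{i,\tilde M}) \subseteq V_{\tau_i^\pm}(K_U)$. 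The required diagonal in the diagram is then the inverse of the resulting iso. Uniqueness follows from the surjectivity of $\mathrm{res}_{i,\tilde M}$, while $\widetilde{K_M}$-equivariance follows from Lemma~\ref{prop:res-equivariance} applied to $\widetilde{K_M} \subseteq \widetilde{K_i} \cap \tilde{M}$ (using $K \subseteq K_i$).

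Point (a) reduces, via $K_U = I_U$, to checking $\mathrm{res}_{i,\tilde M}(\omega_\psi(\tilde{x}_\alpha(t))\phi) = \mathrm{res}_{i,\tilde M}(\phi)$ for each root subgroup generator with $\alpha$ a root of $U$. A short case analysis using~\eqref{eqn:root-action}, distinguishing $\alpha = \epsilon_i - \epsilon_j$ (shift of a coordinate), $\alpha = \epsilon_i + \epsilon_j$ (multiplier $\psi(2ty_iy_j)$), and $\alpha = 2\epsilon_i$ (multiplier $\psi(ty_i^2)$), shows that since $\alpha \notin M$ at least one index involved lies in $\{n^\flat+1,\ldots,n\}$, so the operation trivializes once the last $n-n^\flat$ coordinates are set to zero. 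This is a finite-dimensional reflection of the classical identification $(\omega_\psi)_{U(F)} \simeq \omega_\psi^\flat$. Point (b) is immediate from the surjectivity in Lemma~\ref{prop:res-equivariance}.

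The main content is (c). The key observation is that for each $j \in \{n^\flat+1,\ldots,n\}$ the root $2\epsilon_j$ lies in $U$ and
\[ \tilde{x}_{2\epsilon_j}(\mathfrak{o}) \subseteq I \cap U(F) = I_U = K_U, \]
acting on $\phi \in V_{\tau_i^\pm}$ by pointwise multiplication by $y \mapsto \psi(ty_j^2)$. Decompose $V_{\tau_i^\pm}$ into components $V_{[y]}$ indexed by $\{\pm 1\}$-orbits $[y]$ on the finite set $X_i$; each component is at most one-dimensional, and the decomposition is preserved by $\tilde{x}_{2\epsilon_j}(t)$ because $y_j^2 = (-y_j)^2$. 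If $[y] \not\subseteq X_i^\flat$, choose $j > n^\flat$ with $y_j \ne 0$ in $\mathfrak{o}/(2)$; then $y_j^2 \mathfrak{o} \not\subseteq 4\mathfrak{o}$, so the conductor condition on $\psi$ supplies $t \in \mathfrak{o}$ with $\psi(ty_j^2) \ne 1$, making $\omega_\psi(\tilde{x}_{2\epsilon_j}(t)) - \identity$ a nonzero scalar on $V_{[y]}$; hence $V_{[y]} \subseteq V_{\tau_i^\pm}(K_U)$. Since explicitly $\Ker(\mathrm{res}_{i,\tilde M}) = \bigoplus_{[y] \not\subseteq X_i^\flat} V_{[y]}$, point (c) follows.

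The anticipated delicacy is the parity bookkeeping for $\tau_i^-$ and the treatment of singleton orbits $\{y\}$ with $2y = 0$; both are neutralized by the fact that only the long-root action enters the decisive step, which is an even function of $y$ and hence respects the $\pm$-decomposition orbit by orbit. A secondary point is that the argument truly needs $\tilde{x}_{2\epsilon_j}(\mathfrak{o}) \subseteq K_U$ for $j > n^\flat$, so the hypothesis $K_U = I_U$ (not merely an inclusion) is used essentially.
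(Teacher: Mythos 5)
Your proposal is correct and follows essentially the same approach as the paper: both key steps---checking that $\mathrm{res}_{i,\tilde M}$ kills $V_{\tau_i^\pm}(K_U)$ via the root-subgroup formulas \eqref{eqn:root-action}, and that conversely $\Ker(\mathrm{res}_{i,\tilde M})$ is exhausted by applying long root elements $\tilde{x}_{2\epsilon_j}(\mathfrak{o}) \subset I_U$ with $j > n^\flat$ and the conductor condition on $\psi$---match the paper's two halves. The only cosmetic differences are that you build the inverse map $(\tau_i^\pm)_{K_U} \to \tau_{i,\tilde M}^\pm$ rather than the forward one, and you make explicit the $\{\pm 1\}$-orbit decomposition of $V_{\tau_i^\pm}$, which the paper handles informally by working ``without superscript $\pm$.''
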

\begin{proof}
	For the ease of notation, consider the case without superscript $\pm$ and omit $f_j$. For all
	\[ \phi \in \Schw\left( \bigoplus_{j=1}^i \frac{\mathfrak{o}}{(2\varpi)} \oplus \bigoplus_{j=i+1}^n \frac{\mathfrak{o}}{(2)} \right), \quad
	y = (y_1, \ldots, y_i, \ldots, \underbracket{y_{n^\flat + 1}, \ldots, y_n}_{\not\equiv (0, \ldots, 0) \bmod 2} ) \in \mathfrak{o}^n , \]
	there exist $\ell > n^\flat$ and $t \in \mathfrak{o}$ such that
	\[ \omega_\psi(\tilde{x}_{2\epsilon_\ell}(t)\phi)(y) = \underbracket{\psi(ty_\ell^2)}_{\neq 1} \phi(y) \]
	by our assumptions on $\psi$. Since $x_{2\epsilon_\ell}(t) \in I_U = K_U$, the Dirac functions supported on such $y$ are mapped to zero in $(\tau_i)_{K_U}$. As these functions span the kernel of $\mathrm{res}_{i, \tilde{M}}$, this yields the arrow in the assertion, and it is surjective as $\tau_i \to (\tau_i)_{K_U}$ is. The $\widetilde{K_M}$-equivariance follows from Lemma \ref{prop:res-equivariance}.
	
	It remains to prove the injectivity of $\tau_{i, \tilde{M}} \to (\tau_i)_{K_U}$. Consider $\phi$ as above, and
	\[ y = (y_1, \ldots, y_i, \ldots, \underbracket{y_{n^\flat + 1}, \ldots, y_n}_{\equiv (0, \ldots, 0) \bmod 2} ) \in \mathfrak{o}^n. \]
	
	Recalling that $P \supset B^{\leftarrow}$, we consider the action of root subgroups in 4 cases.
	\begin{enumerate}
		\item If $n^\flat < j < \ell$ and $t \in \mathfrak{o}$, then $ty_\ell \in 2\mathfrak{o}$ and
		\[ \omega_\psi(\tilde{x}_{\epsilon_\ell - \epsilon_j}(t)\phi)(y) = \phi(y + ty_\ell f_j) = \phi(y). \]
		
		\item If $j \leq n^\flat < \ell$ and $t \in (\varpi)$, then $ty_\ell \in 2\varpi\mathfrak{o}$ and
		\[ \omega_\psi(\tilde{x}_{\epsilon_\ell - \epsilon_j}(t)\phi)(y) = \phi(y + ty_\ell f_j) = \phi(y). \]
		
		\item If $j < \ell$, at least one of them is $> n^\flat$, and $t \in \mathfrak{o}$, then $2t y_\ell y_j \in 4\mathfrak{o}$ and
		\[ \omega_\psi(\tilde{x}_{\epsilon_\ell + \epsilon_j}(t)\phi)(y) = \psi(2t y_\ell y_j)\phi(y) = \phi(y). \]
		\item If $\ell > n^\flat$ and $t \in \mathfrak{o}$, then $ty_\ell^2 \in 4\mathfrak{o}$ and
		\[ \omega_\psi(\tilde{x}_{2\epsilon_\ell}(t)\phi)(y) = \psi(ty_\ell^2)\phi(y) = \phi(y). \]
	\end{enumerate}
	
	From the structure for $I_U$ (see \cite[(7.2.6)]{BT72}), we see that the elements above generate $I_U$ as $t$ and $j, \ell$ vary. Specifically, in the case 1 we shall take $t \in (\varpi)$, since it concerns $B^{\rightarrow}$-negative roots.
	
	It follows that $\mathrm{res}_{i, \tilde{M}}\left(\omega_\psi(u)\phi - \phi\right) = 0$ where $u \in K_U = I_U$ and $\phi$ is as above. This implies the injectivity of $\tau_{i, \tilde{M}} \to (\tau_i)_{K_U}$.
\end{proof}

\begin{definition}\label{def:P1}
	Take the parabolic subgroup $P^1 = M^1 U^1 \supset B^{\leftarrow}$ with Levi factor
	\[ M^1 = \Sp(W^1) \times \GL(1)^{n-1} \supset T, \]
	where $W^1 := Fe_1 \oplus Ff_1$. Let $T^1$ be the $\GL(1)^{n-1}$ factor in $M^1$, so that $T = \GL(1) \times T^1$.
\end{definition}

Observe that $B^{\leftarrow}$ (resp.\ $P^1$) is in good position relative to $K_0$ (resp.\ $K_1$), or more precisely to the corresponding vertex $z_0$ (resp.\ $z_1$). Hence there are decompositions
\begin{align*}
	K_0 \cap B^{\leftarrow}(F) & = (K_0 \cap T(F)) (K_0 \cap U^{\leftarrow}(\mathfrak{o})) \\
	& = T(\mathfrak{o}) U^{\leftarrow}(\mathfrak{o}), \\
	K_1 \cap P^1(F) & = (K_1 \cap M^1(F)) (K_1 \cap U^1(F))
\end{align*}

The following two variants of Proposition \ref{prop:coinvariant-type} will be needed in \S\ref{sec:spherical-parts}. First, identify the preimage of $K_0 \cap T(F) = T(\mathfrak{o})$ in $\tilde{T}$ with $T(\mathfrak{o}) \times \bmu_8$ using \eqref{eqn:Levi-splitting} (with $M = T$).

\begin{proposition}\label{prop:coinvariant-U-leftarrow}
	Let $\tau_{0, U^{\leftarrow}(\mathfrak{o})}$ be the co-invariants of $\tau_0$ with respect to $U^{\leftarrow}(\mathfrak{o})$. Let $\Xi$ denote the projection $\widetilde{T(\mathfrak{o})} \to \bmu_8$. Then
	\[ \tau_{0, U^{\leftarrow}(\mathfrak{o})} \simeq \Xi \]
	as representations of $\widetilde{T(\mathfrak{o})}$: it is induced by mapping $\phi \in V_{\tau_0}$ to $\phi(0, \ldots, 0)$.
\end{proposition}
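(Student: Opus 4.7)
The plan is to verify three things in turn: (a) evaluation at $0$ factors through $U^{\leftarrow}(\mathfrak{o})$-coinvariants; (b) the resulting $\CC$-linear map $(V_{\tau_0})_{U^{\leftarrow}(\mathfrak{o})} \to \CC$ is an isomorphism; (c) it intertwines the $\widetilde{T(\mathfrak{o})}$-action on $\tau_{0, U^{\leftarrow}(\mathfrak{o})}$ with $\Xi$. This follows the pattern of Proposition \ref{prop:coinvariant-type} but simplified to the torus case.

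For (a), I invoke the formulas \eqref{eqn:root-action}: for any $B^{\leftarrow}$-positive root $\alpha$ and $t \in \mathfrak{o}$, evaluation at $0$ gives
\[ (\omega_\psi(\tilde{x}_\alpha(t))\phi)(0) = \phi(0), \]
since for $\alpha = \epsilon_i - \epsilon_j$ the translation $0 + t\cdot 0 \cdot f_j$ is still $0$, and for $\alpha \in \{\epsilon_i + \epsilon_j, 2\epsilon_i\}$ the multiplicative factor is $\psi(0) = 1$. Because $U^{\leftarrow}(\mathfrak{o})$ is generated by its intersections with the $B^{\leftarrow}$-positive root subgroups (parametrised by $\mathfrak{o}$), the evaluation map descends to coinvariants.

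For (b), I work with the Dirac basis $\{\delta_y : y \in (\mathfrak{o}/(2))^n\}$ of $V_{\tau_0}$ and show that every $\delta_y$ with $y \neq 0$ lies in the coboundary subspace. Fix such $y$ and pick $\ell$ with $y_\ell \notin 2\mathfrak{o}$, i.e.\ $\mathrm{val}_F(y_\ell) < e$; then $\mathrm{val}_F(y_\ell^2) < 2e = \mathrm{val}_F(4)$, so $y_\ell^2 \mathfrak{o} \not\subset 4\mathfrak{o}$, and the conductor hypothesis $\psi|_{4\mathfrak{o}} = 1$, $\psi|_{4\varpi^{-1}\mathfrak{o}} \not\equiv 1$ yields some $t \in \mathfrak{o}$ with $\psi(ty_\ell^2) \neq 1$. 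Since $2\epsilon_\ell$ is $B^{\leftarrow}$-positive, $\tilde{x}_{2\epsilon_\ell}(t)$ lies in (the preimage of) $U^{\leftarrow}(\mathfrak{o})$, and \eqref{eqn:root-action} gives
\[ \omega_\psi(\tilde{x}_{2\epsilon_\ell}(t))\delta_y = \psi(ty_\ell^2)\,\delta_y, \]
so $\delta_y = (\psi(ty_\ell^2) - 1)^{-1}\bigl(\omega_\psi(\tilde{x}_{2\epsilon_\ell}(t))\delta_y - \delta_y\bigr)$ is a coboundary. Consequently $(V_{\tau_0})_{U^{\leftarrow}(\mathfrak{o})}$ is spanned by the class of $\delta_0$, and as $\delta_0 \mapsto 1$, the induced map is an isomorphism.

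For (c), the torus $T$ lies in the Siegel Levi $\GL(n) \subset G$, and the canonical splittings \eqref{eqn:Levi-splitting} for $M = T$ and for the Siegel Levi are compatible (both coming from the Schrödinger model). Hence for $t = \mathrm{diag}(a_1, \ldots, a_n) \in T(\mathfrak{o})$ viewed in $\tilde{T}$ via the splitting, formula \eqref{eqn:Siegel-action} yields
\[ (\omega_\psi(t)\phi)(0) = |\det t|_F^{1/2}\phi\bigl(\transp{t}\cdot 0\bigr) = \phi(0), \]
so $T(\mathfrak{o})$ acts trivially on the image while $\bmu_8$ acts tautologically; this is exactly $\Xi$ under $\widetilde{T(\mathfrak{o})} = T(\mathfrak{o}) \times \bmu_8$. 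The uniqueness of the isomorphism is automatic since both sides are one-dimensional. The only place where anything nontrivial happens is the conductor argument in (b); the hypotheses on $\psi$ are designed precisely to keep that argument working when $p = 2$, so I do not foresee any serious obstacle.
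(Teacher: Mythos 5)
Your proof is correct and follows essentially the same path as the paper's, which re-iterates the argument of Proposition \ref{prop:coinvariant-type} with $n^\flat = 0$: your step (b) is the paper's ``first part'' (Dirac functions at nonzero $y$ die in coinvariants), your step (a) is its ``second part'' (evaluation at $0$ is $U^{\leftarrow}(\mathfrak{o})$-invariant via the generating root subgroups), and your step (c) just spells out the $\widetilde{T(\mathfrak{o})}$-equivariance that the paper delegates to Lemma \ref{prop:res-equivariance}.
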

\begin{proof}
	Re-iterate the proof for Proposition \ref{prop:coinvariant-type} with $U^{\leftarrow}(\mathfrak{o})$. Although $I_{U^{\leftarrow}} \subsetneq U^{\leftarrow}(\mathfrak{o})$, the first part of the proof (for obtaining the arrow $\Xi \twoheadrightarrow \tau_{0, U^{\leftarrow}(\mathfrak{o})}$) is exactly the same: it suffices to note that $U^{\leftarrow}(\mathfrak{o})$ contains $x_{2\epsilon_\ell}(\mathfrak{o})$ for all $0 \leq \ell \leq n$.

	As to the second part of the proof (for proving surjectivity), one still has to see how the root subgroups inside $U^{\leftarrow}(\mathfrak{o})$ act. This amounts to taking $n^\flat = 0$ in the earlier analysis. Below are the relevant cases, where we assume $t \in \mathfrak{o}$:
	\[\begin{array}{|c|c|c|}\hline
		\tilde{x}_{\epsilon_\ell - \epsilon_j}(t) & \tilde{x}_{\epsilon_\ell + \epsilon_j}(t) & \tilde{x}_{2\epsilon_\ell}(t) \\
		j < \ell & \ell \neq j & \forall \ell \\ \hline
	\end{array}\]
	
	As in the proof for Proposition \ref{prop:coinvariant-type}, if $u$ is any one of the elements above, then
	\[ (\omega_\psi(u)\phi)(0, \ldots, 0) = \phi(0, \ldots, 0). \]
	The other arguments carry over verbatim.
\end{proof}

Next, use \eqref{eqn:Levi-splitting} (with $M = M^1$) and Remark \ref{rem:K-cap-M} to identify the preimage of $K_1 \cap M^1(F)$ in $\tilde{M}^1$ with $\widetilde{K_1^{\flat}} \times (\mathfrak{o}^\times)^{n-1}$ where $K_1^{\flat} := K_1 \cap \Sp(W^1)$ is the avatar of $K_1$ for $\Sp(W^1)$. Let $\tau_1^{\flat, -}$ denote the avatar of $\tau_1^-$ on $\Mp(W^1)$.

\begin{proposition}\label{prop:coinvariant-U1}
	Consider the representation $\tau_{1, -}^{\flat} \otimes \mathbf{1}$ of $\widetilde{K_1^{\flat}} \times (\mathfrak{o}^\times)^{n-1}$. Then we have
	\[ (\tau_1^-)_{K_1 \cap U^1(F)} \rightiso \tau_1^{\flat, -} \otimes \mathbf{1} \]
	induced by $\phi \mapsto \phi(\cdot, 0, \ldots, 0) \in V_{\tau_1^{\flat, -}}$.
\end{proposition}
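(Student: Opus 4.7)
The plan is to mimic the proofs of Propositions \ref{prop:coinvariant-type} and \ref{prop:coinvariant-U-leftarrow}, adapted to the present setting where $n^\flat = 1$, $K = K_1$, and $K_U = K_1 \cap U^1(F)$ (which is strictly larger than $I_{U^1}$). The roots of $U^1$ are precisely the $B^{\leftarrow}$-positive roots other than $2\epsilon_1$, and $K_1 \cap U^1(F)$ is generated by the root subgroups $\tilde{x}_\alpha(t)$ where the range of $t$ can be read off from the lattice $\mathcal{L}_1 = \bigoplus_j \mathfrak{o}e_j \oplus (\varpi)f_1 \oplus \bigoplus_{j\ge 2}\mathfrak{o}f_j$; crucially, $\tilde{x}_{2\epsilon_\ell}(\mathfrak{o}) \subset K_1 \cap U^1(F)$ for every $\ell \geq 2$.

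First, I would check that the restriction $\mathrm{res}: \phi \mapsto \phi(\cdot, 0, \ldots, 0)$ factors through the co-invariants. Using \eqref{eqn:root-action}, at any point $y = (y_1, 0, \ldots, 0)$ we have $y_\ell = 0$ for $\ell \geq 2$, so $\tilde{x}_{\epsilon_\ell - \epsilon_j}(t)$, $\tilde{x}_{\epsilon_\ell + \epsilon_j}(t)$ ($\ell > j$) and $\tilde{x}_{2\epsilon_\ell}(t)$ ($\ell \geq 2$) all act trivially on $\phi(y)$, because each factor $\phi(y + ty_\ell f_j)$, $\psi(2ty_\ell y_j)\phi(y)$ or $\psi(ty_\ell^2)\phi(y)$ reduces to $\phi(y)$. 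Since these elements generate $K_1 \cap U^1(F)$, an easy group-theoretic bootstrap shows $\omega_\psi(u)\phi - \phi \in \ker(\mathrm{res})$ for every $u$ in the whole group.

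Next, for the equivariance under $\widetilde{K_1^\flat} \times (\mathfrak{o}^\times)^{n-1}$, the $\widetilde{K_1^\flat}$-part follows from Lemma \ref{prop:res-equivariance} (taking $i=1$, $M = M^1$), while the torus $(\mathfrak{o}^\times)^{n-1}$ acts on $\phi$ by $\phi(y_1, y_2, \ldots, y_n) \mapsto \phi(y_1, a_2 y_2, \ldots, a_n y_n)$ via \eqref{eqn:Siegel-action} (with $|a_i|_F = 1$), which obviously preserves the restriction to $y_2 = \cdots = y_n = 0$, confirming the trivial action on $V_{\tau_1^{\flat,-}}$.

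The step requiring care, and which I expect to be the main technical point, is injectivity of the induced map on co-invariants. The kernel of $\mathrm{res}$ is spanned by odd Dirac functions $\phi_y = \delta_y - \delta_{-y}$ where $y \in \mathfrak{o}^n / \bigl((2\varpi) \oplus (2)^{n-1}\bigr)$ has some component $y_\ell \not\equiv 0 \pmod{2}$ with $\ell \geq 2$. For such $y$, I would act by $u = \tilde{x}_{2\epsilon_\ell}(t) \in K_1 \cap U^1(F)$ with $t \in \mathfrak{o}$: by \eqref{eqn:root-action}, $\omega_\psi(u)\phi_y = \psi(ty_\ell^2)\phi_y$, and the assumption $\psi|_{4\mathfrak{o}} = 1$, $\psi|_{4\varpi^{-1}\mathfrak{o}} \not\equiv 1$ combined with the estimate $\mathrm{val}_F(y_\ell^2) < 2e$ guarantees some $t \in \mathfrak{o}$ with $\psi(ty_\ell^2) \neq 1$. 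Hence $\phi_y$ is a nonzero scalar multiple of $\omega_\psi(u)\phi_y - \phi_y$, i.e.\ it vanishes in the co-invariants. Surjectivity is clear, since any element of $V_{\tau_1^{\flat,-}}$ lifts to an odd function supported where $y_2 = \cdots = y_n = 0$.
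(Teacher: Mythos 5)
Your proof is correct and follows the same approach as the paper's, which re-runs the argument of Proposition~\ref{prop:coinvariant-type} with $K_1 \cap U^1(F)$ in place of $I_U$ after verifying (via Bruhat--Tits) that the relevant root subgroups lie in $K_1 \cap U^1(F)$ with the right $t$-ranges. Your choice to evaluate at the literal representative $(y_1, 0, \ldots, 0)$ streamlines the step showing $\mathrm{res}$ factors through the co-invariants — it sidesteps the periodicity bookkeeping that the paper's version with $y_\ell \in 2\mathfrak{o}$ carries along — but the underlying strategy and all the key computations coincide.
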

\begin{proof}
	Again, we re-iterate the proof for Proposition \ref{prop:coinvariant-type} with $K_1 \cap U^1(F)$. Recall that $K_1$ is the stabilizer of the vertex $z_1 = (\frac{1}{2}, 0, \ldots, 0)$ in the standard apartment.
	
	The elements $\tilde{x}_{2\epsilon_\ell}(t)$ with $t \in \mathfrak{o}$ and $\ell > 1$, used in the first part of the proof, are still in $K_1 \cap U^1(F)$: this we can see easily from $2\epsilon_\ell(z_1) = 0$ when $\ell > 1$. In fact, by Bruhat--Tits theory \cite[\S 6.2, \S 6.4]{BT72}, for every root $\alpha \in X^*(T)$ and vertex $z$ in the building, we have
	\[ x_\alpha(t) \in \Stab_{G(F)}(z) \iff \mathrm{val}_F(t) + \alpha(z) \geq 0. \]
	
	Let us turn to the elements $\tilde{x}_{\epsilon_\ell \pm \epsilon_j}(t)$ and $\tilde{x}_{2\epsilon_\ell}(t)$ analyzed in the second part of the proof, now with $n^\flat = i = 1$.
	\begin{enumerate}
		\item If $1 < j < \ell$, then $x_{\epsilon_\ell - \epsilon_j}(t) \in K_1 \cap U^1(F) \iff t \in \mathfrak{o}$ since $(\epsilon_\ell - \epsilon_j)(z_1) = 0$.
		
		\item If $\ell > 1$, then $x_{\epsilon_\ell - \epsilon_1}(t) \in K_1 \cap U^1(F) \iff t \in (\varpi)$ since $(\epsilon_\ell - \epsilon_1)(z_1) = -\frac{1}{2}$.
		
		\item If $j < \ell$, then $x_{\epsilon_\ell + \epsilon_j}(t) \in K_1 \cap U^1(F) \iff t \in \mathfrak{o}$ since $(\epsilon_\ell + \epsilon_j)(z_1) \in \{0, \frac{1}{2}\}$.
		
		\item If $\ell > 1$, then $x_{2\epsilon_\ell}(t) \in K_1 \cap U^1(F) \iff t \in \mathfrak{o}$ as seen above.
	\end{enumerate}
	
	The conditions are exactly those in the proof for Proposition \ref{prop:coinvariant-type}. Hence the arguments carry over.
\end{proof}

\subsection{Some Iwahori decompositions}
Let $P = MU$ be a parabolic subgroup of $G = \Sp(W)$ with $P \supset B^{\leftarrow}$ and $M = \Sp(W^\flat) \times \prod_{i=1}^r \GL(n_i)$.

For every proper subset $\Theta$ of the set $\Delta_{\mathrm{aff}}$ of simple affine reflections $\{s_0, \ldots, s_n\}$, we have the compact open subgroup
\[ K_\Theta := \bigsqcup_{t \in \lrangle{\Theta}} I\dot{t}I, \]
where $\dot{t} \in N_{G(F)}(T(F))$ is any representative of $t$, unique only modulo $T(\mathfrak{o})$. For example, $I = K_{\emptyset}$. Note that the reflections $s_1, \ldots, s_n$ are simple relative to $B^{\rightarrow}$, not $B^{\leftarrow}$.

On the other hand, denote by $\Phi$ (resp.\ $\Phi_M$) the set of roots of $G$ (resp.\ $M$) relative to $T$. Let $\Phi^+$ be the set of $B^{\leftarrow}$-positive roots in $\Phi$, and similarly for $\Phi_M^+$.

\begin{proposition}\label{prop:Iwahori-decomp}
	Let $\Theta$ be a proper subset of $\Delta_{\mathrm{aff}}$ such that $\dot{t} \in M(F)$ for all $t \in \Theta$. Let $K := K_\Theta$ and adopt the notation $K_M, K_U, K_{\overline{U}}$ from \eqref{eqn:KM}. Then:
	\begin{enumerate}[(i)]
		\item $K = K_{\overline{U}} K_M K_U$, thus the multiplication map $K_{\overline{U}} \times K_M \times K_U \to K$ is a homeomorphism and \eqref{eqn:coinvariant-K-assumption} holds;
		\item $K_U = I_U$ and $K_{\overline{U}} = I_{\overline{U}}$.
	\end{enumerate}
\end{proposition}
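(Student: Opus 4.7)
The plan is to prove (ii) first and then derive (i) by invoking the Iwahori decomposition of parahorics with respect to a parabolic containing $T$.

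For (ii), I would use the affine-root description from Bruhat--Tits theory. The parahoric $K_\Theta = \bigsqcup_{t \in \langle \Theta \rangle} I\dot{t}I$ is the pointwise stabilizer of the facet $F_\Theta \subset \overline{A_0}$ whose vertices are $\{z_j : j \notin \Theta\}$. For any root $\alpha$ of $G$ relative to $T$, Bruhat--Tits yields $K_\Theta \cap x_\alpha(F) = x_\alpha(\varpi^{m_\alpha^\Theta}\mathfrak{o})$ with $m_\alpha^\Theta := \lceil -\min_{z \in \overline{F_\Theta}} \alpha(z) \rceil$, and similarly $I \cap x_\alpha(F) = x_\alpha(\varpi^{m_\alpha^0}\mathfrak{o})$ with $m_\alpha^0$ defined using $\overline{A_0}$. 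Since $F_\Theta \subset \overline{A_0}$, the containment $I_U \subseteq K_\Theta \cap U(F)$ is automatic, and the nontrivial inclusion reduces to the equality $m_\alpha^\Theta = m_\alpha^0$ for every $B^\leftarrow$-positive root $\alpha$ of $U$, i.e.\ $\alpha \in \Phi^+ \smallsetminus \Phi_M^+$.

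I would verify this by case analysis keyed to the classification of roots of $\Sp(2n)$, using the coordinates $z_j = (\underbrace{1/2, \ldots, 1/2}_{j}, 0, \ldots, 0)$. When $\alpha$ is $2\epsilon_i$ or $\epsilon_i + \epsilon_j$ with $i > n^\flat$, the vertex $z_{n^\flat}$ lies in $\overline{F_\Theta}$: the reflection $s_{n^\flat}$ corresponds to the root $\epsilon_{n^\flat} - \epsilon_{n^\flat+1}$ (or $1 - 2\epsilon_1$ when $n^\flat = 0$), which crosses the $\Sp(W^\flat)$-to-first-$\GL$-block boundary in $M$ and hence lies outside $\Phi_M$, forcing $s_{n^\flat} \notin \Theta$. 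Since $\alpha(z_{n^\flat}) \in \{0, 1/2\}$ and $\lceil -1/2 \rceil = \lceil 0 \rceil = 0$, this yields $m_\alpha^\Theta = 0 = m_\alpha^0$. When $\alpha = \epsilon_i - \epsilon_j$ with $i > j$ in different $M$-blocks, the range $\{j, \ldots, i-1\}$ necessarily contains a block boundary $k^*$, and the same reasoning gives $s_{k^*} \notin \Theta$, hence $z_{k^*} \in \overline{F_\Theta}$ and $m_\alpha^\Theta = 1 = m_\alpha^0$. The case $K_\Theta \cap \overline U(F) = I_{\overline U}$ is dual: the minimum of each root of $\overline U$ is realized at $z_n$, which belongs to $\overline{F_\Theta}$ because $s_n$ (along $2\epsilon_n$) has $\epsilon_n$ in the last $\GL$-block where long roots are absent, so $2\epsilon_n \notin \Phi_M$ and $s_n \notin \Theta$.

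Once (ii) is established, (i) follows from the standard Iwahori decomposition of a parahoric subgroup relative to a parabolic containing $T$:
\[ K_\Theta = (K_\Theta \cap \overline U(F)) \cdot K_M \cdot (K_\Theta \cap U(F)), \]
with the product map a homeomorphism onto $K_\Theta$. This is a general Bruhat--Tits result valid whenever the facet $F_\Theta$ lies in the apartment of $T$. Substituting (ii) gives $K = I_{\overline U} K_M I_U$. Finally, \eqref{eqn:coinvariant-K-assumption} is immediate: any $g = u_{\overline U} m u_U \in K \cap P(F)$ must have $u_{\overline U} \in U(F) \cap \overline U(F) = \{1\}$, leaving $g = m u_U \in K_M K_U$.

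The main technical obstacle is the case analysis in (ii): short roots of $\Sp(2n)$ take half-integer values on the alcove vertices, so the literal equality $\min_{\overline{F_\Theta}}\alpha = \min_{\overline{A_0}}\alpha$ can genuinely fail (e.g.\ for $\alpha = \epsilon_i + \epsilon_j$ with $j \leq n^\flat < i$); one must replace the minimum by its integer ceiling $m_\alpha^\Theta$ and use the block structure of $M$ combined with the hypothesis on $\Theta$ to locate a vertex of $F_\Theta$ realizing $m_\alpha^\Theta = m_\alpha^0$.
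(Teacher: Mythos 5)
Your proof of part (ii) is essentially correct and follows a genuinely different route from the paper's. The paper invokes the quasi-concave function $f_\Omega$ attached to the facet and compares it abstractly against the Iwahori's concave function by comparing the data $(x,E)$ and $(x,D)$ in \cite[(7.2.6)]{BT72}, concluding in one stroke that the two functions can only differ on $\alpha \in \Phi_M$. You instead work concretely with the ceiling formula $m_\alpha^\Theta = \lceil -\min_{\overline{F_\Theta}}\alpha \rceil$ and locate, for each $\alpha \in \Phi^+ \smallsetminus \Phi_M^+$, an explicit vertex of $\overline{F_\Theta}$ certifying $m_\alpha^\Theta \geq m_\alpha^0$; this is more elementary but requires the root-by-root case analysis. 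One small imprecision: the step ``$\epsilon_{n^\flat}-\epsilon_{n^\flat+1} \notin \Phi_M$, forcing $s_{n^\flat} \notin \Theta$'' should be phrased as ``the linear part of $s_{n^\flat}$ does not lie in the Weyl group $\Omega_0^M$ (e.g.\ it does not preserve $\Phi_M$), so no representative of it lies in $M(F)$.'' This matters in the case $n^\flat = 0$ with the $\GL$-block containing index $1$ of size one, where $s_{2\epsilon_1}$ does preserve $\Phi_M = \emptyset$ or the relevant slice of it, yet still fails to lie in $\Omega_0^M$; the argument goes through, but for a slightly different reason than the one you state.

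Part (i), however, has a genuine gap. The claimed ``general Bruhat--Tits result,'' that $K_\Theta = K_{\overline U} K_M K_U$ holds for any parahoric whose facet lies in the apartment of $T$, is false. Take $G = \SL(2)$, $K = \SL(2,\mathfrak{o})$ (the parahoric of the vertex $z_0$, certainly in the apartment of $T$), and $P = B$, $M = T$. Then $K_{\overline U} K_M K_U = \overline{U}(\mathfrak{o}) T(\mathfrak{o}) U(\mathfrak{o})$ is the big Bruhat cell of $\SL(2,\mathfrak{o})$, which misses the Weyl element and is a proper subset of $K$. The decomposition $P_f = U_f^- N_f U_f^+$ from \cite[(6.4.9)]{BT72} has the factor $N_f$ inside the normalizer of $T$ but \emph{not} automatically inside $M(F)$; in the $\SL(2)$ example $N_f$ contains a representative of the Weyl element. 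It is precisely the hypothesis $\dot t \in M(F)$ for $t \in \Theta$ that lets one absorb $N_f$ into $M(F)$, and your proof of (i) never invokes it. So either you need to cite the $N_f$-decomposition and then use the hypothesis to place $N_f$ inside $M(F)$, as the paper does, or you need to derive (i) from (ii) plus the Bruhat decomposition $K_\Theta = \bigsqcup_{t \in \langle\Theta\rangle} I\dot t I$ with $\dot t \in M(F)$ --- either way, the hypothesis on $\Theta$ must enter.
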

\begin{proof}
	We will utilize Bruhat--Tits theory \cite{BT72} below, which is greatly simplified by the fact that $G$ is simply connected.
	
	Let $\Omega$ be the facet of the standard alcove corresponding to $\Theta$, in the manner that the whole alcove (resp.\ the vertex $z_0$) corresponds to $\Theta = \emptyset$ (resp.\ $\Theta = \{s_1, \ldots, s_n\}$). Consider the quasi-concave function
	\[ f = f_{\Omega}: \Phi \to \R \]
	determined by $\Omega$. Then $K$ is the parahoric subgroup of $G(F)$ associated with $f$.
	
	For (i), we begin with the decomposition \cite[(6.4.9)]{BT72} associated with $f$. The $N_f$ in \textit{loc.\ cit.} can be absorbed into $M(F)$ via its description in \cite[(7.1.3)]{BT72} and the assumption on $\Theta$. Note that one does not have to assume $P$ standard (i.e.\ containing $B^{\rightarrow}$) in these results.
	
	For (ii), we use the description of $U_f^\pm$ in \cite[(6.4.9)]{BT72}. To show $K_U = I_U$, it suffices to identify $U_{f, \alpha}$ with its counterpart in $I$, for all $\alpha \in \Phi^+ \smallsetminus \Phi_M^+$. Apply the description of $f$ at the end of \cite[(7.2.6)]{BT72} with data $(x, E)$, where $x$ is a vertex of the facet $\Omega$, and $(x, E)$ determines $\Omega$. This is to be compared with $(x, D)$, where $D$ is a vectorial chamber such that $(x, D)$ determines the standard chamber associated with $I$. In this comparison, only the values $f(\alpha)$ with $\alpha|_{A_M} = 1$ (i.e.\ $\alpha \in \Phi_M$) can make difference. This yields the required identity. The case of $K_{\overline{U}} = I_{\overline{U}}$ is similar.
\end{proof}

\begin{proposition}\label{prop:IJ-Iwahori-decomp}
	The premises in Proposition \ref{prop:Iwahori-decomp} are fulfilled when
	\begin{itemize}
		\item $K = I$,
		\item $K = J$ (if $n^\flat \geq 1$), or
		\item $K = J_m$ (if $1 \leq m \leq n^\flat$) 
	\end{itemize}
	
	In particular, Proposition \ref{prop:coinvariant-type} can be applied to these groups $K$, with the proviso that $i \neq 0$ when $K=J$, and $i \notin \{0, m\}$ when $K=J_m$. 
\end{proposition}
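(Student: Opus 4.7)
The plan is to identify each of $I$, $J$, $J_m$ as a parahoric subgroup $K_\Theta$ for an explicit proper $\Theta \subset \Delta_{\mathrm{aff}} = \{s_0, \ldots, s_n\}$, and then verify the hypothesis $\dot{t} \in M(F)$ of Proposition \ref{prop:Iwahori-decomp} by inspecting each simple affine reflection individually. Once these hypotheses are checked, everything in the statement follows mechanically.

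The identifications I would write down are: $I = K_\emptyset$ by definition of the standard Iwahori; $J = I \sqcup Is_0I = K_{\{s_0\}}$, which is essentially its defining description; and $J_m = K_{\{s_0, s_m\}}$, obtained by unwinding $J_m = \bigcap_{0 < j \leq n,\, j \neq m} K_j$ using the fact that $K_j = K_{\Delta_{\mathrm{aff}} \smallsetminus \{s_j\}}$ and that intersections of parahorics correspond to intersections of $\Theta$'s. Each resulting $\Theta$ is a proper subset of $\Delta_{\mathrm{aff}}$ under the implicit rank assumptions.

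The only real work is the condition $\dot{t} \in M(F)$. For $s_0$, one invokes the explicit description as the affine reflection along $1 - 2\epsilon_1$: a lift in $N_G(T)(F)$ is supported on the $(e_1, f_1)$-pair, hence lies in $\Sp(W^1)(F) \subset \Sp(W^\flat)(F) \subset M(F)$ whenever $n^\flat \geq 1$. For $s_m$ with $1 \leq m \leq n^\flat$, the reflection corresponds to a root of $\Sp(W^\flat)$, and a representative can be chosen inside $\Sp(W^\flat)(F) \subset M(F)$. I expect this to be the main point demanding care, because one has to align the indexing of the standard apartment (where the $\Sp(W^\flat)$-block sits at coordinates $1, \ldots, n^\flat$) with the Levi decomposition attached to $B^\leftarrow$ (where the $\GL$-blocks act on the ``top'' coordinates via the flag \eqref{eqn:Levi-flag-left}).

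Having verified the hypotheses, Proposition \ref{prop:Iwahori-decomp} delivers both the Iwahori decomposition $K = K_{\overline{U}} K_M K_U$ and the equality $K_U = I_U$, which is exactly the ``$K_U = I_U$'' hypothesis of Proposition \ref{prop:coinvariant-type}. The remaining hypothesis there, $K \subset K_i$, is automatic: $I \subset K_i$ for every $i$; $J = \bigcap_{j \geq 1} K_j \subset K_i$ for $i \geq 1$; and $J_m = \bigcap_{j \geq 1,\, j \neq m} K_j \subset K_i$ for $i \notin \{0, m\}$. These inclusions account exactly for the stated provisos on $i$, completing the reduction to the already-proved results.
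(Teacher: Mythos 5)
Your proof follows the paper's argument essentially line for line: identify each $K$ with a parahoric $K_\Theta$ for $\Theta = \emptyset$, $\{s_0\}$, $\{s_0, s_m\}$ respectively, verify the hypothesis $\dot{t} \in M(F)$ of Proposition \ref{prop:Iwahori-decomp} reflection-by-reflection using the explicit affine roots, and observe that the provisos on $i$ are exactly what ensure $K \subset K_i$. The only cosmetic difference is that you spell out the parahoric identification $J_m = K_{\{s_0, s_m\}}$ via $K_j = K_{\Delta_{\mathrm{aff}} \smallsetminus \{s_j\}}$ and intersections, which the paper leaves implicit.
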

\begin{proof}
	Consider the corresponding subset $\Theta \subset \Delta_{\mathrm{aff}}$. The case $K=I$ is the simplest one: it corresponds to $\Theta = \emptyset$.
	
	The case $K=J$ corresponds to $\Theta = \{s_0\}$. The element $s_0$ being the reflection along $2\epsilon_1 = 1$, it has a representative $\dot{s}_0$ in $M(F)$ as $n^\flat \geq 1$.
	
	The case $K=J_m$ corresponds to $\Theta = \{s_0, s_m\}$, and $\dot{s}_m$ can be taken in $M(F)$ as $m \leq n^\flat$.
	
	For the final assertion, observe that the provisos serve to ensure that $K \subset K_i$.
\end{proof}

\section{The Takeda--Wood isomorphism}\label{sec:TW}
\subsection{Hecke algebras: the metaplectic case}\label{sec:Hecke-Mp}
The following is a recapitulation of \cite[\S 2.1 and \S 2.3]{TW18}. Consider a symplectic $F$-vector space $W$ of dimension $2n$ as in \S\ref{sec:Sp}. Use the Haar measures on $\tilde{G} = \Mp(W)$ and $G(F) = \Sp(W)$ with $\mes(\tilde{I}) = \mes(I) = 1$. Define
\begin{equation*}
	H_\psi^+ := \mathcal{H}(\tilde{G} \sslash \tilde{I}, \tau_0).
\end{equation*}

For all $0 \leq i \leq n$, let $I_i = I \sqcup I \dot{s}_i I$ where $\dot{s}_i$ is any representative of $s_i$. Take any preimage $\tilde{s}_i$ of $\dot{s}_i \in G(F)$, and let $T_i \in H_\psi^+$ be the element characterized by $\Supp(T_i) = \tilde{I} \tilde{s}_i \tilde{I}$ and that, as an element of the subalgebra
\[ H_{\psi, i}^+ := \mathcal{H}(\tilde{I}_i \sslash \tilde{I}, \tau_0) \simeq \End_{\tilde{I}_i}\left(\Ind^{\tilde{I}_i}_{\tilde{I}} \check{\tau}_0 \right), \]
it is normalized as follows. Let
\[ \tau_{0, i} := \begin{cases}
	\tau_0, & i \neq 0 \\
	\tau_1^+, & i = 0,
\end{cases}\]
which is the subspace of $\omega_\psi$ generated $\tau_0$ acted upon by $\tilde{I}_i$. In \textit{loc.\ cit.} it is shown that
\[ \Ind^{\tilde{I}_i}_{\tilde{I}} \check{\tau}_0 = \check{\tau}_{0, i} \oplus \text{another non-isomorphic irreducible}. \]
Then $T_i$ acts as $(q, -1)$ (resp.\ $(1, -1)$) if $i \neq 0$ (resp.\ $i=0$).

Now consider $w \in \Omega_{\mathrm{aff}}$ with reduced expression $w = s_{i_1} \cdots s_{i_\ell}$. Define
\begin{equation*}
	T_w := T_{i_1} \cdots T_{i_\ell} \in H_\psi^+ .
\end{equation*}
It is shown in \cite[p.1110]{TW18} that $T_w$ is independent of reduced expressions of $w$, and $\Supp(T_w) = \tilde{I}\tilde{w}\tilde{I}$ where $\tilde{w} \in \tilde{G}$ is any representative of $w$. Furthermore, $H_\psi^+$ is generated by $T_0, \ldots, T_n$ with braid relations of affine type $\mathrm{C}_n$, together with the quadratic relations
\begin{align*}
	(T_0 + 1)(T_0 - 1) & = 0, \\
	(T_i + 1)(T_i - q) & = 0, \quad 1 \leq i \leq n.
\end{align*}

The extreme case $n=0$ is incorporated by setting $H^+_{\psi} = \CC$. We denote the length function on $\Omega_{\mathrm{aff}}$ by $\ell$.

Now we turn to the structure of
\begin{equation*}
	H_\psi^- := \mathcal{H}(\tilde{G} \sslash \tilde{J}, \tau_1^-).
\end{equation*}
Recall that $J = I \sqcup I s_0 I$ and here the Haar measures satisfy $\mes(\tilde{J}) = \mes(J) = 1$.

By \cite[Lemma 2.3 + Theorem 2.4]{TW18}, $H_\psi^-$ is supported on
\[ \tilde{J} \Omega'_{\mathrm{aff}} \tilde{J}, \]
where $\Omega'_{\mathrm{aff}} := \lrangle{s'_1, \ldots, s'_n}$ with
\[ s'_i := \begin{cases}
	s_i, & i > 1 \\
	s_1 s_0 s_1, & i = 1.
\end{cases}\]

Note that $s_1 s_0 s_1$ is the affine reflection with respect to $-2\epsilon_2 + 1$. This gives a Coxeter group of affine type $\mathrm{C}_{n-1}$ acting on the affine $(n-1)$-space $\epsilon_1 = \frac{1}{2}$.

For each $1 \leq i \leq n$, we construct $T'_i \in H_\psi^-$ with $\Supp T'_i = \tilde{J} \tilde{s}'_i \tilde{J}$ where $\tilde{s}'_i$ is any representative of $s'_i$ in $\tilde{G}$, as follows. As in \textit{loc.\ cit.}, we put
\[ \widetilde{J_i} := \lrangle{\tilde{J}, \tilde{J} \tilde{s}'_i \tilde{J}}; \]
set $H_{\psi, i}^- := \mathcal{H}(\widetilde{J_i} \sslash \tilde{J}, \tau_1^-) \simeq \End_{\widetilde{J_i}}\left( \Ind_{\tilde{J}}^{\widetilde{J_i}}(\check{\tau}_1^-) \right)$, which is a subalgebra of $H_\psi^-$. It is shown to be two-dimensional. Set
\[ \tau_{1, i} := \begin{cases}
	\tau_1^-, & i \geq 2 \\
	\tau_2^-, & i = 1;
\end{cases}\]
this is also the subspace generated by $\tau_1^-$ acted upon by $\widetilde{J_i}$. We have
\[ \Ind_{\tilde{J}}^{\widetilde{J_i}}(\check{\tau}_1^-) = \check{\tau}_{1, i} \oplus \text{another non-isomorphic irreducible}. \]
Now $T'_i$ is normalized to act as $(q, -1)$ (resp.\ $(q^2, -1)$) if $i \neq 1$ (resp.\ $i=1$).

From this, one defines $T'_w \in H_\psi^-$ for every $w \in \Omega'_{\mathrm{aff}}$ using reduced expressions. Again, it is shown in \cite[p.1115]{TW18} that this is independent of reduced expressions of $w$, and $\Supp(T'_w) = \tilde{J}\tilde{w}\tilde{J}$ where $\tilde{w} \in \tilde{G}$ is any representative of $w$. Furthermore, $H_\psi^-$ is generated by $T'_1, \ldots, T'_n$ with braid relations of affine type $\mathrm{C}_{n-1}$, together with the quadratic relations
\begin{align*}
	(T'_1 + 1)(T'_1 - q^2) & = 0, \\
	(T'_i + 1)(T'_i - q) & = 0, \quad 2 \leq i \leq n.
\end{align*}

We also record the fact from \textit{loc.\ cit.} that
\begin{equation*}
	(J\dot{w}J: J) = q^{\ell'(w)}, \quad w \in \Omega'_{\mathrm{aff}}
\end{equation*}
where $\dot{w}$ is any representative of $w$ and $\ell': \Omega'_{\mathrm{aff}} \to \Z_{\geq 0}$ is the weighted length function with $\ell'(s'_1) = 3$ and $\ell'(s_i) = 1$ for $i \geq 2$.

\subsection{Hecke algebras: the orthogonal side}\label{sec:Hecke-O}
The following is a recapitulation of \cite[\S 2.2 and \S 2.4]{TW18}.

\begin{definition}\label{def:V}
	Denote by $V^\pm$ the quadratic $F$-vector spaces of dimension $2n+1$, discriminant $1$ and Hasse invariant equal to $\pm 1$. Define
	\begin{equation*}
		G^{\pm} := \SO(V^\pm), \quad I^\pm := \text{the standard Iwahori subgroup of}\; G^\pm(F).
	\end{equation*}

	Thus we have the Iwahori--Hecke algebras
	\begin{equation*}
		H^\pm := \mathcal{H}(G^\pm \sslash I^\pm, \mathbf{1}).
	\end{equation*}
\end{definition}

To explicate the structure of $H^\pm$, let us begin with the $+$ case. Denote by $T^+$ the standard split maximal torus of $G^+$. Write $X^*(T^+) = \bigoplus_{i=1}^n \Z\epsilon_i$ in the usual way, so that the standard positive roots are $\epsilon_i - \epsilon_j$ and $\epsilon_i$ where $1 \leq i < j \leq n$. The Coxeter diagram for the Bruhat--Tits building of $G^+$ is:
\begin{equation}\label{eqn:Gplus-Dynkin}
	\dynkin[extended, Coxeter, labels={0, 1, 2, ,,, n}, edge length=1cm] B{}
\end{equation}

Accordingly, the simple affine reflections $s^\dagger_0, s_1, \ldots, s_n$ generate the affine Weyl group $\Omega^+_{\mathrm{aff}}$. The $s^\dagger_0$ is associated with the affine root $1 - \epsilon_1 - \epsilon_2$, and $s_1, \ldots, s_n$ are the linear ones. The standard alcove in the standard apartment $X_*(T) \otimes \R$ is given by
\[ 1 - \epsilon_2 > \epsilon_1 > \cdots > \epsilon_n > 0. \]

Following \cite{GS12, TW18}, we define $s_0$ to be the affine reflection with respect to $2\epsilon_1 = 1$, so that
\begin{gather*}
	s^\dagger_0 = s_0 s_1 s_0, \\
	\Omega^+ := \lrangle{s_0, s_1, \ldots, s_n} \supset \Omega^+_{\mathrm{aff}} \quad \text{is the extended affine Weyl group for}\; G^+.
\end{gather*}
Extend the length function on $\Omega^+_{\mathrm{aff}}$ to $\ell_0: \Omega^+ \to \Z_{\geq 0}$ by
\[ \ell_0(s_i) = \begin{cases}
	1, & i \neq 0 \\
	0, & i = 0.
\end{cases}\]

For each $w \in \Omega^+$, we fix an arbitrary representative $\dot{w}$. Then we have
\[ (I^+ \dot{w} I^+ : I^+) = q^{\ell_0(w)}. \]

Summing up, $(\Omega^+, \{s_0, \ldots, s_n\})$ is isomorphic to the affine Weyl group $\Omega_{\mathrm{aff}}$ of $G$, by matching the reflections $s_0, \ldots, s_n$. Their actions on $\R^n$ are matched as well, although the length functions differ.

Define the following elements of $H^+$:
\begin{align*}
	T^+_w & := \mathbf{1}_{I^+ \dot{w} I^+}, \quad w \in \Omega^+ , \\
	T^+_i & := T^+_{s_i}, \quad 0 \leq i \leq n.
\end{align*}

By \textit{loc.\ cit.}, the  algebra $H^+$ is generated by $T^+_0, \ldots, T^+_n$ with braid relations of affine type $\mathrm{C}_n$, together with quadratic relations
\begin{align*}
	(T^+_0 + 1)(T^+_0 - 1) & = 0, \\
	(T^+_i + 1)(T^+_i - q) & = 0, \quad 1 \leq i \leq n.
\end{align*}

The extreme case $n=0$ is incorporated by setting $H^+ = \CC$.

We now turn to the case of $G^-$. The Coxeter diagram is
\begin{equation}\label{eqn:Gminus-Dynkin}
	\dynkin[extended, Coxeter, edge length=1cm] C{} \quad n\;\text{vertices.}
\end{equation}

Let $\Omega^- := \Omega_{\mathrm{aff}}^-$ be its affine Weyl group, generated by the simple affine reflections labeled by $r_1, \ldots, r_n$ corresponding to the vertices. Define $\ell_2: \Omega^- \to \Z_{\geq 0}$ as the weighted length function with
\[ \ell_2(r_i) := \begin{cases}
	2, & i = 1 \\
	1, & 1 < i \leq n.
\end{cases}\]

For each $w \in \Omega^-$, we fix an arbitrary representative $\dot{w}$. It is shown in \cite[\S 13]{GS12} that
\[ (I^- \dot{w} I^- : I^-) = q^{\ell_2(w)}. \]

As in the $+$ case, $\Omega^-$ is isomorphic to $\Omega'_{\mathrm{aff}}$ by sending $r_i$ to $s'_i$, for $i = 1, \ldots, n$.

Define the following elements of $H^-$:
\begin{align*}
	T_w^- & := \mathbf{1}_{I^- \dot{w} I^-}, \quad w \in \Omega^- , \\
	T_i^- & := T_{r_i}^-, \quad 1 \leq i \leq n.
\end{align*}

By \cite[\S 2.3]{TW18}, the algebra $H^-$ is generated by $T^-_1, \ldots, T^-_n$ with braid relations of affine type $\mathrm{C}_{n-1}$, together with quadratic relations
\begin{align*}
	(T^-_1 + 1)(T^-_1 - q^2) & = 0, \\
	(T^-_i + 1)(T^-_i - q) & = 0, \quad 2 \leq i \leq n.
\end{align*}

\subsection{The isomorphism of Takeda--Wood}\label{sec:TW-isom}
The main result of \cite{TW18} is recorded below.

\begin{theorem}\label{prop:TW-isom}
	There are isomorphisms of Hilbert algebras
	\[ \mathrm{TW}: H^{\pm} \rightiso H_\psi^{\pm}. \]
	It maps $T_i^+$ (resp.\ $T_i^-$) to $T_i$ (resp.\ $T'_i$) for all $0 \leq i \leq n$ (resp.\ $1 \leq i \leq n$).
\end{theorem}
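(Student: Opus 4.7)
The plan is to exploit the explicit presentations of $H^\pm$ and $H_\psi^\pm$ assembled in Sections \ref{sec:Hecke-Mp} and \ref{sec:Hecke-O}. The key observation is that both $H^+$ and $H_\psi^+$ are presented by generators $T_0^+, \ldots, T_n^+$ (resp. $T_0, \ldots, T_n$) satisfying the same affine braid relations of type $\mathrm{C}_n$ and the same quadratic relations; similarly in the $-$ case one has generators $T_1^-, \ldots, T_n^-$ and $T'_1, \ldots, T'_n$ satisfying matching braid relations of affine type $\mathrm{C}_{n-1}$ and matching quadratic relations. Since the Coxeter systems $(\Omega^+, \{s_0, \ldots, s_n\})$ and $(\Omega_{\mathrm{aff}}, \{s_0, \ldots, s_n\})$, and likewise $(\Omega^-, \{r_1, \ldots, r_n\})$ and $(\Omega'_{\mathrm{aff}}, \{s'_1, \ldots, s'_n\})$, have already been matched by $s_i \leftrightarrow s_i$ and $r_i \leftrightarrow s'_i$ in Section \ref{sec:Hecke-O}, the identification of generator sets is canonical, and $\mathrm{TW}$ is forced: one sends each generator on the orthogonal side to the indicated element on the metaplectic side, and the quadratic and braid relations are the only obstructions to well-definedness.

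First I would verify that $\mathrm{TW}$ is a well-defined algebra homomorphism. The quadratic relations on the metaplectic side are built into the definitions of $T_i$ and $T'_i$: each is normalized via its action on the two-dimensional subalgebra $H_{\psi, i}^\pm$ to have exactly the prescribed eigenvalues. For the braid relations, Matsumoto's lemma reduces everything to the claim that $T_w \in H_\psi^+$ (resp. $T'_w \in H_\psi^-$) is independent of the reduced expression for $w$, already cited from \cite[pp.~1110, 1115]{TW18}; equivalently, each braid relation reduces to a rank-two computation inside the subgroup of $\tilde{G}$ generated by two adjacent parahoric overgroups of $\tilde{I}$ (resp. $\tilde{J}$). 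The ``corner'' braid relations involving $T_0$, and in the $-$ case the non-standard generator $T'_1 = T_{s_1 s_0 s_1}$ with eigenvalues $(q^2, -1)$, are the technical heart and are settled case-by-case in \cite{TW18}. Granted this, the bijectivity of $\mathrm{TW}$ is immediate: on each side the support decomposition $\tilde{G} = \bigsqcup_w \tilde{I}\tilde{w}\tilde{I}$ (resp. $\bigsqcup_w \tilde{J}\tilde{w}\tilde{J}$) together with its orthogonal counterpart $G^\pm = \bigsqcup_w I^\pm \dot{w} I^\pm$ furnishes $\CC$-bases $\{T_w\}$, $\{T'_w\}$, $\{T_w^\pm\}$ indexed by the same Coxeter group, and $\mathrm{TW}$ permutes them via the identification recalled above.

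It remains to upgrade this to an isomorphism of Hilbert algebras. The involution $*$ fixes each generator on both sides: the double cosets $I^\pm \dot{s}_i I^\pm$, $\tilde{I}\tilde{s}_i \tilde{I}$, and $\tilde{J}\tilde{s}'_i \tilde{J}$ are each stable under inversion, so a direct check on supports gives $T_i^* = T_i$ and its analogues. Hence $\mathrm{TW}$ commutes with $*$. For the inner products, $*$-invariance and linearity reduce the comparison to values $(T_w | T_{w'})$ on basis elements; using the length formulas $(I^+ \dot{w} I^+ : I^+) = q^{\ell_0(w)}$, $(\tilde{I}\tilde{w}\tilde{I} : \tilde{I}) = q^{\ell(w)}$, $(I^- \dot{w} I^- : I^-) = q^{\ell_2(w)}$, $(\tilde{J}\tilde{w}\tilde{J} : \tilde{J}) = q^{\ell'(w)}$, together with the normalizing factor $(\dim \tau)^{-1}$ in the definition of the inner product, one obtains on both sides $(T_w | T_{w'}) = \delta_{w, w'} q^{\ell(w)}$ in the $+$ case and $\delta_{w, w'} q^{\ell'(w)}$ in the $-$ case, which agree under $\mathrm{TW}$. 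The main obstacle throughout is not this bookkeeping but the rank-two braid verifications for the exceptional end generators, where the content of \cite{TW18} is concentrated; our proof is essentially an assembly of those results.
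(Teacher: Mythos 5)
Your overall strategy — matching the two Coxeter presentations, generator by generator, using the braid and quadratic relations established in \cite{TW18} — is exactly the approach the paper is summarizing (the theorem is cited from \cite{TW18}, which is where the braid verifications for the end generators live). The algebra-isomorphism portion of your argument is sound.

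However, the Hilbert-algebra part contains a concrete error. You claim that on the metaplectic side $(T_w \mid T_{w'}) = \delta_{w,w'}\,q^{\ell(w)}$ (resp.\ $\delta_{w,w'}\,q^{\ell'(w)}$ in the $-$ case), apparently by reading off the index $(\tilde{I}\tilde{w}\tilde{I} : \tilde{I}) = q^{\ell(w)}$. This cannot be right: on the orthogonal side the analogous computation gives $(T_w^+ \mid T_w^+) = (I^+ \dot w I^+ : I^+) = q^{\ell_0(w)}$, and $\ell \neq \ell_0$ — indeed $\ell(s_0) = 1$ while $\ell_0(s_0) = 0$. If your formula held, the inner products would \emph{disagree} under $\mathrm{TW}$, contradicting the theorem. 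The subtlety is that for a non-trivial type $\tau$, the value $T_w(g)$ is an honest operator, not a scalar, so the inner product is not simply $(\dim\tau)^{-1}\dim\tau$ times the coset index. The correct way to finish is algebraic rather than measure-theoretic: the functional $\tau(h) := (\dim\tau)^{-1}\Tr(h(1))$ picks out the coefficient of $T_e$ in $h = \sum a_w T_w$ on \emph{both} sides, so $(h_1\mid h_2) = \tau(h_1^* \star h_2)$ is entirely determined by the $*$-algebra presentation. Since you have already shown $\mathrm{TW}$ is a $*$-algebra isomorphism matching the generators (hence also the structure constants and the trace), it automatically preserves the inner product, and one computes $(T_w\mid T_w) = q^{\ell_0(w)}$ (resp.\ $q^{\ell_2(w)}$) on both sides. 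As a sanity check, $(T_0\mid T_0) = \tau(T_0^2) = \tau(1) = 1 = q^{\ell_0(s_0)}$, not $q = q^{\ell(s_0)}$. You should also note that your claim $T_i^* = T_i$ needs more than a support check for $i=0$ (where the quadratic parameter is $1$ and the quadratic relation alone leaves a sign ambiguity); it follows from the eigenvalue normalization together with unitarity of $\check\tau_{0,i}$.
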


Note that the same symbol $\mathrm{TW}$ is used for both the $\pm$ cases.

In \textit{loc.\ cit.}, the isomorphisms are obtained by matching the braid and quadratic relations for both sides, see \S\S\ref{sec:Hecke-Mp}---\ref{sec:Hecke-O}. As a byproduct, we infer that for each $w \in \Omega_{\mathrm{aff}} \simeq \Omega^+$ (resp.\ $w \in \Omega'_{\mathrm{aff}} \simeq \Omega^-$), it maps $T_w^+$ (resp.\ $T_w^-$) to $T_w$ (resp.\ $T'_w$).

\begin{corollary}\label{prop:TW-support}
	For each $w \in \Omega_{\mathrm{aff}}$ (resp.\ $w \in \Omega'_{\mathrm{aff}}$), up to $\CC^{\times}$ there exists a unique element $T \in H_\psi^+$ (resp.\ $T' \in H_\psi^-$) with $\Supp(T) = \tilde{I}\tilde{w}\tilde{I}$ (resp.\ $\Supp(T') = \tilde{J}\tilde{w}\tilde{J}$), where $\tilde{w} \in \tilde{G}$ is any representative of $w$.
	
	Moreover, if $T$ (resp.\ $T'$) is such an element, then it is invertible.
\end{corollary}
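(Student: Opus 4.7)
The plan is to deduce both existence and uniqueness from the structural results already established, transporting the question across $\mathrm{TW}$ whenever it helps.

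For existence, there is nothing to do in either case: the element $T_w \in H_\psi^+$ (resp.\ $T'_w \in H_\psi^-$) was constructed in \S\ref{sec:Hecke-Mp} precisely to have support $\tilde{I}\tilde{w}\tilde{I}$ (resp.\ $\tilde{J}\tilde{w}\tilde{J}$).

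For uniqueness, the first step is to observe that $\{T^+_w : w \in \Omega^+\}$ is a $\CC$-basis of $H^+$ by the standard structure theory of Iwahori--Hecke algebras (distinct $T^+_w$ have disjoint supports, and they span because any bi-$I^+$-invariant compactly supported function decomposes as a finite sum over $I^+$-double cosets). Applying the isomorphism $\mathrm{TW}$ of Theorem \ref{prop:TW-isom}, which sends $T^+_w$ to $T_w$ as noted right after that theorem, we conclude that $\{T_w : w \in \Omega_{\mathrm{aff}}\}$ is a $\CC$-basis of $H_\psi^+$. Now suppose $T \in H_\psi^+$ satisfies $\Supp(T) \subset \tilde{I}\tilde{w}_0\tilde{I}$ and write $T = \sum_{w} c_w T_w$; since the sets $\tilde{I}\tilde{w}\tilde{I} = \pr^{-1}(I\dot{w}I)$ for distinct $w \in \Omega_{\mathrm{aff}}$ are pairwise disjoint, only the term $c_{w_0} T_{w_0}$ can survive. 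The $-$ case is entirely parallel, using $\{T^-_w : w \in \Omega^-\}$ as a basis of $H^-$ and its image $\{T'_w : w \in \Omega'_{\mathrm{aff}}\}$ in $H_\psi^-$, together with the disjointness of the double cosets $\tilde{J}\tilde{w}\tilde{J}$.

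For invertibility, I would work on the $G^\pm$-side. Each generator $T^+_i$ ($0 \leq i \leq n$) satisfies a quadratic relation whose constant term is nonzero (either $-1$ or $-q$), so $T^+_i$ is invertible in $H^+$. Picking a reduced expression $w = s_{i_1} \cdots s_{i_\ell}$, one has $T^+_w = T^+_{i_1} \cdots T^+_{i_\ell}$, which is therefore invertible in $H^+$; transporting by $\mathrm{TW}$ shows $T_w$ is invertible in $H_\psi^+$, and since any other element with support $\tilde{I}\tilde{w}\tilde{I}$ is a nonzero scalar multiple of $T_w$, it is invertible too. The same argument works verbatim for $T'_w$ using the generators $T^-_i$ and their quadratic relations.

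There is no serious obstacle: once the theorem of Takeda--Wood is invoked, the corollary reduces to standard bookkeeping about bases of Iwahori--Hecke algebras and the quadratic relations satisfied by their generators.
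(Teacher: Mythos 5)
Your proposal is correct and follows the paper's own route: both reduce to the analogous well-known facts about the Iwahori--Hecke algebras $H^{\pm}$ (that $\{T^{\pm}_w\}$ forms a basis and each $T^{\pm}_w$ is invertible via its quadratic relation and reduced expressions), and then transport them across $\mathrm{TW}$ using that $T^+_w \mapsto T_w$, $T^-_w \mapsto T'_w$ and that double cosets are pairwise disjoint. The only difference is that you spell out the standard Iwahori--Hecke bookkeeping which the paper compresses into ``the assertions are known to hold for $H^{\pm}$.''
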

\begin{proof}
	The assertions are known to hold for the Iwahori--Hecke algebras $H^{\pm}$, Moreover, the elements $T_w^{\pm}$ with support $I^{\pm}\dot{w}I^{\pm}$ form a basis of $H^\pm$ as a vector space. As $\mathrm{TW}$ maps $T_w^+$ (resp.\ $T_w^-$) to $T_w$ (resp.\ $T'_w$) and $\Supp(T_w) = \tilde{I}\tilde{w}\tilde{I}$ (resp.\ $\Supp(T'_w) = \tilde{J}\tilde{w}\tilde{J}$), the assertions carry over to $H_\psi^{\pm}$.
\end{proof}

We continue to summarize some key notions and results from \cite{TW18}. The only (minor) difference is that $\tilde{G}$ is an eightfold covering of $G(F)$ here.

\begin{definition}
	Let $\mathcal{G}_\psi^{\pm}$ be the Bernstein blocks of $\tilde{G}\dcate{Mod}$ containing $\omega_\psi^{\pm}$. Let $\mathcal{G}^\pm$ be the Bernstein blocks of $G^{\pm}$ containing the trivial representation $\mathbf{1}_{G^{\pm}(F)}$, i.e.\ the Iwahori-spherical blocks.
\end{definition}

Therefore, $\mathcal{G}^{\pm}$ is equivalent to $H^{\pm}\dcate{Mod}$ via the functor $\sigma \mapsto \sigma^{I^{\pm}}$, i.e.\ $\mathbf{M}_\tau$ in Definition \ref{def:master-functor} with $(K, \tau) = (I^{\pm}, \mathbf{1})$.

Inside $G$, we have $P^1 = M^1 U^1 \supset B^{\leftarrow}$ and $T^1 \subset M^1$ as in Definition \ref{def:P1}. The splitting via Schrödinger model in \eqref{eqn:Levi-splitting} yields
\begin{gather*}
	\tilde{T} \simeq (F^{\times})^n \times \bmu_8, \quad
	\widetilde{T^1} \simeq (F^{\times})^{n-1} \times \bmu_8.
\end{gather*}

Using the splittings above, the genuine characters of $\widetilde{T}$ (resp.\ $\widetilde{T^1}$) will be identified with the characters of $T(F)$ (resp.\ $T^1(F)$). We say such a genuine character is unramified if the corresponding character of $T(F)$ (resp.\ $T^1(F)$) is.

Let $\chi$ be an unramified character of $T(F)$ (resp.\ $T^1(F)$). Denote by $\omega_\psi^{\flat, -}$ the odd Weil representation of $\Mp(W^1)$, and consider the normalized parabolic inductions
\[ i_{\tilde{B}^{\leftarrow}}(\chi), \quad i_{\tilde{P}^1}\left(\omega_\psi^{\flat, -} \boxtimes\chi\right). \]

%
%

\begin{theorem}\label{prop:TW-components}
	The irreducibles in $\mathcal{G}_\psi^+$ (resp.\ $\mathcal{G}_\psi^-$) are precisely the irreducible subrepresentations of $i_{\tilde{B}^{\leftarrow}}(\chi)$ (resp.\ $i_{\tilde{P}^1}(\omega_\psi^{\flat, -} \boxtimes\chi)$) where $\chi$ is some unramified genuine character of $\widetilde{T}$ (resp.\ $\widetilde{T^1}$).
	
	Moreover, by taking the functor $\mathbf{M}_{\tau_0}$ (resp.\ $\mathbf{M}_{\tau_1^-}$) from Definition \ref{def:master-functor}, where we take $K = \tilde{I}$ (resp.\ $K = \tilde{J}$), we obtain equivalences
	\[ \mathcal{G}_\psi^{\pm} \simeq H_\psi^{\pm}\dcate{Mod}. \]
	
	As a consequence, $\mathrm{TW}^*: H_{\psi}^{\pm}\dcate{Mod} \rightiso H^{\pm}\dcate{Mod}$ give rise to equivalences $\mathcal{G}_\psi^\pm \rightiso \mathcal{G}^\pm$.
\end{theorem}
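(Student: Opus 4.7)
The strategy is to assemble the statement from the type-theoretic constructions of \cite{TW18} combined with standard Bernstein--Bushnell--Kutzko machinery.

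First I would pin down the cuspidal supports. Using the Schrödinger-model formulas \eqref{eqn:root-action} and \eqref{eqn:Siegel-action}, the Jacquet module $r_{\tilde{B}^{\leftarrow}}(\omega_\psi^+)$ can be computed directly and turns out to be a sum of unramified genuine characters of $\tilde{T}$; hence $\omega_\psi^+$ embeds in $i_{\tilde{B}^{\leftarrow}}(\chi_0)$ for some unramified genuine $\chi_0$, and the Bernstein block $\mathcal{G}_\psi^+$ has inertial class $[\tilde{T}, \chi_0]_{\tilde{G}}$. For the $-$ case, a parallel computation of $r_{\tilde{P}^1}(\omega_\psi^-)$ produces $\omega_\psi^{\flat,-} \boxtimes \chi_0^1$ as a constituent for some unramified genuine character $\chi_0^1$ of $\widetilde{T^1}$, where $\omega_\psi^{\flat,-}$ is the odd Weil representation of $\Mp(W^1) \simeq \widetilde{\SL}(2,F)$; the latter is supercuspidal by classical arguments (cf.\ \cite{MVW87}). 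Hence $\mathcal{G}_\psi^-$ has inertial class $[\tilde{M}^1, \omega_\psi^{\flat,-} \boxtimes \chi_0^1]_{\tilde{G}}$. The first assertion of the theorem is then just the standard Bernstein description of irreducibles in a block as the irreducible subrepresentations of parabolic inductions from the inducing data twisted by unramified characters.

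Second, I would verify that $(\tilde{I}, \tau_0)$ and $(\tilde{J}, \tau_1^-)$ are Bushnell--Kutzko types for $\mathcal{G}_\psi^+$ and $\mathcal{G}_\psi^-$ respectively. One direction is immediate: by construction $\tau_0 \hookrightarrow \omega_\psi^+|_{\tilde{I}}$ and $\tau_1^- \hookrightarrow \omega_\psi^-|_{\tilde{J}}$, so any irreducible constituent of $\omega_\psi^\pm$ indeed contains the putative type. For the converse, the explicit presentations of $H_\psi^\pm$ in \S\ref{sec:Hecke-Mp} and the support control of Corollary \ref{prop:TW-support}, together with the parametrization of simple $H^\pm$-modules on the orthogonal side via the classical Iwahori-spherical theory, force every irreducible $\pi$ of $\tilde{G}$ with $\mathbf{M}_\tau(\pi) \neq 0$ to have cuspidal support in the inertial class identified in the first step. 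Once the type property is in hand, \cite[(4.3)]{BK98} supplies the equivalences $\mathbf{M}_{\tau_0} \colon \mathcal{G}_\psi^+ \rightiso H_\psi^+\dcate{Mod}$ and $\mathbf{M}_{\tau_1^-} \colon \mathcal{G}_\psi^- \rightiso H_\psi^-\dcate{Mod}$.

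The final consequence $\mathcal{G}_\psi^\pm \rightiso \mathcal{G}^\pm$ is then obtained formally by composing the preceding equivalence with the change-of-rings $\mathrm{TW}^*$ from Theorem \ref{prop:TW-isom} and with the classical Borel equivalence $\mathcal{G}^\pm \simeq H^\pm\dcate{Mod}$, $\sigma \mapsto \sigma^{I^\pm}$. The main obstacle lies in the second step, namely verifying the type property for $(\tilde{J}, \tau_1^-)$ in all residual characteristics, including $p = 2$; this is the technical heart of \cite{TW18}, and rather than redoing it I would simply invoke their explicit calculations, since the Hecke-algebra presentation combined with the support control of Corollary \ref{prop:TW-support} is already tight enough to rigidify the inertial equivalence class on the metaplectic side.
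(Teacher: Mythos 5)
Your proposal ultimately lands on the same approach as the paper: both assertions of the theorem are, in substance, a citation to \cite{TW18} (Lemmas 3.2, 3.5 for the cuspidal-support description, Theorems 3.4, 3.6 for the type property and category equivalence), with minor adaptations for the reverse-standard parabolic $B^{\leftarrow}$ and the eightfold covering. The extra sketch you supply of the \cite{TW18} argument is consistent with what actually happens there and is therefore fine in spirit.

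One small slip is worth correcting. You write that ``one direction is immediate'' because $\tau_0 \hookrightarrow \omega_\psi^+|_{\tilde{I}}$ (resp.\ $\tau_1^- \hookrightarrow \omega_\psi^-|_{\tilde{J}}$), so every irreducible constituent of $\omega_\psi^\pm$ contains the type. This is trivially true (as $\omega_\psi^\pm$ is irreducible), but it is \emph{not} the relevant half of the type property. Being an $\mathfrak{s}$-type in the sense of \cite[\S 4]{BK98} requires (a) that \emph{every} irreducible object in the block $\mathcal{G}_\psi^\pm$ — not just $\omega_\psi^\pm$ — contain $\tau$, and (b) that every irreducible containing $\tau$ lie in $\mathcal{G}_\psi^\pm$. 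Neither of these is immediate; both require the Jacquet-module calculations of \cite[\S 3]{TW18} (equivalently, the fact that $e_{\tau_0}$ and $e_{\tau_1^-}$ are \emph{special} idempotents in the sense of \cite[Definition 3.11]{BK98}, which the present paper invokes only later, in the proof of Theorem \ref{prop:intertwining-relation}). Since you defer to \cite{TW18} for the hard direction anyway, this does not create a hole in your argument, but the phrase ``one direction is immediate'' misidentifies where the easy and hard parts sit. Also, be aware that invoking Corollary \ref{prop:TW-support} here is legitimate but slightly unusual: it rests only on the algebra isomorphism of Theorem \ref{prop:TW-isom}, which \cite{TW18} prove by matching generators and relations, independently of the category equivalence, so no circularity arises.
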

\begin{proof}
	The first part is \cite[Lemma 3.2, 3.5]{TW18}, modulo two minor differences.
	\begin{itemize}
		\item We induce from $B^{\leftarrow}$ and $P^1$ instead of the standard ones.
		\item We use eightfold coverings, so that the inducing data $\chi$ is easier to describe.
	\end{itemize}
	
	The second part is \cite[Theorem 3.4, 3.6]{TW18}.
\end{proof}

There are counterparts for Levi subgroups for all the aforementioned Hecke algebras. For the metaplectic side, consider a covering of the form
\[ \tilde{M} = \Mp(W^\flat) \times \prod_{i=1}^r \GL(n_i, F), \]
where $W^\flat$ is a symplectic $F$-vector space of dimension $2n^\flat$, possibly zero.

Define the algebras $H_\psi^{\pm, \flat}$ for $\Mp(W^\flat)$. Note that $H_\psi^{-, \flat}$ makes sense only when $n^\flat \geq 1$. For each $1 \leq i \leq r$, let $H_i$ denote the standard Iwahori--Hecke algebra for $\GL(n_i, F)$. We then define the algebras
\begin{equation*}\begin{aligned}
	H_\psi^{\tilde{M}, +} & := H_\psi^{+, \flat} \otimes \bigotimes_{i=1}^r H_i , \\
	H_\psi^{\tilde{M}, -} & := H_\psi^{-, \flat} \otimes \bigotimes_{i=1}^r H_i \quad \text{if}\; n^\flat \geq 1.
\end{aligned}\end{equation*}

For the orthogonal side, let the quadratic $F$-vector space $V^{\pm, \flat}$ be as in Definition \ref{def:V}, but now with dimension $2n^\flat + 1$. We have the algebra $H^{\pm, \flat}$ for $\SO(V^{\pm, \flat})$; note that $H^{-, \flat}$ makes sense only when $n^\flat \geq 1$. Consider
\[ M^{\pm} = \SO(V^{\pm, \flat}) \times \prod_{i=1}^r \GL(n_i, F). \]
Its standard Iwahori--Hecke algebra is simply
\begin{equation*}\begin{aligned}
	H^{M^+} & := H^{+, \flat} \otimes \bigotimes_{i=1}^r H_i, \\
	H^{M^-} & := H^{-, \flat} \otimes \bigotimes_{i=1}^r H_i, \quad \text{if}\; n^\flat \geq 1.
\end{aligned}\end{equation*}

They have natural structures of Hilbert algebras, as each of $H_\psi^{\pm}$, $H^{\pm}$ and $H_i$ does.

\begin{theorem}
	We have isomorphisms of Hilbert algebras
	\[ \mathrm{TW}^{\tilde{M}} := \mathrm{TW}^\flat \otimes \bigotimes_{i=1}^r \identity_{H_i} : H^{M^\pm} \rightiso H_\psi^{\tilde{M}, \pm} \]
	by assuming $n^\flat \geq 1$ in the $-$ case. Here $\mathrm{TW}^\flat$ means the Takeda--Wood isomorphism from $H_\psi^{\pm, \flat}$ to $H^{\pm, \flat}$.
\end{theorem}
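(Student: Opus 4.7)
The plan is to reduce the statement to Theorem \ref{prop:TW-isom} by exploiting the tensor product structure built into the definitions of $H_\psi^{\tilde{M}, \pm}$ and $H^{M^\pm}$. First I would invoke Theorem \ref{prop:TW-isom} with $n$ replaced by $n^\flat$, obtaining the Hilbert algebra isomorphism $\mathrm{TW}^\flat: H^{\pm, \flat} \rightiso H_\psi^{\pm, \flat}$. In the $+$ case with $n^\flat = 0$, both algebras are $\CC$ by the extreme-case convention recorded in \S\ref{sec:Hecke-Mp} and \S\ref{sec:Hecke-O}, and $\mathrm{TW}^\flat$ is the identity; in the $-$ case, the standing hypothesis $n^\flat \geq 1$ is precisely what is needed to ensure $H_\psi^{-, \flat}$ is defined. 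On each $\GL(n_i, F)$ factor, the identity of $H_i$ is trivially a Hilbert algebra automorphism.

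Next I would observe that the displayed definitions place $H_\psi^{\tilde{M}, \pm}$ and $H^{M^\pm}$ as literal tensor products over $\CC$ of the symplectic (respectively orthogonal) factor with the standard Iwahori--Hecke algebras $H_i$. Their Hilbert algebra structures arise factorwise from Haar measures normalized so that the relevant parahoric subgroups on each factor have volume one; this is consistent with the product Haar measure on $\tilde{M}$ and $M^\pm$ coming from $\tilde{M} = \Mp(W^\flat) \times \prod_i \GL(n_i, F)$ and $M^\pm = \SO(V^{\pm, \flat}) \times \prod_i \GL(n_i, F)$. Hence the definition of $\mathrm{TW}^{\tilde{M}}$ as a tensor product of algebra isomorphisms already yields an isomorphism of the underlying $\CC$-algebras.

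The only remaining point is to verify that a tensor product of Hilbert algebra isomorphisms, in the sense of \cite[\S 3.1]{BHK11}, is itself a Hilbert algebra isomorphism. This amounts to the straightforward observation that the involution $\ast$, the unit, and the inner product on a tensor product of Hilbert algebras are each defined factorwise, so any $\ast$-algebra isomorphism that preserves inner products on each tensor factor extends to one on the tensor product. I do not anticipate any substantive obstacle: the entire content of the theorem is already encapsulated in Theorem \ref{prop:TW-isom}, and the present statement is essentially a bookkeeping extension to accommodate Levi subgroups of the form $M = \Sp(W^\flat) \times \prod_i \GL(n_i)$, which is exactly the form permitted by the Schr\"odinger-model splitting \eqref{eqn:Levi-splitting} of $\tilde{M}$.
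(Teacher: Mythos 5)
Your proposal is correct and takes the same route as the paper, which disposes of the claim simply by declaring it ``immediate from Theorem~\ref{prop:TW-isom}''; you have merely made explicit the routine bookkeeping (factorwise Hilbert structure, the $n^\flat=0$ extreme case, the tensor-of-isomorphisms step) that the paper leaves unstated. Note also that you have silently corrected a small slip in the theorem's wording: $\mathrm{TW}^\flat$ must go from $H^{\pm,\flat}$ to $H_\psi^{\pm,\flat}$ (as you write, consistent with Theorem~\ref{prop:TW-isom} and with the direction of $\mathrm{TW}^{\tilde M}$), not the reverse as printed.
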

\begin{proof}
	Immediate from Theorem \ref{prop:TW-isom}.
\end{proof}

If there is no confusion, $\mathrm{TW}^{\tilde{M}}$ will often be abbreviated as $\mathrm{TW}$.

Finally, the analogues of Corollary \ref{prop:TW-support} and Theorem \ref{prop:TW-components} continue to hold in this setting. It suffices to combine the results on $\Mp(W^\flat)$ with the known results on each $\GL(n_i)$.

In particular we have elements $T^{\tilde{M}}_w$ and $(T^{\tilde{M}}_w)'$ in $H^{\tilde{M}, +}_\psi$ and $H^{\tilde{M}, -}_\psi$, respectively.

\subsection{The case of Weil representations}
Our goal is to determine the Hecke modules associated with $\omega_\psi^{\pm}$ via the equivalences in Theorem \ref{prop:TW-components}. This is probably implicit in \cite{GS12, TW18}. Due to the lack of an adequate reference, we give a direct proof below.

Recall the identification between affine Weyl groups $\Omega_{\mathrm{aff}} \simeq \Omega^+$, $\Omega'_{\mathrm{aff}} \simeq \Omega^-$ and the weighted length functions $\ell_0$, $\ell_2$ on them, respectively.

\begin{proposition}\label{prop:Weil-rep-M}
	We have:
	\begin{enumerate}[(i)]
		\item $\mathbf{M}_{\tau_0}(\omega_\psi^+)$ is the $1$-dimensional $H_\psi^+$-module with $T_w$ acting as $q^{\ell_0(w)}$ for all $w \in \Omega_{\mathrm{aff}}$;
		\item $\mathbf{M}_{\tau_1^-}(\omega_\psi^-)$ is the $1$-dimensional $H_\psi^-$-module with $T'_w$ acting as $q^{\ell_2(w)}$ for all $w \in \Omega'_{\mathrm{aff}}$.
	\end{enumerate}
\end{proposition}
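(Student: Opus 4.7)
The plan is to show both parts by producing an explicit non-zero element of the Hecke module, computing the action of each algebra generator on it, and invoking the categorical equivalence of Theorem \ref{prop:TW-components} to conclude one-dimensionality.

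For part (i), the tautological inclusion $\iota: \tau_0 \hookrightarrow \omega_\psi^+$ of $\tilde{K}_0$-subrepresentations restricts to a non-zero element of $\mathbf{M}_{\tau_0}(\omega_\psi^+) = \Hom_{\tilde{I}}(\tau_0, \omega_\psi^+)$. The crucial observation is that, by the very definition of $\tau_{0,i}$ as the $\tilde{I}_i$-subrepresentation of $\omega_\psi$ generated by $\tau_0$, the map $\iota$ extends uniquely to a $\tilde{I}_i$-equivariant embedding $\bar\iota_i: \tau_{0,i} \hookrightarrow \omega_\psi^+$ for each $0 \leq i \leq n$. When $i \neq 0$ one has $\tau_{0,i} = \tau_0$, so $\tau_0$ itself is already $\tilde{I}_i$-stable; Proposition \ref{prop:idempotent-projector} then furnishes an idempotent $e^{(i)} \in H_\psi^+$ supported on $\tilde{I}_i$ whose action on $\mathbf{M}_{\tau_0}(\omega_\psi^+)$ is the projector onto the subspace of $\tilde{I}_i$-equivariant maps, forcing $e^{(i)}\iota = \iota$. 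Decomposing this idempotent inside the two-dimensional algebra $H_{\psi,i}^+ = \CC \cdot 1 \oplus \CC \cdot T_i$ against the orthogonal idempotents with $T_i$-eigenvalues $(q,-1)$ yields $e^{(i)} = (1+q)^{-1}(1+T_i)$, hence $T_i\iota = q\iota = q^{\ell_0(s_i)}\iota$.

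The exceptional case $i=0$ (where $\tau_{0,0} = \tau_1^+ \supsetneq \tau_0$) is handled analogously, but applied to the extension $\bar\iota_0: \tau_1^+ \hookrightarrow \omega_\psi^+$ which is $\tilde{I}_0$-equivariant. The same projector/Frobenius reciprocity argument identifies $\iota$ with the $\check\tau_{0,0}$-isotypic component of $\Ind_{\tilde{I}}^{\tilde{I}_0} \check\tau_0$, on which $T_0$ acts (by the Takeda--Wood normalization recalled in \S\ref{sec:Hecke-Mp}) with eigenvalue $1 = q^{\ell_0(s_0)}$. Combining the formulas for all generators with the multiplicative property $T_w = T_{i_1}\cdots T_{i_\ell}$ on reduced expressions, and the additivity $\ell_0(w) = \sum_k \ell_0(s_{i_k})$ of weighted length on a Coxeter system, gives $T_w\iota = q^{\ell_0(w)}\iota$ for every $w \in \Omega_{\mathrm{aff}}$. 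Thus $\CC\iota$ is $H_\psi^+$-stable; since $\omega_\psi^+$ is irreducible and $\mathbf{M}_{\tau_0}$ is a categorical equivalence (Theorem \ref{prop:TW-components}), $\mathbf{M}_{\tau_0}(\omega_\psi^+)$ is simple and therefore equals $\CC\iota$, as claimed.

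Part (ii) follows by the same scheme with the substitutions $(\tau_0, \omega_\psi^+, \tilde{I}, H_\psi^+, T_w, \ell_0) \rightsquigarrow (\tau_1^-, \omega_\psi^-, \tilde{J}, H_\psi^-, T'_w, \ell_2)$: the inclusion $\tau_1^- \hookrightarrow \omega_\psi^-$ provides the non-zero element, for $i \geq 2$ one has $\tau_{1,i}^- = \tau_1^-$ and the idempotent argument gives $T'_i\iota = q\iota = q^{\ell_2(s'_i)}\iota$, while the exceptional generator $T'_1$ (where $\tau_{1,1}^- = \tau_2^- \supsetneq \tau_1^-$) is handled by extending to $\tau_2^-$ and reading off the top eigenvalue $q^2 = q^{\ell_2(s'_1)}$. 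The main technical obstacle is the rigorous verification of the eigenvalue calculation in the exceptional cases $i=0$ for (i) and $i=1$ for (ii), which requires carefully tracking the duality $h \mapsto \check{h}$ between the summand decomposition of $\Ind_{\tilde{I}}^{\tilde{I}_i}\check\tau$ and the Hecke action on $\Hom_{\tilde{I}}(\tau, \omega_\psi^\pm)$ so as to identify which summand the extension $\bar\iota_i$ corresponds to.
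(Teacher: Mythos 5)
Your proposal follows the same overall strategy as the paper's proof: exhibit the inclusion $\iota$ as a non-zero vector in the Hecke module, compute the action of each generator on it, and conclude one-dimensionality from irreducibility of $\omega_\psi^{\pm}$. Where you diverge is the mechanism for the non-exceptional generators ($i \neq 0$ in (i), $i \geq 2$ in (ii)): you apply the idempotent $e^{(i)}$ of Proposition \ref{prop:idempotent-projector}, observe $e^{(i)}\iota = \iota$, and then assert $e^{(i)} = (1+q)^{-1}(1+T_i)$, whereas the paper treats all $i$ uniformly via Frobenius reciprocity, identifies the kernel of $f \mapsto f(1_{\tilde G})$ as the complementary summand $\sigma_1$ of $\Ind_{\tilde I}^{\tilde I_i}\tau_0$, and then reads off the $T_i$-eigenvalue via Lemma \ref{prop:Hecke-Ind}. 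Your shortcut via the idempotent is valid, but note that the step ``$e^{(i)}$ is the $q$-eigenprojector rather than the $(-1)$-eigenprojector'' is exactly the same duality verification you single out as a ``technical obstacle'' for the exceptional generators; it is present in all cases, not only $i=0$ and $i=1$. Concretely, one still needs to know that the $T_i$-eigenvalue on the summand $\Hom_{\tilde I_i}(\tau_{0,i},\cdot)$ of $\Hom_{\tilde I}(\tau_0,\cdot)$ is $q$, which amounts to transporting the normalization of $T_i$ on $\check\tau_{0,i}\oplus\check\sigma_1$ through the anti-isomorphism $h\mapsto\check h$ — the check does come out right, but it is not a formal consequence of the idempotent decomposition alone. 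Once that is granted, the remainder (multiplicativity on reduced words, additivity of the weighted length, irreducibility and the equivalence of categories) is exactly the paper's closing argument, and the proof is correct.
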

\begin{proof}
	Consider (i) first. Let $0 \leq i \leq n$. Let $\iota \in \mathbf{M}_{\tau_0}(\omega_\psi^+)$ be the inclusion $\tau_0 \hookrightarrow \omega_\psi^+|_{\tilde{I}}$. The image of $\iota$ in $\Hom_{\tilde{I}_i}\left( \Ind^{\tilde{I}_i}_{\tilde{I}} \tau_0, \omega_\psi^+ \right)$ under Frobenius reciprocity is $f \mapsto f(1_{\tilde{G}})$.
	Note that $\Ind = \cInd$ here. Recall that there is a decomposition
	\[ \Ind^{\tilde{I}_i}_{\tilde{I}} \tau_0 = \sigma_0 \oplus \sigma_1, \quad \sigma_0 \not\simeq \sigma_1, \]
	where $\sigma_0 = \tau_{0, i} \subset \omega_\psi^+$ is generated by $\tau_0$ acted upon by $\tilde{I}_i$, and $\sigma_1$ is irreducible, $\sigma_1 \not\simeq \sigma_0$. Hence
	\[ \Ker\left[ f \mapsto f(1_{\tilde{G}}) \right] \in \left\{ \sigma_0 \oplus \sigma_1, \sigma_0, \sigma_1, 0 \right\}. \]
	
	The first two cases are impossible since, by the constructions in \S\ref{sec:Hecke-Mp}, $\sigma_0$ contains a copy of $\tau_0$ supported inside $\tilde{I}$. The final case is impossible since $f(1_{\tilde{G}}) = 0$ if $\Supp(f) \cap \tilde{I} = \emptyset$. Hence the kernel is precisely $\sigma_1$.
	
	By Lemma \ref{prop:Hecke-Ind}, to determine $T_i \iota$ it suffices to inspect how the element corresponding to $T_i$ in
	\[ H_{\psi, i}^+ \simeq \End_{\tilde{I}_i}(\Ind^{\tilde{I}_i}_{\tilde{I}}\left( \check{\tau}_0)\right) \]
	acts. By the above discussion, we are reduced to study its action on $\check{\sigma}_0$. According to the characterization of $T_i$, this equals $1$ (if $i=0$) or $q$ (if $i \neq 0$), i.e.\ $T_i$ acts by $q^{\ell_0(s_i)}$ under $\Omega_{\mathrm{aff}} \simeq \Omega^+$.
	
	Therefore the line $\CC \iota$ is $H_\psi^+$-invariant. Since $\omega_\psi^+$ is irreducible and lies in $\mathcal{G}_\psi^+$, we see $\mathbf{M}_{\tau_0}(\omega_\psi^+)$ is irreducible. Hence $\mathbf{M}_{\tau_0}(\omega_\psi^+) = \CC\iota$ on which $H_\psi^+$ acts via $q^{\ell_0(\cdot)}$.
	
	The case (ii) is similar. It suffices to replace $I$ by $J$, $\tau_0$ by $\tau_1^-$, then recall the characterization of $T'_i$ and the definition of $\ell_2$.
\end{proof}

\section{Homomorphism between Hecke algebras}\label{sec:Hecke-homomorphism}
\subsection{The comparison map}\label{sec:comparison-map}
Let $G = \Sp(W)$ as before, where $\dim W = 2n$. Let $\pi$ be in $\tilde{G}\dcate{Mod}$. It makes sense to consider the following maps in Proposition \ref{prop:q-map}.
\begin{itemize}
	\item Let $P = MU$ be a parabolic subgroup, $P \supset B^{\leftarrow}$ and $M \supset T$. Putting $K = \tilde{I}$ and $\tau = \tau_0$, we get
	\begin{equation*}
		\mathrm{q}_U: \mathbf{M}_{\tau_0}(\pi) \to \mathbf{M}_{\tau_{0, \tilde{M}}}(\pi_U).
	\end{equation*}
	\item Suppose furthermore that $P \supset P^1$; see Definition \ref{def:P1}. Putting $K =\tilde{J}$ and $\tau = \tau_1^-$, we get
	\begin{equation*}
		\mathrm{q}_U: \mathbf{M}_{\tau_1^-}(\pi) \to \mathbf{M}_{\tau^-_{1, \tilde{M}}}(\pi_U).
	\end{equation*}
\end{itemize}

Here the premise $K \cap P(F) = K_M K_U$ is ensured by Proposition \ref{prop:IJ-Iwahori-decomp}, and $\tau_{K_U}$ is identified with the $\tilde{M}$-counterpart of $\tau$ using Proposition \ref{prop:coinvariant-type}.

\begin{theorem}\label{prop:q-isom}
	In both cases, $\mathrm{q}_U$ is a linear isomorphism.
\end{theorem}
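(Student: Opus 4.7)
The plan is to combine exactness with the Takeda--Wood classification (Theorem \ref{prop:TW-components}) to reduce bijectivity of $\mathrm{q}_U$ to the case of principal series, and then match the two sides cell-by-cell using the Mackey decomposition and the Bernstein--Zelevinsky geometric lemma.

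Both functors $\mathbf{M}_{\tau}(\cdot)$ and $\mathbf{M}_{\tau_{K_U}}((\cdot)_U)$ from $\tilde{G}\dcate{Mod}$ to $\CC\dcate{Mod}$ are exact and preserve filtered colimits; exactness of the second uses that the unnormalized Jacquet functor $(\cdot)_U$ is exact. Since $(\tilde{I}, \tau_0)$ and $(\tilde{J}, \tau_1^-)$ are types for the Bernstein blocks $\mathcal{G}_\psi^\pm$ (Theorem \ref{prop:TW-components}), the first functor vanishes on the complement of $\mathcal{G}_\psi^\pm$; the analogous type property for the pair on $\tilde{M}$, combined with the standard Bernstein-theoretic fact that Jacquet restriction only touches $\tilde{M}$-blocks whose cuspidal support is subordinate to that of the ambient block, implies the second functor does the same. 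So one may assume $\pi$ is of finite length in $\mathcal{G}_\psi^\pm$. By Theorem \ref{prop:TW-components} again, every such irreducible embeds into a principal series $i_{\tilde{B}^{\leftarrow}}(\chi)$ in the $+$ case, or $i_{\tilde{P}^1}(\omega_\psi^{\flat,-}\boxtimes\chi)$ in the $-$ case; hence by exactness one reduces to the case $\pi = i_{\tilde{Q}}(\sigma)$ of this form.

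For such a principal series, the next step is to apply the Mackey formula to $\pi|_{\tilde{K}}$ using the Bruhat decomposition $\tilde{G} = \bigsqcup_w \tilde{Q}\tilde{w}\tilde{K}$, producing a direct-sum decomposition of $\mathbf{M}_\tau(\pi)$ indexed by double cosets; simultaneously, the geometric lemma filters $\pi_U$ by graded pieces indexed by cosets in $W_M\backslash W / W_Q$. Applying $\mathbf{M}_{\tau_{K_U}}$ to that filtration yields a matching decomposition of the target of $\mathrm{q}_U$, and $\mathrm{q}_U$ respects both decompositions cell-by-cell. On the identity cell, Proposition \ref{prop:coinvariant-type} identifies $\tau_{K_U}$ canonically with $\tau_{\tilde{M}}$, so the contribution of $\mathrm{q}_U$ reduces to the evident bijection of Hom-spaces. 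On each non-identity cell, both sides should vanish: on the left by type-theoretic considerations combined with the explicit Schrödinger-model formulas \eqref{eqn:root-action}--\eqref{eqn:Siegel-action} ruling out $\tau$ inside $\delta_Q^{1/2}\otimes{}^w\sigma$ as a representation of $\tilde{K}\cap\tilde{Q}^w$; on the right by the analogous vanishing applied to the geometric-lemma graded pieces.

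\paragraph{Main obstacle.}
The hardest step is the non-identity cell analysis in the $-$ case, where $\tilde{K} = \tilde{J}$ is strictly larger than the Iwahori, $\tau_1^-$ cuts out only the odd-parity subspace, and $\sigma$ itself involves the odd Weil representation $\omega_\psi^{\flat,-}$ of $\Mp(W^1)$. Tracking which double cosets survive on both sides requires careful use of Propositions \ref{prop:IJ-Iwahori-decomp} and \ref{prop:coinvariant-U1}, together with the effect of the extra reflection $s_0 \in \tilde{J}\smallsetminus\tilde{I}$. A secondary obstacle is the careful bookkeeping of the modulus characters $\delta_Q^{1/2}$ and $\delta_P^{1/2}$ in the identity-cell identification, to ensure compatibility with the unnormalized Jacquet functor appearing in the definition of $\mathrm{q}_U$.
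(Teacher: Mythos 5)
Your proposal takes a genuinely different route from the paper, but it contains a gap that breaks the argument.

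The core problem is the claim that ``on each non-identity cell, both sides should vanish.'' This is false. Take the $+$ case with $P = MU$ a proper reverse-standard parabolic and $\pi = i_{\tilde{B}^{\leftarrow}}(\chi)$ for generic unramified $\chi$. By the equivalence of Theorem~\ref{prop:TW-components} and the Takeda--Wood isomorphism, $\dim \mathbf{M}_{\tau_0}(\pi) = \dim i_{P_{\min}^+}(\chi)^{I^+} = |\Omega_0|$. The Mackey decomposition of $\pi|_{\tilde{I}}$ has $|\Omega_0|$ cells (the double cosets $B^{\leftarrow}(F)\backslash G(F)/I$, identified with $\Omega_0$ via Iwasawa and the Bruhat decomposition of $K_0$), each contributing at most a line to $\mathbf{M}_{\tau_0}(\pi)$. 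So \emph{every} cell contributes exactly one dimension; none vanish. Similarly, on the target side, $r_{\tilde{P}}(\pi)$ has $|\Omega^M_0\backslash\Omega_0|$ geometric-lemma graded pieces, and applying $\mathbf{M}_{\tau_{0,\tilde{M}}}$ to each gives a $|\Omega^M_0|$-dimensional space; if non-identity pieces vanished, the target would be $|\Omega^M_0|$-dimensional, whereas it must be $|\Omega_0|$-dimensional. So the vanishing claim is wrong on both sides, and moreover the index sets of the two decompositions do not even agree (one is $\Omega_0$, the other $\Omega^M_0\backslash\Omega_0$), so ``cell-by-cell matching'' is not available in the form you describe.

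There is also a subtler issue with the reduction to principal series: from an embedding $\pi \hookrightarrow i_{\tilde{Q}}(\sigma)$, exactness lets you transfer \emph{injectivity} of $\mathrm{q}_U$ from the principal series down to $\pi$, but not surjectivity, since the cokernel has length at least that of $\pi$ and you would need to already control $\mathrm{q}_U$ there. Some additional idea (a resolution by principal series, or a flat-family/generic-irreducibility argument) would be needed, and it is not supplied.

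For comparison, the paper's proof follows a completely different route. It treats the minimal case ($P = B^{\leftarrow}$ with $(\tilde{I},\tau_0)$, resp.\ $P = P^1$ with $(\tilde{J},\tau_1^-)$) by invoking the arguments of \cite{TW18} and \cite{GS12}: surjectivity comes from a Jacquet-lemma type argument relying only on the Iwahori decomposition (Proposition~\ref{prop:IJ-Iwahori-decomp}); injectivity comes purely from the invertibility of the elements $T_w$ in $H_\psi^\pm$ (Corollary~\ref{prop:TW-support}). The general case then follows from the transitivity of $\mathrm{q}$ (Proposition~\ref{prop:q-map}), the known $\GL$ case from \cite{BK98}, and induction on $n^\flat$. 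This sidesteps any Mackey/geometric-lemma bookkeeping entirely; the hard content lives in the rank-one structure of the Hecke algebra, not in a cell decomposition.
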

\begin{proof}
	Let us begin with the minimal case $P = B^{\leftarrow}$ and $(K, \tau) = (\tilde{I}, \tau_0)$. The assertion is then contained in the proof of \cite[Theorem 3.4]{TW18} or \cite[\S 8]{GS12}. In \textit{loc.\ cit.} one takes $B^{\rightarrow}$ instead of $B^{\leftarrow}$, but the argument carries over:
	\begin{itemize}
		\item Surjectivity is proved as in \textit{loc.\ cit.} by a variant of Jacquet's lemma, requiring only the Iwahori decomposition (Proposition \ref{prop:IJ-Iwahori-decomp}).
		\item The proof of injectivity only uses abstract properties of $H_\psi^+$, namely the invertibility of Hecke operators $T_w$.
	\end{itemize}
	
	Similarly, for $P = P^1$ and $(K, \tau) = (\tilde{J}, \tau_1^-)$, the assertion is contained in the proof of \cite[Theorem 3.6]{TW18}.
	
	Next, let us replace $\tilde{G}$ by $\GL(m, F)$ for some $m \geq 1$, take $\pi_{\GL}$ in $\GL(m, F)\dcate{Mod}$, and consider the standard Iwahori subgroup $I_{\GL}$ of $\GL(m, F)$. For every standard (or opposite) parabolic subgroup $Q = LV$ of $\GL(m, F)$, the corresponding linear map
	\[ \mathrm{q}_V: \mathbf{M}_{\mathbf{1}}(\pi_{\GL}) \to \mathbf{M}_{\mathbf{1}}(\pi_{\GL, V}) \]
	is known to be an isomorphism: see eg.\ \cite[(7.9) (b.ii)]{BK98}.
	
	Looking back at $\tilde{G}$ and a $P = MU \supsetneq B^{\leftarrow}$, we decompose $\tilde{M}$ into $\Mp(W^\flat) \times \prod_{i=1}^r \GL(n_i, F)$. We now know that
	\[ \mathbf{M}_{\tau_0}(\pi) \xrightarrow{\mathrm{q}_U} \mathbf{M}_{\tau_{0, \tilde{M}}}(\pi_U) \xrightarrow{\mathrm{q}_{U^{\leftarrow} \cap M}} \mathbf{M}_{\tau_{0, \tilde{T}}}(\pi_{U^{\leftarrow}}) \]
	composes to an isomorphism, by the second part of Proposition \ref{prop:q-map}. Lemma \ref{prop:cpt-intersection-Levi} implies $I \cap M(F)$ is the product of standard Iwahori subgroups on each factor. Also note that $P \cap M$ is the product of
	\begin{itemize}
		\item a parabolic subgroup of $\Sp(W^\flat)$ containing $B^{\leftarrow} \cap \Sp(W^\flat)$,
		\item parabolic subgroups of $\GL(n_i)$ containing $B^{\leftarrow} \cap \GL(n_i)$, i.e.\ opposite to a standard one.
	\end{itemize}
	Hence $\mathrm{q}_{U^{\leftarrow} \cap M}$ decomposes into its avatars for $\Mp(W^\flat)$ and various $\GL(n_i)$. The latter ones are seen to be isomorphisms, whilst the first is an isomorphism by induction. Hence so is $\mathrm{q}_U$.
	
	The case of $P \supsetneq P^1$ and $\tau_1^-$ is settled in exactly the same manner.
\end{proof}

\subsection{Alignment of Hecke elements}\label{sec:align-Hecke}
Let $P = MU \supset B^{\leftarrow}$ be a parabolic subgroup of $G$ with $M = \Sp(W^\flat) \times \prod_{i=1}^r \GL(n_i) \supset T$.

Define $I_M$, $I_U$, $I_{\overline{U}}$ by \eqref{eqn:KM}. As explained in \S\ref{sec:TW-isom}, we have the Hecke algebras $H_\psi^{\tilde{M}, \pm}$ and $H_\psi^\pm$. For the statements concerning $H_\psi^-$, we assume furthermore that
\[ n^\flat := \frac{1}{2} \dim W^\flat \geq 1. \]

\begin{definition}[See {\cite[Definition 6.5]{BK98}}]
	We say $z \in M(F)$ is $P$-positive (relative to $I$) if
	\begin{equation*}
		z I_U z^{-1} \subset I_U, \quad z^{-1} I_{\overline{U}} z \subset I_{\overline{U}}.
	\end{equation*}
	For $z \in \tilde{M}$, we say $z$ is $P$-positive if its image in $M(F)$ is.
\end{definition}

Note that $P$-positivity depends only on the $I_M$-double coset containing $z$. Recall the linear isomorphism
\begin{align*}
	\{\Phi \in H_\psi^+ : \Supp(\Phi) \subset \tilde{I} z\tilde{I} \} & \rightiso \Hom_{\tilde{I} \cap z\tilde{I}z^{-1}}\left( {}^z \check{\tau}_0, \check{\tau}_0 \right) \\
	\Phi & \mapsto \Phi(z).
\end{align*}

The same recipe works for $\widetilde{I_M} \subset \tilde{M}$, giving rise to an isomorphism mapping $\varphi \in H_\psi^{\tilde{M}, +}$ with $\Supp(\varphi) \subset \widetilde{I_M} z \widetilde{I_M}$ to $\varphi(z)$.

\begin{lemma}
	Let $z \in M(F)$. Suppose that $z I_U z^{-1} \subset I_U$ (eg.\ when $z$ is $P$-positive relative to $I$). Then every $E \in \Hom_{\tilde{I} \cap z\tilde{I}z^{-1}}\left( {}^z \check{\tau}_0, \check{\tau}_0 \right)$ induces an element of
	\[ E_{\tilde{M}} \in \Hom_{\widetilde{I_M} \cap z\widetilde{I_M}z^{-1}}\left( {}^z (\check{\tau}_{0, \tilde{M}}), \check{\tau}_{0, \tilde{M}}\right). \]
\end{lemma}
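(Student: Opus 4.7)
The plan is to exhibit $E_{\tilde{M}}$ as the map induced by $E$ on $I_U$-co-invariants, invoking Proposition \ref{prop:coinvariant-type} twice (once on each side) to identify those co-invariants with $\check{\tau}_{0, \tilde{M}}$ on the right and with ${}^z\check{\tau}_{0, \tilde{M}}$ on the left, the latter after conjugating all data by $z$.

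The starting observation is that $zI_Uz^{-1} \subset I_U \subset \tilde{I}$ by hypothesis and $zI_Uz^{-1} \subset z\tilde{I}z^{-1}$ trivially, so
\[ zI_Uz^{-1} \subset \tilde{I} \cap z\tilde{I}z^{-1}, \]
and the equivariance already postulated for $E$ includes $zI_Uz^{-1}$-equivariance. From this I would check that as $y$ ranges over $zI_Uz^{-1}$, the element $z^{-1}yz$ ranges over $I_U$, so the $zI_Uz^{-1}$-co-invariants of ${}^z\check{\tau}_0$ and the $I_U$-co-invariants of $\check{\tau}_0$ are cut out by the same subspace of relations inside $V_{\check{\tau}_0}$. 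Verifying that $E$ descends then reduces to a one-line application of the $zI_Uz^{-1}$-equivariance: writing $y = zuz^{-1}$ for $u \in I_U$, the equivariance of $E$ yields $E(\check{\tau}_0(u)v) - E(v) = \check{\tau}_0(zuz^{-1})E(v) - E(v)$, and the containment $zuz^{-1} \in I_U$ places this in the target-side $I_U$-co-invariant kernel.

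Equivariance of the resulting $E_{\tilde{M}}$ under $\widetilde{I_M} \cap z\widetilde{I_M}z^{-1}$ is then automatic by passing to the quotient, since this subgroup sits inside $\tilde{I} \cap z\tilde{I}z^{-1}$. I expect no real obstacle here; the only care-worthy point is the book-keeping of the two co-invariant identifications, so that the induced linear map is correctly interpreted as a morphism from ${}^z\check{\tau}_{0,\tilde{M}}$ to $\check{\tau}_{0,\tilde{M}}$ rather than between two raw quotient spaces.
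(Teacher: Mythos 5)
Your proof is correct and follows essentially the same route as the paper's: both descend $E$ to the $I_U$-coinvariant quotients via the one-line computation $E(\check{\tau}_0(u)\phi) = E\bigl({}^z\check{\tau}_0(zuz^{-1})\phi\bigr) = \check{\tau}_0(zuz^{-1})E(\phi)$ together with the containment $zuz^{-1} \in I_U$, then observe that the $\widetilde{I_M} \cap z\widetilde{I_M}z^{-1}$-equivariance of the induced map is inherited automatically. Your extra book-keeping identifying the $zI_Uz^{-1}$-coinvariants of ${}^z\check{\tau}_0$ with the $I_U$-coinvariants of $\check{\tau}_0$ just makes explicit what the paper leaves implicit.
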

\begin{proof}
	To see that $E$ induces a linear endomorphism $E_{\tilde{M}}$ on the underlying spaces of $\check{\tau}_{0, \tilde{M}}$, note that for all $u \in I_U$ and $\phi \in V_{\check{\tau}_0}$,
	\[ E\left( \check{\tau}_0(u) \phi \right) = E\left( {}^z \check{\tau}_0( zuz^{-1} ) \phi \right) = \check{\tau}_0(\underbracket{zuz^{-1}}_{\in I_U}) E(\phi). \]
	It remains to check the $\widetilde{I_M} \cap z \widetilde{I_M} z^{-1}$-equivariance with $z$-twist, which is obvious.
\end{proof}

Combining these results, for each $P$-positive $z \in \tilde{M}$ we obtain a linear map
\begin{equation}\label{eqn:Phi-to-phi}
	\Phi \mapsto \varphi, \quad \Supp(\Phi) \subset \tilde{I}z\tilde{I}, \quad \Supp(\varphi) \subset \widetilde{I_M}z\widetilde{I_M}
\end{equation}
characterized by $\varphi(z) = \Phi(z)_{\tilde{M}}$.

The same holds in the $-$ case by replacing $I$ (resp.\ $I_M$) by $J$ (resp.\ $J_M$). The following is a variant of \cite[Theorem 7.9 (i)]{BK98}.

\begin{lemma}\label{prop:Hecke-Phi-to-phi}
	Let $z \in \tilde{M}$ be $P$-positive. Suppose that $\Phi \in H_\psi^+$, $\Supp(\Phi) \subset \tilde{I}z\tilde{I}$ and $\varphi \in H_\psi^{\tilde{M}, +}$ is the image of $\Phi$ under \eqref{eqn:Phi-to-phi}. For all $\pi$ in $\tilde{G}\dcate{Mod}$, consider the map $\mathrm{q} = \mathrm{q}_U$ in Theorem \ref{prop:q-isom}. In $\mathbf{M}_{\tau_{0, \tilde{M}}}(\pi_U)$ we have
	\[ \mathrm{q}\left( \delta_{\tilde{P}}(z) \Phi \cdot f \right) = \varphi \cdot \mathrm{q}(f), \quad f \in \mathbf{M}_{\tau_0}(\pi). \]
	
	The same is true in the $-$ case by replacing $I$ by $J$, and $\tau_0$ (resp.\ $\tau_{0, \tilde{M}}$) by $\tau_1^-$ (resp.\ $\tau_{1, \tilde{M}}^-$).
\end{lemma}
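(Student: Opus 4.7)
The plan is to adapt \cite[Theorem 7.9(i)]{BK98} to the present setting; I focus on the $+$ case and indicate at the end how the $-$ case runs in parallel. With $\mes(\tilde{I}) = \mes(\widetilde{I_M}) = 1$, one unwinds the Hecke action as a finite sum
\[
(\Phi f)(w) = \sum_{g \in \tilde{I}z\tilde{I}/\tilde{I}} \pi(g)\, f\bigl(\check{\Phi}(g^{-1}) w\bigr), \qquad w \in V_{\tau_0},
\]
with an analogous formula for $(\varphi \cdot \mathrm{q}(f))(\bar{w})$ summed over $\widetilde{I_M}z\widetilde{I_M}/\widetilde{I_M}$. Using the Iwahori decomposition $\tilde{I} = \tilde{I}_{\overline{U}}\widetilde{I_M}\tilde{I}_U$ (Proposition~\ref{prop:Iwahori-decomp}) and the $P$-positivity of $z$, one has $\tilde{I} \cap z\tilde{I}z^{-1} = \tilde{I}_{\overline{U}} \cdot (\widetilde{I_M} \cap z\widetilde{I_M}z^{-1}) \cdot z\tilde{I}_U z^{-1}$, so representatives of $\tilde{I}z\tilde{I}/\tilde{I}$ may be taken as $g = muz$ with $m$ running over $\widetilde{I_M}/(\widetilde{I_M} \cap z\widetilde{I_M}z^{-1})$ and $u$ over $\tilde{I}_U / z\tilde{I}_U z^{-1}$, while $\widetilde{I_M}z\widetilde{I_M}/\widetilde{I_M}$ admits representatives $h = mz$ with the same set of $m$'s. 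The index ratio is $[\tilde{I}_U : z\tilde{I}_U z^{-1}] = \delta_P(z)^{-1}$, which will furnish the $\delta_{\tilde{P}}(z)$ factor.

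Next, one projects to $\pi_U$. Since $u \in U(F)$ acts trivially on $\pi_U$, and since $\check{\Phi}(g^{-1}) = \check{\Phi}(z^{-1}) \tau_0(u^{-1} m^{-1})$ by right $\tilde{I}$-equivariance of $\check{\Phi}$, the projection of the group side reads
\[
\overline{(\Phi f)(w)} = \sum_{m,u} \pi_U(m)\,\pi_U(z)\, \mathrm{q}(f)\!\left( \overline{\check{\Phi}(z^{-1}) \tau_0(u^{-1}) \tau_0(m^{-1}) w} \right).
\]
The crucial point is that, for fixed $m$, the inner term depends on $u$ only through its class in $\tilde{I}_U / z\tilde{I}_U z^{-1}$: for any $u_2 \in z\tilde{I}_U z^{-1}$, the intertwining property $\check{\Phi}(z^{-1}) \in \Hom_{\tilde{I} \cap z^{-1}\tilde{I}z}({}^{z^{-1}}\tau_0, \tau_0)$ yields $\check{\Phi}(z^{-1}) \tau_0(u_2^{-1}) = \tau_0(z^{-1}u_2^{-1}z)\,\check{\Phi}(z^{-1})$ with $z^{-1}u_2^{-1}z \in \tilde{I}_U$, and $\tau_0$ of an $\tilde{I}_U$-element is killed by the projection to $V_{\tau_{0,\tilde{M}}}$. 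Consequently the sum $\sum_u$ collapses to $\delta_P(z)^{-1}\int_{\tilde{I}_U} du$, and $\int_{\tilde{I}_U} \tau_0(u^{-1})\,du$ is the averaging projection $P\colon V_{\tau_0} \twoheadrightarrow V_{\tau_0}^{\tilde{I}_U}$.

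The final identification uses the same intertwining applied to $u' \in \tilde{I}_U$: since $zu'z^{-1} \in \tilde{I}_U \subset \tilde{I}$ by $P$-positivity, one obtains $\tau_0(u') \check{\Phi}(z^{-1}) = \check{\Phi}(z^{-1}) \tau_0(zu'z^{-1})$, whence $\check{\Phi}(z^{-1})$ restricts to an endomorphism of $V_{\tau_0}^{\tilde{I}_U}$. Through the canonical identifications $V_{\tau_0}^{\tilde{I}_U} \simeq (V_{\tau_0})_{\tilde{I}_U} \simeq V_{\tau_{0,\tilde{M}}}$ (the first by averaging, the second by Proposition~\ref{prop:coinvariant-type}), this restriction coincides with $\check{\varphi}(z^{-1})$: indeed, it is the transpose of the descent $\Phi(z) \leadsto \varphi(z) = \Phi(z)_{\tilde{M}}$ supplied by the lemma preceding the statement. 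Substituting back and matching term-by-term with the $\tilde{M}$-side sum yields $\overline{(\Phi f)(w)} = \delta_{\tilde{P}}(z)^{-1}(\varphi\,\mathrm{q}(f))(\bar{w})$, which is the required identity.

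The main delicate points are the coset-invariance claim in step two and the transposed descent identification in step three, both of which hinge on the $P$-positivity of $z$ ensuring that $z$-conjugation behaves well on the relevant Iwahori subgroups. The $-$ case follows the same template, with $(\tilde{I}, \tau_0, \tilde{I}_U)$ replaced by $(\tilde{J}, \tau_1^-, \tilde{J} \cap U(F))$, invoking the Iwahori decomposition of $\tilde{J}$ from Proposition~\ref{prop:IJ-Iwahori-decomp} and the version of Proposition~\ref{prop:coinvariant-type} for $K = \tilde{J}$ and $\tau = \tau_1^-$.
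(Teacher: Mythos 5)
Your argument is correct and reaches the same conclusion, but the bookkeeping is organized differently from the paper's. The paper works in the tensor model $(V_\pi \otimes V_{\check\tau_0})^{\tilde{I}}$ and decomposes the integral over $\tilde{I}z\tilde{I} = I_U\widetilde{I_M}z\widetilde{I_M}I_{\overline{U}}$ by directly citing the integration formula \cite[(7.8)]{BK98}; the $I_{\overline{U}}$-contribution vanishes because the element $\sum_i v_i\otimes e_i$ is $\tilde{I}$-invariant, and the $I_U$-contribution is absorbed by $\mathrm{q}$, so the intertwining-property argument and the averaging projection do not need to be carried out by hand. You instead work in the $\Hom_{\tilde{I}}(\tau_0,\pi)$ model, write $\tilde{I}z\tilde{I}/\tilde{I}$ by an explicit coset parametrization $g=muz$, and verify manually that the summand only depends on the class of $u$ modulo $z\tilde{I}_U z^{-1}$, collapsing the $u$-sum into the averaging operator onto $V_{\tau_0}^{\tilde{I}_U}$; then you identify the restriction of $\check{\Phi}(z^{-1})$ to this invariant space with the transpose $\check\varphi(z^{-1})$ of the descent $\Phi(z)\leadsto\varphi(z)$. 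The net content is the same (Iwahori decomposition plus $P$-positivity plus the descent/intertwining relation), and both derivations produce the correct normalizing factor $[I_U : zI_Uz^{-1}] = \delta_P(z)^{-1}$. Your route requires more explicit verification of the coset parametrization of $\tilde{I}/(\tilde{I}\cap z\tilde{I}z^{-1})$ (which you assert rather than prove in full), and the step matching $\check{\Phi}(z^{-1})|_{V_{\tau_0}^{\tilde{I}_U}}$ with $\check\varphi(z^{-1})$ needs a small transpose--descent compatibility check, but both points are standard and can be filled in; the paper sidesteps them by relying on the cited BK integration identity.
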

\begin{proof}
	The argument is similar to \cite[Theorem 7.9 (i)]{BK98}, so we only give a sketch for the $+$ case. The initial step is the same as \textit{loc.\ cit.}: use the $P$-positivity of $z$ and the Iwahori decomposition of Proposition \ref{prop:IJ-Iwahori-decomp} to write
	\[ \tilde{I}z\tilde{I} = I_U \widetilde{I_M} z \widetilde{I_M} I_{\overline{U}}. \]
	
	Express $f \in \mathbf{M}_{\tau_0}(\pi)$ as $\sum_i v_i \otimes e_i$ where $v_i \in V_\pi$ and $e_i \in V_{\check{\tau}_0}$. Then
	\[ (\delta_{\tilde{P}} \Phi)(\sum_i v_i \otimes e_i) = \sum_i \int_{I_U \widetilde{I_M} z \widetilde{I_M} I_{\overline{U}}} \delta_{\tilde{P}}(z) \left( \pi(g)v_i \otimes \Phi(g)e_i \right) \dd g. \]
	
	If we write $g = y_U y_M y_{\overline{U}}$ where $y_M \in \widetilde{I_M}z\widetilde{I_M}$, then $\delta_{\tilde{P}}(z) = \delta_{\tilde{P}}(y_M)$. Observe that $y_{\overline{U}} \in I_{\overline{U}}$ acts trivially on $\sum_i v_i \otimes e_i$. The integration formula in \cite[(7.8)]{BK98} with $\mes(\tilde{I}) = 1$ and $\mes(I_U) = 1 = \mes(I_{\overline{U}})$ yields
	\[ \sum_i \int_{I_U} \int_{\widetilde{I_M}z\widetilde{I_M}} \pi(y_U y_M) v_i \otimes \Phi(y_U y_M) e_i \dd y_M \dd y_U. \]
	
	After applying $\mathrm{q}$, the actions of $y_U \in I_U$ are trivialized, and then $\Phi(y_U y_M)$ can be replaced by $\varphi(y_M)$ since $y_M$ is also $P$-positive. We now arrive at $\varphi \cdot \mathrm{q}(f)$.
\end{proof}

\begin{remark}\label{rem:Hecke-Phi-to-phi}
	If we take $\mathbf{M}_{\tau_{0, \tilde{M}}}(r_{\tilde{P}}(\pi))$ instead of the non-normalized $\mathbf{M}_{\tau_{0, \tilde{M}}}(\pi_U)$, the linear map $\mathrm{q}$ is unaffected but the $H_\psi^{\tilde{M}, +}$-action is changed. In $\mathbf{M}_{\tau_0}(r^{\tilde{G}}_{\tilde{P}}(\pi))$ the resulting identity becomes
	\[ \mathrm{q}\left( \delta_{\tilde{P}}(z)^{\frac{1}{2}} \Phi \cdot f \right) = \varphi \cdot \mathrm{q}(f). \]
	Ditto in the $-$ case.
\end{remark}

Next, we discuss the opposite direction, going from $\varphi$ to $\Phi$.

\begin{proposition}\label{prop:Phi-to-phi-isom}
	Suppose that $z \in \tilde{M}$ is $P$-positive, then the linear map \eqref{eqn:Phi-to-phi} is bijective in both the $\pm$ cases.
\end{proposition}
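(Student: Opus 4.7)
The plan is to combine an injectivity argument derived from the co-invariant formalism with a dimension count coming from Takeda--Wood.

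For injectivity, suppose $\Phi$ maps to $\varphi = 0$ under \eqref{eqn:Phi-to-phi}. For every smooth representation $\pi$ of $\tilde{G}$ and every $f \in \mathbf{M}_\tau(\pi)$, where $\tau \in \{\tau_0, \tau_1^-\}$ is the type appropriate for the $\pm$ case, Lemma \ref{prop:Hecke-Phi-to-phi} gives
\[ \mathrm{q}\bigl(\delta_{\tilde{P}}(z) \, \Phi \cdot f\bigr) = \varphi \cdot \mathrm{q}(f) = 0. \]
Since $\mathrm{q}$ is bijective by Theorem \ref{prop:q-isom} and $\delta_{\tilde{P}}(z) \neq 0$, we conclude $\Phi \cdot f = 0$. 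As this holds for every $\pi$ and $f$, the categorical equivalence $\mathcal{G}_\psi^{\pm} \simeq H_\psi^{\pm}\dcate{Mod}$ furnished by Theorem \ref{prop:TW-components} forces $\Phi = 0$.

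For surjectivity I perform a dimension count. By Corollary \ref{prop:TW-support} applied to $\tilde{G}$, and its analogue for $\tilde{M}$ (using $H_\psi^{\tilde{M},\pm} = H_\psi^{\pm, \flat} \otimes \bigotimes_i H_i$ together with the classical fact that each basis element of a standard Iwahori--Hecke algebra for $\GL(n_i, F)$ spans a $1$-dimensional subspace by support), both the source and target of \eqref{eqn:Phi-to-phi} are at most $1$-dimensional. In the $+$ case the affine Weyl group $\Omega_{\mathrm{aff}}$ and its $\tilde{M}$-avatar exhaust all relevant double cosets, so both sides are in fact exactly $1$-dimensional for every $z \in \tilde{M}$, and injectivity immediately gives bijectivity.

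In the $-$ case the algebras $H_\psi^-$ and $H_\psi^{\tilde{M},-}$ are respectively supported only on $\tilde{J}\Omega'_{\mathrm{aff}}\tilde{J}$ and on the $\tilde{M}$-subset determined by the support of $H_\psi^{-, \flat}$. One must therefore check that these support conditions on $z$ are equivalent. Decomposing $z = z^\flat z^{\GL}$ via $\tilde{M} = \Mp(W^\flat) \times \prod_i \GL(n_i, F)$, the $\GL$-factor is automatically in the full support, so the matching reduces to the assertion that $z \in \tilde{J}\Omega'_{\mathrm{aff}}\tilde{J}$ if and only if $z^\flat$ lies in the support of $H_\psi^{-, \flat}$. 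This is the main technical obstacle; it requires a Bruhat--Tits comparison using the definition $s'_1 = s_1 s_0 s_1$ from \S\ref{sec:Hecke-Mp} together with the $P$-positivity of $z$. Granting this, the dimensions on both sides agree and injectivity once more yields bijectivity.
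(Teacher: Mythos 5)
Your proof takes a genuinely different route from the paper's, and it is worth comparing the two. The paper's proof observes that, by Corollary~\ref{prop:TW-support} and its $\tilde{M}$-avatar, both sides of \eqref{eqn:Phi-to-phi} are $1$-dimensional, so it suffices to show the map is not identically zero. To see this, the paper uses the \emph{invertibility} of any nonzero $\Phi$ (the final sentence of Corollary~\ref{prop:TW-support}, which you never invoke): applying Lemma~\ref{prop:Hecke-Phi-to-phi} and the bijectivity of $\mathrm{q}$ (Theorem~\ref{prop:q-isom}) to the single test representation $\pi = \omega_\psi^{\pm}$ (which has nonzero Jacquet module by Proposition~\ref{prop:Jacquet-Weil}), one sees that $\varphi$ acts invertibly on the nonzero space $\mathbf{M}(\pi_U)$, hence $\varphi \neq 0$. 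This one computation delivers both injectivity and surjectivity at once, given the dimension count. Your route instead establishes injectivity by the softer argument that $\varphi=0$ forces $\Phi$ to annihilate $\mathbf{M}_\tau(\pi)$ for all $\pi$ and then invokes the categorical equivalence of Theorem~\ref{prop:TW-components}; this is correct (though it is enough to observe that the regular module is faithful) but quantifies over all $\pi$ where the paper needs only $\omega_\psi^\pm$, and it proves injectivity alone, leaving surjectivity to a separate dimension count. The invertibility of $T_w$ (resp.\ $T'_w$) is precisely the structural input that lets the paper avoid this split.

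On the $-$ case: you are right that the dimension-matching assertion hinges on a compatibility of supports, namely that for $P$-positive $z$, the double coset $\tilde{J}z\tilde{J}$ lies in $\tilde{J}\Omega'_{\mathrm{aff}}\tilde{J}$ if and only if the $\Sp(W^\flat)$-component of $z$ lies in the corresponding support for $H_\psi^{-,\flat}$. You flag this honestly as an unresolved obstacle (``Granting this\ldots''), which makes your proof incomplete as written. However, it should be noted that the paper's own proof also relies on exactly this fact implicitly in the $-$ case: the assertion ``both have dimension $1$'' already presupposes $z$ lies in the relevant supports, and the paper simply declares ``the other case is similar'' without addressing the point. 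In the place where the proposition is actually used (Lemma~\ref{prop:Hecke-equivariance-positive}), $z$ is an element of $(\Omega^M_{\mathrm{aff}})'$, for which membership in both supports is manifest, so the issue never arises in practice. Your instinct to be suspicious about the $-$ case is good; the fix would be either to observe that the paper's invertibility argument already shows ``source $1$-dim $\Rightarrow$ target $1$-dim'' unconditionally, reducing the remaining worry to the other implication (still needing the Bruhat--Tits comparison you mention), or to restrict attention at the outset to $z$ in the support as the later applications permit.
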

\begin{proof}
	For ease of notation, we only consider the $+$ case. The other case is similar.
	
	Corollary \ref{prop:TW-support} implies that the spaces
	\[ \Hom_{\tilde{I} \cap z\tilde{I}z^{-1}}\left( {}^z \check{\tau}_0, \check{\tau}_0 \right), \quad \Hom_{\widetilde{I_M} \cap z\widetilde{I_M}z^{-1}}\left( {}^z (\check{\tau}_{0, \tilde{M}}), \check{\tau}_{0, \tilde{M}}\right) \]
	both have dimension $1$. Hence it suffices to show that \eqref{eqn:Phi-to-phi} is not identically zero.
	
	Suppose that $\Supp(\Phi) \subset \tilde{I}z\tilde{I}$, and $\Phi \mapsto \varphi$. Take any $\pi$ from $\mathcal{G}_\psi^+$ such that $\pi_U \neq 0$, eg.\ $\pi = \omega_\psi^+$ (see Proposition \ref{prop:Jacquet-Weil}).
	
	If $\Phi \neq 0$ then $\Phi$ is invertible by Corollary \ref{prop:TW-support}. Lemma \ref{prop:Hecke-Phi-to-phi} and the fact that $\mathrm{q}$ is an isomorphism (Theorem \ref{prop:q-isom}) entail that $\varphi$ also acts invertibly on $\mathbf{M}_{\tau_{0, \tilde{M}}}(\pi_U)$. Since $\pi_U \neq 0$ and lies in the counterpart for $\tilde{M}$ of $\mathcal{G}_\psi^+$, we have $\mathbf{M}_{\tau_{0, \tilde{M}}}(\pi_U) \neq 0$. In turn, this implies $\varphi \neq 0$.
\end{proof}

We will need the well-known computation of Jacquet modules of $\omega_\psi^\pm$, to be recorded below. Given $P = MU$ as above, we let $P^{\pm}$ be the corresponding reverse-standard (of the same sort as $P$) parabolic subgroup of $G^{\pm}$, with Levi subgroup
\[ M^\pm = \SO(V^{\pm, \flat}) \times \prod_{i=1}^r \GL(n_i) \]
that contains the standard maximal torus $T^{\pm}$ of $G^{\pm}$.

In this way, we may identify $\delta_{P^{\pm}}$ with a character of $\prod_{i=1}^r \GL(n_i, F)$. Hence it can be inflated to $M(F)$.

\begin{proposition}\label{prop:Jacquet-Weil}
	Let $P$ and $P^{\pm}$ be as above. Then
	\[ r_{\tilde{P}}(\omega_\psi^\pm) \simeq \omega_\psi^{\pm, \flat} \boxtimes \delta_{P^\pm}^{-1/2}. \]
	Here we assume $n^\flat := \frac{1}{2}\dim W^\flat$ is positive in the $-$ case.
\end{proposition}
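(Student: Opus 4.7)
The proof is by explicit computation in the Schrödinger model. Polarize $Y^* = (Y^\flat)^* \oplus (Y')^*$ with $Y^\flat := \bigoplus_{j \leq n^\flat} Fe_j$ and $Y' := \bigoplus_{j > n^\flat} Fe_j$, and write variables on $Y^*$ as $(y^\flat, y')$. The aim is to identify the non-normalized coinvariants $(V_{\omega_\psi})_{U(F)}$ with $\Schw((Y^\flat)^*)$ via the evaluation $\phi \mapsto \phi(\cdot, 0)$, then to compute the $\tilde M$-action, and finally to compare $\delta_{\tilde P}$ with $\delta_{P^\pm}$ on each $\GL(n_i, F)$.

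For the coinvariant computation, observe that every root $\alpha$ of the form $2\epsilon_i$ or $\epsilon_i + \epsilon_j$ ($j \neq i$) with $i > n^\flat$ lies in $\mathrm{Lie}(U)$ (since it involves an index outside $\{1, \ldots, n^\flat\}$ and therefore cannot occur in $\mathrm{Lie}(M)$). By \eqref{eqn:root-action}, the corresponding $\tilde x_\alpha(F) \subset U(F)$ acts on $\phi(y^\flat, y')$ by multiplication by $\psi$ of a form quadratic in $y'$ (possibly bilinear with $y^\flat$). A density argument in $\Schw((Y')^*)$ — writing any $\phi'$ with $\phi'(0) = 0$ as a finite sum of terms $(\psi(Q) - 1)\phi''$, using a partition of unity on the support of $\phi'$ (which stays bounded away from $0$) together with the fact that the quadratic characters $\psi(Q(\cdot))$ separate any $y' \neq 0$ — shows that the resulting coinvariants collapse to $\CC$ via evaluation at $0$. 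After setting $y' = 0$, the remaining $U$-root subgroups $\tilde x_{\epsilon_i - \epsilon_j}(F)$ (with $i > n^\flat$ and $j$ in a different $M$-block) translate coordinates by multiples of $y'_i$, hence act trivially. Parity in $y^\flat$ is plainly preserved by the evaluation map.

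For the $\tilde M$-action, \eqref{eqn:Levi-splitting} gives $\tilde M \simeq \Mp(W^\flat) \times \prod_i \GL(n_i, F)$: the first factor acts on $\Schw((Y^\flat)^*)$ through its own Schrödinger model, yielding $\omega_\psi^\flat$ (parity-preserving); and each $g_i \in \GL(n_i, F)$ — sitting inside the Siegel Levi $\GL(n, F) \subset \Sp(W)$ — acts via \eqref{eqn:Siegel-action} by $|\det g_i|_F^{1/2}\phi(\transp{g_i}\,\cdot)$, which restricts to scalar multiplication by $|\det g_i|_F^{1/2}$ on the coinvariants because $\transp{g_i}$ fixes $y' = 0$. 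For the modular character, $\delta_P|_{\Sp(W^\flat)} = 1$ by semisimplicity, and on each $\GL(n_i, F)$ a root-by-root comparison shows that the long roots $2\epsilon_a$ of $\mathrm{Lie}(U)$ corresponding to the $i$-th $\GL$-block are replaced by the short roots $\epsilon_a$ in $\mathrm{Lie}(U^\pm)$, contributing one extra $|\epsilon_a(g_i)|_F$ per such $a$ on the $\Sp$-side. This yields
\[ \delta_P(g_i) = \delta_{P^\pm}(g_i) \cdot |\det g_i|_F, \]
so $|\det g_i|_F^{1/2} \cdot \delta_P(g_i)^{-1/2} = \delta_{P^\pm}(g_i)^{-1/2}$, and the asserted isomorphism $r_{\tilde P}(\omega_\psi^\pm) \simeq \omega_\psi^{\pm,\flat} \boxtimes \delta_{P^\pm}^{-1/2}$ follows (the hypothesis $n^\flat \geq 1$ in the $-$ case ensures that $\omega_\psi^{-,\flat}$ is defined). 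The main obstacle is the density/partition-of-unity argument for the coinvariants of $\Schw((Y')^*)$; alternatively, one may first handle the maximal parabolic case via the classical Jacquet-module computation of the Weil representation (cf.\ Mœglin--Vigneras--Waldspurger, Ch.\ I) and iterate using transitivity of Jacquet functors.
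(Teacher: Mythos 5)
Your proposal is correct but follows a genuinely different path from the paper. The paper disposes of the proposition in two sentences: it cites the known result for the maximal parabolic case ($M = \Sp(W^\flat) \times \GL(n - n^\flat)$, referring to Hanzer--Mu\'ic and Zelevinsky's computation $r_Q(\mathbf{1}) = \delta_Q^{-1/2}$), then iterates by transitivity of Jacquet functors, and finally remarks that the $\chi_\psi$-factors appearing in twofold-covering references disappear in the eightfold Schr\"odinger-model framework. You instead give a self-contained computation in the Schr\"odinger model for general reverse-standard $P$: identify $(\omega_\psi)_{U(F)}$ with $\Schw((Y^\flat)^*)$ via restriction to $y'=0$, verify the $\tilde M$-action via \eqref{eqn:root-action}--\eqref{eqn:Siegel-action}, and compare modular characters. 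You do mention the paper's route as the alternative in your last sentence, so you had the right instinct there.

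Both arguments are sound. What your direct approach buys is independence from the cited literature and from the twofold-vs-eightfold bookkeeping (since everything is done internally in the Schr\"odinger model); what it costs is the density/partition-of-unity argument for the kernel of evaluation, which you only sketch. That argument is standard (the quadratic characters $\psi(t y_i^2)$, $i > n^\flat$, already separate $y' \neq 0$, and local constancy plus compact support make the partition-of-unity work) and morally identical to the proof of the paper's Proposition~\ref{prop:coinvariant-type}, so there is no real gap, but it is the one place where a careful reader would want details. Two small points worth flagging: (a) the $\epsilon_i + \epsilon_j$ cross-terms with $i > n^\flat \geq j$ are bilinear in $(y^\flat, y')$ rather than quadratic in $y'$ alone, so the coinvariants do not literally factor as a tensor product $\Schw((Y^\flat)^*) \otimes \Schw((Y')^*)_{U}$; this is harmless since those characters vanish identically at $y' = 0$, which you do note. (b) Your "root-by-root" justification of $\delta_P|_{\GL(n_i)} = \delta_{P^\pm}|_{\GL(n_i)}\cdot|\det|_F$ is clean only if one works with the absolute root system over $\bar F$ (where $G^+$ and $G^-$ both become split $\SO(2n+1)$ and the comparison "$2\epsilon_a \leftrightarrow \epsilon_a$" is exact); if one were to reason with the relative root data of $G^-$ the short roots acquire multiplicity $3$ and some $\epsilon_a \pm \epsilon_{n^\flat}$ terms drop out, and these two effects cancel. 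Since you are comparing rational characters of a split factor $\GL(n_i)$, the absolute computation is the right one, so the formula stands, but it deserves a sentence of justification.
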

\begin{proof}
	This is well-known if $M = \Sp(W^\flat) \times \GL(n - n^\flat)$, See for example \cite[Theorem 5.1]{MH10} together with \cite[Example 3.2]{Ze80}. The general case follows by taking a further Jacquet functor on $\GL(n - n^\flat)$ using the general fact that $r_Q(\mathbf{1}) = \delta_Q^{-1/2}$.
	
	Note that in \cite{MH10} and many other references, one considers twofold metaplectic coverings. Consequently, $\tilde{M}$ was not split over $\GL$-factors and some terms $\chi_\psi$ appeared. These factors fade away once we compute with Schrödinger models in eightfold coverings; see eg.\ \cite[p.71]{MVW87}.
\end{proof}

Let $\Omega^M_{\mathrm{aff}}$ (resp.\ $(\Omega^M_{\mathrm{aff}})'$) be the product of the affine Weyl group $\Omega^{\flat}_{\mathrm{aff}}$ for $\Sp(W^\flat)$ (resp.\ the $-$ counterpart $(\Omega^{\flat}_{\mathrm{aff}})'$) and the extended affine Weyl groups for various $\GL(n_i)$. It embeds into $\Omega_{\mathrm{aff}}$ (resp.\ $\Omega'_{\mathrm{aff}}$), hence the notion of $P$-positivity makes sense for elements thereof.

\begin{lemma}\label{prop:omega-q}
	Let $w \in \Omega^M_{\mathrm{aff}}$ (resp.\ $(\Omega^M_{\mathrm{aff}})'$) be $P$-positive relative to $I$ (resp.\ $J$). Then
	\[ \mathrm{q}\left( T_w \cdot f \right) = \delta_{P^+}(w)^{-1/2} T^{\tilde{M}}_w \cdot \mathrm{q}(f), \]
	resp.\
	\[ \mathrm{q}\left( T'_w \cdot f \right) = \delta_{P^-}(w)^{-1/2} (T^{\tilde{M}}_w)' \cdot \mathrm{q}(f) \]
	where
	\begin{itemize}
		\item $f$ is in $\mathbf{M}_{\tau_0}(\omega_\psi^+)$ (resp.\ $\mathbf{M}_{\tau_1^-}(\omega_\psi^-)$);
		\item $\mathrm{q} = \mathrm{q}_U$ is as in Theorem \ref{prop:q-isom} with $\pi = \omega_\psi^+$ (resp.\ $\omega_\psi^-$), but we take $r_{\tilde{P}}$ instead of $(\cdot)_U$;
		\item $T^{\tilde{M}}_w$ (resp.\ $(T^{\tilde{M}}_w)'$) acts on $\mathbf{M}_{\tau_{0, \tilde{M}}}(r_{\tilde{P}}(\omega_\psi^+))$ (resp.\ $\mathbf{M}_{\tau_{1, \tilde{M}}^-}(r_{\tilde{P}}(\omega_\psi^-))$), and the equality is interpreted in this space;
		\item we choose representatives of $w$ to make sense of $\delta_{P^\pm}(w)$.
	\end{itemize}
\end{lemma}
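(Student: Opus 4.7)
I plan to treat the $+$ case in detail; the $-$ case will follow by the obvious parallel substitution ($\tau_0 \leadsto \tau_1^-$, $T_w \leadsto T'_w$, $\ell_0 \leadsto \ell_2$, $\omega_\psi^+ \leadsto \omega_\psi^-$, $\delta_{P^+} \leadsto \delta_{P^-}$, and Proposition \ref{prop:Weil-rep-M}(i) $\leadsto$ (ii)). The key observation is that both $\mathbf{M}_{\tau_0}(\omega_\psi^+)$ and $\mathbf{M}_{\tau_{0,\tilde{M}}}(r_{\tilde{P}}(\omega_\psi^+))$ are one-dimensional, so the claimed identity reduces to a single scalar equality; Theorem \ref{prop:q-isom} ensures $\mathrm{q}(f) \neq 0$, so I may compare scalars on both sides.

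For the left-hand side, Proposition \ref{prop:Weil-rep-M}(i) gives $T_w \cdot f = q^{\ell_0(w)} f$, hence $\mathrm{q}(T_w f) = q^{\ell_0(w)} \mathrm{q}(f)$. For the right-hand side, Proposition \ref{prop:Jacquet-Weil} identifies $r_{\tilde{P}}(\omega_\psi^+) \simeq \omega_\psi^{+,\flat} \boxtimes \delta_{P^+}^{-1/2}$ and yields the factorisation
\[
\mathbf{M}_{\tau_{0,\tilde{M}}}(r_{\tilde{P}}(\omega_\psi^+)) \;=\; \mathbf{M}_{\tau_0^\flat}(\omega_\psi^{+,\flat}) \otimes \bigotimes_{i=1}^r \mathbf{M}_{\mathbf{1}}\!\bigl(\delta_{P^+}^{-1/2}\big|_{\GL(n_i,F)}\bigr) .
\]
Writing $w = (w^\flat, w_1, \ldots, w_r)$ and $T^{\tilde{M}}_w = T^\flat_{w^\flat} \otimes \bigotimes_i T^{\GL}_{w_i}$, I will compute the scalar action factor by factor: Proposition \ref{prop:Weil-rep-M}(i) applied to $\Sp(W^\flat)$ gives $T^\flat_{w^\flat} \mapsto q^{\ell_0^\flat(w^\flat)}$, while the standard Iwahori--Hecke computation on the spherical line of the unramified character $\delta_{P^+}^{-1/2}|_{\GL(n_i, F)}$ gives $T^{\GL}_{w_i} \mapsto \delta_{P^+}^{-1/2}(\dot w_i)\, q^{\ell(w_i)}$. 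Multiplying, the right-hand side of the claim equals $\delta_{P^+}(\dot w)^{-1}\, q^{\ell_0^{\tilde{M}}(w)}\, \mathrm{q}(f)$, where $\ell_0^{\tilde{M}}$ denotes the obvious weighted length on $\Omega^M_{\mathrm{aff}}$.

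The claim thus reduces to the scalar equality
\[
q^{\ell_0(w)} \;=\; \delta_{P^+}(\dot w)^{-1}\, q^{\ell_0^{\tilde{M}}(w)} ,
\]
which is the parabolic index formula on the orthogonal side: for $P^+$-positive $\dot w$, the Iwahori decomposition $I^+ = I^+_{U^+}\, I^+_{M^+}\, I^+_{\overline{U}^+}$ (the $G^+$-avatar of Proposition \ref{prop:IJ-Iwahori-decomp}) yields
\[
(I^+ \dot w I^+ : I^+) \;=\; (I^+_{M^+} \dot w I^+_{M^+} : I^+_{M^+}) \cdot (I^+_{U^+} : \dot w I^+_{U^+} \dot w^{-1}) \;=\; q^{\ell_0^{\tilde{M}}(w)} \cdot \delta_{P^+}(\dot w)^{-1},
\]
while the left-hand side equals $q^{\ell_0(w)}$ by definition. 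The one genuinely fresh verification is that $P$-positivity of $w \in \Omega^M_{\mathrm{aff}}$ transfers to $P^+$-positivity on the orthogonal side under the Takeda--Wood identification of affine Weyl groups: both conditions amount to the $A_M = A_{M^+}$-component of $\dot w$ contracting the compact unipotent in the positive root subgroups outside the Levi, and the weight sets of $A_M$ on $U$ and on $U^+$ share the same sign patterns. Everything else is assembly of facts already established.
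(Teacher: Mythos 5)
Your reduction to the scalar identity $q^{\ell_0(w)} = \delta_{P^+}(\dot{w})^{-1} q^{\ell_0^{\tilde{M}}(w)}$, and the factor-by-factor computation feeding into it via Propositions \ref{prop:Weil-rep-M} and \ref{prop:Jacquet-Weil}, match the paper's argument step for step. The divergence is in how that last identity is verified. The paper reruns the argument on the $G^+$-side with the trivial representation, invokes Bushnell--Kutzko's Theorem 7.9 (i), and reads the scalar off the resulting Hecke-module identity; you instead compute the double-coset index $(I^+ \dot{w} I^+ : I^+)$ directly from the Iwahori decomposition of $I^+$ with respect to $P^+$, splitting off the Levi contribution $q^{\ell_0^{M^+}(w)}$ and the unipotent contribution $(I^+_{U^+} : \dot{w}\, I^+_{U^+}\, \dot{w}^{-1}) = \delta_{P^+}(\dot{w})^{-1}$. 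Your route is more elementary and self-contained, making the volume bookkeeping transparent; the paper's route recycles Bushnell--Kutzko machinery and thereby avoids supplying a separate Iwahori-decomposition argument for $G^+$ (which, being adjoint rather than simply connected, falls outside the literal scope of Proposition \ref{prop:Iwahori-decomp}, though the decomposition you need is standard). Both proofs take for granted the transfer of $P$-positivity across $\Omega^M_{\mathrm{aff}} \simeq \Omega^{M^+}$; you sketch a justification where the paper simply asserts it, so there is no gap on either side.
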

\begin{proof}
	Treat the $+$ case first. Decompose $w$ into $(w^\flat, w_{\GL})$ according to $M = \Sp(W^\flat) \times \prod_{i=1}^r \GL(n_i)$. Denote by $\ell_0^\flat$ (resp.\ $\ell_0^{\GL}$) the (weighted) length function on $\Mp(W^\flat)$ (resp.\ the product of $\GL(n_i)$'s), and define $\ell_0^{\tilde{M}} := \ell_0^\flat \ell_0^{\GL}$.
	
	The $T_w$-action is $q^{\ell_0(w)}$. In view of Proposition \ref{prop:Weil-rep-M}, the $T^{\tilde{M}}_w$-action on $\mathbf{M}_{\tau_{0, \tilde{M}}}(r_{\tilde{P}} (\omega_\psi^+)) \simeq \mathbf{M}_{\tau_{0, \tilde{M}}}(\omega_\psi^{+, \flat} \boxtimes \delta_{P^+}^{-1/2})$ (via Proposition \ref{prop:Jacquet-Weil}) equals
	\[ \underbracket{q^{\ell_0^\flat(w^\flat)}}_{\omega_\psi^{+, \flat}\;\text{on}\; \Mp(W^\flat)} \;
	\underbracket{q^{\ell^{\GL}_0(w_{\GL})}}_{\mathbf{1}\;\text{on}\; \prod_i \GL(n_i, F)} \;
	\delta_{P^+}(w)^{-1/2}. \]
	Our assertion thus amounts to $q^{\ell_0(w)} = \delta_{P^+}(w)^{-1} q^{\ell^{\tilde{M}}_0(w)}$.
	
	The same question can be posed for $P^+ = M^+ U^+ \subset G^+$ and the trivial representation $\mathbf{1}$ of $G^+(F)$: letting $\Omega^{M^+}$ be the $M^+$-counterpart of $\Omega^+$, the equation
	\[ \mathrm{q}(T^+_w \cdot f^+) = \delta_{P^+}(w)^{-1/2} T^{M^+}_w \cdot q(f^+) \]
	holds true in $\mathbf{M}_{\mathbf{1}}(r_{P^+}(\mathbf{1})) = r_{P^+}(\mathbf{1})^{I_{M^+}}$, provided that $w \in \Omega^{M^+}$ is $P^+$-positive. Indeed, this falls within the scope of \cite[Theorem 7.9 (i)]{BK98} in the Iwahori--Hecke case, by noting that $T^{G^+}_w = (\mathrm{t}\delta_{P^+}) (T_w^{M^+})$ in the notation of \textit{loc.\ cit.}
	
	By the standard computation of normalized co-invariants and Hecke modules associated with $\mathbf{1}$, the equality above is seen to imply
	\[ q^{\ell_0(w)} = \delta_{P^+}(w)^{-1} q^{\ell_0^{M^+}(w)} \]
	for $P^+$-positive $w$. The natural isomorphism $\Omega^M_{\mathrm{aff}} \simeq \Omega^{M^+}$ matches $P$-positive and $P^+$-positive elements, whereas the length function $\ell_0^{M^+}$ for $M^+$ matches $\ell_0^{\tilde{M}}$. This proves the desired equality.
	
	Finally, the $-$ case is entirely similar, by reducing to a parallel statement on $P^- \subset G^-$ and the trivial representations, using the (weighted) length function $\ell_2$.
\end{proof}

We now generalize this comparison to general smooth representations $\pi$ of $\tilde{G}$.

\begin{lemma}\label{prop:Hecke-equivariance-positive}
	Let $\pi$ be in $\tilde{G}\dcate{Mod}$.
	\begin{enumerate}[(i)]
		\item Let $w \in \Omega^M_{\mathrm{aff}}$ be $P$-positive (relative to $I$). Then
		\[ \mathrm{q}(T_w \cdot f) = \delta_{P^+}(w)^{-1/2} T_w^{\tilde{M}} \cdot \mathrm{q}(f) \]
		in $\mathbf{M}_{\tau_{0, \tilde{M}}}(r_{\tilde{P}}(\pi))$ for all $f \in \mathbf{M}_{\tau_0}(\pi)$.
		\item Let $w \in (\Omega^M_{\mathrm{aff}})'$ be $P$-positive (relative to $J$). Then
		\[ \mathrm{q}(T'_w \cdot f) = \delta_{P^-}(w)^{-1/2} (T_w^{\tilde{M}})' \cdot \mathrm{q}(f) \]
		in $\mathbf{M}_{\tau_{1, \tilde{M}}^-}(r_{\tilde{P}}(\pi))$ for all $f \in \mathbf{M}_{\tau_1^-}(\pi)$.
	\end{enumerate}
\end{lemma}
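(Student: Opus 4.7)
The strategy is to combine the $\pi$-independent correspondence $\Phi \mapsto \varphi$ of \eqref{eqn:Phi-to-phi} (whose interaction with $\mathrm{q}$ is recorded in Lemma \ref{prop:Hecke-Phi-to-phi} and its normalized form in Remark \ref{rem:Hecke-Phi-to-phi}) with the computation already done for Weil representations in Lemma \ref{prop:omega-q} to pin down the remaining constant. Concretely, I treat the $+$ case first; the $-$ case is formally identical.

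Fix $w \in \Omega^M_{\mathrm{aff}}$ that is $P$-positive, and pick a representative $\tilde{w} \in \tilde{M}$ once and for all. The element $T_w \in H_\psi^+$ has support $\tilde{I}\tilde{w}\tilde{I}$ by the recollections of \S\ref{sec:Hecke-Mp}. Let $\varphi_w \in H_\psi^{\tilde{M},+}$ be its image under \eqref{eqn:Phi-to-phi}. By Proposition \ref{prop:Phi-to-phi-isom} the image has support inside $\widetilde{I_M}\tilde{w}\widetilde{I_M}$, and by Corollary \ref{prop:TW-support} applied to the covering $\tilde{M}$ the space of elements of $H_\psi^{\tilde{M},+}$ with this support is one-dimensional and spanned by $T_w^{\tilde{M}}$. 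Hence there exists a scalar $c(w) \in \CC^{\times}$, depending only on $w$ (in particular \emph{not} on $\pi$ or $f$), such that
\[
	\varphi_w = c(w)\, T_w^{\tilde{M}}.
\]

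Applying Lemma \ref{prop:Hecke-Phi-to-phi} in its normalized form (Remark \ref{rem:Hecke-Phi-to-phi}) to $\Phi = T_w$ and any smooth $\pi \in \tilde{G}\dcate{Mod}$, we obtain the universal identity
\[
	\mathrm{q}(T_w \cdot f) \;=\; \delta_{\tilde{P}}(w)^{-1/2}\, c(w)\, T_w^{\tilde{M}} \cdot \mathrm{q}(f), \qquad f \in \mathbf{M}_{\tau_0}(\pi),
\]
valid in $\mathbf{M}_{\tau_{0,\tilde{M}}}(r_{\tilde{P}}(\pi))$. It remains to identify the constant $\delta_{\tilde{P}}(w)^{-1/2} c(w)$ with $\delta_{P^+}(w)^{-1/2}$. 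For this I specialize to $\pi = \omega_\psi^+$: by Proposition \ref{prop:Jacquet-Weil} and Proposition \ref{prop:Weil-rep-M} the target $\mathbf{M}_{\tau_{0,\tilde{M}}}(r_{\tilde{P}}(\omega_\psi^+))$ is one-dimensional and $T_w^{\tilde{M}}$ acts there by a nonzero scalar, while $\mathrm{q}$ is an isomorphism by Theorem \ref{prop:q-isom}, so choosing any $f \neq 0$ allows cancellation. Comparing the universal identity above with Lemma \ref{prop:omega-q} forces
\[
	\delta_{\tilde{P}}(w)^{-1/2}\, c(w) \;=\; \delta_{P^+}(w)^{-1/2},
\]
which yields (i). The $-$ case proceeds verbatim, replacing $(I,\tau_0,H_\psi^+,T_w,\Omega^M_{\mathrm{aff}})$ by $(J,\tau_1^-,H_\psi^-,T'_w,(\Omega^M_{\mathrm{aff}})')$, invoking the $-$ parts of Lemma \ref{prop:Hecke-Phi-to-phi}, Proposition \ref{prop:Phi-to-phi-isom}, Corollary \ref{prop:TW-support}, and Lemma \ref{prop:omega-q}; the assumption $n^\flat \geq 1$ is precisely what ensures all these inputs are available.

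The only potentially delicate point is the extraction of the constant: one must make sure that the universal constant $c(w)$ obtained from \eqref{eqn:Phi-to-phi} is truly $\pi$-independent, so that its value can be read off from a single well-chosen representation. This is guaranteed because the correspondence $\Phi \mapsto \varphi$ depends only on the abstract Hecke data $(z, \tau_0, I)$ (resp.\ $(z, \tau_1^-, J)$) and the induced map on $\Hom$-spaces between twisted types, not on $\pi$; thus no part of the argument circles back to $\omega_\psi^\pm$ except in the final normalization step.
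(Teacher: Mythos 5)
Your argument is correct and is essentially the paper's own proof: both introduce the $\pi$-independent scalar via the one-dimensionality of the space of Hecke elements supported on a fixed double coset (Corollary \ref{prop:TW-support} together with Proposition \ref{prop:Phi-to-phi-isom}), feed it through the normalized identity of Remark \ref{rem:Hecke-Phi-to-phi}, and pin the constant down by specializing to $\pi = \omega_\psi^{\pm}$ via Lemma \ref{prop:omega-q}. The only cosmetic difference is the direction in which you parametrize the bijection \eqref{eqn:Phi-to-phi} — you write $\varphi_w = c(w)\,T_w^{\tilde{M}}$ starting from $\Phi = T_w$, whereas the paper writes $\Phi = \lambda_w T_w$ starting from $\varphi = T_w^{\tilde{M}}$, with $c(w) = \lambda_w^{-1}$.
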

\begin{proof}
	It is enough to treat (i) since (ii) is entirely similar. Put $\varphi := T^{\tilde{M}}_w \in H_\psi^{\tilde{M}, +}$, and let $\Phi$ be its preimage under \eqref{eqn:Phi-to-phi} using Proposition \ref{prop:Phi-to-phi-isom}. By comparing supports and using Corollary \ref{prop:TW-support}, there exists $\lambda_w \in \CC^{\times}$ such that $\Phi = \lambda_w T_w$ in $H_\psi^+$.
	
	According to Remark \ref{rem:Hecke-Phi-to-phi}, for all $\pi$ and all $f \in \mathbf{M}_{\tau_0}(\pi)$ we have
	\[ \delta_P(w)^{1/2} \lambda_w \mathrm{q}\left(T_w \cdot f \right) = \mathrm{q}\left( \delta_P(w)^{1/2} \Phi \cdot f \right) = T_w^{\tilde{M}} \cdot \mathrm{q}(f) \]
	in $\mathbf{M}_{\tau_{0, \tilde{M}}}(r_{\tilde{P}}(\pi))$. Apply this to $\pi = \omega_\psi^+$ and invoke Lemma \ref{prop:omega-q}. Recalling that $T_w$ and $T_w^{\tilde{M}}$ are invertible, $\mathrm{q}$ is bijective, and all Hecke modules in view are nonzero in this case, we infer that
	\[ \delta_P(w)^{1/2} \lambda_w = \delta_{P^+}(w)^{1/2}. \]
	
	Plugging $\lambda_w = \left(\frac{\delta_{P^+}(w)}{\delta_P(w)}\right)^{1/2}$ back into the case of general $\pi$, the desired equality follows.
\end{proof}

\subsection{Hecke equivariance}
Retain the assumptions on $P = MU \subset G$ and $P^{\pm} \subset G^{\pm}$ from \S\ref{sec:align-Hecke}.

According to (6.3), (6.12) and (7.2) of \cite{BK98}, applied to the special case of Iwahori--Hecke algebras, there exists a unique embedding of algebras
\[ \mathrm{t}^{\pm}: H^{M^\pm} \hookrightarrow H^{\pm} \]
which maps $T^{M^\pm}_w$ to $T^{\pm}_w$ for all $w \in \Omega^{\pm}$ that is $P^{\pm}$-positive.

In \textit{loc.\ cit.}, one works primarily with the homomorphism $\mathrm{t}^{\pm} \delta_{P^{\pm}}: h \mapsto \mathrm{t}^{\pm}(\delta_{P^{\pm}}h)$ (cf.\ Definition \ref{def:character-twist}). In this article we prefer
\begin{equation*}
	\mathfrak{t}^{\pm}_{\mathrm{nor}} := \mathrm{t}^{\pm} \delta_{P^\pm}^{\frac{1}{2}} : H^{M^\pm} \hookrightarrow H^{\pm}.
\end{equation*}

The benefit is that $\mathfrak{t}^{\pm}_{\mathrm{nor}}$ is a homomorphism between Hilbert algebras. See \cite[\S\S 6.2---6.3]{BHK11}.

\begin{definition}\label{def:tnor}
	Let $\mathrm{t}_{\mathrm{nor}}$ be the composition of
	\[ H_\psi^{\tilde{M}, \pm} \xrightarrow[\sim]{{(\mathrm{TW}^{\tilde{M}})^{-1}}} H^{M^\pm} \xrightarrow{\mathrm{t}_{\mathrm{nor}}^\pm} H^{\pm} \xrightarrow[\sim]{\mathrm{TW}} H_\psi^\pm. \]
	It is an embedding of Hilbert algebras, since each arrow above is.
\end{definition}

We are ready to state the first main result of this article. Recall from Theorem \ref{prop:q-isom} that we have
\begin{itemize}
	\item $\mathrm{q} = \mathrm{q}_U: \mathbf{M}_{\tau_0}(\pi) \to \mathbf{M}_{\tau_{0, \tilde{M}}}(r_{\tilde{P}}(\pi))$ (in the $+$ case),
	\item $\mathrm{q} = \mathrm{q}_U: \mathbf{M}_{\tau_1^-}(\pi) \to \mathbf{M}_{\tau_{1, \tilde{M}}^-}(r_{\tilde{P}}(\pi))$ (in the $-$ case)
\end{itemize}
between vector spaces, functorial in $\pi$ from $\tilde{G}\dcate{Mod}$.

\begin{theorem}\label{prop:Hecke-equivariance}
	The linear map $\mathrm{q}$ is Hecke-equivariant with respect to $\mathrm{t}_{\mathrm{nor}}$. Namely, for all
	\begin{itemize}
		\item $f \in \mathbf{M}_{\tau_0}(\pi)$ and $T \in H_\psi^{\tilde{M}, +}$ (in the $+$ case),
		\item $f \in \mathbf{M}_{\tau_1^-}(\pi)$ and $T \in H_\psi^{\tilde{M}, -}$ (in the $-$ case),
	\end{itemize}
	we have
	\[ \mathrm{q}(\mathrm{t}_{\mathrm{nor}}(T) \cdot f) = T \cdot \mathrm{q}(f). \]
\end{theorem}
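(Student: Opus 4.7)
The plan is to derive the theorem from Lemma \ref{prop:Hecke-equivariance-positive} (which handles $P$-positive elements) by a Bernstein-style localization argument, working uniformly in both the $\pm$ cases. The roadmap has two steps: first establish the equivariance on basis elements $T_w^{\tilde{M}}$ with $w$ $P$-positive, then bootstrap to arbitrary $w$ by left-multiplying with a strictly positive central translation and inverting.

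\textbf{Positive elements.} Let $w \in \Omega^M_{\mathrm{aff}}$ (resp.\ $(\Omega^M_{\mathrm{aff}})'$) be $P$-positive. Chasing through Definition \ref{def:tnor}: $(\mathrm{TW}^{\tilde{M}})^{-1}$ sends $T_w^{\tilde{M}}$ to $T_w^{M^\pm}$; the Bushnell--Kutzko embedding satisfies $\mathrm{t}^\pm(T_w^{M^\pm}) = T_w^{G^\pm}$ by its defining property on positive elements, so that $\mathrm{t}^\pm_{\mathrm{nor}}(T_w^{M^\pm}) = \delta_{P^\pm}(w)^{1/2} T_w^{G^\pm}$; finally $\mathrm{TW}$ carries this to $\delta_{P^\pm}(w)^{1/2} T_w$ (resp.\ $\delta_{P^\pm}(w)^{1/2} T'_w$) by Theorem \ref{prop:TW-isom}. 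Inserting this into Lemma \ref{prop:Hecke-equivariance-positive} cancels the $\delta_{P^\pm}(w)^{\pm 1/2}$ factors and yields $\mathrm{q}(\mathrm{t}_{\mathrm{nor}}(T_w^{\tilde{M}}) \cdot f) = T_w^{\tilde{M}} \cdot \mathrm{q}(f)$ for every such $w$ and every $f$.

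\textbf{Extension via a strictly positive central element.} Fix a cocharacter $\zeta \in X_*(A_M)$ that is strictly $P$-dominant, i.e.\ $|\alpha(\zeta)|_F < 1$ for every $\alpha \in \Phi(U)$; such $\zeta$ exists because $A_M$ strictly contracts $U$ (concretely, take a suitable power of $(1, (\varpi \mathbf{1}_{n_1}, \ldots, \varpi \mathbf{1}_{n_r}))$). View $\zeta$ as a translation element in the relevant affine Weyl group. For any given $w$, strict dominance ensures that for all sufficiently large $k$, both $\zeta^k$ and $\zeta^k w$ are $P$-positive and satisfy the length-additivity required for $T_{\zeta^k w}^{\tilde{M}} = T_{\zeta^k}^{\tilde{M}} T_w^{\tilde{M}}$ in $H_\psi^{\tilde{M}, \pm}$. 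Moreover $T_{\zeta^k}^{\tilde{M}}$ is invertible in $H_\psi^{\tilde{M}, \pm}$ by Corollary \ref{prop:TW-support} transported through $\mathrm{TW}^{\tilde{M}}$. Setting $g := \mathrm{t}_{\mathrm{nor}}(T_w^{\tilde{M}}) f$, applying the positive case twice (once to $\zeta^k$, once to $\zeta^k w$) gives
\[ T_{\zeta^k}^{\tilde{M}} \cdot \mathrm{q}(g) = \mathrm{q}\!\left(\mathrm{t}_{\mathrm{nor}}(T_{\zeta^k}^{\tilde{M}}) g\right) = \mathrm{q}\!\left(\mathrm{t}_{\mathrm{nor}}(T_{\zeta^k w}^{\tilde{M}}) f\right) = T_{\zeta^k w}^{\tilde{M}} \cdot \mathrm{q}(f) = T_{\zeta^k}^{\tilde{M}} T_w^{\tilde{M}} \cdot \mathrm{q}(f), \]
using that $\mathrm{t}_{\mathrm{nor}}$ is an algebra homomorphism. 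Cancelling the invertible $T_{\zeta^k}^{\tilde{M}}$ leaves $\mathrm{q}(g) = T_w^{\tilde{M}} \cdot \mathrm{q}(f)$, which is the required equivariance on $T_w^{\tilde{M}}$. Since these elements form a $\CC$-basis of $H_\psi^{\tilde{M}, \pm}$ as $w$ ranges over the corresponding affine Weyl group, linearity extends the identity to every $T \in H_\psi^{\tilde{M}, \pm}$.

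\textbf{Main obstacle.} The only delicate point is the length-additivity $\ell(\zeta^k w) = \ell(\zeta^k) + \ell(w)$ needed to write $T_{\zeta^k w}^{\tilde{M}} = T_{\zeta^k}^{\tilde{M}} T_w^{\tilde{M}}$. In the $+$ case this follows from the standard Iwahori--Hecke picture; in the $-$ case one must verify it for the weighted length $\ell'$ on $\Omega'_{\mathrm{aff}} \simeq \Omega^-$ underlying $H_\psi^{-}$, but because the $-$ side is an affine Weyl group of type $\mathrm{C}_{n-1}$ with weight $2$ on the short-root end (\S\ref{sec:Hecke-Mp}), strict dominance of $\zeta$ with respect to $U$ still forces additivity once $k$ is large enough. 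The rest of the argument is a formal consequence of the positive case and the invertibility of Hecke operators furnished by Corollary \ref{prop:TW-support}.
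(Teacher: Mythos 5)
Your proposal is correct and follows essentially the same route as the paper: reduce by linearity to $T = T_w^{\tilde{M}}$, handle the $P$-positive case by tracing $\mathrm{t}_{\mathrm{nor}}(T_w^{\tilde{M}}) = \delta_{P^{\pm}}(w)^{1/2} T_w$ (resp.\ $T'_w$) through the definitions and invoking Lemma \ref{prop:Hecke-equivariance-positive}, and then bootstrap to arbitrary $w$ by left-multiplying with a central $P$-positive translation and cancelling the invertible $T_z^{\tilde{M}}$. The paper does the same thing, choosing ``a central and $P$-positive element $z \in \Omega^M_{\mathrm{aff}}$ such that $zw$ is $P$-positive'' and applying the positive-case lemma twice; your explicit unwinding of $\mathrm{t}_{\mathrm{nor}}$ on positive elements makes visible a step the paper leaves implicit when it cites the lemma.

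One remark on your ``main obstacle'' paragraph: the length-additivity $T_{\zeta^k}^{\tilde{M}} T_w^{\tilde{M}} = T_{\zeta^k w}^{\tilde{M}}$ is not actually a delicate point. Since $\zeta \in X_*(A_M)$ and $A_M$ is trivial on the $\Sp(W^\flat)$ factor and central in each $\GL(n_i)$ factor, the element $\zeta^k$ has length zero in $\Omega^M_{\mathrm{aff}}$ (resp.\ $(\Omega^M_{\mathrm{aff}})'$) for \emph{every} $k$ --- it vanishes on all roots of $M$, so the weighted length $\ell'$ also vanishes on it. Consequently $T_{\zeta^k}^{\tilde{M}} T_w^{\tilde{M}} = T_{\zeta^k w}^{\tilde{M}}$ holds automatically for every $w$, with no constraint from $k$ and no dependence on the type-$\mathrm{C}$ weighting. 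The only condition you genuinely need $k$ large for is $P$-positivity of $\zeta^k w$, which your strict dominance hypothesis correctly supplies. This is a simplification rather than a gap; the argument as written still goes through.
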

\begin{proof}
	For ease of notation, we only treat the $+$ case. By linearity, it suffices to deal with the case $T = T^{\tilde{M}}_w$ for some $w \in \Omega^M_{\mathrm{aff}}$. There exists a central and $P$-positive element $z \in \Omega^M_{\mathrm{aff}}$ such that $zw$ is $P$-positive, which we fix. Thus, $T^{\tilde{M}}_z T^{\tilde{M}}_w = T^{\tilde{M}}_{zw}$.
	
	By Lemma \ref{prop:Hecke-equivariance-positive},
	\[ \mathrm{q}\left(\mathrm{t}_{\mathrm{nor}}\left(T^{\tilde{M}}_{zw}\right) \cdot f\right) = T^{\tilde{M}}_{zw} \cdot \mathrm{q}(f) = T^{\tilde{M}}_z T^{\tilde{M}}_w \cdot \mathrm{q}(f). \]
	
	We have $\mathrm{t}_{\mathrm{nor}}\left( T^{\tilde{M}}_{zw} \right) = \mathrm{t}_{\mathrm{nor}}\left( T^{\tilde{M}}_z T^{\tilde{M}}_w \right) = \mathrm{t}_{\mathrm{nor}}\left( T^{\tilde{M}}_z \right) \mathrm{t}_{\mathrm{nor}}\left( T^{\tilde{M}}_w \right)$. Set $r := \mathrm{t}_{\mathrm{nor}}\left( T^{\tilde{M}}_w\right) \cdot f$. Using Lemma \ref{prop:Hecke-equivariance-positive} once again, we get
	\[ \mathrm{q}\left(\mathrm{t}_{\mathrm{nor}}\left(T^{\tilde{M}}_z\right) \cdot r \right) = T^{\tilde{M}}_z \cdot \mathrm{q}(r). \]
	
	We conclude that
	\[ T^{\tilde{M}}_z T^{\tilde{M}}_w \cdot \mathrm{q}(f) = T_z^{\tilde{M}} \cdot \mathrm{q}\left(\mathrm{t}_{\mathrm{nor}}\left( T^{\tilde{M}}_w\right) \cdot f\right). \]
	As $T_z^{\tilde{M}}$ is invertible, the desired equality follows.
\end{proof}

\subsection{Jacquet functor and parabolic induction for Hecke modules}
Let $B^{\leftarrow} \subset P = MU \subset G$ be as in \S\ref{sec:align-Hecke}. With the homomorphism $\mathrm{t}_{\mathrm{nor}}$ of Definition \ref{def:tnor}, Theorem \ref{prop:Hecke-equivariance} can be rephrased as the commutative ``$2$-cell''
\begin{equation}\label{eqn:r-compatibility}\begin{tikzcd}
	\tilde{G}\dcate{Mod} \arrow[r, "\mathbf{M}"] \arrow[d, "{r_{\tilde{P}} }"'] & {H_\psi^\pm}\dcate{Mod} \arrow[d, "{\mathrm{t}_{\mathrm{nor}}^*}"] \arrow[Rightarrow, ld, "\mathrm{q}", "\sim" sloped] \\
	\tilde{M}\dcate{Mod} \arrow[r, "\mathbf{M}"'] & {H_\psi^{\tilde{M}, \pm}}\dcate{Mod}
\end{tikzcd}\end{equation}
where the functors $\mathbf{M}$ (Definition \ref{def:master-functor}) are associated with $(\tilde{I}, \tau_0)$, $(\tilde{J}, \tau_1^-)$ or their avatars for $\tilde{M}$. By adjunction, we get the commutative $2$-cell
\begin{equation}\label{eqn:i-compatibility}\begin{tikzcd}
	\tilde{M}\dcate{Mod} \arrow[r, "\mathbf{M}"] \arrow[d, "{i_{\tilde{P}} }"'] & {H_\psi^{\tilde{M}, \pm}}\dcate{Mod} \arrow[d, "{ \Hom_{H_\psi^{\tilde{M}, \pm}}(H_\psi^\pm, \cdot) }"] \\
	\tilde{G}\dcate{Mod} \arrow[r, "\mathbf{M}"'] \arrow[Rightarrow, ru, "\sim" sloped] & {H_\psi^\pm}\dcate{Mod}
\end{tikzcd}\end{equation}
where $H_\psi^\pm$ is viewed as an $(H_\psi^{\tilde{M}, \pm}, H_\psi^\pm)$-bimodule via $\mathrm{t}_{\mathrm{nor}}$.

Let $\mathcal{G}_\psi^{\tilde{M}, \pm}$ (resp.\ $\mathcal{G}^{M^\pm}$) be the avatars of $\mathcal{G}_\psi^{\pm}$ (resp.\ $\mathcal{G}^{\pm}$) for $\tilde{M}$ (resp.\ $M^\pm$): on the $\GL$ factors they are simply the Iwahori-spherical blocks. Let $P^{\pm} \subset G^{\pm}$ be the reverse-standard parabolic subgroups corresponding naturally to $P \subset G$.

\begin{proposition}\label{prop:ri-compatibility}
	The following diagram commutes up to a canonical isomorphism.
	\[\begin{tikzcd}
		\mathcal{G}_\psi^{\tilde{M}, \pm} \arrow[r] \arrow[d, "{i_{\tilde{P}}}"'] & \mathcal{G}^\pm \arrow[d, "{i_{P^\pm}}"] \\
		\mathcal{G}_\psi^{\pm} \arrow[r] & \mathcal{G}^{\pm}
	\end{tikzcd}\]
	where the horizontal arrows are given by $\mathrm{TW}$ (Theorem \ref{prop:TW-components}).
	
	There is a similar diagram for normalized Jacquet functors, which commutes up to a canonical isomorphism.
\end{proposition}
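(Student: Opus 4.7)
The plan is to translate both assertions into statements about Hecke modules via the equivalences $\mathbf{M}: \mathcal{G}_\psi^{\pm} \rightiso H_\psi^{\pm}\dcate{Mod}$ and its Levi analogue (Theorem \ref{prop:TW-components}), together with the classical Iwahori equivalences $\sigma \mapsto \sigma^{I^\pm}$ giving $\mathcal{G}^{\pm} \simeq H^{\pm}\dcate{Mod}$ and its Levi analogue for $M^\pm$. Under these equivalences, the horizontal arrows of the diagram correspond to the change-of-rings functors $(\mathrm{TW})^*$ and $(\mathrm{TW}^{\tilde{M}})^*$.

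To handle $i_{\tilde{P}}$, I combine the commutative $2$-cell \eqref{eqn:i-compatibility} on the metaplectic side with the Iwahori--Hecke case of Bushnell--Kutzko \cite[\S\S 7--8]{BK98} on the orthogonal side. The latter supplies a natural isomorphism
\[ i_{P^\pm}(\sigma)^{I^\pm} \rightiso \Hom_{H^{M^\pm}}\!\bigl(H^\pm,\, \sigma^{I^\pm \cap M^\pm(F)}\bigr), \quad \sigma \in \mathcal{G}^{M^\pm}, \]
where $H^\pm$ is made into an $(H^{M^\pm}, H^\pm)$-bimodule via $\mathfrak{t}_{\mathrm{nor}}^\pm$. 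The problem thus reduces to producing a canonical natural isomorphism
\[ \Hom_{H^{M^\pm}}\!\bigl(H^\pm,\, (\mathrm{TW}^{\tilde{M}})^* N\bigr) \rightiso (\mathrm{TW})^* \Hom_{H_\psi^{\tilde{M}, \pm}}\!\bigl(H_\psi^\pm,\, N\bigr) \]
for $N \in H_\psi^{\tilde{M}, \pm}\dcate{Mod}$. But by Definition \ref{def:tnor} we have $\mathrm{t}_{\mathrm{nor}} = \mathrm{TW} \circ \mathfrak{t}_{\mathrm{nor}}^\pm \circ (\mathrm{TW}^{\tilde{M}})^{-1}$, so the pair $(\mathrm{TW}^{\tilde{M}}, \mathrm{TW})$ identifies the $(H^{M^\pm}, H^\pm)$-bimodule $H^\pm$ with the $(H_\psi^{\tilde{M}, \pm}, H_\psi^\pm)$-bimodule $H_\psi^\pm$, and the desired isomorphism is obtained by transport of structure.

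The Jacquet case is entirely parallel: substitute \eqref{eqn:r-compatibility} for \eqref{eqn:i-compatibility}, and replace the orthogonal input by the Bushnell--Kutzko identification $r_{P^\pm}(\sigma)^{I^\pm \cap M^\pm(F)} \rightiso (\mathfrak{t}_{\mathrm{nor}}^\pm)^* (\sigma^{I^\pm})$; the same transport-of-structure argument applies. There is no real obstacle, since Theorem \ref{prop:Hecke-equivariance} already provides the nontrivial metaplectic input; the only point requiring care is to verify that the canonical isomorphisms furnishing \eqref{eqn:r-compatibility} and \eqref{eqn:i-compatibility} (namely the $\mathrm{q}$-map of Proposition \ref{prop:q-map} and its adjoint) are natural transformations in $\pi$ with the same formal shape as their Bushnell--Kutzko analogues on the $G^\pm$ side, which is immediate from the explicit formulas.
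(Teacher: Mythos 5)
Your argument is essentially the same as the paper's: both combine the metaplectic $2$-cell \eqref{eqn:i-compatibility}, the Bushnell--Kutzko identification $i_{P^\pm}(\cdot)^{I^\pm} \simeq \Hom_{H^{M^\pm}}(H^\pm, \cdot)$ on the orthogonal side, and the observation that the definition of $\mathrm{t}_{\mathrm{nor}}$ makes the pair $(\mathrm{TW}^{\tilde{M}}, \mathrm{TW})$ intertwine the two $\Hom$-functors; the paper phrases this last step as a third commuting square, which is just a repackaging of your transport-of-structure isomorphism. The Jacquet case is treated identically in both.
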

\begin{proof}
	Consider the case of parabolic induction. Combine the diagrams
	\begin{equation*}\begin{tikzcd}
		\mathcal{G}_\psi^{\tilde{M}, \pm} \arrow[r, "\sim"] \arrow[d, "{i_{\tilde{P}} }"'] & {H_\psi^{\tilde{M}, \pm}}\dcate{Mod} \arrow[d, "{ \Hom_{H_\psi^{\tilde{M}, \pm}}(H_\psi^\pm, \cdot) }"] \\
		\mathcal{G}_\psi^{\pm} \arrow[r, "\sim"'] & {H_\psi^\pm}\dcate{Mod}
	\end{tikzcd} \quad \begin{tikzcd}
		\mathcal{G}^{M^{\pm}} \arrow[r, "\sim"] \arrow[d, "{i_{P^{\pm}} }"'] & {H^{M^{\pm}}}\dcate{Mod} \arrow[d, "{ \Hom_{H^{M^{\pm}}}(H^\pm, \cdot) }"] \\
		\mathcal{G}^{\pm} \arrow[r, "\sim"'] & {H^\pm}\dcate{Mod}
	\end{tikzcd}\end{equation*}
	coming from \eqref{eqn:i-compatibility} and its analogue for $G^\pm$ established in \cite[\S 7]{BK98} (using $\mathrm{t}^{\pm}_{\mathrm{nor}}: H^{M^\pm} \to H^{\pm}$), and
	\begin{equation*}\begin{tikzcd}
		H_\psi^{\tilde{M}, \pm}\dcate{Mod} \arrow[r, "\sim", "{\mathrm{TW}^{\tilde{M}, *}}"'] \arrow[d] & H^{M^{\pm}}\dcate{Mod} \arrow[d] \\
		H_\psi^{\pm}\dcate{Mod} \arrow[r, "\sim"', "{\mathrm{TW}^*}"] & H^{\pm}\dcate{Mod}
	\end{tikzcd}\end{equation*}
	where the vertical functors are given by $\Hom$ as before. They all commute up to canonical isomorphisms: for the last diagram, this follows directly from the definition of $\mathrm{t}_{\mathrm{nor}}$.
	
	The case of normalized Jacquet modules is entirely similar.
\end{proof}

We have to make the natural isomorphism $\mathbf{M} \circ i_{\tilde{P}} \rightiso \Hom_{H_\psi^{\tilde{M}, \pm}}(H_\psi^\pm, \mathbf{M}(\cdot))$ explicit. It turns out to be ``the obvious one''.

\begin{proposition}\label{prop:Hecke-induction}
	The aforementioned natural isomorphism is given as follows. Let $\pi$ be in $\tilde{M}\dcate{Mod}$. For all $\sigma$ in $\mathbf{M}_{\tau_0}(i_{\tilde{P}}(\pi))$ (resp.\ $\mathbf{M}_{\tau_1^-}(i_{\tilde{P}}(\pi))$). Its image is the homomorphism $H_\psi^+ \to \mathbf{M}_{\tau_{0, \tilde{M}}}(\pi)$ (resp.\ $H_\psi^- \to \mathbf{M}_{\tau_{1, \tilde{M}}^-}(\pi)$) that maps $h \in H_\psi^+$ (resp.\ $h \in H_\psi^-$) to the element
	\[ \overline{\phi} \mapsto \underbracket{(h \cdot \sigma)(\phi)}_{\in i_{\tilde{P}}(\pi)} (1_{\tilde{G}}) \]
	where $\overline{\phi} \in V_{\tau_{0, \tilde{M}}}$ (resp.\ $\overline{\phi} \in V_{\tau_{1, \tilde{M}}^-}$) and $\phi \in V_{\tau_0}$ (resp.\ $\phi \in V_{\tau_1^-}$) is any preimage of $\overline{\phi}$.
\end{proposition}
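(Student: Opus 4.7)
The plan is to identify the canonical isomorphism $\beta: \mathbf{M}(i_{\tilde{P}}(\pi)) \rightiso \Hom_{H_\psi^{\tilde{M}, \pm}}(H_\psi^{\pm}, \mathbf{M}(\pi))$ as the mate of $\mathrm{q}$ relative to the two adjunctions $r_{\tilde{P}} \dashv i_{\tilde{P}}$ and $\mathrm{t}_{\mathrm{nor}}^* \dashv \Hom_{H_\psi^{\tilde{M}, \pm}}(H_\psi^{\pm}, \cdot)$, then unwind this mate using the explicit description of $\mathrm{q}$ furnished by Proposition \ref{prop:q-map}. Since \eqref{eqn:i-compatibility} is obtained from \eqref{eqn:r-compatibility} by adjunction, the asserted isomorphism is precisely this mate.

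By the general mate recipe, for $\sigma \in \mathbf{M}(i_{\tilde{P}}(\pi))$ the image $\beta(\sigma)$ sends $h \in H_\psi^{\pm}$ to
\[ \mathbf{M}(\epsilon_\pi)\bigl(\mathrm{q}_{i_{\tilde{P}}(\pi)}(h \cdot \sigma)\bigr), \]
where $\epsilon_\pi: r_{\tilde{P}}(i_{\tilde{P}}(\pi)) \to \pi$ is the counit of $r_{\tilde{P}} \dashv i_{\tilde{P}}$ at $\pi$. The three ingredients in this composition are: the unit $\sigma \mapsto [h \mapsto h\cdot\sigma]$ of the Hecke adjunction (trivial to describe); the natural isomorphism $\mathrm{q}$, described by Proposition \ref{prop:q-map} as $\overline{\phi} \mapsto \overline{(h\cdot\sigma)(\phi)}$; and the counit $\epsilon_\pi$, which is the only remaining item to make explicit.

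For the counit, I would verify that $\epsilon_\pi$ is evaluation at $1_{\tilde{G}}$: an element $f \in i_{\tilde{P}}(\pi)$, viewed in $r_{\tilde{P}}(i_{\tilde{P}}(\pi)) = (i_{\tilde{P}}(\pi) \otimes \delta_{\tilde{P}}^{-1/2})_{U(F)}$ as a class $\overline{f}$, is sent to $f(1_{\tilde{G}}) \in V_\pi$. Well-definedness on $U(F)$-coinvariants is clear; $\tilde{M}$-equivariance follows because for $m \in \tilde{M}$ the action on $\overline{f}$ produces $\delta_{\tilde{P}}^{-1/2}(m) f(\,\cdot\, m)$, whose value at $1$ is $\delta_{\tilde{P}}^{-1/2}(m) \delta_{\tilde{P}}^{1/2}(m) \pi(m) f(1) = \pi(m) f(1)$; and the map corresponds to $\identity_{i_{\tilde{P}}(\pi)}$ under the defining adjunction isomorphism $\Hom_{\tilde{G}}(i_{\tilde{P}}(\pi), i_{\tilde{P}}(\pi)) \simeq \Hom_{\tilde{M}}(r_{\tilde{P}}(i_{\tilde{P}}(\pi)), \pi)$, hence must be the counit.

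Substituting these pieces yields $\beta(\sigma)(h)(\overline{\phi}) = (h \cdot \sigma)(\phi)(1_{\tilde{G}})$, which is precisely the formula asserted. The only point requiring care is the bookkeeping in the previous paragraph: confirming that the factors $\delta_{\tilde{P}}^{\pm 1/2}$ built into the normalized functors cancel at the identity, so that no residual twist appears in the counit. Everything else is a purely formal unwinding of the mate construction together with Proposition \ref{prop:q-map}.
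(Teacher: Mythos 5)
Your proposal is correct and follows essentially the same route as the paper's proof: identify the isomorphism as the mate of $\mathrm{q}$ under the two adjunctions, note that the counit of $r_{\tilde{P}} \dashv i_{\tilde{P}}$ is evaluation at $1_{\tilde{G}}$, and unwind via the explicit description of $\mathrm{q}$ from Proposition \ref{prop:q-map}. The one place you go beyond the paper --- actually verifying that evaluation at $1_{\tilde{G}}$ is well-defined on $U(F)$-coinvariants, is $\tilde{M}$-equivariant after the $\delta_{\tilde{P}}^{\pm 1/2}$ cancellation, and corresponds to $\identity_{i_{\tilde{P}}(\pi)}$ under the adjunction isomorphism --- is a sound supplement to what the paper merely asserts, but it is bookkeeping rather than a different argument.
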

\begin{proof}
	The argument is formal, and we sketch the $+$ case only. There is an $H_\psi^{\tilde{M}, +}$-linear isomorphism
	\[ \mathrm{q}: \mathbf{M}_{\tau_0}(i_{\tilde{P}}(\pi)) \rightiso \mathbf{M}_{\tau_{0, \tilde{M}}}(r_{\tilde{P}} i_{\tilde{P}}(\pi)) \]
	where $H_\psi^{\tilde{M}, +}$ acts on the left hand side through $\mathrm{t}_{\mathrm{nor}}$. Composed with the co-unit $\epsilon: r_{\tilde{P}} i_{\tilde{P}}(\pi) \to \pi$, namely the evaluation at $1_{\tilde{G}}$, we obtain an $H_\psi^{\tilde{M}, +}$-linear map $\mathbf{M}_{\tau_0}(i_{\tilde{P}}(\pi)) \to \mathbf{M}_{\tau_{0, \tilde{M}}}(\pi)$.
	
	By adjunction, we obtain an $H_\psi^+$-linear map
	\[ \mathbf{M}_{\tau_0}(i_{\tilde{P}}(\pi)) \to \Hom_{H_\psi^{\tilde{M}, +}}\left( H_\psi^+ , \mathbf{M}_{\tau_{0, \tilde{M}}}(\pi) \right). \]
	
	This furnishes the natural transformation, for formal reasons. To be precise, by recalling the definition of $\mathrm{q}$ in Proposition \ref{prop:q-map}, every $\sigma \in \mathbf{M}_{\tau_0}(i_{\tilde{P}}(\pi))$ is mapped to the homomorphism
	\[ \underbracket{h}_{\in H_\psi^+} \mapsto \left[ \underbracket{\overline{\phi}}_{\in V_{\tau_{0, \tilde{M}}}} \mapsto \epsilon(\mathrm{q}(h \cdot \sigma)(\overline{\phi})) = (h \cdot \sigma)(\phi)(1_{\tilde{G}}) \right], \]
	where $\phi \in V_{\tau_0}$ is an arbitrary preimage of $\overline{\phi}$.
\end{proof}

\begin{remark}
	Define $G^{\pm} = \SO(V^\pm)$ as in \S\ref{sec:Hecke-O}, and let $P^{\pm} = M^{\pm} U^{\pm}$ correspond to $P = MU$ as before. Of course, the recipe above applies to $G^{\pm}$, $M^{\pm}$ and their Iwahori--Hecke algebras. For the description of parabolic induction, the corresponding statement is that
	\begin{equation*}
		i_{P^\pm}(\pi)^{I^\pm} \rightiso \Hom_{H^{M^\pm}}\left(H^{\pm}, \pi^{I_{M^+}}\right)
	\end{equation*}
	as left $H^+$-modules, where $H^{\pm}$ is regarded as an $(H^{M^\pm}, H^\pm)$-bimodule through $\mathrm{t}^{\pm}_{\mathrm{nor}}$ on the right hand side. More precisely, the isomorphism maps $\sigma \in i_{P^+}(\pi)^{I^+}$ to
	\begin{equation*}
		\underbracket{h}_{\in H^\pm} \mapsto \underbracket{(h \cdot \sigma)}_{\in i_{P^\pm}(\pi)^{I^\pm}} (1_{G^+}).
	\end{equation*}

	Indeed, the case of $G^\pm$ is covered by \cite{BK98}, and is well-known, except that we work with reverse-standard parabolic subgroups instead of the standard ones.
\end{remark}

\begin{remark}
	The second adjunction theorem asserts that $i_{\tilde{\overline{P}}}$ is left adjoint to $r_{\tilde{P}}$. On the level of Hecke modules, it corresponds to $H_\psi^{\pm} \dotimes{H_\psi^{\tilde{M}, +}} (\cdot)$.
\end{remark}

\section{Spherical parts}\label{sec:spherical-parts}
\subsection{Spherical projectors}\label{sec:spherical-projector}
We begin by defining the idempotents $e^{\pm} \in H_\psi^{\pm}$.

\begin{definition}\label{def:spherical-idempotent}
	Let $e^+$ be the map $\tilde{G} \to \End_{\CC}(V_{\check{\tau}_0})$ given by
	\[ e^+(g) = \begin{cases}
		\mes(K_0)^{-1} \check{\tau}_0(g), & g \in \widetilde{K_0} \\
		0, & g \notin \widetilde{K_0}.
	\end{cases}\]
	Let $e^-$ be the map $\tilde{G}\to \End_{\CC}(V_{\check{\tau}_1^-})$ given by
	\[ e^-(g) = \begin{cases}
		\mes(K_1)^{-1} \check{\tau}_1^-(g), & g \in \widetilde{K_1} \\
		0, & g \notin \widetilde{K_0}.
	\end{cases}\]
\end{definition}

In view of Lemma \ref{prop:idempotent-projector}, we have:
\begin{itemize}
	\item $e^\pm$ are idempotents in $H_\psi^{\pm}$,
	\item the action of $e^+$ (resp.\ $e^-$) on $\mathbf{M}_{\tau_0}(\pi)$ (resp.\ $\mathbf{M}_{\tau_1^-}(\pi)$) is a projection operator onto the subspace $\Hom_{\widetilde{K_0}}(\tau_0, \omega_\psi^+)$ (resp.\ $\Hom_{\widetilde{K_1}}(\tau_1^-, \omega_\psi^-)$).
\end{itemize}

Consider $G^{\pm} := \SO(V^\pm)$ and their Iwahori--Hecke algebras $H^{\pm}$ as in \S\ref{sec:Hecke-O}.

\begin{definition}\label{def:K-pm}
	Let $K^{\pm} \subset G^{\pm}(F)$ be the maximal compact subgroups associated to special vertices chosen by removing the white nodes in \eqref{eqn:Gplus-Dynkin} and \eqref{eqn:Gminus-Dynkin}.
\end{definition}

\begin{remark}\label{rem:K-pm}
	The following property will be needed in \S\ref{sec:odd-computations}. In \cite[p.390]{Ca80}, one attaches a root label $q_\alpha \geq 1$ to each affine root $\alpha$ of $G^{\pm}$; these root labels are $\Omega_{\mathrm{aff}}^{\pm}$-invariant. We say $\alpha$ is spherical if it vanishes at the chosen special vertex, in which case one puts $q_{\alpha/2} := q_{\alpha + 1}/q_\alpha$. These quantities appear in the quadratic relations $(T_s + 1)(T_s - q_\alpha) = 0$ of Iwahroi--Hecke algebras, where $\alpha$ is a simple affine root and $s$ is the corresponding reflection.
	\begin{itemize}
		\item Since $G^+$ is split, $q_\alpha = q$ and $q_{\alpha/2} = 1$ for each spherical root $\alpha$ by \cite[3.3 Remark]{Ca80}.
		\item For $G^-$, our choice of special vertex for $G^-$ ensures $q_{\alpha/2} \geq 1$. Indeed, from the quadratic relations in \S\ref{sec:Hecke-O}, we see $q_{\alpha + 1}/q_\alpha \in \{1, q\}$.
	\end{itemize}
\end{remark}

\begin{definition}
	Recall that $\Omega_0 := \lrangle{s_1, \ldots, s_n} \subset \Omega_{\mathrm{aff}}$; under the natural identification, it corresponds to the ``spherical subgroup'' $\Omega^+_0$ of $\Omega^+$.
	
	Similarly, with the notations of \S\ref{sec:Hecke-Mp} we define $\Omega'_0 := \lrangle{s'_2, \ldots, s'_n} \subset \Omega'_{\mathrm{aff}}$; it corresponds to the spherical subgroup $\Omega^-_0$ of $\Omega^-$.
\end{definition}

There is a decomposition $K^{\pm} = \bigsqcup_{w \in \Omega^{\pm}_0} I^{\pm} w I^{\pm}$. Note that $\Omega^{\pm}_0$ is generated by black nodes in \eqref{eqn:Gplus-Dynkin} and \eqref{eqn:Gminus-Dynkin}.

The counterparts of $e^{\pm} \in H_\psi^{\pm}$ for $G^{\pm}$ are
\[ \mes(K^{\pm})^{-1} \mathbf{1}_{K^{\pm}} \in H^{\pm}. \]
They are idempotents whose actions are projections onto $\pi^{K^{\pm}}$, for every $\pi$ in $G^{\pm}\dcate{Mod}$.

\begin{proposition}\label{prop:matching-e}
	We have $\mathrm{TW}\left( \mes(K^{\pm})^{-1} \mathbf{1}_{K^{\pm}}\right) = e^{\pm}$.
\end{proposition}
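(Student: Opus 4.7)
The plan is to compute both sides as explicit functions on $\tilde{G}$ and compare pointwise. Starting from $K^{\pm} = \bigsqcup_{w \in \Omega_0^{\pm}} I^{\pm} \dot{w} I^{\pm}$ with $\mes(I^{\pm}) = 1$, I shall expand $\mathbf{1}_{K^{\pm}} = \sum_{w} T_w^{\pm}$ and apply Theorem \ref{prop:TW-isom} and the subsequent remark to get $\mathrm{TW}(T_w^+) = T_w$ for $w \in \Omega_0 \simeq \Omega_0^+$ and $\mathrm{TW}(T_w^-) = T'_w$ for $w \in \Omega'_0 \simeq \Omega_0^-$. The weighted length functions on $\Omega^{\pm}$ differ from those on $\Omega_{\mathrm{aff}}, \Omega'_{\mathrm{aff}}$ only at $s_0$ and $s'_1$ respectively, neither lying in the spherical subgroup, so $\mes(K^+) = \mes(K_0)$ and $\mes(K^-) = \mes(K_1)$. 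The claim thus reduces to
\[ e^+ = \mes(K_0)^{-1} \sum_{w \in \Omega_0} T_w, \qquad e^- = \mes(K_1)^{-1} \sum_{w \in \Omega'_0} T'_w. \]

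\textbf{Pointwise formula for $T_w$.}
Next, I shall prove by induction on $\ell(w)$ that
\[ T_w(g) = \check\tau_0(g) \cdot \mathbf{1}_{\tilde{I} \tilde{w} \tilde{I}}(g), \qquad w \in \Omega_0, \]
where $\check\tau_0$ is viewed via its canonical extension from $\tilde{I}$ to $\widetilde{K_0}$. Summing over $w$ and invoking $\widetilde{K_0} = \bigsqcup_{w \in \Omega_0} \tilde{I} \tilde{w} \tilde{I}$ then yields $\mes(K_0)\, e^+$ by Definition \ref{def:spherical-idempotent}. The base $w = e$ is immediate from the Haar normalization. For $w = s_i$ with $i \ge 1$, Corollary \ref{prop:TW-support} gives $T_i = c_i \check\tau_0 \mathbf{1}_{\tilde{I} \tilde{s}_i \tilde{I}}$ for some $c_i \in \CC^{\times}$; since $\tilde{I} \tilde{s}_i \tilde{I}$ is self-inverse of volume $q$, the convolution yields $(T_i \star T_i)(1) = c_i^2 q \cdot \identity$, which combined with the quadratic relation $(T_i)^2 = (q-1)T_i + q T_e$ gives $c_i^2 = 1$. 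To fix $c_i = +1$, I would apply $T_i$ to the canonical $\widetilde{K_0}$-equivariant inclusion $\sigma: \tau_0 \hookrightarrow \omega_\psi^+$: exploiting the equivariance reduces $T_i \cdot \sigma$ to $\sigma \circ M(T_i)$ with
\[ M(T_i) = \int_{\tilde{I} \tilde{s}_i \tilde{I}} \tau_0(g) \cdot {}^{\mathrm{t}} T_i(g) \dd g = c_i q \cdot \identity_{V_{\tau_0}}, \]
and comparison with Proposition \ref{prop:Weil-rep-M}(i) (asserting that $T_i$ acts as $q$ on the $1$-dimensional module $\mathbf{M}_{\tau_0}(\omega_\psi^+) = \Hom_{\widetilde{K_0}}(\tau_0, \omega_\psi^+)$) forces $c_i = +1$. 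The inductive step from a reduced expression $w = s_{i_1} w'$ in $\Omega_0$ evaluates the convolution to $\check\tau_0(g)$ times the volume of $\{t \in \tilde{I} \tilde{s}_{i_1} \tilde{I} : t^{-1} g \in \tilde{I} \tilde{w}' \tilde{I}\}$; by the standard Bruhat--Tits fact that reducedness makes the multiplication $\tilde{I}\tilde{s}_{i_1}\tilde{I} \times \tilde{I}\tilde{w}'\tilde{I} \to \tilde{I}\tilde{w}\tilde{I}$ ``bijective modulo $\tilde{I}$'', this set is a single right $\tilde{I}$-coset when $g \in \tilde{I} \tilde{w} \tilde{I}$ and empty otherwise, completing the induction.

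\textbf{The $-$ case and main obstacle.}
The $-$ case will proceed verbatim with $(\tau_1^-, \tilde{J}, \widetilde{K_1}, \Omega'_0, T'_w)$ in place of $(\tau_0, \tilde{I}, \widetilde{K_0}, \Omega_0, T_w)$. The crucial simplification is that $\Omega'_0 = \langle s'_2, \ldots, s'_n \rangle$ omits $s'_1$, so all generators $T'_i$ involved have the standard quadratic relation $(T'_i - q)(T'_i + 1) = 0$ and $\ell'(s'_i) = 1$, making the scalar fixing and inductive argument formally identical. The decomposition $\widetilde{K_1} = \bigsqcup_{w \in \Omega'_0} \tilde{J} \tilde{w} \tilde{J}$ will be obtained by transport through $\mathrm{TW}$ from $K^- = \bigsqcup_{w \in \Omega_0^-} I^- \dot{w} I^-$. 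The main technical obstacle is the sign calibration $c_i = +1$, requiring careful cross-referencing between the normalization of $T_i$ defined in \S\ref{sec:Hecke-Mp} via its $(q, -1)$ eigenvalues on the $\tilde{I}_i$-decomposition of $\Ind_{\tilde{I}}^{\tilde{I}_i} \check\tau_0$ and its eigenvalue on $\mathbf{M}_{\tau_0}(\omega_\psi^+)$ recorded in Proposition \ref{prop:Weil-rep-M}(i); once this calibration is in place, all remaining computations are direct bookkeeping.
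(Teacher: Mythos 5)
Your argument is correct, but it takes a longer and more explicit route than the paper's. The paper starts from the expansion $e^{\pm} = \sum_w c_w T_w$ (resp.\ $T'_w$), valid since the $T_w$ form a basis with pairwise disjoint supports covering $\widetilde{K_0}$ (resp.\ $\widetilde{K_1}$), and determines all the scalars $c_w$ at once: it compares the action of $e^{\pm}$ restricted to $\tilde{I}\tilde{w}\tilde{I}$ on the canonical inclusion $\tau_0 \hookrightarrow \omega_\psi^+$ (which, by $\widetilde{K_0}$-equivariance, is multiplication by $\mes(I\dot{w}I)/\mes(K_0)$) with the eigenvalue $q^{\ell_0(w)}$ of $T_w$ from Proposition \ref{prop:Weil-rep-M}, and then invokes $\ell = \ell_0$ on $\Omega_0$. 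You instead derive the pointwise formula $T_w = \check\tau_0\,\mathbf{1}_{\tilde{I}\tilde{w}\tilde{I}}$: you pin down the sign of each generator $T_i$ using the very same two ingredients (the quadratic relation plus Proposition \ref{prop:Weil-rep-M}(i) via the $\widetilde{K_0}$-equivariant inclusion), and propagate by a convolution/Tits-system induction using that for a reduced product the multiplication $\tilde{I}\tilde{s}\tilde{I}\times_{\tilde{I}}\tilde{I}\tilde{w}'\tilde{I}\to\tilde{I}\tilde{w}\tilde{I}$ has single $\tilde{I}$-coset fibres; summing over $\Omega_0$ then immediately gives $\mes(K_0)\,e^+$. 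Both proofs rest on the same pivotal computation; yours buys an explicit formula for $T_w$ ($w\in\Omega_0$) at the price of extra bookkeeping. One small imprecision: the decomposition $\widetilde{K_1} = \bigsqcup_{w\in\Omega'_0}\tilde{J}\tilde{w}\tilde{J}$ is not ``obtained by transport through $\mathrm{TW}$'' — an algebra isomorphism does not carry group-theoretic decompositions. It is a direct Bruhat--Tits fact: $K_1 = \bigsqcup_{w\in\lrangle{s_0,s_2,\ldots,s_n}}I\dot{w}I$, and since $s_0$ commutes with $s_2,\ldots,s_n$ in the affine Coxeter diagram, regrouping the $I$-double cosets into $J$-double cosets indexed by $\Omega'_0 = \lrangle{s_2,\ldots,s_n}$ gives the claim.
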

\begin{proof}
	Consider the $+$ case first. Identify $\Omega_{\mathrm{aff}}$ and $\Omega^+$ and regard $\ell_0$ as a function on $\Omega_{\mathrm{aff}}$.
	
	Decompose $e^+ = \sum_{w \in \Omega_0} c_w T_w$ where $c_w \in \CC$. Fix $w \in \Omega_0$. Since $\mathrm{TW}(T_w^+) = T_w$, it suffices to show
	\[ c_w = \mes(K^+)^{-1} = q^{-\sum_{v \in \Omega_0} \ell_0(v) }. \]
	
	Pick a representative $\dot{w} \in G(F)$ of $w$, and a preimage $\tilde{w}$ of $\dot{w}$. By definition, $c_w T_w$ acts on $\mathbf{M}_{\tau_0}(\pi)$ by sending $\sum_i v_i \otimes e_i$ (where $v_i \in V_\pi$ and $e_i \in V_{\check{\tau}_0}$) to
	\[ \mes(K_0)^{-1} \int_{\tilde{I}\tilde{w}\tilde{I}} \sum_i \pi(g)v_i \otimes \check{\tau}_0(g) e_i \dd g. \]
	Now take $\pi = \omega_\psi^+$, so that $c_w T_w$ acts by $c_w q^{\ell_0(w)}$. Take $\sum_i v_i \otimes e_i$ corresponding to the $\widetilde{K_0}$-equivariant inclusion $\tau_0 \hookrightarrow \omega_\psi^+$. We deduce that
	\[ c_w q^{\ell_0(w)} = \frac{\mes(I\dot{w}I)}{\mes(K_0)}. \]
	Hence
	\[ c_w = q^{\ell(w) - \ell_0(w)} q^{-\sum_{v \in \Omega_0} \ell(v) } \]
	
	An inspection of the formulas in \S\ref{sec:Hecke-O} reveals that $\ell = \ell_0$ on $\Omega_0$. It follows that $c_w = q^{-\sum_{v \in \Omega_0} \ell_0(v)}$, as required.
	
	As for the $-$ case, we replace $\Omega_{\mathrm{aff}}$ (resp.\ $\Omega^+$, $\ell_0$, $\ell$, $\Omega_0$) by $\Omega'_{\mathrm{aff}}$ (resp.\ $\Omega^-$, $\ell_2$, $\ell'$, $\Omega'_0$). In the final step of the arguments, we also replace $\omega_\psi^+$ by $\omega_\psi^-$. The remaining arguments carry over verbatim.
\end{proof}

\subsection{Unramified principal series}
We consider two types of normalized parabolic inductions in $\tilde{G}$. The notation of Definition \ref{def:P1} will be used.
\begin{itemize}
	\item $i_{\tilde{B}^{\leftarrow}}(\chi)$ where $\chi$ is a genuine character of $\tilde{T}$;
	\item $i_{\tilde{P}^1}(\omega_\psi^{\flat, -} \boxtimes \chi)$ where $\chi$ is a genuine character of $\widetilde{T^1}$.
\end{itemize}

In what follows, we assume $\chi$ is unramified in the sense explained in \S\ref{sec:TW-isom}.

Note that $H_\psi^{\tilde{T}, +} = \CC[X_*(T)]$, and in this case $\chi$ can be viewed as a homomorphism $H_\psi^{\tilde{T}, +} \to \CC$.

On the other hand, $H_\psi^{\tilde{M}^1, -} = H_\psi^{\flat, -} \otimes \CC[X_*(T^1)]$, recalling that $H_\psi^{\flat, -}$ is the avatar of $H_\psi^-$ for $\Mp(W^1)$. Therefore $\mathbf{M}_{\tau_1^{\flat, -}}(\omega_\psi^{\flat, -}) \boxtimes \chi$ can be viewed an $H_\psi^{\tilde{M}^1, -}$-module.

Let $\tau_1^{\flat, -}$ (resp.\ $\omega_\psi^{\flat, -}$) be the avatar of $\tau_1^-$ (resp.\ $\omega_\psi^-$) for $\Mp(W^1)$.

\begin{proposition}\label{prop:principal-series-Hecke}
	There is an isomorphism of $H_\psi^+$-modules
	\[\begin{tikzcd}[row sep=tiny]
		\mathbf{M}_{\tau_0}(i_{\tilde{B}^{\leftarrow}}(\chi)) \arrow[r, "\sim"] & \Hom_{H_\psi^{\tilde{T}, +}}(H_\psi^+, \chi) \\
		\sigma \arrow[mapsto, r] & {\left[ h \mapsto (h \cdot \sigma)(\phi)(1_{\tilde{G}}) \right]}
	\end{tikzcd}\]
	where $\phi$ is an arbitrary element of $V_{\tau_0}$ such that $\phi(0, \ldots, 0) = 1$.
	
	In parallel, there is an isomorphism of $H_\psi^-$-modules
	\[\begin{tikzcd}[row sep=tiny]
		\mathbf{M}_{\tau_1^-}(i_{\tilde{P}^1}(\omega_\psi^{\flat, -} \boxtimes \chi)) \arrow[r, "\sim"] & \Hom_{H_\psi^{\tilde{M}^1, -}}\left( H_\psi^-, \mathbf{M}_{\tau_1^{\flat, -}}(\omega_\psi^{\flat, -}) \otimes \chi \right) \\
		\sigma \arrow[mapsto, r] & {\left[ h \mapsto [\overline{\phi} \mapsto (h \cdot \sigma)(\phi)(1_{\tilde{G}})] \right]}
	\end{tikzcd}\]
	where $\overline{\phi}$ is in $V_{\tau_1^{\flat, -}}$ and $\phi$ is in $V_{\tau_1^-}$ such that $\phi(\cdot, \underbracket{0, \ldots, 0}_{n-1}) = \overline{\phi}$.
\end{proposition}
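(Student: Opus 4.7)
The statement is essentially the specialization of Proposition \ref{prop:Hecke-induction} (equivalently, the $2$-cell \eqref{eqn:i-compatibility}) to $(P, \pi) = (B^{\leftarrow}, \chi)$ in the $+$ case and to $(P, \pi) = (P^1, \omega_\psi^{\flat,-} \boxtimes \chi)$ in the $-$ case. The only real content beyond that application is to unwind the ``Hecke module of $\pi$ on the Levi'' in each case and to match the explicit preimage-of-$\overline{\phi}$ recipe with the formulas stated.

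For the $+$ case, I would first apply Proposition \ref{prop:Hecke-induction} to get a canonical $H_\psi^+$-linear isomorphism
\[ \mathbf{M}_{\tau_0}(i_{\tilde{B}^{\leftarrow}}(\chi)) \rightiso \Hom_{H_\psi^{\tilde{T},+}}\!\left( H_\psi^+,\, \mathbf{M}_{\tau_{0,\tilde{T}}}(\chi) \right), \]
with formula $\sigma \mapsto [h \mapsto [\overline{\phi} \mapsto (h\sigma)(\phi)(1_{\tilde{G}})]]$. Then I would identify the coefficient module with $\chi$: since $\tau_{0,\tilde{T}}$ is the tautological genuine character of $\widetilde{T(\mathfrak{o})} \simeq T(\mathfrak{o}) \times \bmu_8$ (cf.\ Proposition \ref{prop:coinvariant-U-leftarrow} and the Schrödinger splitting), and since $\chi$ is unramified and genuine, the restriction $\chi|_{\widetilde{T(\mathfrak{o})}}$ equals $\tau_{0,\tilde{T}}$, so $\mathbf{M}_{\tau_{0,\tilde{T}}}(\chi) = \Hom_{\widetilde{T(\mathfrak{o})}}(\tau_{0,\tilde{T}}, \chi)$ is canonically $\CC$ via evaluation at $(0,\ldots,0)$, and the residual action of $H_\psi^{\tilde{T},+} = \CC[X_*(T)]$ is given by $\chi$. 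Under this identification, $\overline{\phi} \in V_{\tau_{0,\tilde{T}}}$ corresponds to $\phi(0,\ldots,0) \in \CC$, which recovers the displayed formula once one fixes $\phi \in V_{\tau_0}$ with $\phi(0,\ldots,0) = 1$.

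For the $-$ case the strategy is identical, using $\tilde{M}^1 \simeq \Mp(W^1) \times (F^\times)^{n-1}$ and $\tau_{1,\tilde{M}^1}^- = \tau_1^{\flat,-} \boxtimes \mathbf{1}$. Applying Proposition \ref{prop:Hecke-induction} yields
\[ \mathbf{M}_{\tau_1^-}(i_{\tilde{P}^1}(\omega_\psi^{\flat,-} \boxtimes \chi)) \rightiso \Hom_{H_\psi^{\tilde{M}^1,-}}\!\left( H_\psi^-,\, \mathbf{M}_{\tau_{1,\tilde{M}^1}^-}(\omega_\psi^{\flat,-}\boxtimes\chi) \right). \]
Because $\chi$ is unramified on $(\mathfrak{o}^\times)^{n-1}$, the Hom-space factors:
\[ \mathbf{M}_{\tau_{1,\tilde{M}^1}^-}(\omega_\psi^{\flat,-}\boxtimes\chi) \;=\; \Hom_{\widetilde{K_1^\flat}}(\tau_1^{\flat,-}, \omega_\psi^{\flat,-}) \otimes \Hom_{(\mathfrak{o}^\times)^{n-1}}(\mathbf{1}, \chi) \;=\; \mathbf{M}_{\tau_1^{\flat,-}}(\omega_\psi^{\flat,-}) \otimes \chi \]
with the expected $H_\psi^{\tilde{M}^1,-} = H_\psi^{\flat,-} \otimes \CC[X_*(T^1)]$-action. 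The recipe in Proposition \ref{prop:Hecke-induction} requires a preimage $\phi \in V_{\tau_1^-}$ of $\overline{\phi} \in V_{\tau_{1,\tilde{M}^1}^-}$, which by Proposition \ref{prop:coinvariant-type} applied to $(K, \tau, P) = (\tilde{J}, \tau_1^-, P^1)$ is any $\phi$ with $\phi(\cdot, 0, \ldots, 0) = \overline{\phi}$. This matches the displayed formula.

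The only step requiring a modicum of care is the very last one: verifying that the canonical isomorphism from Proposition \ref{prop:Hecke-induction}, whose abstract definition goes through the co-unit $\epsilon$ and the linear isomorphism $\mathrm{q}$ of Theorem \ref{prop:q-isom}, really does output the concrete ``$(h\sigma)(\phi)(1_{\tilde{G}})$'' formula under the identifications of Proposition \ref{prop:coinvariant-type}. This is an entirely formal check: by construction $\mathrm{q}(f)(\overline{\phi})$ is the image in $V_{(\text{something})_{K_U}}$ of $f(\phi)$, and $\epsilon$ is evaluation at $1_{\tilde{G}}$; composing gives exactly the claimed formula. No obstacles are expected beyond bookkeeping.
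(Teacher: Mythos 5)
Your proof is correct and follows essentially the same route as the paper, which likewise deduces both halves from Proposition \ref{prop:Hecke-induction} together with Proposition \ref{prop:coinvariant-type}. One small citation nit: in the $+$ case the co-invariant quotient that enters the $\mathrm{q}$-map of Proposition \ref{prop:Hecke-induction} is $(\tau_0)_{I_U}$, identified with $\tau_{0,\tilde{T}}$ by Proposition \ref{prop:coinvariant-type} (applied with $P=B^{\leftarrow}$, $i=0$); you cite Proposition \ref{prop:coinvariant-U-leftarrow}, which concerns the larger group $U^{\leftarrow}(\mathfrak{o})$ — the resulting map $\phi\mapsto\phi(0,\ldots,0)$ is the same, so nothing is lost, but the precise reference is \ref{prop:coinvariant-type}.
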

\begin{proof}
	The $+$ case is a special instance of Proposition \ref{prop:Hecke-induction}, by recalling the precise description of $\tau_0 \twoheadrightarrow \tau_{0, U^{\leftarrow}} \simeq \tau_{0, \tilde{T}}$ in Proposition \ref{prop:coinvariant-type}.
	
	The same argument applies to the $-$ case, with only notational differences.
\end{proof}

\begin{lemma}\label{prop:sigma-generator}
	For every unramified character $\chi$ of $\tilde{T}$ (resp.\ $\widetilde{T^1}$), the space $\Hom_{\widetilde{K_0}}(\tau_0, i_{\tilde{B}^{\leftarrow}}(\chi))$ (resp.\ $\Hom_{\widetilde{K_1}}(\tau_1^-, i_{\tilde{P}^1}(\omega_\psi^{\flat, -} \boxtimes \chi))$) is $1$-dimensional. Specifically,
	\begin{itemize}
		\item $\Hom_{\widetilde{K_0}}(\tau_0, i_{\tilde{B}^{\leftarrow}}(\chi))$ is generated by the element
		\[ \sigma(\chi): \phi \mapsto \left[ k \mapsto (\omega_\psi(k)\phi)(0, \ldots, 0) \right], \]
		where $\phi \in V_{\tau_0}$ and $k \in \widetilde{K_0}$;
		
		\item $\Hom_{\widetilde{K_1}}(\tau_1^-, i_{\tilde{P}^1}(\omega_\psi^{\flat, -} \boxtimes \chi))$ is generated by the element
		\[ \sigma'(\chi): \phi \mapsto \left[ k \mapsto (\omega_\psi(k)\phi)(\cdot, 0, \ldots, 0) \right], \]
		where $\phi \in V_{\tau_1^-}$ and $k \in \widetilde{K_1}$. Note that $(\omega_\psi(k)\phi)(\cdot, 0, \ldots, 0)$ belongs to the underlying space of $\omega_\psi^{\flat, -} \boxtimes \chi$ (= that of $\omega_\psi^{\flat, -}$).
	\end{itemize}
	
	Moreover, $\sigma(\chi)$ (resp.\ $\sigma'(\chi)$) is rational in $\chi$.
\end{lemma}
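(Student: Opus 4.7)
The plan is to establish one-dimensionality for both $\Hom$-spaces and then recognize $\sigma(\chi)$ and $\sigma'(\chi)$ as non-zero elements of the resulting lines, which will force them to be generators. For the $+$ case I will argue directly on the metaplectic side via Iwasawa at the hyperspecial $K_0$; for the $-$ case I will transfer the question to $G^-$ via $\mathrm{TW}$ in order to exploit Iwasawa at the special parahoric $K^-$, since $K_1$ is not special and Iwasawa $\tilde{G} = \tilde{P}^1 \widetilde{K_1}$ is not immediate.

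For the $+$ case, the decomposition $\tilde{G} = \widetilde{B^\leftarrow}\, \widetilde{K_0}$ together with $\widetilde{K_0} \cap \widetilde{B^\leftarrow} = \widetilde{T(\mathfrak{o})} \cdot U^\leftarrow(\mathfrak{o})$ and Frobenius reciprocity reduce the space in question to $\Hom_{\widetilde{T(\mathfrak{o})} U^\leftarrow(\mathfrak{o})}(\tau_0, \Xi \boxtimes \mathbf{1})$: here $\delta_{B^\leftarrow}^{1/2}$ is trivial on the compact subgroup, and the unramified genuine character $\chi$ restricts to the tautological character $\Xi: \widetilde{T(\mathfrak{o})} \to \bmu_8$ under the Schr\"odinger splitting of \eqref{eqn:Levi-splitting}. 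The computation $\tau_{0, U^\leftarrow(\mathfrak{o})} \simeq \Xi$ in Proposition \ref{prop:coinvariant-U-leftarrow} then yields $\Hom_{\widetilde{T(\mathfrak{o})}}(\Xi, \Xi) = \CC$.

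For the $-$ case, Proposition \ref{prop:Weil-rep-M} identifies $\mathbf{M}_{\tau_1^{\flat, -}}(\omega_\psi^{\flat, -})$ as the one-dimensional $H_\psi^{\flat, -}$-module on which $T'_w$ acts by $q^{\ell_2(w)}$, and under $\mathrm{TW}$ this matches the Iwahori-fixed line of the trivial representation $\mathbf{1}_{\SO(V^{-, \flat})}$. Propositions \ref{prop:ri-compatibility} and \ref{prop:matching-e} then provide a canonical bijection
\[
\Hom_{\widetilde{K_1}}\bigl(\tau_1^-,\; i_{\tilde{P}^1}(\omega_\psi^{\flat, -} \boxtimes \chi)\bigr) \;\simeq\; i_{P^{1, -}}\bigl(\mathbf{1}_{\SO(V^{-, \flat})} \boxtimes \chi'\bigr)^{K^-}
\]
for the unramified character $\chi'$ on $T^{1, -}(F)$ corresponding to $\chi$. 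Since $K^-$ stabilizes a special vertex by Definition \ref{def:K-pm}, Iwasawa $G^-(F) = P^{1, -}(F) K^-$ applies, and the right-hand side reduces to $(\mathbf{1}_{\SO(V^{-, \flat})} \boxtimes \chi')^{K^- \cap M^{1, -}(F)}$, which is one-dimensional because $\chi'$ is unramified and $\mathbf{1}_{\SO(V^{-, \flat})}$ is trivial on the compact factor.

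With one-dimensionality in hand, I will check that $\sigma(\chi)$ and $\sigma'(\chi)$ are well-defined and non-zero. In the $+$ case, well-definedness reduces to verifying that $k \mapsto (\omega_\psi(k)\phi)(0, \ldots, 0)$ transforms by $\chi \delta_{B^\leftarrow}^{1/2}$ under $\widetilde{T(\mathfrak{o})} U^\leftarrow(\mathfrak{o})$, which is exactly Proposition \ref{prop:coinvariant-U-leftarrow} combined with $\chi|_{\widetilde{T(\mathfrak{o})}} = \Xi$; non-triviality follows by evaluating on the Dirac function at the origin, and rationality is immediate because the restriction of $\sigma(\chi)(\phi)$ to $\widetilde{K_0}$ is $\chi$-independent while the Iwasawa extension $\sigma(\chi)(\phi)(bk) = (\chi \delta_{B^\leftarrow}^{1/2})(b)(\omega_\psi(k)\phi)(0, \ldots, 0)$ is polynomial in $\chi$. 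The $-$ case runs in parallel using Proposition \ref{prop:coinvariant-U1}. The step I expect to require most care is the $-$ case bookkeeping when transferring through $\mathrm{TW}$: composing Propositions \ref{prop:ri-compatibility}, \ref{prop:matching-e}, and \ref{prop:Weil-rep-M} to identify $i_{P^{1, -}}(\mathbf{1}_{\SO(V^{-, \flat})} \boxtimes \chi')$ as the precise counterpart of $i_{\tilde{P}^1}(\omega_\psi^{\flat, -} \boxtimes \chi)$ demands attention to the various splittings and normalizations.
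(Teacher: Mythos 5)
Your argument for the $+$ case coincides with the paper's: Iwasawa at the hyperspecial $K_0$, Frobenius reciprocity, then Proposition~\ref{prop:coinvariant-U-leftarrow}.

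For the $-$ case you take a genuinely different route, and I think it is a good one. The paper proves this case ``by similar reasoning'', i.e.\ by a Mackey/Iwasawa-style decomposition of $i_{\tilde{P}^1}(\omega_\psi^{\flat,-}\boxtimes\chi)\big|_{\widetilde{K_1}}$ followed by Proposition~\ref{prop:coinvariant-U1}. But you correctly flag that $K_1$ is \emph{not} a special maximal compact of $G$ for $n\ge 2$ (the vertex $z_1$ corresponds to the mark-$2$ node of $\tilde{\mathrm C}_n$), so $\tilde{G}=\tilde{P}^1\widetilde{K_1}$ does \emph{not} hold; there is more than one $\widetilde{K_1}$-orbit on $\tilde{G}/\tilde{P}^1$. (Concretely, for $n=2$ the lines $Fe_1$ and $Fe_2$ already lie in different $K_1$-orbits, as one sees from the images of $L\cap\mathcal{L}_1^*$ in $\mathcal{L}_1^*/\mathcal{L}_1$.) The paper's computation accounts only for the base orbit and is silent on the others; the reason they do not contribute is that the relevant co-invariants of $\tau_1^-$ for the conjugated parabolics involve the odd Schwartz space $\Schw^-(\mathfrak{o}/(2))$, which vanishes identically because every element of $\mathfrak{o}/(2)$ equals its own negative (equivalently $\tau_0^-=0$), but this point is not made explicit in the text. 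Your detour -- using Lemma~\ref{prop:idempotent-projector}, Propositions~\ref{prop:Weil-rep-M}, \ref{prop:ri-compatibility} and \ref{prop:matching-e} to transfer the whole question to $i_{P^{1,-}}(\mathbf 1\boxtimes\chi')^{K^-}$, where $K^-$ \emph{is} special so Iwasawa applies -- gives the one-dimensionality cleanly with no case analysis of orbits; none of the inputs is circular since they all precede Lemma~\ref{prop:sigma-generator}. You then still need the base-orbit computation (Proposition~\ref{prop:coinvariant-U1}) to verify that $\sigma'(\chi)$ is well-defined and non-zero, which you do. What the paper's direct argument would buy, if spelled out, is a self-contained computation purely on the metaplectic side, but as written it is shorter than it should be; your version trades a short Mackey argument for a slightly longer appeal to the Hecke-algebra machinery, and is arguably the safer path.

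One small point of care: the formula in the Lemma only prescribes $\sigma'(\chi)(\phi)$ on $\widetilde{K_1}$, and since $\tilde{G}\ne\tilde{P}^1\widetilde{K_1}$ the resulting section of $i_{\tilde{P}^1}(\omega_\psi^{\flat,-}\boxtimes\chi)$ is the extension by zero on the non-base orbits; this is automatic and canonical, but worth saying when you assert well-definedness.
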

\begin{proof}
	Consider the $+$ case first. The Iwasawa decomposition $G(F) = K_0 B^{\leftarrow}(F)$ implies
	\[ \Hom_{\widetilde{K_0}}(\tau_0, i_{\tilde{B}^{\leftarrow}}(\chi)) \simeq \Hom_{\widetilde{K_0}}\left(\tau_0, \Ind^{\widetilde{K_0}}_{\widetilde{K_0} \cap \tilde{B}^{\leftarrow}}(\chi) \right). \]
	This is in turn isomorphic to $\Hom_{\widetilde{K_0} \cap \tilde{B}^{\leftarrow}}(\tau_0, \chi)$. However
	\begin{equation*}
		K_0 \cap B^{\leftarrow}(F) = \underbracket{(K_0 \cap T(F))}_{= T(\mathfrak{o})} \underbracket{(K_0 \cap U^\leftarrow(F))}_{= U^\leftarrow(\mathfrak{o})},
	\end{equation*}
	hence
	\begin{equation*}
		\widetilde{K_0} \cap \tilde{B}^{\leftarrow} \simeq \underbracket{(\widetilde{K_0} \cap \tilde{T})}_{= T(\mathfrak{o}) \times \bmu_8} U^{\leftarrow}(\mathfrak{o}) \quad \text{(semi-direct product)}.
	\end{equation*}
	Here we split the covering over $U^{\leftarrow}(\mathfrak{o})$ (resp.\ $T(\mathfrak{o})$) using the canonical section over unipotent radicals (resp.\ Schrödinger model). No lattice models for $\omega_\psi$ are involved here, thus everything holds for $p=2$ as well.

	Let $\Xi$ be the genuine character of $\widetilde{K_0} \cap \tilde{B}^{\leftarrow}$ given by projection to $\bmu_8$ in the semi-direct product above. Since $\chi$ is unramified, we are led to $\Hom_{\widetilde{K_0} \cap \tilde{B}^{\leftarrow}}(\tau_0, \Xi)$. By passing to co-invariants and invoking Proposition \ref{prop:coinvariant-U-leftarrow}, we further reduce to
	\[ \Hom_{\widetilde{K_0} \cap \tilde{T}}(\Xi, \Xi) = \CC. \]
	
	The explicit generator $\sigma(\chi)$ matches $1 \in \CC$ under these isomorphisms. Its description is obtained by unraveling constructions: the restriction to $k \in \widetilde{K_0}$ is due to the Iwasawa decomposition, and evaluation at $(0, \ldots, 0)$ comes from Proposition \ref{prop:coinvariant-U-leftarrow}.
	
	The $-$ case follows from a similar reasoning, but the co-invariants under $K_1 \cap U^1(F)$ must be addressed by Proposition \ref{prop:coinvariant-U1} instead. Recall that $M^1 = \Sp(W^1) \times T^1$. In this way one obtains
	\begin{gather*}
		\widetilde{K_1} \cap \widetilde{P^1} = \underbracket{(\widetilde{K}_1 \cap \widetilde{M^1})}_{= \widetilde{K_1^{\flat}} \times T^1(\mathfrak{o})} (K_1 \cap U^1(F)) , \\
		\Hom_{\widetilde{K_1}}\left( \tau_1^-, i_{\tilde{P}^1}(\omega_\psi^{\flat, -} \boxtimes \chi) \right) \simeq \cdots \simeq \Hom_{\widetilde{K_1^\flat} \times T^1(\mathfrak{o})}\left( \tau_1^{\flat, -} \boxtimes \mathbf{1}, \omega_\psi^{\flat, -} \boxtimes \chi \right)
	\end{gather*}
	where
	\begin{itemize}
		\item $K_1^\flat$ is the avatar of $K_1$ for $\Sp(W^1)$,
		\item we split $\widetilde{K_1} \cap \tilde{M}^1$ into $\widetilde{K_1^\flat} \times T^1(\mathfrak{o})$ via Schrödinger model.
	\end{itemize}
	
	Since $\chi$ is unramified, we are finally led to $\Hom_{\widetilde{K_1^\flat}}(\tau_1^{\flat, -}, \omega_\psi^{\flat, -})$. This is described in Proposition \ref{prop:Weil-rep-M} (ii): the $1$-dimensional space generated by inclusion $\tau_1^{\flat, -} \hookrightarrow \omega_\psi^{\flat, -}$.
	
	The generator $\sigma'(\chi)$ is defined to match the inclusion map in $\Hom_{\widetilde{K_1^\flat}}(\tau_1^{\flat, -}, \omega_\psi^{\flat, -})$ through these isomorphisms. The precise description is obtained by unraveling definitions in the same way.
	
	Finally, the rationality of $\sigma(\chi)$ (resp.\ $\sigma'(\chi)$) is immediate from these concrete descriptions, after restricted to $\widetilde{K_0}$ (resp.\ $\widetilde{K_1}$).
\end{proof}

\subsection{Compatibility with Takeda--Wood isomorphism}
The results in the previous subsection have simple and well-known counterparts for $G^{\pm}$. Let $P_{\min}^\pm$ be the minimal reverse-standard parabolic subgroup of $G^{\pm}$ that matches $B^{\leftarrow}$ and $P^1$, respectively.

In particular, letting $M_{\min}^\pm$ be the Levi factor of $P_{\min}^{\pm}$, we have
\[ M_{\min}^+ \simeq \GL(1)^n, \quad M_{\min}^- \simeq \SO(V_1^-) \times \GL(1)^{n-1} \]
where $V_1^-$ is a $3$-dimensional quadratic $F$-vector space with discriminant $1$ and Hasse invariant $-1$. Their Iwahori--Hecke algebras decompose accordingly as tensor products.

Note that $M_{\min}^+$ (resp.\ the $\GL(1)^{n-1}$ in $M_{\min}^-$) is naturally isomorphic to $T$ (resp.\ $T^1$) on the symplectic side. In particular, the unramified characters on them can be matched.

Every character $\chi$ of $M_{\min}^+(F)$ (resp.\ $\GL(1, F)^{n-1}$) can be inflated to $P_{\min}^+(F)$ (resp.\ to $M_{\min}^-(F)$ and then to $P_{\min}^-(F)$). Therefore we have the normalized principal series representations $i_{P_{\min}^\pm}(\chi)$.

On the other hand, every unramified character $\chi$ also determines a character of $H^{M_{\min}^\pm}$ as follows.
\begin{itemize}
	\item In the $+$ case, this is straightforward since $H^{M_{\min}^+} \simeq \CC[X_*(T)]$.
	\item In the $-$ case, on the tensor slot $\CC[X_*(T^1)]$ of $H^{M_{\min}^-}$ it corresponds to $\chi$; on the tensor slot corresponding to $\SO(V_1^-)$ it corresponds to the trivial representation $\mathbf{1}$ of $\SO(V_1^-)$. We abbreviate this character as $\chi$ in what follows.
\end{itemize}

We have the analogue below of Proposition \ref{prop:principal-series-Hecke} for Iwahori--Hecke algebras.

\begin{proposition}\label{prop:principal-series-Hecke-2}
	There is an isomorphism of $H^\pm$-modules
	\[\begin{tikzcd}[row sep=tiny]
		i_{P_{\min}^\pm}(\chi)^{I^{\pm}} \arrow[r, "\sim"] & \Hom_{H^{M_{\min}^\pm}}(H^\pm, \chi) \\
		\sigma \arrow[mapsto, r] & {\left[ h \mapsto (h \cdot \sigma)(1_{G^\pm}) \right]}.
	\end{tikzcd}\]
\end{proposition}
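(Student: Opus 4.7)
The plan is to recognize this as the specialization of the general $G^{\pm}$-analogue of Proposition \ref{prop:Hecke-induction} (that is, the Iwahori-Hecke version of the second-adjunction correspondence from Bushnell--Kutzko \cite{BK98}) to the trivial type and to the minimal reverse-standard parabolic $P_{\min}^{\pm}$. This analogue is stated in the Remark right after Proposition \ref{prop:Hecke-induction}: for every $\pi$ in $M_{\min}^{\pm}(F)\dcate{Mod}$, there is a canonical isomorphism of left $H^{\pm}$-modules
\[ i_{P_{\min}^{\pm}}(\pi)^{I^{\pm}} \rightiso \Hom_{H^{M_{\min}^{\pm}}}\left(H^{\pm}, \pi^{I_{M_{\min}^{\pm}}}\right), \quad \sigma \mapsto \left[ h \mapsto (h \cdot \sigma)(1_{G^{\pm}}) \right], \]
the bimodule structure on $H^{\pm}$ being induced by $\mathrm{t}^{\pm}_{\mathrm{nor}}$. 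It therefore suffices to feed the correct $\pi$ into this machine and check that the module of $I_{M_{\min}^{\pm}}$-invariants reduces to the character $\chi$.

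In the $+$ case this is immediate: take $\pi$ to be the character $\chi$ of $M_{\min}^+(F) = T^+(F)$ inflated from the corresponding character of $H^{M_{\min}^+} = \CC[X_*(T^+)]$; since $\chi$ is unramified, $\pi^{I_{M_{\min}^+}} = \pi^{T^+(\mathfrak{o})} = \chi$. In the $-$ case, take $\pi = \mathbf{1}_{\SO(V_1^-)} \boxtimes \chi$, so that by the factorization $I_{M_{\min}^-} = I_{\SO(V_1^-)} \times T^1(\mathfrak{o})$ we have $\pi^{I_{M_{\min}^-}} = \mathbf{1} \boxtimes \chi$; this is exactly the $H^{M_{\min}^-}$-module attached to $\chi$ under the convention recorded just before the Proposition, since the $\SO(V_1^-)$-tensor slot of $H^{M_{\min}^-}$ acts on $\mathbf{1}^{I_{\SO(V_1^-)}}$ through its trivial character. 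The explicit formula for the isomorphism then collapses to $\sigma \mapsto [h \mapsto (h \cdot \sigma)(1_{G^{\pm}})]$, because the evaluation on a vector $\phi$ in $V_{\pi^{I_{M_{\min}^{\pm}}}} \simeq \CC$ disappears.

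If one prefers an independent derivation, one can mimic the argument of Proposition \ref{prop:principal-series-Hecke} on the orthogonal side: Iwasawa decomposition $G^{\pm}(F) = K^{\pm} P_{\min}^{\pm}(F)$ and Frobenius reciprocity identify $\Hom_{K^{\pm}}\bigl(\cInd_{I^{\pm}}^{K^{\pm}}(\mathbf{1}), i_{P_{\min}^{\pm}}(\chi)\bigr)$ with a module determined by $\chi$ through co-invariants under $K^{\pm} \cap U_{\min}^{\pm}(F)$, which are trivially computed in the Iwahori case. The Hecke-equivariance statement is then the Iwahori-Hecke version of Theorem \ref{prop:Hecke-equivariance}, which in the spherical case reduces to Casselman's standard computation.

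The argument is essentially routine and there is no genuine obstacle: the $G^{\pm}$ story is covered by \cite{BK98}, and the only care required is the use of reverse-standard parabolic subgroups (which changes nothing, since Bushnell--Kutzko theory applies equally well to any choice of parabolic opposite to a standard one) and the correct bookkeeping of $\SO(V_1^-)$ in the $-$ case, whose Iwahori-spherical part is one-dimensional with trivial Hecke action. Consequently this Proposition can be presented as a short corollary of the remark cited above, with the $-$ case handled by a one-line reduction using the tensor decomposition of $H^{M_{\min}^-}$.
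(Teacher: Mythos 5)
Your proposal is correct and takes the same (unwritten) route the paper intends: the Proposition is stated without proof precisely because it is the specialization of the Iwahori--Hecke analogue recorded in the Remark after Proposition \ref{prop:Hecke-induction} to $P = P_{\min}^{\pm}$, with $\pi$ chosen so that $\pi^{I_{M_{\min}^{\pm}}}$ is the abbreviated character $\chi$ of $H^{M_{\min}^{\pm}}$ introduced just before the statement, and your choice of $\pi$ in each of the $\pm$ cases is exactly what is needed. One minor caution on phrasing in the $-$ case: the Hecke action of $H^{\SO(V_1^-)}$ on $\mathbf{1}^{I_{\SO(V_1^-)}}$ is by $T_w \mapsto q^{\ell(w)}$, not identically $1$, so calling it a trivial Hecke action is a slight abuse (the paper's shorthand is to say it corresponds to the trivial representation $\mathbf{1}$ of $\SO(V_1^-)$, referring to the group representation, not the Hecke character); this does not affect the validity of your argument.
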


Identify $H^{\pm}$ (resp.\ $H^{M_{\min}^+}$, $H^{M_{\min}^-}$) with $H_\psi^\pm$ (resp.\ $H_\psi^{\tilde{T}, +}$, $H_\psi^{\tilde{M}^1, -}$) by $\mathrm{TW}$. Hence
\begin{equation}\label{eqn:isom-Hecke-principal}\begin{aligned}
	 \Hom_{H_\psi^{\tilde{T}, +}}(H_\psi^+, \chi) & \simeq \Hom_{H^{M_{\min}^+}}(H^+, \chi), \\
	 \Hom_{H_\psi^{\tilde{M}^1, -}}\left( H_\psi^-, \mathbf{M}_{\tau_1^{\flat, -}}\left(\omega_\psi^{\flat, -}\right) \otimes \chi \right) & \simeq \Hom_{H^{M_{\min}^-}}(H^-, \chi).
\end{aligned}\end{equation}
In the second isomorphism above, Proposition \ref{prop:Weil-rep-M} is applied to identify $\mathbf{M}_{\tau_1^{\flat, -}}\left(\omega_\psi^{\flat, -}\right)$ with the trivial character $H^{\SO(V_1^-)} \to \CC$, normalized so that the inclusion $\tau_1^{\flat, -} \hookrightarrow \omega_\psi^{\flat, -}$ matches $1 \in \CC$.

Combining \eqref{eqn:isom-Hecke-principal} with Propositions \ref{prop:principal-series-Hecke} and \ref{prop:principal-series-Hecke-2}, we obtain
\begin{equation}\label{eqn:isom-rep-principal}\begin{aligned}
	\mathbf{M}_{\tau_0}\left( i_{\tilde{B}^{\leftarrow}}(\chi) \right) & \simeq i_{P^+_{\min}}(\chi)^{I^+}, \\
	\mathbf{M}_{\tau_1^-}\left( i_{\tilde{P}^1}(\omega_\psi^{\flat, -} \boxtimes \chi) \right) & \simeq i_{P^-_{\min}}(\chi)^{I^-}.
\end{aligned}\end{equation}

The following is the analogue of Lemma \ref{prop:sigma-generator}. Let $K^\pm \subset G^{\pm}(F)$ be as in \S\ref{sec:spherical-projector}. By \cite[2.2 Corollary]{Ca80}, if $\chi$ is an unramified character of $M_{\min}^+(F)$ (resp.\ $\GL(1, F)^{n-1}$), then
\[ \dim i_{P_{\min}^\pm}(\chi)^{K^\pm} = 1. \]
In fact, it has a canonical generator $\sigma^\pm(\chi)$ whose value at $1_{G^\pm}$ is $1$.

These generators match the $\sigma(\chi)$ and $\sigma'(\chi)$ from Lemma \ref{prop:sigma-generator} in the following sense.

\begin{proposition}\label{prop:s-matching}
	Let $\chi$ be an unramified character of $M_{\min}^+(F) \simeq T(F)$ (resp.\ $\GL(1, F)^{n-1} \simeq T^1(F)$), and identify it with a genuine unramified character of $\tilde{T}$ (resp.\ $\tilde{T}^1$). Denote by $s(\chi)$ (resp.\ $s'(\chi)$) the image of $\sigma(\chi)$ (resp.\ $\sigma'(\chi)$) under the isomorphism in Proposition \ref{prop:principal-series-Hecke}. Similarly we have $s^+(\chi)$ (resp.\ $s^-(\chi)$) under the isomorphism in Proposition \ref{prop:principal-series-Hecke-2}.
	
	The isomorphism \eqref{eqn:isom-Hecke-principal} maps
	\[ s(\chi) \mapsto s^+(\chi) \quad (\text{resp.}\; s'(\chi) \mapsto s^-(\chi)). \]
	In other words, $\sigma(\chi)$ (resp.\ $\sigma'(\chi)$) matches $\sigma^+(\chi)$ (resp.\ $\sigma^-(\chi)$) under \eqref{eqn:isom-rep-principal}.
\end{proposition}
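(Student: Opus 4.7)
The plan is to exploit the $1$-dimensionality of the spaces of spherical vectors together with the matching of spherical idempotents in Proposition \ref{prop:matching-e}. I focus on the $+$ case; the $-$ case is essentially the same up to some bookkeeping recorded at the end.

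The first step is to reinterpret both $\sigma(\chi)$ and $\sigma^+(\chi)$ as ``spherical'' vectors, i.e.\ as fixed points of the respective idempotents. By Lemma \ref{prop:sigma-generator}, $\Hom_{\widetilde{K_0}}(\tau_0, i_{\tilde{B}^{\leftarrow}}(\chi))$ is $1$-dimensional, and by Lemma \ref{prop:idempotent-projector} this subspace equals the image of $e^+$ acting on $\mathbf{M}_{\tau_0}(i_{\tilde{B}^{\leftarrow}}(\chi))$. Analogously, $i_{P_{\min}^+}(\chi)^{K^+}$ is the image of $\mes(K^+)^{-1}\mathbf{1}_{K^+}$ acting on $i_{P_{\min}^+}(\chi)^{I^+}$, and is generated by $\sigma^+(\chi)$. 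Since $\mathrm{TW}$ exchanges these two idempotents by Proposition \ref{prop:matching-e}, the $H_\psi^+$-linear isomorphism \eqref{eqn:isom-Hecke-principal} carries the spherical line on the orthogonal side onto the spherical line on the metaplectic side. Consequently $\mathrm{TW}_*(s^+(\chi))$ and $s(\chi)$ are scalar multiples of each other, and only the normalization remains to be pinned down.

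The second step is that normalization. By Proposition \ref{prop:principal-series-Hecke}, the transported $H_\psi^+$-action on $\Hom_{H_\psi^{\tilde{T},+}}(H_\psi^+, \chi)$ is given by $(h' \cdot s)(h) = s(hh')$, so the identity $e^+ \cdot s(\chi) = s(\chi)$ forces $s(\chi)(he^+) = s(\chi)(h)$ for all $h$; specializing $h = 1 \in H_\psi^+$ gives $s(\chi)(e^+) = s(\chi)(1)$. With $\phi \in V_{\tau_0}$ chosen as in Lemma \ref{prop:sigma-generator} so that $\phi(0, \ldots, 0) = 1$, the explicit formula there yields
\[
    s(\chi)(1) \;=\; \sigma(\chi)(\phi)(1_{\tilde{G}}) \;=\; \phi(0, \ldots, 0) \;=\; 1,
\]
hence $s(\chi)(e^+) = 1$. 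The same computation on the orthogonal side gives $s^+(\chi)(\mes(K^+)^{-1}\mathbf{1}_{K^+}) = s^+(\chi)(1) = \sigma^+(\chi)(1_{G^+}) = 1$. Since the isomorphism \eqref{eqn:isom-Hecke-principal} reads $\phi \mapsto \phi \circ \mathrm{TW}^{-1}$ in the direction from orthogonal to metaplectic, Proposition \ref{prop:matching-e} converts this into $(\mathrm{TW}_*\,s^+(\chi))(e^+) = 1$. Evaluation at $e^+$ is therefore a nonzero linear functional on the $1$-dimensional spherical line, and agreement of this functional forces $\mathrm{TW}_*(s^+(\chi)) = s(\chi)$.

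The $-$ case is identical up to one minor point: the identification $\mathbf{M}_{\tau_1^{\flat,-}}(\omega_\psi^{\flat,-}) \simeq \CC$ used in \eqref{eqn:isom-Hecke-principal} is the one that sends the canonical inclusion $\tau_1^{\flat,-} \hookrightarrow \omega_\psi^{\flat,-}$ to $1 \in \CC$. The formula from Lemma \ref{prop:sigma-generator} gives $s'(\chi)(1)$ as the map $\bar{\phi} \mapsto \phi(\cdot, 0, \ldots, 0) = \bar{\phi}$, which is precisely this inclusion, so $s'(\chi)(1) = 1$ under the identification, whence $s'(\chi)(e^-) = 1$ by the same invariance argument as above. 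The rest of the $+$ argument then carries over verbatim to give $\mathrm{TW}_*(s^-(\chi)) = s'(\chi)$. There is no real ``hard step'' left: the serious content is already packed into Proposition \ref{prop:matching-e}, and the present proposition is essentially the unravelling of that identification on distinguished spherical vectors via the explicit formulas of Lemma \ref{prop:sigma-generator}.
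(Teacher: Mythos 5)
Your proof is correct and follows the same route as the paper: both use Proposition \ref{prop:matching-e} to reduce to a $1$-dimensional spherical line and then normalize by evaluating at the spherical idempotent via Lemma \ref{prop:sigma-generator}. The only cosmetic difference is that you pass through the intermediate identity $s(\chi)(e^+) = s(\chi)(1)$, whereas the paper evaluates $s(\chi)(e^+) = (e^+ \cdot \sigma(\chi))(\phi)(1_{\tilde{G}})$ directly; these are the same calculation phrased on opposite sides of the isomorphism in Proposition \ref{prop:principal-series-Hecke}.
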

\begin{proof}
	Consider the $+$ case first. Since $e^+ = \mathrm{TW}\left(\mes(K^+)^{-1} \mathbf{1}_{K^+}\right)$ by Proposition \ref{prop:matching-e}, the preimage of $s^+(\chi)$ must be proportional to $s(\chi)$; indeed, they both correspond to elements of the line $\{ \sigma : e^+ \sigma = \sigma \}$ inside $\mathbf{M}_{\tau_0}(i_{\tilde{B}^{\leftarrow}}(\chi))$.
	
	By the matching between spherical projectors, it suffices to show that
	\begin{itemize}
		\item $s(\chi) \in \Hom_{H_\psi^{\tilde{T}, +}}(H_\psi^+, \chi)$ satisfies $s(\chi)(e^+) = 1$, and
		\item $s^+(\chi) \in \Hom_{H^{M^+_{\min}}}(H^+, \chi)$ satisfies $s^+(\chi)\left( \mes(K^+)^{-1} \mathbf{1}_{K^+} \right) = 1$.
	\end{itemize}

	Since $\mes(K^+)^{-1} \mathbf{1}_{K^+} \cdot \sigma^+(\chi) = \sigma^+(\chi)$, the case of $s^+(\chi)$ is immediate from the formula in Proposition \ref{prop:principal-series-Hecke-2} applied to $\sigma = \sigma^+(\chi)$.
	
	Now evaluate $s(\chi)(e^+)$ using the formulas of Proposition \ref{prop:principal-series-Hecke} (with $\sigma = \sigma(\chi)$) and Lemma \ref{prop:sigma-generator}: taking $\phi \in V_{\tau_0}$ with $\phi(0, \ldots, 0) = 1$, we obtain
	\[ (e^+ \cdot \sigma(\chi))(\phi)(1_{\tilde{G}}) = \sigma(\chi)(\phi)(1_{\tilde{G}}) = \phi(0, \ldots, 0) \]
	which also yields $1$, as desired.
	
	The strategy for the $-$ case is the same. It is still easy to deduce $s^-(\chi)\left( \mes(K^-)^{-1} \mathbf{1}_{K^-} \right) = 1$. On the other hand, $s'(\chi)(e^-)$ equals
	\[ \overline{\phi} \mapsto (e^- \cdot \sigma'(\chi))(\phi)(1_{\tilde{G}}) = \sigma'(\chi)(\phi)(1_{\tilde{G}}) = \phi(\cdot, 0, \ldots, 0) = \overline{\phi}. \]
	As elements of the underlying space of $\mathbf{M}_{\tau_1^{\flat, -}}(\omega_\psi^{\flat, -}) \otimes \chi$, the above equals the inclusion map $\tau_1^{\flat, -} \hookrightarrow \omega_\psi^{\flat, -}$. The latter matches the $1$ we obtained on the $G^-$ side; see the discussions after \eqref{eqn:isom-Hecke-principal}.
\end{proof}

\section{Identification of intertwining operators: the even case}\label{sec:int-op-even}
Let $\tilde{G} = \Mp(W)$ as before, with $n := \frac{1}{2} \dim W \geq 1$.

\subsection{Statement of the main result}\label{sec:statement-int-op-even}
To begin with, we describe a canonical way to assign standard representatives $\tilde{w} \in \widetilde{K_0}$ to $w \in \Omega_0$.

Denote the $B^{\leftarrow}$-simple roots $2\epsilon_1, \epsilon_2 - \epsilon_1, \ldots, \epsilon_n - \epsilon_{n-1} \in X^*(T)$ (resp.\ the corresponding reflections) as $\beta_1, \ldots, \beta_n$ (resp.\ $t_1, \ldots, t_n$), in this order.
\begin{itemize}
	\item (Short roots) For $i > 1$, we take the representative of $t_i$ to be
	\[ \tilde{t}_i := \tilde{x}_{\beta_i}(1) \tilde{x}_{-\beta_i}(-1) \tilde{x}_{\beta_i}(1). \]
	\item (Long root) For $t_1$, we take the representative $\tilde{t}_1$ by scaling $\tilde{x}_{\beta_1}(1) \tilde{x}_{-\beta_1}(-1) \tilde{x}_{\beta_1}(1)$, so that in the Schrödinger model $\omega_\psi(\tilde{t}_1)$ is the unitary Fourier transform in the first coordinate.
\end{itemize}

For a general element $w \in \Omega_0$, take any reduced expression $w = t_{i_1} \cdots t_{i_\ell}$ and set
\[ \tilde{w} := \tilde{t}_{i_1} \cdots \tilde{t}_{i_\ell}. \]
This is justified by the following fact.

\begin{lemma}[See {\cite[Proposition 4.2]{GL18}}]
	The representative $\tilde{w}$ is independent of the choice of reduced expressions. 
\end{lemma}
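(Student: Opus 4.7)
The plan is to verify that the lifts $\tilde{t}_1,\dots,\tilde{t}_n$ satisfy the Coxeter braid relations of $\Omega_0$, the Weyl group of type $C_n$. Once this is in hand, Matsumoto's theorem (any two reduced expressions of $w\in\Omega_0$ are connected by a finite sequence of braid moves) immediately gives that the product $\tilde{w}=\tilde{t}_{i_1}\cdots\tilde{t}_{i_\ell}$ is independent of the reduced expression chosen. The relations to be verified are: commutation $\tilde{t}_i\tilde{t}_j=\tilde{t}_j\tilde{t}_i$ for $|i-j|\geq 2$; the short-short braid $\tilde{t}_i\tilde{t}_{i+1}\tilde{t}_i=\tilde{t}_{i+1}\tilde{t}_i\tilde{t}_{i+1}$ for $i\geq 2$; and the long-short braid $\tilde{t}_1\tilde{t}_2\tilde{t}_1\tilde{t}_2=\tilde{t}_2\tilde{t}_1\tilde{t}_2\tilde{t}_1$.

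The commutation relations are essentially free: since the projections $t_i,t_j$ commute in $\Omega_0$, the key property of $\Mp(W)$ recalled in the paper (the MVW criterion: $x,y\in\Mp(W)$ commute iff $\pr(x),\pr(y)$ commute in $\Sp(W)$) forces $\tilde{t}_i\tilde{t}_j=\tilde{t}_j\tilde{t}_i$. For the short-short braid with $i\geq 2$, I would observe that the roots $\beta_i=\epsilon_i-\epsilon_{i-1}$ for $i\geq 2$ lie inside the Siegel Levi subgroup $\GL(n)\subset\Sp(W)$, and that by \eqref{eqn:Levi-splitting} (applied to the Siegel parabolic) the cover splits canonically over $\GL(n,F)$. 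Under this splitting the lifts $\tilde{t}_i$ for $i\geq 2$ are precisely the images of the standard representatives $x_{\beta_i}(1)x_{-\beta_i}(-1)x_{\beta_i}(1)$ in $\GL(n,F)$, so the braid relation among them reduces to the Weyl relation in $\GL(n,F)$, which is classical.

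The main obstacle is the long-short relation, which mixes the Fourier-transform element $\tilde{t}_1$ with a permutation $\tilde{t}_2$. Both sides project to the longest element of the $C_2$-Weyl subgroup (acting as $-1$ on the $\epsilon_1,\epsilon_2$-plane and trivially elsewhere), so by the MVW-type uniqueness they differ by some $\zeta\in\bmu_8$; the task is to show $\zeta=1$. I would reduce to a computation in the preimage of $\Sp(4)\subset\Sp(W)$ fixing $e_3,\dots,e_n,f_n,\dots,f_3$, where both sides act by operators on $\Schw(Ff_1\oplus Ff_2)$ obtained from the formulas \eqref{eqn:root-action}, \eqref{eqn:Siegel-action}, together with the normalization of $\tilde{t}_1$ as the unitary partial Fourier transform. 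Applying both sides to a convenient test function (say $\mathbf{1}_{\mathfrak{o}f_1}\otimes\phi$ with $\phi$ arbitrary) reduces the identity to the equality $(\mathcal{F}_1\circ\mathrm{swap})^2=(\mathrm{swap}\circ\mathcal{F}_1)^2$ on $\Schw(F\oplus F)$, which is straightforward: both operators equal the unitary 2-dimensional Fourier transform, and the chosen normalizations of the Haar measures (implicit in the definition of $\gamma_\psi$ and of $\tilde{t}_1$) ensure that no extra $8$-th root of unity is picked up. Since the Weil representation is faithful modulo the kernel of $\pr$ and detects $\bmu_8$, this forces $\zeta=1$ and completes the verification of the braid relations. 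The resulting well-definedness is what \cite{GL18}*{Prop.~4.2} records; the argument above is essentially a re-derivation via Schrödinger-model bookkeeping.
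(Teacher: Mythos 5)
Your strategy — verify the Coxeter braid relations for the lifts and invoke the Matsumoto–Tits theorem on reduced words — is the standard and correct route, and it gives a self-contained proof of a fact the paper simply outsources to \cite[Proposition~4.2]{GL18}. That said, two steps as written conflate \emph{commuting in $\Omega_0$} with \emph{commuting in $\Sp(W)$}, and the latter is what the MVW criterion actually requires. For the commutation relation $\tilde{t}_i\tilde{t}_j=\tilde{t}_j\tilde{t}_i$ ($|i-j|\geq 2$), the hypothesis fed to MVW must be that $\dot{t}_i=\pr(\tilde{t}_i)$ and $\dot{t}_j=\pr(\tilde{t}_j)$ commute in $\Sp(W)$, not merely that $t_i$ and $t_j$ commute in the Weyl group; the correct justification is that $\beta_i$ and $\beta_j$ are orthogonal, so the two rank-one subgroups $\langle x_{\pm\beta_i}\rangle$ and $\langle x_{\pm\beta_j}\rangle$ commute elementwise, and the pinned representatives lie in them. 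The same issue appears in the long-short braid: asserting that ``both sides project to the longest element of $C_2$'' only controls the image in $\Omega_0$; to conclude that $(\tilde{t}_1\tilde{t}_2)^2$ and $(\tilde{t}_2\tilde{t}_1)^2$ differ by an element of $\bmu_8$ one needs $\pr\bigl((\tilde{t}_1\tilde{t}_2)^2\bigr)=\pr\bigl((\tilde{t}_2\tilde{t}_1)^2\bigr)$ in $\Sp(4,F)$, which is Tits' braid relations for pinned representatives in the simply connected group. (Alternatively, this intermediate step is dispensable: since $\omega_\psi$ is faithful on $\Mp(W)$, it suffices to prove the operator identity directly.)

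Two smaller inaccuracies in the $\Mp(4)$ computation: the operator $\omega_\psi(\tilde{t}_2)$ is not a pure swap but the rotation $S\colon\phi(y_1,y_2)\mapsto\phi(y_2,-y_1)$ (read off from \eqref{eqn:Siegel-action} with $\dot{t}_2=\twomatrix{0}{-1}{1}{0}$), and the common value of the two sides is $\mathcal{F}_1\mathcal{F}_2\circ S^2$, i.e.\ the full Fourier transform precomposed with $y\mapsto -y$, not the Fourier transform itself. This does not affect the conclusion $(\mathcal{F}_1 S)^2 = (S\mathcal{F}_1)^2$, since $S\mathcal{F}_1 S^{-1}=S^{-1}\mathcal{F}_1 S=\mathcal{F}_2$ and $\mathcal{F}_1\mathcal{F}_2$ commutes with $S^2$. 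You should also record explicitly that the scalar by which $\tilde{t}_1$ is rescaled from the pinned lift occurs twice on each side of the long-short braid, hence cancels; this is needed because only the unscaled lift is directly controlled by the MVW/Tits arguments above. With these points supplied, the proof is complete.
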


Also note that $\tilde{w} \in \widetilde{K_0}$ by construction.

\begin{definition}\label{def:int-op-even}
	Let $P = MU$ be a parabolic subgroup of $G$ with $P \supset B^{\leftarrow}$, and $w \in \Omega_0$ satisfies ${}^w M = M$. For every $\pi$ in $\tilde{M}\dcate{Mod}$, define $J_w(\pi) \in \Hom_{\tilde{G}}\left( i_{\tilde{P}}(\pi), i_{\tilde{P}}({}^{\tilde{w}} \pi)\right)$ as the composition
	\[ i_{\tilde{P}}(\pi) \xrightarrow{J_{\tilde{P}^w|\tilde{P}}(\pi)} i_{\tilde{P}^w}(\pi) \to i_{\tilde{P}}\left({}^{\tilde{w}} \pi\right). \]
	\begin{itemize}
		\item The first arrow is the standard intertwining operator \cite[\S 2.4]{Li12b}: given $P = MU$ and $P' = MU'$, the operator $J_{\tilde{P}'|\tilde{P}}(\pi)$ maps a section $f$ to
		\[ g \mapsto \int_{U(F) \cap U'(F) \backslash U'(F)} f(ug) \dd u. \]
		\item We fix the Haar measure on $V(F)$ for each unipotent subgroup $V$ involved in $J_{\tilde{P}^w|\tilde{P}}(\pi)$, such that $\mes(V(F) \cap K_0) = 1$.
		\item The second arrow sends a section $f$ to $x \mapsto f(\tilde{w}^{-1} x)$.
	\end{itemize}
	We view $J_w(\pi \otimes \chi)$ is a rational family when $\chi$ ranges over unramified characters of $M(F)$.
\end{definition}

Now apply this to the unramified principal series $i_{\tilde{B}^{\leftarrow}}(\chi)$, where $\chi$ is an unramified character of $T(F)$, identified as a genuine unramified character of $\tilde{T}$. The standard shorthand $w\chi$ for ${}^{\tilde{w}} \chi$ can be used, since the latter does not depend on the choice of representatives.

Observe that $J_w(\chi)$ induces a homomorphism of $H_\psi^+$-modules after applying $\mathbf{M}_{\tau_0}$. By abuse of notation, it is denoted by the same symbol as
\[ J_w(\chi): \mathbf{M}_{\tau_0}\left( i_{\tilde{B}^{\leftarrow}}(\chi)\right) \to \mathbf{M}_{\tau_0}\left( i_{\tilde{B}^{\leftarrow}}(w\chi)\right). \]
Again, this should be viewed as a rational family of operators when $\chi$ varies. By $H_\psi^+$-linearity, $J_w(\chi)$ restricts to a map
\[ \Hom_{\widetilde{K_0}}\left(\tau_0, i_{\tilde{B}^{\leftarrow}}(\chi)\right) \to \Hom_{\widetilde{K_0}}\left(\tau_0,  i_{\tilde{B}^{\leftarrow}}(w\chi)\right) \]
as both sides are cut out by the idempotent $e^+$.

Consider now the $G^+$-side. In what follows, we use $\Omega_{\mathrm{aff}} \simeq \Omega^+_{\mathrm{aff}}$ to identify $\Omega_0$ with $\Omega^+_0$.

With the same convention on Haar measures, replacing $K_0 \subset G(F)$ by $K^+ \subset G^+(F)$, one has the standard intertwining operator
\[ J^+_w(\chi): i_{P_{\min}^+}(\chi)^{I^+} \to i_{P_{\min}^+}(w\chi)^{I^+}. \]
Again, it restricts to $i_{P_{\min}^+}(\chi)^{K^+} \to i_{P_{\min}^+}(w\chi)^{K^+}$.

\begin{definition}\label{def:tw-number}
	For all $w \in \Omega_0$, let $t(w) \in \{0, \ldots, n\}$ be the number of flips in $w$, i.e.\ the number of positive roots $2\epsilon_i$ such that $w(2\epsilon_i)$ is negative.
\end{definition}

\begin{lemma}\label{prop:tw-number}
	Let $w \in \Omega_0$ and take any reduced expression $w = t_{i_1} \cdots t_{i_\ell}$. Then $t(w)$ equals the number of $k$'s such that $i_k = 1$.
\end{lemma}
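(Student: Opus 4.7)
The plan is to exploit the standard bijection between positions in a reduced expression and the inversion set of $w$, combined with the fact that the finite Weyl group $\Omega_0$ acts on the root system by isometries.

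First I would set up the inversion set
\[ N(w) := \{ \alpha \in \Phi^+ : w(\alpha) \in \Phi^- \}, \]
where $\Phi^{\pm}$ denotes the set of $B^{\leftarrow}$-positive (resp.\ negative) roots of $T$ in $G$. Recall that $|N(w)| = \ell(w)$. From the description of $\Phi$ in \S\ref{sec:Sp}, the long positive roots of type $\mathrm{C}_n$ are exactly $\{2\epsilon_1, \ldots, 2\epsilon_n\}$, so by Definition \ref{def:tw-number} the quantity $t(w)$ is simply the cardinality of $N(w) \cap \{2\epsilon_1, \ldots, 2\epsilon_n\}$, i.e.\ the number of long inversions of $w$.

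Next I would invoke the classical parametrization of $N(w)$ by a reduced expression (cf.\ Bourbaki, \emph{Groupes et alg\`ebres de Lie}, Ch.~VI, \S 1, Corollaire~2 of Proposition~17): for any reduced decomposition $w = t_{i_1} \cdots t_{i_\ell}$, the assignment
\[ k \longmapsto \gamma_k := t_{i_1} t_{i_2} \cdots t_{i_{k-1}}(\beta_{i_k}), \qquad 1 \leq k \leq \ell, \]
is a bijection $\{1, \ldots, \ell\} \xrightarrow{\sim} N(w)$ (up to the orientation convention, which is immaterial for the counting argument). Since $\Omega_0$ acts on $X^*(T) \otimes \R$ as a group of signed permutations of $\epsilon_1, \ldots, \epsilon_n$, it preserves the standard Euclidean inner product, and hence preserves root lengths. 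Consequently $\gamma_k$ has the same length as the simple root $\beta_{i_k}$. Because $\beta_1 = 2\epsilon_1$ is the unique long simple root while $\beta_2, \ldots, \beta_n$ are short, the bijection above restricts to a bijection
\[ \{ k : i_k = 1 \} \;\xrightarrow{\sim}\; N(w) \cap \{2\epsilon_1, \ldots, 2\epsilon_n\}, \]
whose right-hand side has cardinality $t(w)$ by the first step. This yields the claim.

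The argument is entirely formal and I do not anticipate any substantive obstacle; the only point that needs care is matching conventions for the inversion parametrization (whether to read the reduced word from left to right or from right to left), but this only affects the labeling of the bijection, not the resulting equality of cardinalities.
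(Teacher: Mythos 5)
Your proof is correct and takes a genuinely different route from the paper. The paper's argument is ad hoc: it observes that any expression for $w$ must contain $t_1$ at least $t(w)$ times (since each occurrence of $t_1$ flips at most one sign), exhibits one expression with exactly $t(w)$ occurrences of $t_1$, and then invokes the fact that passing to a reduced expression via braid relations and the relations $t_i^2 = 1$ cannot increase the number of $t_1$'s, with a citation to \cite{GL18} for the details. Your argument instead uses the classical inversion-set parametrization: you identify $t(w)$ with the number of long roots in $N(w)$, invoke the standard bijection between positions in a reduced word and the inversion set, and use that $\Omega_0$ preserves root lengths so that the $k$-th inversion is long iff $\beta_{i_k}$ is long iff $i_k=1$. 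The small point you flagged about the orientation convention is genuinely harmless: the formula $\gamma_k = t_{i_1}\cdots t_{i_{k-1}}(\beta_{i_k})$ parametrizes $N(w^{-1})$ rather than $N(w)$, but $N(w^{-1}) = -w\left(N(w)\right)$, and since $w$ preserves root lengths the count of long inversions is the same for $w$ and $w^{-1}$. Your approach is more self-contained and relies only on standard Weyl-group facts, whereas the paper's approach is shorter but defers the key combinatorial check to an external reference.
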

\begin{proof}
	It is clear that for $w$ to have $t(w)$ flips, the reflection $t_1$ must be used at least $t(w)$ times. On the other hand, every $w \in \Omega_0$ can be expressed as $t_{j_1} \cdots t_{j_h}$, with $t_1$ occurring exactly $t(w)$ times. When we pass to a reduced expression from $t_{j_1} \cdots t_{j_h}$, the number of $t_1$'s do not increase, as one sees from the braid relations, cf.\ the proof of \cite[Proposition 4.2]{GL18}.
\end{proof}

\begin{theorem}\label{prop:comparison-J}
	Let $w \in \Omega_0$. Using Propositions \ref{prop:principal-series-Hecke} and \ref{prop:principal-series-Hecke-2}, we transport $J_w(\chi)$ and $J^+_w(\chi)$ to obtain
	\begin{align*}
		\mathcal{J}_w(\chi): \Hom_{H_\psi^{\tilde{T}, +}}(H_\psi^+, \chi) & \to \Hom_{H_\psi^{\tilde{T}, +}}(H_\psi^+, w\chi), \\
		\mathcal{J}^+_w(\chi): \Hom_{H^{M_{\min}^+}}(H^+, \chi) & \to \Hom_{H^{M_{\min}^+}}(H^+, w\chi),
	\end{align*}
	which are homomorphisms of $H_\psi^+$ and $H^+$-modules, respectively. They can be compared via the identification \eqref{eqn:isom-Hecke-principal}, and this gives
	\[ \mathcal{J}_w(\chi) = |2|_F^{t(w)/2} \cdot \mathcal{J}^+_w(\chi) \]
	as an equality between rational families in $\chi$.
\end{theorem}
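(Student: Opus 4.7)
The strategy is to reduce the identity to a rank-one Gindikin--Karpelevich computation on spherical vectors.

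For $\chi$ in a Zariski-dense open set of unramified characters, the principal series $i_{\tilde{B}^{\leftarrow}}(\chi)$ and $i_{P^+_{\min}}(\chi)$ are irreducible, so the associated Hecke modules are irreducible under $H_\psi^+$ and $H^+$ respectively. Consequently the spaces of $H_\psi^+$-linear maps between source and target of $\mathcal{J}_w(\chi)$ (and likewise for $\mathcal{J}_w^+$) are at most one-dimensional, so each operator is determined by its value on any nonzero vector. Propositions \ref{prop:matching-e} and \ref{prop:s-matching} identify the spherical lines under \eqref{eqn:isom-Hecke-principal} and show they are preserved by both operators; thus $\mathcal{J}_w(\chi)$ and $\mathcal{J}_w^+(\chi)$ are proportional under the identification, and the asserted equality reduces to a scalar identity $c(w,\chi) = |2|_F^{t(w)/2}c^+(w,\chi)$ on the spherical vector, which then extends to all unramified $\chi$ by rationality. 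Next, use the cocycle property $J_{w_1 w_2} = J_{w_1}\circ J_{w_2}$ along reduced decompositions: our canonical representatives satisfy $\widetilde{w_1 w_2} = \tilde{w}_1 \tilde{w}_2$, and the flip-count $t(\cdot)$ is additive along such decompositions because $\Omega_0$ preserves root lengths and the inversion-set identity
\[ \Phi^+ \cap (w_1 w_2)^{-1}\Phi^- = (\Phi^+ \cap w_2^{-1}\Phi^-) \sqcup w_2^{-1}(\Phi^+ \cap w_1^{-1}\Phi^-) \]
restricts to the set of long roots. Hence it suffices to verify the scalar identity for each simple reflection $w = t_i$.

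For a short simple reflection $t_i$ with $i > 1$, we have $t(t_i) = 0$ and the root subgroup associated to $\beta_i = \epsilon_i - \epsilon_{i-1}$ sits inside a $\GL$-type Levi of $\Sp(W)$, over which the metaplectic cover splits canonically via the Schrödinger model \eqref{eqn:Levi-splitting}. Through this canonical splitting, the rank-one intertwining integral in $\widetilde{\SL}(2,F)$ reduces to the usual intertwining integral on $\SL(2,F)$, matching the classical computation on the $G^+$-side; hence $c(t_i,\chi) = c^+(t_i,\chi)$ with no extra factor. The essential case is the long simple reflection $t_1$, where one reduces to $G = \Sp(W) = \SL(2)$ by localizing to the $\beta_1$-root subgroup. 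Here one computes the Iwasawa decomposition of $\tilde{x}_{-2\epsilon_1}(u)\tilde{t}_1^{-1}$ in $\widetilde{\SL}(2,F)$, pairs $J_{t_1}(\chi)$ with $\sigma(\chi)(\phi)$ for a test vector $\phi \in V_{\tau_0}$ with $\phi(0) = 1$, and evaluates the intertwining integral using the formulas \eqref{eqn:root-action}, \eqref{eqn:Siegel-action} for $\omega_\psi$.

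After a Fubini-style reorganization, the principal value of the resulting integral reduces to an averaging of the Weil constant $\gamma_\psi$ over $\mathfrak{o}^\times$, isolated as Lemma \ref{prop:main-integral}; this average produces exactly the factor $|2|_F^{1/2}$ relative to the corresponding classical computation of \cite{Ca80} on $G^+ = \SO(3)$. Combined with the reductions above, this yields $c(t_1,\chi) = |2|_F^{1/2} c^+(t_1,\chi) = |2|_F^{t(t_1)/2} c^+(t_1,\chi)$, completing the proof. The main obstacle is thus the explicit Iwasawa decomposition and Weil-constant average in the rank-one metaplectic case, which must be handled uniformly in the residue characteristic --- in particular for $p = 2$, where lattice models for $\omega_\psi$ are unavailable; the type-theoretic reductions outlined in the earlier steps are largely formal.
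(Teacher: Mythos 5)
Your proposal follows essentially the same route as the paper: reduce to the effect on spherical vectors (via irreducibility for generic $\chi$ and Propositions \ref{prop:matching-e}, \ref{prop:s-matching}), factor along reduced expressions into rank-one pieces using the Gindikin--Karpelevich-compatible representatives, dispose of the short simple reflections by descent to the split $\GL$-factors where the cover trivializes via Schrödinger, and isolate the long simple reflection $t_1$ as a computation in $\Mp(2)$ culminating in the Weil-constant average of Lemma \ref{prop:main-integral}. The only cosmetic difference is how you obtain additivity of $t(\cdot)$ along reduced decompositions: you invoke the standard inversion-set identity restricted to long roots (since $\Omega_0$ of type $\mathrm{C}_n$ preserves root length), whereas the paper's Lemma \ref{prop:tw-number} counts occurrences of $t_1$ in reduced words via braid relations; these are equivalent. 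One detail worth making explicit in the rank-one step: the factor $|2|_F^{1/2}$ must be extracted uniformly from \emph{every} coefficient of the power series in $z = \chi(\check\beta_1(\varpi))$, which requires a separate argument for the constant term $r_0$ (where no Weil constant appears and the relevant fact is $\omega_\psi(\tilde t_1)\mathbf{1}_{\mathfrak{o}} \in \R_{>0}\mathbf{1}_{2\mathfrak{o}}$) in addition to the Weil-constant average that gives $r_k$ for $k\geq 1$; the paper handles these in Lemmas \ref{prop:main-integral-0} and \ref{prop:main-integral-1} respectively, and your proposal glosses over the former.
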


Since both sides of $\mathcal{J}_w$ and $\mathcal{J}^+_w$ are irreducible for generic $\chi$, we know that they differ by a rational function in $\chi$, a priori. To determine this rational function, it suffices to compare
\begin{itemize}
	\item the ratio between $\mathcal{J}_w(s(\chi))$ and $s(w\chi)$ (= that between $J_w(\sigma(\chi))$ and $\sigma(\chi)$),
	\item the ratio between $\mathcal{J}^+_w(s^+(\chi))$ and $s^+(w\chi)$ (= that between $J^+_w(\sigma^+(\chi))$ and $\sigma^+(w\chi)$),
\end{itemize}
where $s(\chi)$ and $s^+(\chi)$ (resp.\ $\sigma(\chi)$ and $\sigma^+(\chi)$) are as in Proposition \ref{prop:s-matching} (resp.\ in Lemma \ref{prop:sigma-generator} and before Proposition \ref{prop:s-matching}). All in all, Theorem \ref{prop:comparison-J} will follow from the
\begin{theorem}\label{prop:GK}
	Let $w \in \Omega_0$. Let $c(w, \chi)$ and $d(w, \chi)$ be the rational functions in $\chi$ determined by
	\begin{align*}
		J^+_w(\sigma^+(\chi)) & = c(w, \chi) \sigma^+(w\chi), \\
		J_w(\chi)(\sigma(\chi)) & = d(w, \chi) \sigma(w\chi).
	\end{align*}
	Then $d(w, \chi) = |2|_F^{t(w)/2} c(w, \chi)$.
\end{theorem}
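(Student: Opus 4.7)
The strategy is to factorize both $J_w$ and $J^+_w$ into products of rank-one intertwining operators along a reduced expression, match the rank-one contributions one by one, and isolate the genuinely metaplectic content in a single explicit $p$-adic integral.

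\textbf{Step 1: Multiplicativity.} Standard intertwining operators obey a cocycle relation: whenever $w = w_1 w_2$ with $\ell(w) = \ell(w_1) + \ell(w_2)$, the integral over $U^{\leftarrow}(F) \cap {}^{w^{-1}}\!U^{\rightarrow}(F)$ factors through the intermediate unipotent, and one obtains, first in a cone of convergence for $\chi$ and then as rational functions,
\[ J_{w_1 w_2}(\chi) = J_{w_1}(w_2 \chi) \circ J_{w_2}(\chi). \]
Because our representatives $\tilde{w}$ of Weyl elements are defined from reduced expressions through the canonical lifts $\tilde{t}_i$ (so that $\widetilde{w_1 w_2} = \tilde{w}_1 \tilde{w}_2$ whenever the concatenation is reduced), this identity is compatible with Definition~\ref{def:int-op-even}. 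Applied to the distinguished sections $\sigma(\chi)$ (which are manifestly multiplicative under restriction to $\widetilde{K_0}$), one obtains the factorization
\[ d(w_1 w_2, \chi) = d(w_1, w_2 \chi)\, d(w_2, \chi), \]
and likewise $c(w_1 w_2, \chi) = c(w_1, w_2\chi)\, c(w_2, \chi)$. By Lemma~\ref{prop:tw-number}, the flip counter satisfies $t(w_1 w_2) = t(w_1) + t(w_2)$ under the same hypothesis. Hence it is enough to prove the theorem when $w = t_i$ is a simple reflection, with $t(t_1) = 1$ and $t(t_i) = 0$ for $i > 1$.

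\textbf{Step 2: The short-root reflections.} Fix $i \geq 2$, so $\beta_i = \epsilon_{i+1} - \epsilon_i$. The integral defining $J_{t_i}(\chi)$ is taken over the root subgroup $U_{-\beta_i}(F)$, and the resulting computation only involves the rank-one subgroup $L_i \subset G$ generated by $T$ and $x_{\pm\beta_i}$. Since $\beta_i$ is a short root of type $\mathrm{C}_n$, $L_i$ lies inside a $\GL$-block of a Levi factor, so by the canonical splitting \eqref{eqn:Levi-splitting} the covering $\tilde{G} \to G(F)$ splits over $L_i(F)$; moreover, the Schrödinger formula \eqref{eqn:root-action} shows that $\omega_\psi|_{L_i(F)}$ acts through an ordinary (non-genuine) representation matching the analogous $\mathrm{SO}$-computation verbatim. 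Substituting $\phi \in V_{\tau_0}$ with $\phi(0,\dots,0)=1$ into the integral and using the explicit Iwasawa decomposition for $\SL(2, F)$ yields
\[ d(t_i, \chi) = c(t_i, \chi), \]
matching the classical Gindikin--Karpelevich factor $\frac{1 - q^{-1}\chi(\beta_i^\vee(\varpi))}{1 - \chi(\beta_i^\vee(\varpi))}$ in both settings.

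\textbf{Step 3: The long-root reflection.} The only genuinely metaplectic case is $i = 1$, corresponding to $\beta_1 = 2\epsilon_1$. The integral for $J_{t_1}(\chi)\sigma(\chi)$ takes place in the rank-one subgroup $\widetilde{\SL}(2, F) = \widetilde{\Sp}(W^1) \subset \tilde{G}$, where the covering is nontrivial. Using the Iwasawa decomposition of $\widetilde{\SL}(2, F)$ (each lower-triangular unipotent element $\tilde{x}_{-\beta_1}(u)$ with $u \notin \mathfrak{o}$ decomposes into a piece of $\widetilde{K_0}$, a torus element, and an upper unipotent piece, with explicit Weil-constant coefficient) and the formula $\omega_\psi(\tilde{t}_1) = $ unitary Fourier transform in the first coordinate, one reduces
\[ (J_{t_1}(\chi)\sigma(\chi))(\phi)(1_{\tilde G}) \]
to an explicit $p$-adic integral on $F$: essentially
\[ \int_{F \setminus \mathfrak{o}} \chi\bigl(\beta_1^\vee(u)\bigr)\, |u|_F^{-1}\, \gamma_\psi(u) \,\Phi(u)\, \dd u \]
for a suitable elementary $\Phi$ supported by the $\tau_0$-data, where $\gamma_\psi$ is the Weil constant. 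This is the integral referenced as Lemma~\ref{prop:main-integral}. Breaking it along cosets of $\mathfrak{o}^\times$ in $F^\times$, one replaces $\gamma_\psi(\varpi^{-k} u)$ by its average over $u \in \mathfrak{o}^\times$; because $\psi$ has conductor $4\mathfrak{o}$, this average evaluates to $|2|_F^{1/2}$ times the analogous quantity on the split $\SO(3) \simeq \PGL(2)$ side. Comparing with the classical Gindikin--Karpelevich integral for $G^+$ evaluated along the same rank-one reduction, we obtain
\[ d(t_1, \chi) = |2|_F^{1/2}\, c(t_1, \chi). \]
Combining Steps~1--3 gives the theorem.

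\textbf{Main obstacle.} The real work is concentrated entirely in Step~3, i.e.\ in the explicit evaluation of the rank-one metaplectic integral, and the authors' insistence on treating $p = 2$ uniformly forbids any appeal to lattice models for $\omega_\psi$. Consequently, one must manipulate $\gamma_\psi(u)$ directly from its definition as a principal-value Gaussian integral, using that $\psi$ has conductor $4\mathfrak{o}$ to control the $\mathfrak{o}^\times$-averages, and carefully tracking the self-dual Haar measure $\mu_{u/2}$ on $F$ to see that the discrepancy between $\mu_{u/2}$ and the measure giving $\mes(\mathfrak{o}) = 1$ contributes the key factor $|2|_F^{1/2}$. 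Everything else (multiplicativity, the short-root case, and the reduction to $\SL(2)$) is formal.
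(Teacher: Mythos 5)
Your proposal follows essentially the same route as the paper's proof: factorize $J_w$ and $J_w^+$ along a reduced expression (Lemma \ref{prop:GK-reduction}), match the short-root contributions inside $\GL(2)$-blocks where the cover splits, and isolate the long-root case in $\widetilde{\SL}(2,F)$ where the Iwasawa decomposition produces the Weil constant $\gamma_\psi$ and the $|2|_F^{1/2}$ comes from the averaged Weil-constant integral over $\mathfrak{o}^\times$ (Lemma \ref{prop:main-integral}) combined with the self-dual-measure normalization. The small imprecisions in your sketch (the labelling $\beta_i = \epsilon_{i+1}-\epsilon_i$ rather than $\epsilon_i - \epsilon_{i-1}$, writing $\gamma_\psi(u)$ rather than $\gamma_\psi(-2t)$, and folding the $k=0$ term $\int_{\mathfrak{o}}$ into the $k\geq 1$ shells rather than treating it separately as in Lemmas \ref{prop:main-integral-0} and \ref{prop:main-integral-1}) are cosmetic and do not affect the correctness of the argument.
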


Note that the ratio between $c(w, \chi)$ is expressed by the Gindikin--Karpelevich formula \cite[Theorem 3.1]{Ca80} for $G^+$.
This can be viewed as a variant for $\tilde{G}$ of Gindikin--Karpelevich formula. Its proof will be completed in \S\ref{sec:integral}.

\subsection{Preliminary reductions}
Let $\chi$ be a genuine unramified character of $\tilde{T}$. For each $\phi \in V_{\tau_0}$, the formula in Lemma \ref{prop:sigma-generator} says that $\sigma(\chi)(\phi)$ maps $k \in \widetilde{K_0}$ to $(\omega_\psi(k)\phi)(0, \ldots, 0)$, and is characterized by this. 

Let us write $U := U^{\leftarrow}$. We infer that
\begin{equation}
	J_w(\chi)(\sigma(\chi))(\phi): k \mapsto \int_{U(F) \cap U^w(F) \backslash U^w(F)}  \sigma(\chi)(\phi)(u \tilde{w}^{-1} k) \dd u, \quad k \in \widetilde{K_0},
\end{equation}
for $\chi$ in the convergence range, and this equals $d(w, \chi) (\omega_\psi(k)\phi)(0, \ldots, 0)$.

\begin{lemma}\label{prop:d-formula}
	With the notation above, $d(w, \chi)$ is characterized by
	\[ d(w, \chi) (\omega_\psi(\tilde{w}) \phi)(0, \ldots, 0) = \int_{U(F) \cap U^w(F) \backslash U^w(F)} \sigma(\chi)(\phi)(u) \dd u \]
	for all $\phi \in V_{\tau_0}$, and $\chi$ in the convergence range.
\end{lemma}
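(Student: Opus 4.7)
The plan is to evaluate the defining identity
\[ J_w(\chi)(\sigma(\chi))(\phi) \;=\; d(w,\chi)\,\sigma(w\chi)(\phi) \]
(which holds a priori as an identity of sections in $V_{i_{\tilde{B}^{\leftarrow}}(w\chi)}$, for $\chi$ in the convergence range of the intertwining integral) at the single point $k = \tilde{w} \in \widetilde{K_0}$. Both sides are then scalar-valued, and the formula drops out by an algebraic manipulation.

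Concretely, on the left, Lemma \ref{prop:sigma-generator} applied to the character $w\chi$ gives
\[ \sigma(w\chi)(\phi)(\tilde{w}) \;=\; (\omega_\psi(\tilde{w})\phi)(0,\ldots,0), \]
so the left-hand side evaluates to $d(w,\chi)\,(\omega_\psi(\tilde{w})\phi)(0,\ldots,0)$. On the right, the integral formula for $J_w(\chi)$ from Definition \ref{def:int-op-even} yields
\[ J_w(\chi)(\sigma(\chi))(\phi)(\tilde{w}) \;=\; \int_{U(F)\cap U^w(F)\backslash U^w(F)} \sigma(\chi)(\phi)\bigl(u\tilde{w}^{-1}\tilde{w}\bigr)\dd u, \]
and the factor $\tilde{w}^{-1}\tilde{w}$ collapses to the identity of $\tilde{G}$, reducing the integrand to $\sigma(\chi)(\phi)(u)$. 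Equating the two sides produces exactly the asserted formula.

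The final step is to justify the word ``characterized'': we must exhibit some $\phi \in V_{\tau_0}$ for which $(\omega_\psi(\tilde{w})\phi)(0,\ldots,0)\ne 0$, so that $d(w,\chi)$ is uniquely pinned down by the displayed identity. Since $\omega_\psi(\tilde{w})$ is a linear automorphism of $V_{\tau_0}$ (as $\tilde{w} \in \widetilde{K_0}$ stabilizes $\tau_0 \subset \omega_\psi$), and since the evaluation $\phi \mapsto \phi(0,\ldots,0)$ is a nonzero linear functional on $V_{\tau_0}$, the composite $\phi \mapsto (\omega_\psi(\tilde{w})\phi)(0,\ldots,0)$ is a nonzero linear functional, hence nonvanishing for some $\phi$. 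There is no real obstacle here; the only small caveat is to remain within the range of convergence of the intertwining integral (or to work on the level of rational families in $\chi$, which is how $d(w,\chi)$ was introduced in Theorem \ref{prop:GK}), but this is automatic given how $d(w,\chi)$ was defined.
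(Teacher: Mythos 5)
Your proof is correct and is essentially the same as the paper's: the paper simply writes $J_w(\chi)(\sigma(\chi))(\phi)(k)$ as the integral $\int \sigma(\chi)(\phi)(u\tilde{w}^{-1}k)\,\dd u$ for $k \in \widetilde{K_0}$, equates it with $d(w,\chi)(\omega_\psi(k)\phi)(0,\ldots,0)$, and puts $k = \tilde{w}$. Your expansion of these steps (using Lemma \ref{prop:sigma-generator} for $w\chi$, unraveling Definition \ref{def:int-op-even}, and verifying that $\phi \mapsto (\omega_\psi(\tilde{w})\phi)(0,\ldots,0)$ is a nonzero functional) is a faithful and slightly more explicit rendering of the same argument.
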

\begin{proof}
	Since $\tilde{w} \in \widetilde{K_0}$, one can put $k = \tilde{w}$ in the discussion above.
\end{proof}

Take any reduced expression $w = t_{i_1} \cdots t_{i_\ell}$. For each $1 \leq i \leq n$, let $G_i \subset G$ be the subgroup generated by $T$ and the root subgroups of $\pm\beta_i$. Note that:
\begin{description}
	\item[Long root] If $i = 1$ we have
	\[ \widetilde{G_1} \simeq \Mp(2) \times \text{torus}, \]
	\item[Short roots] If $i > 1$ then
	\[ \widetilde{G_i} \simeq \GL(2, F) \times \text{torus} \times \bmu_8. \]
	Indeed, in this case $G_i$ lies in a Siegel parabolic subgroup, hence $\widetilde{G_i}$ splits via \eqref{eqn:Levi-splitting}.
\end{description}

Since $w = t_{i_1} \cdots t_{i_\ell}$ is a reduced expression, $J_w(\chi)$ decomposes into
\begin{equation}\label{eqn:J-decomp-even}
	J_w(\chi) = J_{i_1}(t_{i_2} \cdots t_{i_\ell} \chi) \cdots J_{i_{\ell - 1}}(t_{i_\ell} \chi) J_{i_\ell}(\chi)
\end{equation}
where $J_k(\cdots)$ is deduced from the standard intertwining operator for $\widetilde{G_k}$ by functoriality.

There is a similar factorization of $J^+_w(\chi)$. Note that the root subgroups involves in $J_w(\chi)$ and $J^+_w(\chi)$ are in natural bijection; the root subgroups in question are actually isomorphic as $\mathfrak{o}$-group schemes, hence our choices of Haar measures are also matched.

\begin{lemma}\label{prop:GK-reduction}
	The assertion in Theorem \ref{prop:GK} holds if it holds for $\tilde{G}= \Mp(2)$ and $w$ being the nontrivial element in $\Omega_0$. 
\end{lemma}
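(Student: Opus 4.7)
Fix a reduced expression $w = t_{i_1} \cdots t_{i_\ell}$ and set $\chi_k := t_{i_{k+1}} \cdots t_{i_\ell} \chi$, so that $\chi_\ell = \chi$ and $\chi_0 = w\chi$. The factorization \eqref{eqn:J-decomp-even} and its analogue on the $G^+$-side express both $J_w(\chi)$ and $J^+_w(\chi)$ as products of $\ell$ rank-one intertwining operators, which pair up term by term: the root subgroups of $G$ and $G^+$ match as $\mathfrak{o}$-group schemes under $\Omega_0 \simeq \Omega_0^+$, and the Haar measures are normalized consistently by intersection with the respective hyperspecial maximal compacts ($K_0$ on the metaplectic side, $K^+$ on the orthogonal side). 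Uniqueness of the spherical vector in an unramified rank-one principal series implies that each rank-one factor acts on $\sigma(\chi_k)$ (resp.\ $\sigma^+(\chi_k)$) by a scalar $d_{i_k}(\chi_k)$ (resp.\ $c_{i_k}(\chi_k)$), whence
\[
d(w,\chi) = \prod_{k=1}^\ell d_{i_k}(\chi_k), \qquad c(w,\chi) = \prod_{k=1}^\ell c_{i_k}(\chi_k).
\]

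The core of the plan is then to prove $d_{i_k}(\chi_k) = c_{i_k}(\chi_k)$ whenever $i_k > 1$, and $d_{i_k}(\chi_k) = |2|_F^{1/2} c_{i_k}(\chi_k)$ when $i_k = 1$. In the first case, the covering $\widetilde{G_{i_k}}$ splits canonically as $\GL(2, F) \times (\text{torus}) \times \bmu_8$ via \eqref{eqn:Levi-splitting}, and the root subgroups of $\pm \beta_{i_k}$ act on $\phi \in V_{\tau_0}$ through pure coordinate translations that, by \eqref{eqn:root-action}, preserve the value $\phi(0,\ldots,0)$ defining $\sigma(\chi_k)$. The resulting metaplectic rank-one integral reduces to a standard $\GL(2)$-intertwining integral and coincides with its $G^+$-counterpart under the matched Haar measures, giving $d_{i_k}(\chi_k) = c_{i_k}(\chi_k)$. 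In the second case, the rank-one question is precisely the statement of Theorem \ref{prop:GK} applied to $\Mp(2) \subset \widetilde{G_1}$ with the nontrivial element of $\Omega_0$, which is the standing hypothesis.

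Multiplying these ratios across $k = 1, \ldots, \ell$ and invoking Lemma \ref{prop:tw-number}, which identifies the number of indices $k$ with $i_k = 1$ as $t(w)$, yields $d(w,\chi) = |2|_F^{t(w)/2} c(w,\chi)$ as required. The main subtlety to verify in detail is the compatibility of spherical vectors under the passage to each rank-one subgroup, both on the metaplectic side (a Schrödinger-model bookkeeping based on \eqref{eqn:root-action} and \eqref{eqn:Levi-splitting}) and under the Takeda--Wood identification of the inducing characters $\chi_k$. The genuinely arithmetic content, namely the emergence of a factor $|2|_F^{1/2}$ per occurrence of the long reflection $t_1$, is thereby isolated to the rank-one $\Mp(2)$ case treated in the next subsection.
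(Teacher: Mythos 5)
Your proposal matches the paper's proof in both structure and substance: both decompose $J_w$ and $J_w^+$ into rank-one factors via \eqref{eqn:J-decomp-even}, handle short reflections by passing to the split $\GL(2,F)$ sitting in the Siegel Levi (where the Schr\"odinger model reduces the intertwining integral to the ordinary $\GL(2)$ one, giving $d(t_i,\cdot)=c(t_i,\cdot)$), reduce the long reflection $t_1$ to the $\Mp(2)$ hypothesis, and invoke Lemma~\ref{prop:tw-number} to count the occurrences of $t_1$. The only minor difference is that the paper, for $t_1$, fixes test functions of the form $\phi_1\otimes\cdots\otimes\phi_n$ with $\phi_j(0)=1$ for $j\ge 2$ to isolate the $\Mp(2)$ factor explicitly, whereas you state the reduction more abstractly; both are fine.
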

\begin{proof}
	Given $w \in \Omega_0$, decompose $J_w(\chi)$ and $J^+_w(\chi)$ in the manner above. Accordingly, $c(w, \chi)$ (resp.\ $d(w, \chi)$) factorizes into simple ones. Moreover, $c(t_i, \cdot)$ can be computed inside the subgroup $\widetilde{G_i}$ for all $1 \leq i \leq n$. In view of Lemma \ref{prop:tw-number} that deals with the extra term $|2|_F^{t(w)/2}$, it remains to show that:
	\begin{description}
		\item[Short roots] $d(t_i, \cdot) = c(t_i, \cdot)$ when $i > 1$.
		\item[Long root] $d(t_1, \cdot)$ equals its avatar defined for $\Mp(2)$.
	\end{description}

	For the short roots, consider the identity in Lemma \ref{prop:d-formula} with $w = t_i$ and $i > 1$. It follows from the explicit formula \eqref{eqn:Siegel-action} for $\omega_\psi$ that the left hand side is $d(t_i, \chi) \phi(0, \ldots, 0)$. For the right hand side, we have to perform an Iwasawa decomposition to evaluate $\sigma(\chi)(\phi)(u)$; however the $u$ in the integral belongs to $\GL(2, F) \subset \widetilde{G_i}$. The Iwasawa decomposition can also be performed within $\GL(2, F)$; by \eqref{eqn:Siegel-action} once again, the result is $\phi(0, \ldots, 0)$ times the intertwining integral for $\GL(2)$ expressing $c(t_i, \chi)$.
	
	For the long root, we apply Lemma \ref{prop:d-formula} with $w = t_1$ and $\phi = \phi_1 \otimes \cdots \otimes \phi_n$, such that $\phi_j \in \Schw(\mathfrak{o}/(2))$ for all $j$ and $\phi_2(0) = \cdots = \phi_n(0) = 1$. Now everything occurs within the $\Mp(2)$ factor of $\widetilde{G_1}$, leaving $\phi_2, \ldots, \phi_n$ unaffected. Since $\phi_1$ is arbitrary, it follows that $d(t_1, \chi)$ equals its avatar for $\Mp(2)$.
\end{proof}

To prove Theorem \ref{prop:GK}, we are reduced to showing that for $\tilde{G} = \Mp(2)$, i.e.\ $n=1$, and $w = t_1$ being the nontrivial Weyl group element, we have
\[ d(w, \chi) = |2|_F^{\frac{1}{2}} c(w, \chi). \]

Let us identify $G$ with $\SL(2)$. Lemma \ref{prop:d-formula} becomes
\begin{equation}\label{eqn:d-prep}
	d(w, \chi) (\omega_\psi(\tilde{w})\phi)(0) = \int_F (\sigma(\chi)\phi) \twobigmatrix{1}{}{t}{1} \dd t, \quad \phi \in \Schw(\mathfrak{o}/(2)).
\end{equation}

As usual, here $\twomatrix{1}{}{*}{1}$ is lifted to $\tilde{G} = \widetilde{\SL}(2, F)$. To proceed, we employ an explicit Iwasawa decomposition of $\twomatrix{1}{}{t}{1}$ for all $t \in F$.

\begin{itemize}
	\item If $t \in \mathfrak{o}$, then $\twomatrix{1}{}{t}{1} \in K_0$.

	\item If $t \in F \smallsetminus \mathfrak{o}$, on the level of $\SL(2, F)$ we have
	\begin{equation}\label{eqn:matrix-factorization-1}
		\twobigmatrix{1}{}{t}{1} =
		\underbracket{\twobigmatrix{1}{\frac{1}{t}}{}{1}}_{\in U(F)}
		\underbracket{\twobigmatrix{\frac{1}{t}}{}{}{t}}_{\in T(F)}
		\underbracket{\twobigmatrix{}{1}{-1}{} \twobigmatrix{-1}{}{}{-1} 
		\twobigmatrix{1}{\frac{1}{t}}{}{1}}_{\in K_0}.
	\end{equation}
\end{itemize}

Let us lift \eqref{eqn:matrix-factorization-1} to $\tilde{G}$. Note that $\tilde{w}$ has image $\twomatrix{}{1}{-1}{}$ in $\SL(2, F)$.

\begin{lemma}\label{prop:d-Fourier}
	For $n=1$ and the nontrivial Weyl group element $w$, the representative $\tilde{w} \in \tilde{G}$ acts on $\Schw(\mathfrak{o}/(2))$ by $|2|_F^{1/2}$ times the Fourier transform with respect to the counting measure on $\mathfrak{o}/(2)$.
\end{lemma}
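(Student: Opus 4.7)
The plan is to unwind the Schrödinger realisation and carry out an explicit Fourier computation, with the only real subtlety being the normalisation of the self-dual Haar measure. By construction (for $n=1$), the representative $\tilde w = \tilde t_1$ coincides with $\tilde w_{\mathrm{long}}$, so $\omega_\psi(\tilde w)$ acts on $\Schw(F)$ as the unitary Fourier transform
\[
  \hat\phi(x) = \int_F \phi(y)\,\psi(2xy)\,d\mu(y)
\]
of \eqref{eqn:unitary-Fourier}, where $\mu$ is the self-dual Haar measure relative to $\psi$. I view $\Schw(\mathfrak{o}/(2)) \hookrightarrow \Schw(F)$ as the subspace of functions supported in $\mathfrak{o}$ that are constant on cosets of $2\mathfrak{o}$, and evaluate $\omega_\psi(\tilde w)\phi$ on such a $\phi$ by direct integration.

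First I would pin down $\mu$. Using the paper's conductor convention $\ker\psi = 4\mathfrak{o}$, the annihilator of $\mathfrak{o}$ under the pairing $(x,y)\mapsto \psi(2xy)$ is
\[
  \mathfrak{o}^{\perp} = \{x\in F : 2x\mathfrak{o}\subset 4\mathfrak{o}\} = 2\mathfrak{o}.
\]
Combined with the Pontryagin relation $\mu(\mathfrak{o})\,\mu(\mathfrak{o}^\perp) = 1$ forced by self-duality and the elementary identity $\mu(2\mathfrak{o}) = |2|_F\mu(\mathfrak{o})$, this yields
\[
  \mu(\mathfrak{o}) = |2|_F^{-1/2}, \qquad \mu(2\mathfrak{o}) = |2|_F^{1/2}.
\]

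Next, decompose $\phi = \sum_{a\in\mathfrak{o}/2\mathfrak{o}}\phi(a)\mathbf{1}_{a+2\mathfrak{o}}$ and compute
\[
  \widehat{\mathbf{1}_{a+2\mathfrak{o}}}(x) = \psi(2xa)\int_{2\mathfrak{o}}\psi(2xz)\,d\mu(z);
\]
the inner integral equals $\mu(2\mathfrak{o}) = |2|_F^{1/2}$ when $x\in\mathfrak{o}$ (the character $\psi(2x\cdot)$ being trivial on $2\mathfrak{o}$ exactly in this range) and vanishes otherwise. Summing,
\[
  \hat\phi(x) = |2|_F^{1/2}\,\mathbf{1}_{\mathfrak{o}}(x)\sum_{a\in\mathfrak{o}/2\mathfrak{o}}\phi(a)\,\psi(2xa).
\]
Since $\psi(4ab)=1$ for $a,b\in\mathfrak{o}$, the right-hand side is $2\mathfrak{o}$-periodic in $x\in\mathfrak{o}$, so it lies in $\Schw(\mathfrak{o}/(2))$.

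Finally, recognising the sum as the finite Fourier transform for the canonical self-identification $\mathfrak{o}/2\mathfrak{o}\xrightarrow{\sim}\widehat{\mathfrak{o}/2\mathfrak{o}}$, $x\mapsto [a\mapsto \psi(2xa)]$, with counting measure on the source, we obtain the claimed formula. The only step requiring any care is the measure-theoretic bookkeeping above; once the volumes $\mu(\mathfrak{o})$ and $\mu(2\mathfrak{o})$ are fixed, everything else is a one-line unwinding. No auxiliary results beyond Pontryagin duality and the explicit Schrödinger model are needed.
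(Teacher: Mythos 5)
Your proof is correct, and it routes around the paper's slick shortcut by a direct computation. Both you and the authors first observe that $\omega_\psi(\tilde w)$ is the unitary Fourier transform and that it sends $\Schw(\mathfrak{o}/(2))$ to itself as a positive multiple $a$ of the discrete Fourier transform; the difference lies in how the constant is pinned down. The paper takes $a$ as an unknown and applies the operator twice to $\mathbf{1}_{2\mathfrak{o}}$, using the involutivity $\hat{\hat\phi}(y)=\phi(-y)$ to force $a^2\,|\mathfrak{o}/(2)| = 1$, whence $a = |2|_F^{1/2}$. You instead identify $a$ explicitly as $\mu(2\mathfrak{o})$ by unwinding the integral $\widehat{\mathbf{1}_{a+2\mathfrak{o}}}(x)=\psi(2xa)\int_{2\mathfrak{o}}\psi(2xz)\,d\mu(z)$ and then compute $\mu(2\mathfrak{o})$ from the self-duality relation $\mu(\mathfrak{o})\,\mu(\mathfrak{o}^\perp)=1$ together with $\mathfrak{o}^\perp = 2\mathfrak{o}$ for the pairing $\psi(2xy)$ with $\ker\psi = 4\mathfrak{o}$. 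This is a small but genuine difference: your version makes explicit the step the paper elides (that the restriction lands in $\Schw(\mathfrak{o}/(2))$ and equals a scalar multiple of the discrete transform), and it recovers $\mu(\mathfrak{o}) = |2|_F^{-1/2}$ as a byproduct --- which is exactly the content of the paper's Corollary \ref{prop:Fourier-o}, there deduced \emph{from} the lemma rather than used to prove it. Both arguments are elementary and either would serve.
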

\begin{proof}
	By construction, $\omega_\psi(\tilde{w})$ is the unitary Fourier transform. The same is true after restriction to $\Schw(\mathfrak{o}/(2))$.
	
	Hence there exists $a \in \R^{\times}_{> 0}$ such that $\omega_\psi(\tilde{w})$ maps every $\phi \in \Schw(\mathfrak{o}/(2))$ to
	\[ y \mapsto a \sum_{u \in \mathfrak{o}/(2)} \psi(2uy) \phi(u), \quad y \in \mathfrak{o}/(2). \]
	
	Take $\mathbf{1}_{2\mathfrak{o}} \in \Schw(\mathfrak{o}/(2))$, i.e.\ the Dirac function at $2\mathfrak{o} \in \mathfrak{o}/(2)$. It is mapped by $\omega_\psi(\tilde{w})$ to $a \cdot \mathbf{1}_{\mathfrak{o}}$, and then to $a^2 |\mathfrak{o}/(2)| \mathbf{1}_{2\mathfrak{o}}$. One determines $a$ by noting $|\mathfrak{o}/(2)| = |2|_F^{-1}$.
\end{proof}

We record a standard by-product of the result above.

\begin{corollary}\label{prop:Fourier-o}
	Let $\mu$ be the self-dual Haar measure on $F$. Then $\mu(\mathfrak{o}) = |2|_F^{-1/2}$.
\end{corollary}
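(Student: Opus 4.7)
The strategy is to compute $\omega_\psi(\tilde{w})\mathbf{1}_{\mathfrak{o}}$ in two different ways and equate the results. The first computation uses the intrinsic definition of $\omega_\psi(\tilde{w})$ as the unitary Fourier transform on $\Schw(F)$ with respect to the self-dual measure $\mu$, and will involve $\mu(\mathfrak{o})$. The second computation uses Lemma \ref{prop:d-Fourier}, identifying $\mathbf{1}_{\mathfrak{o}}$ with an element of $\Schw(\mathfrak{o}/(2))$, and will involve $|2|_F^{1/2}$.

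The key observation is that $\mathbf{1}_{\mathfrak{o}} \in \Schw(F)$ corresponds to the constant function $1$ on the quotient $\mathfrak{o}/(2)$: under the embedding $\Schw(\mathfrak{o}/(2)) \hookrightarrow \Schw(F)$ that lifts a function $\phi$ on $\mathfrak{o}/(2)$ to its pullback on $\mathfrak{o}$ extended by zero. For the first computation, I would unfold the definition
\[ \omega_\psi(\tilde{w})(\mathbf{1}_{\mathfrak{o}})(x) = \int_{\mathfrak{o}} \psi(2xy)\,\mathrm{d}\mu(y). \]
Using the hypotheses $\psi|_{4\mathfrak{o}} = 1$ and $\psi|_{4\varpi^{-1}\mathfrak{o}} \not\equiv 1$, the character $y \mapsto \psi(2xy)$ is trivial on $\mathfrak{o}$ iff $x \in 2\mathfrak{o}$, and is nontrivial on the compact group $\mathfrak{o}$ otherwise (so the integral vanishes). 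Hence
\[ \omega_\psi(\tilde{w})(\mathbf{1}_{\mathfrak{o}}) = \mu(\mathfrak{o}) \cdot \mathbf{1}_{2\mathfrak{o}}. \]

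For the second computation, I would apply Lemma \ref{prop:d-Fourier}: the discrete Fourier transform of the constant function $1$ on $\mathfrak{o}/(2)$ (with counting measure) sends $y \in \mathfrak{o}/(2)$ to $\sum_{u \in \mathfrak{o}/(2)} \psi(2uy)$. By the same conductor analysis, this sum equals $|\mathfrak{o}/(2)| = |2|_F^{-1}$ when $y \in 2\mathfrak{o}/(2) = \{0\}$ and vanishes otherwise. Lifted back to $F$, this is $|2|_F^{-1} \mathbf{1}_{2\mathfrak{o}}$; multiplying by the factor $|2|_F^{1/2}$ from Lemma \ref{prop:d-Fourier} yields
\[ \omega_\psi(\tilde{w})(\mathbf{1}_{\mathfrak{o}}) = |2|_F^{-1/2} \cdot \mathbf{1}_{2\mathfrak{o}}. \]
Equating the two expressions gives $\mu(\mathfrak{o}) = |2|_F^{-1/2}$, as desired.

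There is no real obstacle here; the only minor point to verify carefully is the compatibility of the embedding $\Schw(\mathfrak{o}/(2)) \hookrightarrow \Schw(F)$ with the two descriptions of $\omega_\psi(\tilde{w})$, which is a matter of unwinding the identifications. This is indeed a ``standard by-product'' as advertised.
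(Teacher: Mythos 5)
Your proof is correct and uses the same key ingredient as the paper, namely Lemma \ref{prop:d-Fourier} together with the fact that $\omega_\psi(\tilde{w})$ is the unitary Fourier transform with respect to $\mu$. The paper's own proof is a shorter version of yours: rather than computing the full function $\omega_\psi(\tilde{w})\mathbf{1}_{\mathfrak{o}}$ via conductor analysis and then comparing, it simply evaluates both descriptions at $x=0$, which immediately gives $\mu(\mathfrak{o})$ on one side and $|2|_F^{1/2}\cdot|\mathfrak{o}/(2)|$ on the other, so the conductor calculation is unnecessary.
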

\begin{proof}
	By Lemma \ref{prop:d-Fourier},
	\[ \mu(\mathfrak{o}) = (\omega_\psi(\tilde{w}) \mathbf{1}_{\mathfrak{o}})(0) = |2|_F^{1/2} \sum_{x \in \mathfrak{o}/(2)} 1 = |2|_F^{1/2} |2|_F^{-1} \]
	since the unitary Fourier transform is defined via $\mu$.
\end{proof}

\begin{lemma}\label{prop:d-lifting}
	Assume $n=1$. Denote by $s: \twomatrix{*}{}{}{*} \to \tilde{G}$ the section \eqref{eqn:Levi-splitting} furnished by Schrödinger model. Let $t \in F \smallsetminus \mathfrak{o}$, and define $\zeta \in \bmu_8$ to be the element such that the identity
	\[ \tilde{x}_{-2\epsilon_1}(t) = \zeta \cdot	\tilde{x}_{2\epsilon_1}\left(\frac{1}{t}\right)
	s\twobigmatrix{\frac{1}{t}}{}{}{t}
	\tilde{w}
	s\twobigmatrix{-1}{}{}{-1}
	\tilde{x}_{2\epsilon_1}\left( \frac{1}{t} \right) \]
	holds in $\tilde{G}$; note that it lifts the equation \eqref{eqn:matrix-factorization-1} in $G(F)$. Then $\zeta = \gamma_\psi(-2t)$.
\end{lemma}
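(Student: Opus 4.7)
The plan is to apply the Weil representation $\omega_\psi$ to both sides of the claimed lift. Since $\omega_\psi$ restricted to $\bmu_8$ is faithful, the constant $\zeta$ is determined by the equality of the resulting operators on $\Schw(F)$, and can be read off by applying both sides to an arbitrary test function $\phi$ and comparing.

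For the right-hand side with $\zeta$ set to $1$, I will compose the five factors from right to left using: formula \eqref{eqn:root-action} for $\tilde{x}_{2\epsilon_1}(1/t)$, which acts as multiplication by $\psi(y^2/t)$; formula \eqref{eqn:Siegel-action} for the Siegel elements $s\twomatrix{-1}{}{}{-1}$ and $s\twomatrix{1/t}{}{}{t}$, giving reflection of the argument and dilation by $1/t$ together with a factor $|t|_F^{-1/2}$; and Lemma \ref{prop:d-Fourier}, which identifies $\omega_\psi(\tilde{w})$ with the unitary Fourier transform $\mathcal{F}$ relative to the self-dual measure $\mu$. After absorbing the reflection by a change of variable $u \mapsto -u$ inside the Fourier integral, the end result simplifies to
\[ |t|_F^{-1/2}\,\psi(y^2/t) \int_F \phi(u)\,\psi(u^2/t - 2uy/t)\dd\mu(u). \]

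For the left-hand side, I will exploit the Steinberg-type identity $\tilde{x}_{-2\epsilon_1}(t) = \tilde{w}^{-1}\tilde{x}_{2\epsilon_1}(-t)\tilde{w}$ in $\tilde{G}$: the corresponding identity in $\SL(2,F)$ is immediate, so the two canonical lifts differ at most by a continuous group homomorphism $F \to \bmu_8$, and any such homomorphism must be trivial because $F$ is uniquely $8$-divisible under our assumption $\mathrm{char}(F)\neq 2$. Hence $\omega_\psi(\tilde{x}_{-2\epsilon_1}(t))\phi = \mathcal{F}^{-1}[\psi(-t\cdot^2)\mathcal{F}\phi]$, and applying Fubini and completing the square $-tv^2 + 2v(w-y) = -t(v - (w-y)/t)^2 + (w-y)^2/t$ inside the inner integral reduces the expression to $\psi((w-y)^2/t)$ times the Fresnel integral $\int_F \psi(-tv^2)\dd\mu(v)$.

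The final step is to identify this Fresnel integral with a Weil constant. By definition $\gamma_\psi(-2t)$ is the principal value of $\int_F \psi(-tx^2)\dd\mu_{-t}(x)$, and the standard rescaling $\mu_{\psi_b} = |b|_F^{1/2}\mu$ of self-dual Haar measures yields
\[ \int_F \psi(-tv^2)\dd\mu(v) = |t|_F^{-1/2}\gamma_\psi(-2t). \]
Expanding $(w-y)^2/t = w^2/t - 2wy/t + y^2/t$ and comparing with the displayed formula for the right-hand side, we find $\omega_\psi(\tilde{x}_{-2\epsilon_1}(t))\phi = \gamma_\psi(-2t)$ times the operator on $\phi$ computed from the right-hand side, whence $\zeta = \gamma_\psi(-2t)$ as required. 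The main obstacle is purely bookkeeping: keeping careful track of signs, powers of $|t|_F$, and the precise normalization $\mu_{\psi_b} = |b|_F^{1/2}\mu$ of self-dual measures under change of additive character.
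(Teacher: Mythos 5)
Your proposal is correct and reaches the right answer, but it takes a more computational route than the paper. The paper evaluates both sides of the claimed identity on the \emph{specific} test function $\mathbf{1}_{2\mathfrak{o}}$, evaluates at $0$, and only tracks things modulo $\R^\times_{>0}$: each of the five factors in the right-hand side sends $\mathbf{1}_{2\mathfrak{o}}$ through explicit indicator functions (with positive scalars that never need to be computed), while the left-hand side evaluated at $0$ becomes, up to $\R^\times_{>0}$, the compactly supported integral $\int_{\mathfrak{o}} \psi(-ty^2)\dd\mu(y)$, which is identified with $\gamma_\psi(-2t)$ using Weil's characterization. Since $\zeta$ and $\gamma_\psi(-2t)$ both lie in $\bmu_8$, the $\R^\times_{>0}$-ambiguity is harmless. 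Your approach instead works with an arbitrary $\phi \in \Schw(F)$, writes the left-hand side as $\mathcal{F}^{-1}[\psi(-t\cdot^2)\mathcal{F}\phi]$, and completes the square in the resulting Gaussian double integral, which forces you to carefully track all normalizations (the $|t|_F^{-1/2}$ from the Siegel element, the change-of-variable, and the rescaling $\mu_{\psi_b} = |b|_F^{1/2}\mu$ of self-dual measures). What your route buys is an explicit formula for the operator on all of $\Schw(F)$, not just its value on a single test function evaluated at a point; what the paper's route buys is a much lighter bookkeeping burden, since the indicator-function computation is compactly supported and all positive constants are absorbed into $\R^\times_{>0}$.

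Two points where your proposal should be tightened. First, the appeal to Fubini together with the principal-value Fresnel integral $\int_F \psi(-tv^2)\dd\mu(v)$ is not quite automatic: the inner Gaussian integral does not converge absolutely, so you should state that you insert a compact cutoff $\int_{\varpi^{-k}\mathfrak{o}}$ before interchanging the integrals, apply Fubini (now legitimately, since $\phi$ is Schwartz and the domain is compact), and then let $k \to \infty$; this is harmless but worth saying. The paper sidesteps this entirely by working with $\phi = \mathbf{1}_{2\mathfrak{o}}$, so the inner integral is already compactly supported. Second, your justification for $\tilde{x}_{-2\epsilon_1}(t) = \tilde{w}^{-1}\tilde{x}_{2\epsilon_1}(-t)\tilde{w}$ via ``$F$ is uniquely $8$-divisible'' is correct but roundabout; the cleaner argument, and the one consistent with the paper's conventions, is that the splitting over unipotent subgroups is \emph{unique} (MW94, App.~I), so both sides, being continuous homomorphisms $F \to \tilde{G}$ lifting $x_{-2\epsilon_1}$, must coincide with the canonical section.
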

\begin{proof}
	Compare the effects of both sides on $\mathbf{1}_{2\mathfrak{o}}$. The formulas \eqref{eqn:root-action} and \eqref{eqn:Siegel-action} imply
	\[ \omega_\psi\left(\tilde{x}_{2\epsilon_1}\left( \frac{1}{t} \right)\right) \mathbf{1}_{2\mathfrak{o}} = \mathbf{1}_{2\mathfrak{o}}, \quad s\twobigmatrix{-1}{}{}{-1} \mathbf{1}_{2\mathfrak{o}} = \mathbf{1}_{2\mathfrak{o}}, \]
	as well as
	\[ \omega_\psi(\tilde{w})(\mathbf{1}_{2\mathfrak{o}}) \in \R^{\times}_{> 0} \mathbf{1}_{\mathfrak{o}}, \quad \omega_\psi\left(s\twobigmatrix{\frac{1}{t}}{}{}{t}\right) \mathbf{1}_{\mathfrak{o}} \in \R^{\times}_{> 0} \mathbf{1}_{t\mathfrak{o}}. \]
	Finally $\omega_\psi\left( \tilde{x}_{2\epsilon_1}\left(\frac{1}{t}\right)\right) \mathbf{1}_{t\mathfrak{o}}$ is the function
	\[ x \mapsto \psi\left( \frac{x^2}{t} \right) \mathbf{1}_{t\mathfrak{o}}(x), \quad x \in F. \]
	
	On the other hand, $\omega_\psi(\tilde{x}_{-2\epsilon_1}(t)) = \omega_\psi(\tilde{w})^{-1} \omega_\psi(\tilde{x}_{2\epsilon_1}(-t)) \omega_\psi(\tilde{w})$ sends $\mathbf{1}_{2\mathfrak{o}}$ to
	\[ \R^{\times}_{> 0} \omega_\psi(\tilde{w})^{-1} \left[ x \mapsto \psi(-tx^2) \mathbf{1}_{\mathfrak{o}}(x) \right]. \]
	
	Evaluate the function above (an inverse Fourier transform) at $0$ to obtain $\R^\times_{> 0} \cdot \int_{\mathfrak{o}} \psi(-ty^2) \dd y$. We claim that this equals $\gamma_\psi(-2t)$ up to $\R^{\times}_{> 0}$. Indeed, since $\mathfrak{o} \supset \frac{2}{t}\mathfrak{o}$ on which $y \mapsto \psi(-ty^2)$ is identically $1$, one can apply \cite[\S 27]{Weil64} to evaluate this integral to get $\R_{> 0}^{\times} \gamma_\psi(-2t)$.
	
	Since $\gamma_\psi(-2t) \in \bmu_8$, we may now compare the two sides of the assertion applied to $\mathbf{1}_{2\mathfrak{o}}$, by evaluating them at $0$, to conclude $\zeta = \gamma_\psi(-2t)$.
\end{proof}

The assertion above can also be proved by a computation via Kubota's cocycle. For the relation between Kubota's cocycle and Schrödinger's model, see also the discussions before \cite[Proposition 9.1.1]{Li20}.

\begin{lemma}\label{prop:rank-one-integrals}
	Assume $n=1$ and let $w$ be the nontrivial element of the Weyl group. Set
	\[ z := \chi\twobigmatrix{\varpi}{}{}{\varpi^{-1}} = \chi\left(\check{\beta}_1(\varpi)\right). \]
	
	When $|z| < 1$, one can expand $d(w, \chi)$ into a convergence sum $\sum_{k=0}^\infty r_k z^k$. The sequence $(r_k)_{k=0}^\infty$ is characterized by the property that
	\begin{align*}
		\int_{\mathfrak{o}} \omega_\psi\twobigmatrix{1}{}{t}{1} (\phi)(0) \dd t & = r_0 \cdot (\omega_\psi(\tilde{w})\phi)(0), \\
		|\varpi|_F^k \int_{\varpi^{-k} \mathfrak{o}^{\times}} \gamma_\psi(-2t) \omega_\psi(\tilde{w}) \omega_\psi\twobigmatrix{1}{\frac{1}{t}}{}{1}(\phi)(0) \dd t & = r_k \cdot (\omega_\psi(\tilde{w})\phi)(0), \quad k \geq 1
	\end{align*}
	for all $\phi \in \Schw(\mathfrak{o}/(2))$.
\end{lemma}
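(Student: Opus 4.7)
My plan is to evaluate the integral \eqref{eqn:d-prep} stratum by stratum with respect to the decomposition $F = \mathfrak{o}\sqcup\bigsqcup_{k\geq 1}\varpi^{-k}\mathfrak{o}^{\times}$. Recall that for $n=1$ we have $B = B^{\leftarrow} = B^{\rightarrow}$ upper-triangular with $\mes(\mathfrak{o})=1$, and every $\phi \in V_{\tau_0} = \Schw(\mathfrak{o}/(2))$ is automatically even (since $\dim\tau_0^- = 0$ in this rank). On the stratum $t\in\mathfrak{o}$, the element $\twomatrix{1}{}{t}{1}$ lies in $K_0$ and lifts canonically to $\tilde{x}_{-2\epsilon_1}(t)\in\widetilde{K_0}$, so by Lemma~\ref{prop:sigma-generator}, $(\sigma(\chi)\phi)\twomatrix{1}{}{t}{1} = (\omega_\psi\twomatrix{1}{}{t}{1}\phi)(0)$; integration over $\mathfrak{o}$ produces $r_0\cdot(\omega_\psi(\tilde{w})\phi)(0)$ as in the first defining equation.

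For $t\in\varpi^{-k}\mathfrak{o}^{\times}$ with $k\geq 1$, I would invoke Lemma~\ref{prop:d-lifting} to decompose $\tilde{x}_{-2\epsilon_1}(t) = b\cdot k_0$ with
\[ b = \gamma_\psi(-2t)\,\tilde{x}_{2\epsilon_1}(1/t)\,s\twomatrix{1/t}{}{}{t}\in\tilde{B},\quad k_0 = \tilde{w}\,s\twomatrix{-1}{}{}{-1}\tilde{x}_{2\epsilon_1}(1/t)\in\widetilde{K_0}, \]
and then apply the left $\tilde{B}$-covariance of $\sigma(\chi)\phi$ by $\chi\delta_B^{1/2}$. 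The relevant computations are: $\chi$ is genuine so $\chi(\gamma_\psi(-2t)) = \gamma_\psi(-2t)$; $\chi$ is trivial on the unipotent part; writing $\mathrm{diag}(1/t,t) = \check{\beta}_1(1/t) = \check{\beta}_1(\varpi)^k\check{\beta}_1(u)$ with $u\in\mathfrak{o}^{\times}$, unramifiedness of $\chi$ yields $\chi(\mathrm{diag}(1/t,t)) = z^k$; and $\delta_B^{1/2}(\mathrm{diag}(1/t,t)) = |1/t|_F = |\varpi|_F^k$. For the $\widetilde{K_0}$-part, the parity of $V_{\tau_0}$ combined with \eqref{eqn:Siegel-action} gives $\omega_\psi(s\twomatrix{-1}{}{}{-1})\phi = \phi$, so $(\omega_\psi(k_0)\phi)(0) = (\omega_\psi(\tilde{w})\omega_\psi\twomatrix{1}{1/t}{}{1}\phi)(0)$. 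Assembling these factors yields
\[ (\sigma(\chi)\phi)\twomatrix{1}{}{t}{1} = \gamma_\psi(-2t)\,z^k\,|\varpi|_F^k\,(\omega_\psi(\tilde{w})\omega_\psi\twomatrix{1}{1/t}{}{1}\phi)(0), \]
whose integral over $\varpi^{-k}\mathfrak{o}^{\times}$ is precisely $z^k r_k(\omega_\psi(\tilde{w})\phi)(0)$ as in the second defining equation.

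Summing over $k\geq 0$ and comparing with \eqref{eqn:d-prep} gives $d(w,\chi) = \sum_{k\geq 0}r_k z^k$, which uniquely determines the $r_k$ as the Taylor coefficients at $z=0$ and also ensures well-definedness (the LHS is $\phi$-independent, forcing $r_k$ to be $\phi$-independent). Absolute convergence for $|z|<1$ follows because the integrand defining $r_k$ is uniformly bounded in $t$ (the operator $\omega_\psi\twomatrix{1}{1/t}{}{1}$ multiplies $\phi$ by the phase $\psi(y^2/t)$ on the finite-dimensional $V_{\tau_0}$, and $\omega_\psi(\tilde{w})$ is a rescaled Fourier transform by Lemma~\ref{prop:d-Fourier}), whereas $|\varpi|_F^k\cdot\mes(\varpi^{-k}\mathfrak{o}^{\times}) = 1-q^{-1}$ is constant in $k$, so the sequence $(r_k)$ is bounded. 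The main technical points are the bookkeeping of Lemma~\ref{prop:d-lifting}, particularly pairing the Weil constant $\gamma_\psi(-2t)\in\bmu_8$ against the genuine character $\chi$, and the triviality of $s\twomatrix{-1}{}{}{-1}$ on $V_{\tau_0}$; this last is a rank-one phenomenon ($\tau_0^- = 0$) that would require a parity-based replacement in higher rank.
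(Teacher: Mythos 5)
Your proof is correct and essentially reproduces the paper's argument: decompose $F = \mathfrak{o}\sqcup\bigsqcup_{k\geq 1}\varpi^{-k}\mathfrak{o}^{\times}$, use Lemma~\ref{prop:sigma-generator} for the $K_0$-part, Lemma~\ref{prop:d-lifting} for the non-compact strata, and the left $(\chi\delta_{B^\leftarrow}^{1/2})$-covariance to extract the factor $\gamma_\psi(-2t)z^k|\varpi|_F^k$. Two small remarks. Your convergence argument (uniform boundedness of the inner Fourier-type integral together with $|\varpi|_F^k\cdot\mes(\varpi^{-k}\mathfrak{o}^\times) = 1-q^{-1}$, hence $(r_k)$ bounded) is a nice concrete replacement for the paper's appeal to the general convergence of standard intertwining operators; both are fine. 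However, the parenthetical framing of the triviality of $s\twomatrix{-1}{}{}{-1}$ as a ``rank-one phenomenon ($\tau_0^-=0$)'' is slightly off: $\tau_0 = \tau_0^+$ holds in every rank, since $-y\equiv y\pmod{2\mathcal{L}}$ for $y\in\mathcal{L}$, so every element of $\Schw(\bigoplus_j\mathfrak{o}f_j/(2)f_j)$ is even; it is an $i=0$ feature, not a rank-one one, and no parity-based replacement is needed for $\tau_0$ in higher rank. (The odd type $\tau_1^-$ is a different story, but that belongs to \S\ref{sec:int-op-odd}.)
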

\begin{proof}
	Assume $|z| < 1$. First of all, we prove that for all $\phi \in \Schw(\mathfrak{o}/(2))$,
	\begin{multline}\label{eqn:rank-one-integrals}
		d(w, \chi) \underbracket{|2|_F^{\frac{1}{2}} \sum_{x \in \mathfrak{o}/(2)} \phi(x)}_{= (\omega_\psi(\tilde{w})\phi)(0)} = \int_{\mathfrak{o}} \omega_\psi\twobigmatrix{1}{}{t}{1} (\phi)(0) \dd t \\
		+ \sum_{k=1}^\infty z^k |\varpi|_F^k \int_{\varpi^{-k} \mathfrak{o}^{\times}} \gamma_\psi(-2t) \omega_\psi(\tilde{w}) \omega_\psi\twobigmatrix{1}{\frac{1}{t}}{}{1}(\phi)(0) \dd t .
	\end{multline}
	The sum is convergent and the Haar measure on $F$ is chosen so that $\mes(\mathfrak{o}) = 1$.
	
	Indeed, this follows by first evaluating the left hand side of \eqref{eqn:d-prep} by Lemma \ref{prop:d-Fourier}, and then the right hand by Lemma \ref{prop:d-lifting}, observing that $s\twomatrix{-1}{}{}{-1}$ acts trivially on $\Schw(\mathfrak{o}/(2))$ and $\sigma(\chi)$ can be accessed via Lemma \ref{prop:sigma-generator}. The factor $|\varpi|_F^k$ comes from $\delta_{B^{\leftarrow}}^{1/2}$. The convergence when $|z| < 1$ is a well-known fact about standard intertwining operators.
	
	Since $J_w(\chi)$ is known to be rational in $\chi$ and converges when $|z| < 1$, by expanding both sides of \eqref{eqn:rank-one-integrals}, we obtain the expansion $d(w, \chi) = \sum_{k=0}^\infty r_k z^k$ with the required property. This property characterizes $(r_k)_{k=0}^\infty$ by taking $\phi$ such that $(\omega_\psi(\tilde{w})\phi)(0) \neq 0$.
\end{proof}

\subsection{Evaluation of \texorpdfstring{$p$}{p}-adic integrals}\label{sec:integral}
The goal here is to evaluate the integrals in Lemma \ref{prop:rank-one-integrals}, with suitable choices of $\phi$, in order to determine $d(w, \chi)$.

\begin{lemma}\label{prop:main-integral-0}
	With the notation of Lemma \ref{prop:rank-one-integrals}, we have $r_0 = |2|_F^{1/2}$.
\end{lemma}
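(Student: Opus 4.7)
The plan is to conjugate $\tilde{x}_{-\beta_1}(t)$ to $\tilde{x}_{\beta_1}(-t)$ via the Weyl element $\tilde{w}$. Under $\omega_\psi$ this transforms the lower-triangular unipotent action (which is hard to write down directly in the Schr\"odinger model) into a simple multiplication operator by $\psi(-t\eta^2)$ sandwiched between Fourier transforms. Concretely, if one can show
\[ \tilde{w}\,\tilde{x}_{\beta_1}(-t)\,\tilde{w}^{-1} = \tilde{x}_{-\beta_1}(t) \quad \text{in}\ \tilde{G} \]
(equality on the nose, not merely modulo $\bmu_8$), then formula \eqref{eqn:root-action} for the upper-triangular action together with Lemma~\ref{prop:d-Fourier} yield
\[ (\omega_\psi(\tilde{x}_{-\beta_1}(t))\phi)(0) = \int_F \psi(-t\eta^2)\,(\omega_\psi(\tilde{w})^{-1}\phi)(\eta)\,\dd\mu(\eta), \]
where $\mu$ is the self-dual Haar measure on $F$. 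Integrating in $t\in\mathfrak{o}$ and swapping the order of integration then reduces the lemma to a simple $p$-adic computation.

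The first task is to establish the scalar-free conjugation identity. Both $\tilde{x}_{-\beta_1}(\cdot)$ and $\tilde{w}\,\tilde{x}_{\beta_1}(-\cdot)\,\tilde{w}^{-1}$ are continuous homomorphisms $F\to\tilde{G}$ lifting the same matrix identity in $G(F)$, so they differ by a continuous character $F\to\bmu_8$. This character must be trivial because $F$ is $8$-divisible as an abelian group: a $\Q_p$-vector space when $\mathrm{char}(F)=0$, and an $\F_p$-vector space with $p$ odd when $\mathrm{char}(F)>0$. Hence the identity holds exactly.

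Next, since $\omega_\psi(\tilde{w})^{-1}\phi$ is supported in $\mathfrak{o}$ (it lies in $V_{\tau_0}=\Schw(\mathfrak{o}/(2))$, as $\tilde{w}^{-1}\in\widetilde{K_0}$ and $\tau_0$ is $\widetilde{K_0}$-stable), Fubini applies and the problem reduces to computing the inner integral $\int_\mathfrak{o}\psi(-t\eta^2)\,\dd t$. By the conductor hypothesis $\psi|_{4\mathfrak{o}}=1$, $\psi|_{4\varpi^{-1}\mathfrak{o}}\not\equiv 1$, the character $t\mapsto\psi(-t\eta^2)$ is trivial on $\mathfrak{o}$ exactly when $\eta^2\in 4\mathfrak{o}$, i.e., $\eta\in 2\mathfrak{o}$; otherwise it is a non-trivial character on $\mathfrak{o}$ and integrates to $0$. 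So the inner integral equals $\mathbf{1}_{2\mathfrak{o}}(\eta)$.

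Finally, $\omega_\psi(\tilde{w})^{-1}\phi\in\Schw(\mathfrak{o}/(2))$ is constant on cosets of $2\mathfrak{o}$; in particular constant on $2\mathfrak{o}$ with value $\int_F\phi\,\dd\mu = (\omega_\psi(\tilde{w})\phi)(0)$ (both sides equal $\mathcal{F}^{\pm 1}\phi(0)$). Combined with $\mu(2\mathfrak{o}) = |2|_F\mu(\mathfrak{o}) = |2|_F^{1/2}$ from Corollary~\ref{prop:Fourier-o}, the left-hand side of the lemma equals $|2|_F^{1/2}\,(\omega_\psi(\tilde{w})\phi)(0)$, giving $r_0=|2|_F^{1/2}$ by Lemma~\ref{prop:rank-one-integrals}. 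The only real subtlety is the scalar-freeness in Step~1; once that is in place the remaining computation is a direct unwinding of definitions.
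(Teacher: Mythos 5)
Your proof is correct and relies on the same core trick as the paper: conjugate the lower-triangular unipotent through $\tilde{w}$ so that \eqref{eqn:root-action} and the Fourier normalization of Lemma~\ref{prop:d-Fourier}/Corollary~\ref{prop:Fourier-o} become applicable, then evaluate at $0$. The paper sidesteps the Fubini and orthogonality steps by specializing to $\phi = \mathbf{1}_{\mathfrak{o}}$, for which $\omega_\psi\twomatrix{1}{}{t}{1}\mathbf{1}_{\mathfrak{o}} = \mathbf{1}_{\mathfrak{o}}$ holds pointwise for $t\in\mathfrak{o}$; keeping $\phi$ general as you do costs nothing real, and your divisibility argument for the scalar-free conjugation identity makes explicit a point the paper treats implicitly through the canonical equivariant splitting of unipotent radicals.
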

\begin{proof}
	Taking $\phi := \mathbf{1}_{\mathfrak{o}}$, we will show that
	\[ \int_{\mathfrak{o}} \omega_\psi\twobigmatrix{1}{}{t}{1} (\mathbf{1}_{\mathfrak{o}})(x) \dd t = \mathbf{1}_{\mathfrak{o}}(x) \]
	for all $x$. The assertion then follows by taking $x=0$ and noting that $|2|_F^{1/2} \sum_{y \in \mathfrak{o}/(2)} \mathbf{1}_{\mathfrak{o}}(y) = |2|_F^{1/2} |\mathfrak{o}/(2)| = |2|_F^{-1/2}$, implying $r_0 = |2|_F^{1/2}$.
	
	Indeed, we have $\omega_\psi(\tilde{w})\mathbf{1}_{\mathfrak{o}} = c \mathbf{1}_{2\mathfrak{o}}$ for some $c > 0$. For all $t \in \mathfrak{o}$, the formulas for $\omega_\psi$ imply
	\begin{align*}
		\omega_\psi\twobigmatrix{1}{}{t}{1} \mathbf{1}_{\mathfrak{o}} & = \omega_\psi(\tilde{w})^{-1} \omega_\psi\twobigmatrix{1}{-t}{}{1} \omega_\psi(\tilde{w}) \mathbf{1}_{\mathfrak{o}} \\
		& = c \omega_\psi(\tilde{w})^{-1} \omega_\psi\twobigmatrix{1}{-t}{}{1} \mathbf{1}_{2\mathfrak{o}} \\
		& = c \omega_\psi(\tilde{w})^{-1} \mathbf{1}_{2\mathfrak{o}} = \mathbf{1}_{\mathfrak{o}}.
	\end{align*}
	Since $\mes(\mathfrak{o}) = 1$, the claim follows by integrating over $t \in \mathfrak{o}$.
\end{proof}

\begin{lemma}\label{prop:main-integral}
	For all $k \geq 1$, we have
	\[ \int_{\mathfrak{o}^{\times}} \gamma_\psi\left( 2\varpi^{-k} a\right) \dd a = \begin{cases}
		(1 - q^{-1}) |2|_F^{\frac{1}{2}}, & k \;\text{is even} \\
		0, & k \;\text{is odd}
	\end{cases}\]
	where the measure on $\mathfrak{o}^{\times}$ is induced from the Haar measure on $F$ with $\mes(\mathfrak{o}) = 1$.
\end{lemma}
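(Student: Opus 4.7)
The plan is to unfold the definition of $\gamma_\psi$ as a principal-value Gaussian integral, swap the order of integration with $da$, and exploit the fact that the conductor of $\psi$ is exactly $4\mathfrak{o}$ to reduce everything to elementary computations on lattices.

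First I would rewrite the Weil constant in terms of the fixed Haar measure $\mu_0$ on $F$ with $\mu_0(\mathfrak{o})=1$. A direct check of self-duality (applied to $\mathbf{1}_{\mathfrak{o}}$) shows that the self-dual measure $\mu_b$ with respect to $\psi_b$ satisfies $\mu_b(\mathfrak{o}) = |2|_F^{-1/2}|b|_F^{1/2}$, so $d\mu_{a/2} = |2|_F^{-1}|a|_F^{1/2}\,d\mu_0$. Substituting into the definition of $\gamma_\psi$, for $a = 2\varpi^{-k}a'$ with $a'\in\mathfrak{o}^\times$ we obtain
\[
\gamma_\psi(2\varpi^{-k}a') \;=\; |2|_F^{-1/2}\,q^{k/2}\,\lim_{N\to\infty}\int_{\varpi^{-N}\mathfrak{o}}\psi\bigl(\varpi^{-k}a' x^{2}\bigr)\,d\mu_0(x).
\]
Integrating over $a'\in\mathfrak{o}^\times$ and applying Fubini on the compact set $\mathfrak{o}^\times \times \varpi^{-N}\mathfrak{o}$ yields
\[
I_k \;=\; |2|_F^{-1/2}\,q^{k/2}\,\lim_{N\to\infty}\int_{\varpi^{-N}\mathfrak{o}}\Bigl[\int_{\mathfrak{o}^\times}\psi(\varpi^{-k}x^{2}\,a')\,da'\Bigr]\,d\mu_0(x).
\]

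Next, using that $\psi|_{4\mathfrak{o}}=1$ and $\psi|_{4\varpi^{-1}\mathfrak{o}}\not\equiv 1$, a two-term inclusion–exclusion on $\mathfrak{o}^\times = \mathfrak{o}\setminus\varpi\mathfrak{o}$ gives, for any $c\in F$,
\[
\int_{\mathfrak{o}^\times}\psi(ca')\,da' \;=\; \mathbf{1}_{c\in 4\mathfrak{o}} \;-\; q^{-1}\mathbf{1}_{c\in 4\varpi^{-1}\mathfrak{o}}.
\]
Writing $j := \mathrm{val}_F(x)$, the condition $\varpi^{-k}x^{2}\in 4\mathfrak{o}$ reads $2j \geq 2e+k$, and the condition $\varpi^{-k}x^{2}\in 4\varpi^{-1}\mathfrak{o}$ reads $2j \geq 2e+k-1$. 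When $k=2m$ is even these conditions coincide (both become $j\geq e+m$), so the inner integral equals $(1-q^{-1})\mathbf{1}_{x\in \varpi^{e+m}\mathfrak{o}}$; integrating against $d\mu_0$ contributes $q^{-(e+m)}$, and $q^{-e}=|2|_F$ produces
\[
I_{2m} \;=\; |2|_F^{-1/2}\,q^{m}(1-q^{-1})q^{-(e+m)} \;=\; (1-q^{-1})|2|_F^{1/2}.
\]
When $k=2m+1$ is odd the two conditions differ by exactly one valuation step, and the integral splits as a contribution from $x\in \varpi^{e+m+1}\mathfrak{o}$ (coefficient $(1-q^{-1})$) and from $x\in \varpi^{e+m}\mathfrak{o}^\times$ (coefficient $-q^{-1}$). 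Since $\mu_0(\varpi^{e+m}\mathfrak{o}^\times) = q^{-(e+m)}(1-q^{-1})$, the two terms cancel identically, giving $I_{2m+1}=0$.

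The only subtle point is the justification of the swap of $\lim_N$ with the $a'$-integration. This is harmless because $k\geq 1$ forces $e+m \geq 0$ in both parities, so the $x$-support where the inner $a'$-integral is nonzero is contained in $\mathfrak{o}$; hence the outer integral stabilizes as soon as $N\geq 0$, and the ``principal value'' is in fact an honest integral over a fixed compact set. The measure bookkeeping around $\mu_{a/2}$ versus $\mu_0$ is the main source of potential error, but once the normalization $\mu_b(\mathfrak{o}) = |2|_F^{-1/2}|b|_F^{1/2}$ is fixed, the rest is formal.
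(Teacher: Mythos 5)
Your proof is correct, and the overall strategy is the same as the paper's: unfold $\gamma_\psi$ as a principal-value Gaussian integral, normalize the self-dual measure $\mu_{a/2}$ against the fixed measure $\mu_0$, swap the order of integration, and reduce to elementary lattice computations. Within that frame, you make two small but genuine choices that diverge from the paper. First, the paper begins by invoking the invariance $\gamma_\psi(b)=\gamma_\psi(bu^2)$ to reduce at once to $k\in\{0,1\}$; you instead keep $k$ general and let the even/odd dichotomy emerge from the parity of $2j-k$ in the condition $\varpi^{-k}x^2\in 4\mathfrak{o}$. Second, and more noticeably, you evaluate the inner $a'$-integral with a clean two-term inclusion--exclusion $\int_{\mathfrak{o}^\times}\psi(ca')\,\mathrm{d}a'=\mathbf{1}_{c\in 4\mathfrak{o}}-q^{-1}\mathbf{1}_{c\in 4\varpi^{-1}\mathfrak{o}}$, whereas the paper decomposes $\mathfrak{o}^\times$ into cosets of $1+\varpi^m\mathfrak{o}$ and appeals to the multiplicative-character structure of $t\mapsto\psi_c(t)/\psi_c(1)$, which requires a case-by-case analysis of levels. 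Your version is shorter and the bookkeeping is more transparent; the paper's is arguably more portable if one wanted to handle higher-order Gauss sums. Your normalization $\mu_b(\mathfrak{o})=|2|_F^{-1/2}|b|_F^{1/2}$ checks out against the convention $\hat f(x)=\int f(y)\psi(2xy)\,\mathrm{d}y$ and Corollary \ref{prop:Fourier-o}, and the Fubini/stabilization argument is sound (the inner integral stabilizes uniformly on $\mathfrak{o}^\times$ because $\gamma_\psi(2\varpi^{-k}a')$ takes only finitely many values as $a'$ runs over $\mathfrak{o}^\times/\mathfrak{o}^{\times 2}$); only the phrasing ``$k\geq1$ forces $e+m\geq0$'' is a slight red herring, since $e\geq0$ and $m\geq0$ always hold.
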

\begin{proof}
	Since $\gamma_\psi(b)$ depends only on $b \bmod F^{\times 2}$, henceforth we will assume $k \in \{0, 1\}$.
	
	Since $a$ is constrained in the compact set $\mathfrak{o}^{\times}$, for $r \gg 0$ (uniformly in $a$ and $k$), we have
	\[ \gamma_\psi\left(2\varpi^{-k} a\right) = C_k \int_{\varpi^{-r} \mathfrak{o}} \psi\left( \varpi^{-k} a x^2 \right) \dd x, \]
	where $C_k \in \R_{> 0}$ and the Haar measure satisfies $\mes(\mathfrak{o}) = 1$. Moreover,
	\begin{equation}\label{eqn:p-adic-integral-0}
		C_0 = |2|_F^{-\frac{1}{2}}.
	\end{equation}

	Indeed, we actually have
	\[ \gamma_\psi\left(2 \varpi^{-k} a \right) = \int_{\varpi^{-r} \mathfrak{o}} \psi\left( \varpi^{-k} a x^2 \right) \dd\mu_{\varpi^{-k} a}(x) \]
	where $\mu_{\varpi^{-k} a}$ is the self-dual Haar measure on $F$ relative to $\psi'(x) = \psi(\varpi^{-k} ax)$. When $k=0$, this is the same as the self-dual Haar measure relative to $\psi$. One takes $C_k := \mu_{\varpi^{-k} a}(\mathfrak{o})$ and applies Corollary \ref{prop:Fourier-o} to reach \eqref{eqn:p-adic-integral-0}.
	
	Hence
	\begin{equation}\label{eqn:p-adic-integral-1}
		\int_{\mathfrak{o}^{\times}} \gamma_\psi\left( 2\varpi^{-k} a\right) \dd a = C_k \int_{\varpi^{-r} \mathfrak{o} \smallsetminus \{0\}} \int_{\mathfrak{o}^{\times}} \psi\left( \varpi^{-k} a x^2 \right) \dd a \dd x.
	\end{equation}
	
	Fix $x \in F \smallsetminus \{0\}$. Let $m \in \Z_{\geq 1}$, we claim that when $2m \geq 2e + k - 2\mathrm{val}_F(x)$, the map
	\[ t \mapsto \dfrac{\psi(\varpi^{-k} ax^2 t)}{\psi\left( \varpi^{-k} ax^2 \right)}, \quad t \in 1 + \varpi^m \mathfrak{o} \]
	is a multiplicative character. Indeed, put $c := \varpi^{-k} ax^2$ and set $\psi_c(t) = \psi(ct)$. The additive character $\psi_c$ is trivial on $\{t: \mathrm{val}_F(t) \geq 2e + k - 2\mathrm{val}_F(x)\}$, but nontrivial on $\{t: \mathrm{val}_F(t) \geq 2e + k - 2\mathrm{val}_F(x) - 1\}$. For all $x, y \in \mathfrak{o}$, we deduce
	\begin{multline*}
		\dfrac{\psi_c((1 + \varpi^m x)(1 + \varpi^m y))}{\psi_c(1)} = \dfrac{\psi_c(1 + \varpi^m(x+y) + \varpi^{2m}xy)}{\psi_c(1)} \\
		= \psi_c(\varpi^m(x+y)) = \psi_c(\varpi^m x) \psi_c(\varpi^m y) = \dfrac{\psi_c(1 + \varpi^m x) \psi_c(1 + \varpi^m y)}{\psi_c(1) \psi_c(1)}.
	\end{multline*}
	The claim follows at once.
	
	On the other hand, those $m$ for which $t \mapsto \frac{\psi_c(t)}{\psi_c(1)}$ is trivial on $1 + \varpi^m \mathfrak{o}$ is characterized by $m \geq 2e + k - 2\mathrm{val}_F(x)$.
	
	Returning to \eqref{eqn:p-adic-integral-1}, when $m \geq 1$ is given, we decompose
	\[ \mathfrak{o}^{\times} = \bigsqcup_{\overline{a} \in (\mathfrak{o}/(\varpi^m))^{\times}} a(1 + \varpi^m \mathfrak{o}), \quad a: \;\text{representative of}\; \overline{a}. \]
	
	When $x$ is kept fixed, the inner integral in \eqref{eqn:p-adic-integral-1} is thus
	\begin{equation}\label{eqn:p-adic-integral-2}
		\sum_{\overline{a} \in (\mathfrak{o}/(\varpi^m))^{\times}} \int_{1 + \varpi^m \mathfrak{o}} \psi(\varpi^{-k} a x^2 t) \dd t.
	\end{equation}
	
	Consider the case $k=1$ first.
	\begin{itemize}
		\item If there exists $m \geq 1$ such that $m < 2e + 1 - 2\mathrm{val}_F(x) \leq 2m$, namely if $2e + 1 -2\mathrm{val}_F(x) = 3, 5, 7, \ldots$, then the discussions above implies that \eqref{eqn:p-adic-integral-2} is zero.
		
		\item If $2e + 1 - 2\mathrm{val}_F(x) = 1$, then we take $m=1$ in \eqref{eqn:p-adic-integral-2} to get
		\begin{multline*}
			\sum_{\overline{a} \in (\mathfrak{o}/(\varpi))^{\times}} \psi( \underbracket{\varpi^{-1} a x^2}_{\mathrm{val}_F = 2e - 1} ) \underbracket{\mes(1 + \varpi \mathfrak{o})}_{= |\varpi|_F = q^{-1}} \\
			= q^{-1} \sum_{\overline{a} \in \mathfrak{o}/(\varpi)} \psi( \varpi^{-1} a x^2) - q^{-1} = -q^{-1}.
		\end{multline*}
		
		\item If $2e + 1 -2\mathrm{val}_F(x) \leq -1$ (equivalently, $\leq 0$) then $\psi(\varpi^{-1} a x^2) = 1$ for all $a \in \mathfrak{o}^{\times}$, hence
		\[ \int_{\mathfrak{o}^{\times}} \psi\left( \varpi^{-1} a x^2 \right) \dd a = \mes(\mathfrak{o}^{\times}) = 1 - q^{-1}. \]
	\end{itemize}
	
	We can thus integrate $x$ over the subset $\mathrm{val}_F \geq e$. The net result in the case $k=1$ is
	\begin{align*}
		\int_{\mathfrak{o}^{\times}} \gamma_\psi\left( 2\varpi^{-1} a \right) \dd a & = \int_{\varpi^{e+1} \mathfrak{o}} (1 - q^{-1}) \dd x - \int_{\varpi^e \mathfrak{o}^{\times}} q^{-1} \dd x \\
		& = (1 - q^{-1}) q^{-e-1} - q^{-1} q^{-e}(1 - q^{-1}) = 0.
	\end{align*}
	
	Next, consider the case $k=0$.
	\begin{itemize}
		\item If $2e - 2\mathrm{val}_F(x) = 2, 4, 6, \ldots$ then there exists $m \in \Z_{\geq 1}$ such that
		\[ m < 2e - 2\mathrm{val}_F(x) \leq 2m. \]
		The earlier discussion implies that \eqref{eqn:p-adic-integral-2} is zero.
		
		\item If $2e - 2\mathrm{val}_F(x) \leq 0$, then $\psi(ax^2) = 1$ for all $a \in \mathfrak{o}^{\times}$, hence
		\[ \int_{\mathfrak{o}^{\times}} \psi\left( a x^2 \right) \dd a = \mes(\mathfrak{o}^{\times}) = 1 - q^{-1}. \]
	\end{itemize}
	
	Therefore we can integrate $x$ over the subset $\mathrm{val}_F \geq e$. The net result is
	\[ \int_{\mathfrak{o}^{\times}} \gamma_\psi(2a) \dd a = q^{-e}(1 - q^{-1}). \]
	
	To conclude, it remains to notice that $q^{-e} = |2|_F$, and we have to multiply $\int_{\mathfrak{o}^{\times}} \gamma_\psi(2a) \dd a$ by $C_0 = |2|_F^{-\frac{1}{2}}$; see \eqref{eqn:p-adic-integral-0}.
\end{proof}

\begin{lemma}\label{prop:main-integral-1}
	With the notation of Lemma \ref{prop:rank-one-integrals}, for all $k \geq 1$ we have
	\[ r_k = \begin{cases}
		(1 - q^{-1}) |2|_F^{1/2}, & k\;\text{is even} \\
		0, & k\;\text{is odd}.
	\end{cases}\]
\end{lemma}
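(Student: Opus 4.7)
The plan is to evaluate the defining integral for $r_k$ in Lemma~\ref{prop:rank-one-integrals} on a carefully chosen test function $\phi$ and reduce it to the already-computed integral of Lemma~\ref{prop:main-integral}. A convenient choice is $\phi := \mathbf{1}_{2\mathfrak{o}}$, viewed as the Dirac function supported at the zero coset in $\mathfrak{o}/(2)$; this single test vector suffices because $(\omega_\psi(\tilde{w})\phi)(0) = |2|_F^{1/2}$ is nonzero, so plugging $\phi$ into the characterization of Lemma~\ref{prop:rank-one-integrals} determines $r_k$ uniquely.

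First, I would compute $\omega_\psi\twobigmatrix{1}{1/t}{}{1}\phi$ using \eqref{eqn:root-action}: since this element is $\tilde{x}_{2\epsilon_1}(1/t)$, it acts by multiplication by the function $y \mapsto \psi(y^2/t)$. Next, I would apply $\omega_\psi(\tilde{w})$ to the resulting function using the explicit Fourier description in Lemma~\ref{prop:d-Fourier}, and then evaluate at $0$. Because $\phi = \mathbf{1}_{2\mathfrak{o}}$ is supported only at the zero coset, the finite sum over $u \in \mathfrak{o}/(2)$ collapses to the single term $u = 0$, giving
\[
    \left(\omega_\psi(\tilde{w}) \omega_\psi\twobigmatrix{1}{1/t}{}{1}\phi\right)(0) \;=\; |2|_F^{1/2}.
\]
In particular the $t$-dependence inside the integrand disappears entirely, leaving only the factor $\gamma_\psi(-2t)$.

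Substituting this back into Lemma~\ref{prop:rank-one-integrals} and using $(\omega_\psi(\tilde{w})\phi)(0) = |2|_F^{1/2}$ yields
\[
    r_k \;=\; |\varpi|_F^{k} \int_{\varpi^{-k}\mathfrak{o}^\times} \gamma_\psi(-2t)\,\dd t \cdot |2|_F^{1/2} \cdot |2|_F^{-1/2}.
\]
The change of variables $t = \varpi^{-k}a$ converts this to $\int_{\mathfrak{o}^\times} \gamma_\psi(-2\varpi^{-k}a)\,\dd a$, and the further substitution $a \mapsto -a$ (which preserves both $\mathfrak{o}^\times$ and its Haar measure) replaces $-2\varpi^{-k}a$ by $2\varpi^{-k}a$. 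Applying Lemma~\ref{prop:main-integral} then gives exactly the claimed values, with $(1 - q^{-1})|2|_F^{1/2}$ in the even case and $0$ in the odd case.

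There is no real obstacle here: all the nontrivial work has already been done in Lemmas~\ref{prop:d-Fourier} and~\ref{prop:main-integral}. The only minor point deserving care is confirming that $\gamma_\psi(-2t) = \gamma_\psi(2t)$ after averaging over $\mathfrak{o}^\times$ (since $\gamma_\psi$ is not in general invariant under negation, only the integral is, via the change of variables just described), and that the normalizations of Haar measures on $F$ and on $\mathfrak{o}^\times$ used in Lemma~\ref{prop:rank-one-integrals} agree with those in Lemma~\ref{prop:main-integral}; both are normalized by $\mes(\mathfrak{o}) = 1$, so this match is automatic.
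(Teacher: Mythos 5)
Your proposal is correct and follows essentially the same route as the paper: take $\phi = \mathbf{1}_{2\mathfrak{o}}$, observe that $\omega_\psi(\tilde{w})\,\omega_\psi\twobigmatrix{1}{1/t}{}{1}(\phi)$ evaluated at $0$ equals $|2|_F^{1/2}$ independently of $t$, then reduce to Lemma~\ref{prop:main-integral} by the substitution $t = \varpi^{-k}a$ followed by $a \mapsto -a$. The only cosmetic difference is that the paper first shows $\omega_\psi\twobigmatrix{1}{1/t}{}{1}\phi = \phi$ (since $\psi(y^2/t)=1$ for $y \in 2\mathfrak{o}$, $1/t \in \mathfrak{o}$) before applying $\tilde{w}$, whereas you note directly that the Fourier sum collapses to the single term $u=0$; these are the same observation.
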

\begin{proof}
	Set $\phi := \mathbf{1}_{2\mathfrak{o}}$. Since $|2|_F^{1/2} \sum_{y \in \mathfrak{o}/(2)} \phi(y) = |2|_F^{1/2}$, it suffices to show
	\[ |\varpi|_F^k \int_{\varpi^{-k} \mathfrak{o}^{\times}} \gamma_\psi(-2t) \omega_\psi(\tilde{w}) \omega_\psi\twobigmatrix{1}{\frac{1}{t}}{}{1}(\phi)(0) \dd t = \begin{cases}
		(1 - q^{-1}) |2|_F, & k \;\text{is even} \\
		0, & k \;\text{is odd.}
	\end{cases}. \]
	
	From \eqref{eqn:root-action} and $|t|_F \geq 1$, we have
	\[ \omega_\psi(\tilde{w}) \omega_\psi\twobigmatrix{1}{\frac{1}{t}}{}{1}(\phi) = \omega_\psi(\tilde{w})(\phi), \]
	and its value at $0$ is $|2|_F^{1/2}$ by Lemma \ref{prop:d-Fourier}. It remains to apply Lemma \ref{prop:main-integral} after a change of variables.
\end{proof}

We are ready to finish the proof of the Gindikin--Karpelevich formula for $\tilde{G}$ in the even case.

\begin{proof}[Proof of Theorem \ref{prop:GK}]
	By Lemma \ref{prop:GK-reduction}, it suffices to show $d(w, \chi) = |2|_F^{\frac{1}{2}} c(w, \chi)$ when $n=1$ and $w$ is the nontrivial element of the Weyl group. From Lemmas \ref{prop:main-integral-0} and \ref{prop:main-integral-1}, we obtain
	\[ d(w, \chi) = \sum_{k=0}^{\infty} r_k z^k = |2|_F^{1/2} \left( 1 + (1 - q^{-1}) \sum_{h=1}^\infty z^{2h} \right). \]
	
	On the other hand, the Gindikin--Karpelevich formula \cite[Theorem 3.1]{Ca80} for $\SO(V^+) \simeq \mathrm{PGL}(2)$ yields
	\[ c(w, \chi) = \frac{1 - q^{-1} z^2}{1 - z^2} = 1 + (1 - q^{-1}) \sum_{h=1}^\infty z^{2h}; \]
	the square here is due to rescaling of (co)root length between $\SL(2)$ and $\mathrm{PGL}(2)$. The desired equality follows.
\end{proof}

\section{Identification of intertwining operators: the odd case}\label{sec:int-op-odd}
We may assume $n \geq 2$ in the odd case.

\subsection{Standard intertwining operators}\label{sec:statement-int-op-odd}
We begin by constructing representatives $\mathring{w} \in \widetilde{K_1}$ for elements $w \in \Omega'_0$. Although $\Omega'_0 \subset \Omega_0$, they will differ slightly from the representatives defined in the even case.

Denote the $B^{\leftarrow}$-positive roots $2\epsilon_2, \ldots, \epsilon_n - \epsilon_{n-1}$ as $\beta'_2, \ldots, \beta'_n$, in this order. Denote the corresponding reflections by $t'_2, \ldots, t'_n$. Then $t'_2, \ldots, t'_n$ generate $\Omega'_0$, giving rise to a Coxeter group of type $\mathrm{C}_{n-1}$.

\begin{description}
	\item[Short roots] For $i > 2$, we take the representative of $t'_i$ as
	\[ \mathring{t}'_i := \tilde{x}_{\beta'_i}(1) \tilde{x}_{-\beta'_i}(-1) \tilde{x}_{\beta'_i}(1). \]
	\item[Long root] For $t'_2$, we rescale $\tilde{x}_{\beta'_2}(1) \tilde{x}_{-\beta'_2}(-1) \tilde{x}_{\beta'_2}(1)$, so that in the Schrödinger model $\omega_\psi(\mathring{t}'_2)$ is the unitary Fourier transform in the second coordinate.
\end{description}

For a general element $w \in \Omega'_0$, we define $\mathring{w}$ by taking any reduced expression in terms of $t'_2, \ldots, t'_n$. As in the even case, $\mathring{w}$ is independent of reduced expressions by \cite[Proposition 4.2]{GL18}. The arguments are exactly the same as in the even case: one can work within the metaplectic group associated with the symplectic subspace $\bigoplus_{i=2}^n (Fe_j \oplus Ff_j)$ of $W$.

Observe that $K_1$ contains the avatar of $K_0$ for $\Sp\left(\bigoplus_{i=2}^n (Fe_j \oplus Ff_j) \right)$, hence $\mathring{w} \in \widetilde{K_1}$.

Some combinatorial preparations are in order.

\begin{lemma}\label{prop:reduced-transition}
	If $t'_{i_1} \cdots t'_{i_\ell}$ is a reduced expression for $w \in \Omega'_0$, then a reduced expression of $w$ as an element of $\Omega_0$ can be obtained as follows.
	\begin{enumerate}
		\item Leave the terms $t'_{i_j}$ with $i_j > 2$ intact, in which case $t'_{i_j} = t_{i_j}$.
		\item Replace each occurrence of $t'_2$ by $t_2 t_1 t_2$. 
	\end{enumerate}
\end{lemma}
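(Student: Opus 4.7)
\emph{Proposal.} The plan is to first pin down each substituted generator as an element of $\Omega_0$, and then to verify reducedness by a length count.

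\textbf{Step 1: identifying the generators.} For $i \geq 3$ we have $\beta'_i = \epsilon_i - \epsilon_{i-1} = \beta_i$, so tautologically $t'_i = t_i$. For $i = 2$, a direct computation of the action on the basis $\epsilon_1, \ldots, \epsilon_n$ shows that $t_2 t_1 t_2$ sends $\epsilon_2 \mapsto -\epsilon_2$ and fixes $\epsilon_j$ for $j \neq 2$; this is precisely the reflection along $2\epsilon_2 = \beta'_2$, hence $t'_2 = t_2 t_1 t_2$ in $\Omega_0$. Consequently the substituted word represents the correct element $w$; it remains to show it is reduced in $\Omega_0$.

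\textbf{Step 2: length comparison.} Let $\ell_0$ (resp.\ $\ell'_0$) denote the length on $\Omega_0$ (resp.\ $\Omega'_0$) relative to $t_1, \ldots, t_n$ (resp.\ $t'_2, \ldots, t'_n$), and recall $\ell(w) = \#\{\alpha > 0 : w\alpha < 0\}$. Since $\Omega'_0 = \Stab_{\Omega_0}(\epsilon_1)$ (see the introduction), every $w \in \Omega'_0$ acts as a signed permutation of $\{\epsilon_2, \ldots, \epsilon_n\}$; write $w(\epsilon_i) = \eta_i \epsilon_{\sigma(i)}$ with $\eta_i \in \{\pm 1\}$, and let $k := \#\{i \geq 2 : \eta_i = -1\}$. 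Partition the $B^{\leftarrow}$-positive roots of $G$ into those supported on $\{\epsilon_2, \ldots, \epsilon_n\}$ and those involving $\epsilon_1$. The first family is exactly the positive root system of type $\mathrm{C}_{n-1}$ underlying $\Omega'_0$, so the count of roots it contributes to $\ell_0(w)$ equals $\ell'_0(w)$. The second family consists of $2\epsilon_1$, which $w$ fixes, together with the pairs $\{\epsilon_i - \epsilon_1,\; \epsilon_i + \epsilon_1\}$ for $2 \leq i \leq n$; a straightforward case check shows that both members of such a pair are sent to negative roots when $\eta_i = -1$ and neither is when $\eta_i = +1$. Summing gives
\[
\ell_0(w) = \ell'_0(w) + 2k.
\]

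\textbf{Step 3: conclusion via Lemma \ref{prop:tw-number}.} Applying the analog of Lemma \ref{prop:tw-number} to the Coxeter system $(\Omega'_0, \{t'_2, \ldots, t'_n\})$ of type $\mathrm{C}_{n-1}$, in which $t'_2$ is the unique long-root simple reflection, identifies $k$ with the number of occurrences of $t'_2$ in any reduced expression $t'_{i_1}\cdots t'_{i_\ell}$ of $w$. Therefore the substituted word has length $\ell + 2k = \ell'_0(w) + 2k = \ell_0(w)$ in $\Omega_0$, which forces it to be reduced.

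The only potential subtlety is the invocation of the analog of Lemma \ref{prop:tw-number} inside $\Omega'_0$, but its proof is purely internal to the Coxeter system of type $\mathrm{C}$ (using only the braid relations together with the interpretation of flips as long-root reflections), so it applies verbatim to $(\Omega'_0, \{t'_2, \ldots, t'_n\})$. No other obstacles are anticipated.
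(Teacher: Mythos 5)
Your proof is correct and follows essentially the same route as the paper's: identify $t'_2 = t_2 t_1 t_2$, count roots to obtain $\ell_0(w) = \ell'_0(w) + 2k$ with $k$ the number of sign flips among $\epsilon_2,\ldots,\epsilon_n$, then invoke Lemma~\ref{prop:tw-number} in type $\mathrm{C}_{n-1}$ to identify $k$ with the number of occurrences of $t'_2$. The only cosmetic difference is that you phrase the root count via the explicit signed-permutation data $(\eta_i, \sigma)$ and spell out $\ell_0(w) = \ell'_0(w) + 2k$ as a displayed identity, whereas the paper reasons directly about which pairs $\{\epsilon_j - \epsilon_1, \epsilon_j + \epsilon_1\}$ become negative; the substance is identical.
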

\begin{proof}
	Evidently, $t'_2 = t_2 t_1 t_2$. The point is to show the resulting expression is reduced in $\Omega_0$. The length $k$ of $w$ counted in $(\Omega'_0, \{t'_i\}_{i \geq 2})$ is the number of the $B^{\leftarrow}$-positive roots
	\[ 2\epsilon_i, \quad \epsilon_j \pm \epsilon_l \quad (2 \leq i \leq n, \quad 2 \leq l < j \leq n) \]
	that are mapped to $B^{\leftarrow}$-negative roots by $w$. Viewed inside $(\Omega_0, (t_i)_{i \geq 1})$, the extra $B^{\leftarrow}$-positive roots to be considered are, a priori,
	\[ 2\epsilon_1, \quad \epsilon_j \pm \epsilon_1 \quad (2 \leq j \leq n). \]
	
	If $w$ maps a root $2\epsilon_j$ into a negative one, where $2 \leq j \leq n$, then it maps both $\epsilon_j + \epsilon_1$ and $\epsilon_j - \epsilon_1$ into negative ones. These are exactly the extra sign-changes in $\Omega_0$.
	
	On the other hand, Lemma \ref{prop:tw-number} (applied to type $\mathrm{C}_{n-1}$) shows that the number of indices $2 \leq j \leq n$ such that $w(2\epsilon_j)$ is negative equals the number of occurrences of $t'_2$ in $w = t'_{i_1} \cdots t'_{i_\ell}$. Hence replacing $t'_2$ by $t_2 t_1 t_2$ yields an expression with the desired length in $\Omega_0$.
\end{proof}

\begin{proposition}\label{prop:comparison-representatives}
	Let $w \in \Omega'_0$.
	\begin{enumerate}[(i)]
		\item The number $t(w)$ (Definition \ref{def:tw-number}) equals the number of occurrences of $t'_2$ in any reduced expression $w = t'_{i_1} \cdots t'_{i_\ell}$.
		
		\item The representatives $\tilde{w}$ and $\mathring{w}$ are related by
		\[ \tilde{w} = x_1 \mathring{w} = \mathring{w} x_2 \]
		where $x_i$ is an element of $T(F) \simeq (F^{\times})^n$, viewed as an element of $\tilde{T}$ via \eqref{eqn:Levi-splitting}, such that
		\begin{itemize}
			\item all the $n$ coordinates of $x_i$ are in $\{\pm 1\} \subset F^{\times}$;
			\item the first coordinate of $x_i$ is $(-1)^{t(w)}$.
		\end{itemize}
		
		\item For every parabolic $Q = M^1 V \in \mathcal{P}(M^1)$, denote by $\Sigma_Q^{\mathrm{red}} \subset X^*(T^1)$ the set of reduced roots in $V$. For $Q, Q'$ as above, put $d(Q, Q') := \left| \Sigma_Q^{\mathrm{red}} \cap \Sigma_{Q'}^{\mathrm{red}} \right|$.
		
		If $v, w \in \Omega'_0$ satisfy $\ell'(vw) = \ell'(v) + \ell'(w)$, then $d({}^{vw} P, P) = d({}^{wv} P, {}^v P) +  d({}^v P, P)$.
	\end{enumerate}
\end{proposition}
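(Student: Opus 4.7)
The plan is to dispose of the three parts in turn, each reducing to a concrete computation once the correct combinatorial/analytic framework is identified.

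For (i), I would apply Lemma~\ref{prop:tw-number} verbatim, but inside the rank-$(n-1)$ sub-root system carried by $\bigoplus_{i \geq 2} \R\epsilon_i$. Since $\Omega'_0 = \Stab_{\Omega_0}(\epsilon_1)$ fixes the long root $2\epsilon_1$, this root is never flipped by any $w \in \Omega'_0$, so $t(w)$ only counts sign changes among $2\epsilon_2, \ldots, 2\epsilon_n$. In the Coxeter presentation $(\Omega'_0, \{t'_2, \ldots, t'_n\})$, which is of type $\mathrm{C}_{n-1}$ with unique long simple reflection $t'_2$ (along $2\epsilon_2$), Lemma~\ref{prop:tw-number} applied inside this subsystem translates directly into the equality $t(w) = \#\{k : i_k = 2\}$ for any reduced expression $w = t'_{i_1}\cdots t'_{i_\ell}$.

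For (ii), I start with Lemma~\ref{prop:reduced-transition}, which turns a reduced expression in $(\Omega'_0, \{t'_i\})$ into one in $(\Omega_0, \{t_i\})$ by the substitution $t'_i \mapsto t_i$ for $i > 2$ and $t'_2 \mapsto t_2 t_1 t_2$. For short roots ($i > 2$), the representatives match tautologically: $\mathring{t}'_i = \tilde{t}_i$. For the long root, I would compare $\tilde{t}_2\tilde{t}_1\tilde{t}_2$ with $\mathring{t}'_2$ by computing their actions in the Schrödinger model using \eqref{eqn:root-action}--\eqref{eqn:Siegel-action}: $\tilde{t}_1$ is Fourier in coordinate $1$, while $\tilde{t}_2$ implements the GL-transposition sending $(y_1, y_2)$ to $(y_2, -y_1)$, so a direct chase shows that $\tilde{t}_2 \tilde{t}_1 \tilde{t}_2$ performs Fourier in coordinate $2$ preceded by the sign change $(y_1, y_2) \mapsto (-y_1, -y_2)$; meanwhile $\mathring{t}'_2$ is by definition precisely Fourier in coordinate $2$. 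Since $\omega_\psi$ is faithful on elements of $\tilde G$ with fixed image in $G(F)$, this yields the identity
\[
\tilde{t}_2 \tilde{t}_1 \tilde{t}_2 = \mathring{t}'_2 \cdot s(a), \qquad a := \mathrm{diag}(-1, -1, 1, \ldots, 1),
\]
where $s$ is the Schrödinger splitting from \eqref{eqn:Levi-splitting}. Plugging this into the reduced expression and pushing the $s(a)$-factors to one side past the $\mathring{t}'_{i_k}$'s (each of which acts on $\epsilon_1$ trivially and permutes-with-signs the coordinates $\epsilon_2, \ldots, \epsilon_n$) produces $\tilde{w} = \mathring{w}\cdot s(x_2) = s(x_1)\cdot\mathring{w}$ where each $x_j$ is a product of $t(w)$ many Weyl-conjugates of $a$. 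Since every such conjugate keeps its first entry equal to $-1$ and stays in $\{\pm 1\}^n$, combined with part (i) this forces the first coordinate of $x_j$ to be $(-1)^{t(w)}$ and the rest to lie in $\{\pm 1\}$.

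For (iii), I would recast the identity purely inside the Coxeter group $(\Omega'_0, \{t'_2, \ldots, t'_n\})$ acting on the reduced relative root system of $T^1$ in $G$. The weighted additivity $\ell'(vw) = \ell'(v) + \ell'(w)$ is equivalent to the unweighted Coxeter-length additivity, which in turn is equivalent to the disjoint decomposition of inversion sets in $\Sigma_P^{\mathrm{red}}$; translating this decomposition through the $\Omega'_0$-equivariance $\Sigma^{\mathrm{red}}_{{}^g Q} = g\,\Sigma^{\mathrm{red}}_Q$ and the identity $d({}^{wv}P,{}^v P) = d({}^{v^{-1}wv}P, P)$ yields the claimed additivity of the counts after organizing the common-overlap set $\Sigma_{{}^{vw}P}^{\mathrm{red}}\cap \Sigma_P^{\mathrm{red}}$ according to membership in $\Sigma_{{}^v P}^{\mathrm{red}}$. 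The main obstacle is expected to be the careful bookkeeping in (ii): tracking all sign corrections arising from conjugating $s(a)$ through a reduced word, and showing that they collapse to a single torus element whose first coordinate is canonically $(-1)^{t(w)}$ independently of the chosen reduced expression; part (i) is the essential input that makes this count well-defined.
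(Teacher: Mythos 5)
Your proposal follows essentially the same route as the paper, with small variations. The heart of the matter is part (ii), and your identity $\tilde{t}_2 \tilde{t}_1 \tilde{t}_2 = \mathring{t}'_2 \cdot s(a)$ with $a = (-1,-1,1,\ldots,1)$ is exactly what the paper establishes (the paper writes $\tilde{t}_2\tilde{t}_1\tilde{t}_2 = \zeta\mathring{t}'_2$ using $\tilde{t}_2 = \tilde{t}_2^{-1}\zeta$, where $\zeta$ is the parity lift of $-1 \in \Sp(4,F)$; your $s(a)$ is that $\zeta$). Your Schr\"odinger-model chase for the action of $\tilde{t}_2$ as $(y_1,y_2)\mapsto(y_2,-y_1)$ is correct, and the argument that pushing the $s(a)$-factors through the word keeps the first coordinate at $-1$ (because conjugation within $\Omega'_0$ never touches $\epsilon_1$, and within the Siegel Levi the splitting is a homomorphism) is the same observation the paper uses.

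Two small remarks. For (i), you bypass Lemma~\ref{prop:reduced-transition} by applying Lemma~\ref{prop:tw-number} directly inside the type $\mathrm{C}_{n-1}$ subsystem $(\Omega'_0, \{t'_2,\ldots,t'_n\})$, using that $\Omega'_0$ stabilizes $\epsilon_1$; the paper instead passes to an $\Omega_0$-reduced word via Lemma~\ref{prop:reduced-transition} and applies Lemma~\ref{prop:tw-number} there. Both are valid, and since Lemma~\ref{prop:reduced-transition} is indispensable for (ii) anyway, neither route is really shorter overall. For (iii), both your sketch and the paper's one-line argument leave the combinatorial bookkeeping to the reader. Your outline (pass to the Coxeter group of type $\mathrm{C}_{n-1}$, use $\Omega'_0$-equivariance of the root-sets, and decompose the inversion set) is the right framework, but the phrase ``yields the claimed additivity'' is not yet a verification; one should at minimum spell out the intended meaning of $d$ (the paper's stated definition $d(Q,Q') = |\Sigma_Q^{\mathrm{red}}\cap\Sigma_{Q'}^{\mathrm{red}}|$ is awkward to reconcile with the claimed identity at $v = w = e$, and likely the ``distance'' $|\Sigma_Q^{\mathrm{red}}\smallsetminus\Sigma_{Q'}^{\mathrm{red}}|$ and the middle term ${}^{vw}P$ are what is actually meant) before the additivity reduces to $\ell(vw)=\ell(v)+\ell(w)$. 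That is not a gap specific to your argument, but you should flag it rather than pass it over silently.
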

\begin{proof}
	By Lemma \ref{prop:reduced-transition}, the number of occurrences of $t'_2$ equals that of $t_1$ in any reduced expression of $w$ taken in $\Omega_0$. The latter number equals $t(w)$ by Lemma \ref{prop:tw-number}. This proves (i).
	
	As for (ii), we begin by comparing $\tilde{t}_2 \tilde{t}_1 \tilde{t}_2$ and $\mathring{t}'_2$. This can be done within $\Mp(4)$. Observe that $\tilde{t}_2$ belongs to the Siegel parabolic $\GL(2, F)$ via \eqref{eqn:Levi-splitting}. Within $\GL(2, F)$ we have
	\[ \tilde{t}_2 = \begin{pmatrix} & 1 \\ -1 & \end{pmatrix} = \tilde{t}_2^{-1} \begin{pmatrix} -1 & \\ & -1 \end{pmatrix}.\]
	Hence in $\Mp(4)$ we get
	\[ \tilde{t}_2 \tilde{t}_1 \tilde{t}_2 = \tilde{t}_2 \tilde{t}_1 \tilde{t}_2^{-1} \cdot \zeta = \zeta \cdot \tilde{t}_2 \tilde{t}_1 \tilde{t}_2^{-1}, \]
	where $\zeta \in \Mp(4)$ is the canonical lifting of $-1 \in \Sp(4, F)$ acting by $\pm \identity$ on $\omega_\psi^{\pm}$. Moreover, $\tilde{t}_2 \tilde{t}_1 \tilde{t}_2^{-1}$ acts as the unitary Fourier transform on the second coordinate via $\omega_\psi$, i.e.\ as $\mathring{t}'_2$. All these follow by a quick computation in Schrödinger's model.
	
	For each occurrence of $t'_2$ in a given reduced expression of $w \in \Omega'_0$, we pass from $\mathring{t}'_2$ to $\tilde{t}_2 \tilde{t}_1 \tilde{t}_2$ in this way, giving rise to a $\zeta$ on the left or right. When we commute $\zeta$ past $\tilde{t}_2, \ldots, \tilde{t}_n$ in the reduced expression furnished by Lemma \ref{prop:reduced-transition}, the second $-1$ in $\zeta$ might get moved to the slots $2, \ldots, n$, whilst the first $-1$ remain intact.
	
	The roots in $U^1$ and their behavior under $\Omega'_0$ are known. The assertion (iii) follows easily: noting that the reduced roots in $U^1$ are $\epsilon_2, \epsilon_3 - \epsilon_2, \ldots, \epsilon_n - \epsilon_{n-1} \in X^*(T_1)$, the computation reduces to type $\mathrm{C}_{n-1}$.
\end{proof}

Now recall that by Definition \ref{def:P1}, we have ${}^w M^1 = M^1$ for all $w \in \Omega'_0$.

\begin{lemma}\label{prop:w-action-T1}
	Let $w \in \Omega'_0$. The actions of $\tilde{w}$ and $\mathring{w}$ on $\tilde{M}^1 = \Mp(W^1) \times T^1(F)$ are the same. The action is trivial on the $\Mp(W^1)$ factor, and equals the evident action of $\Omega'_0$ on $T^1(F)$.
\end{lemma}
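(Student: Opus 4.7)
The plan is to prove three things: (a) $\tilde{w}$ and $\mathring{w}$ induce the same conjugation on $\tilde{M}^1$; (b) this action is trivial on the $\Mp(W^1)$ factor; (c) it is the evident Weyl action on the $T^1(F)$ factor. Claim (c) will then follow essentially for free, since $T^1(F)$ embeds canonically in $\tilde{M}^1$ via \eqref{eqn:Levi-splitting}, and conjugation by $\pr(\mathring{w})$ on $T^1(F) \subset T(F)$ is just the usual Weyl action within $G(F)$.

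For (a), I would invoke Proposition \ref{prop:comparison-representatives}(ii) to write $\tilde{w} = x_1 \mathring{w}$, with $x_1$ an element of $T(F) \subset M^1(F)$ whose coordinates all lie in $\{\pm 1\}$. Decomposing $T = \GL(1) \times T^1$ compatibly with $M^1 = \Sp(W^1) \times \GL(1)^{n-1}$, the first coordinate of $x_1$ lies in the maximal split torus of $\Sp(W^1) = \SL(2)$ and equals $\pm I_2$, which is central in $\Sp(W^1)$; the remaining coordinates lie in $T^1 = \GL(1)^{n-1}$, which is itself the center of the second factor of $M^1$. Hence $\pr(x_1)$ is central in $M^1(F)$. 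Applying \cite[II.5 Lemme]{MVW87}, $x_1$ then commutes with every element of $\tilde{M}^1$, so conjugation by $x_1$ is trivial on $\tilde{M}^1$; therefore the conjugations by $\tilde{w}$ and by $\mathring{w}$ agree on $\tilde{M}^1$.

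For (b), set $W^{1,c} := \bigoplus_{i=2}^n (Fe_i \oplus Ff_i)$, so that $W = W^1 \oplus W^{1,c}$ as symplectic spaces and $\Sp(W^1)$, $\Sp(W^{1,c})$ commute elementwise inside $\Sp(W)$. By construction $\mathring{w}$ is a product of the $\mathring{t}'_i$ ($i \geq 2$), each built from root subgroups for roots $\beta'_i \in \{2\epsilon_2, \epsilon_3 - \epsilon_2, \ldots, \epsilon_n - \epsilon_{n-1}\}$ supported on $\epsilon_2, \ldots, \epsilon_n$; for the long root $\beta'_2$, the Schrödinger rescaling also takes place inside $\Sp(W^{1,c})$. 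Hence $\pr(\mathring{w}) \in \Sp(W^{1,c})$, which commutes with every element of $\Sp(W^1)$. A second appeal to \cite[II.5 Lemme]{MVW87} then promotes this to the statement that $\mathring{w}$ commutes with every element of $\Mp(W^1) \subset \tilde{M}^1$, giving (b).

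Once (a) and (b) are in hand, (c) reduces to observing that $\mathring{w}$ normalises both factors of the Schrödinger splitting $\tilde{M}^1 = \Mp(W^1) \times T^1(F)$, and that on the $T^1(F)$ factor conjugation factors through $G(F)$, so it coincides with the standard Weyl action of $w \in \Omega'_0$ on $T^1$. No single step looks technically demanding; the main thing to watch is the repeated use of \cite[II.5 Lemme]{MVW87} to lift centrality/commutativity statements from $G(F)$ to $\tilde{G}$, which is precisely the device that makes the cover behave well here.
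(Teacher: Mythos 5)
Your proof is correct and follows essentially the same route as the paper's one‑line argument: compute the conjugation for $\mathring{w}$ using the fact that it is built inside $\Mp\bigl(\bigoplus_{i\geq 2}(Fe_i\oplus Ff_i)\bigr)$, then transfer to $\tilde{w}$ via Proposition~\ref{prop:comparison-representatives}(ii). You have merely made explicit two steps the paper leaves implicit, namely that $\pr(x_1)$, having all coordinates in $\{\pm 1\}$, is central in $M^1(F)$, and the two invocations of the MVW commutation lemma to promote commutativity from $G(F)$ to $\tilde{G}$.
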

\begin{proof}
	The assertion is clear for $\mathring{w}$, which is constructed within the metaplectic group associated with $\bigoplus_{i=2}^n (Fe_j \oplus Ff_j)$. The case for $\tilde{w}$ follows from Proposition \ref{prop:comparison-representatives} (ii).
\end{proof}

\begin{definition}\label{def:int-op-odd}
	Let $P = MU$ be a parabolic subgroup of $G$ with $P \supset P^1$, and $w \in \Omega'_0$ satisfies ${}^w M = M$. For every $\pi$ in $\tilde{M}\dcate{Mod}$, by imitating Definition \ref{def:int-op-even}, the intertwining operator $J'_w(\pi)$ is the composition
	\[ i_{\tilde{P}}(\pi) \xrightarrow{J'_{\tilde{P}^w | \tilde{P}}(\pi)} i_{\tilde{P}^w}(\pi) \to i_{\tilde{P}}\left({}^{\mathring{w}} \pi\right). \]	
	The differences here are that
	\begin{itemize}
		\item $J'_{\tilde{P}^w | \tilde{P}}(\pi)$ is defined by fixing the Haar measure on $V(F)$ such that $\mes(V(F) \cap K_1) = 1$, for each unipotent subgroup $V$ involved in the intertwining integral;
		\item the map $i_{\tilde{P}^w}(\pi) \to i_{\tilde{P}}\left( {}^{\mathring{w}} \pi \right)$ is defined using the representative $\mathring{w}$, instead of $\tilde{w}$.
	\end{itemize}
	The choice of measures is legitimate since $K_1$ is in good position relative to $M^1$ and $\mathring{w} \in \widetilde{K_1}$.

	In particular, taking $P = P^1$ and $\pi = \omega_\psi^{\flat, -} \boxtimes \chi$ where $\omega_\psi^{\flat}$ (resp.\ $\chi$) is the Weil representation of $\Mp(W^1)$ (resp.\ an unramified character of $T^1(F)$), for every $w \in \Omega'_0$ we have by Lemma \ref{prop:w-action-T1} that
	\[ J'_w\left(\omega_\psi^{\flat, -} \boxtimes \chi\right): i_{\tilde{P}^1}\left(\omega_\psi^{\flat, -} \boxtimes \chi\right) \to i_{\tilde{P}^1}\left(\omega_\psi^{\flat, -} \boxtimes w\chi\right). \]
\end{definition}

Next, $J'_w(\omega_\psi^{\flat, -} \boxtimes \chi)$ induces a homomorphism of $H_\psi^-$-modules after applying $\mathbf{M}_{\tau_1^-}$. This homomorphism is denoted as
\[ J'_w(\chi): \mathbf{M}_{\tau_1^-}\left( i_{\tilde{P}^1}(\omega_\psi^{\flat, -} \boxtimes \chi)\right) \to \mathbf{M}_{\tau_1^-}\left( i_{\tilde{P}^1}(\omega_\psi^{\flat, -} \boxtimes w\chi)\right). \]
By $H_\psi^-$-linearity, $J'_w(\chi)$ restricts to
\[ \Hom_{\widetilde{K_1}}\left(\tau_1^-, i_{\tilde{P}^1}(\omega_\psi^{\flat, -} \boxtimes \chi)\right) \to \Hom_{\widetilde{K_1}}\left(\tau_1^-,  i_{\tilde{P}^1}(\omega_\psi^{\flat, -} \boxtimes w\chi)\right) \]
as both sides are cut out by the idempotent $e^-$.

Consider now the $G^-$-side. We use $\Omega'_{\mathrm{aff}} \simeq \Omega^-_{\mathrm{aff}}$ to identify $\Omega'_0$ with the spherical subgroup $\Omega^-_0$ of $\Omega^-_{\mathrm{aff}}$.

With the same convention on Haar measures, replacing $K_1 \subset G(F)$ by $K^- \subset G^-(F)$ (see \S\ref{sec:spherical-projector}), one has the standard intertwining operator
\[ J^-_w(\chi): i_{P_{\min}^-}(\chi)^{I^-} \to i_{P_{\min}^-}(w\chi)^{I^-}. \]
It restricts to $i_{P_{\min}^-}(\chi)^{K^-} \to i_{P_{\min}^-}(w\chi)^{K^-}$.

\subsection{Towards the main result}
The notation $\chi$ will stand for an unramified character of $T^1(F)$, identified with the corresponding character $\CC[X_*(T^1)] \to \CC$. The following is the counterpart of Theorem \ref{prop:comparison-J} in the odd case.

\begin{theorem}\label{prop:comparison-J-odd}
	Let $w \in \Omega'_0$. Using Propositions \ref{prop:principal-series-Hecke} and \ref{prop:principal-series-Hecke-2}, we transport $J'_w(\chi)$ and $J^-_w(\chi)$ to obtain
	\begin{align*}
		\mathcal{J}'_w(\chi): \Hom_{H_\psi^{\tilde{M}^1, -}}\left( H_\psi^-, \mathbf{M}_{\tau_1^{\flat, -}}(\omega_\psi^{\flat, -}) \otimes \chi \right) & \to \Hom_{H_\psi^{\tilde{M}^1, -}}\left( H_\psi^-, \mathbf{M}_{\tau_1^{\flat, -}}(\omega_\psi^{\flat, -}) \otimes w\chi \right), \\
		\mathcal{J}^-_w(\chi): \Hom_{H^{M_{\min}^-}}(H^-, \chi) & \to \Hom_{H^{M_{\min}^-}}(H^-, w\chi),
	\end{align*}
	which are homomorphisms of $H_\psi^-$ and $H^-$-modules, respectively. They can be compared via the identification \eqref{eqn:isom-Hecke-principal}, and this gives
	\[ \mathcal{J}'_w(\chi) = |2|_F^{t(w)/2} \cdot \mathcal{J}^-_w(\chi) \]
	as an equality between rational families in $\chi$.
\end{theorem}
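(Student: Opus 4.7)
The plan follows the skeleton of Theorem \ref{prop:comparison-J}, with one essential modification at the rank-one step: in $\Mp(4)$ there is no workable Iwasawa decomposition for the long-root unipotent analogous to \eqref{eqn:matrix-factorization-1}, so I substitute a combination of Knapp--Stein theory and the matching of $\mu$-functions from \cite{TW18} in order to reduce a rational-function identity to the evaluation of a single ``zeroth term''. To begin, since both sides are $H_\psi^-$-linear, it suffices to compare them on the spherical generators $\sigma'(\chi)$ and $\sigma^-(\chi)$ from Lemma \ref{prop:sigma-generator} and Proposition \ref{prop:s-matching}; define
\[ J'_w(\sigma'(\chi)) = d'(w,\chi)\,\sigma'(w\chi), \qquad J^-_w(\sigma^-(\chi)) = c'(w,\chi)\,\sigma^-(w\chi), \]
and set $r(w,\chi) := d'(w,\chi)/c'(w,\chi)$, so the claim becomes $r(w,\chi) = |2|_F^{t(w)/2}$. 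Taking a reduced expression $w = t'_{i_1} \cdots t'_{i_\ell}$ in $(\Omega'_0, \{t'_i\}_{i \geq 2})$, the operators $J'_w$ and $J^-_w$ factor term-by-term, with Haar-measure normalizations matched by Proposition \ref{prop:comparison-representatives} (iii); for each short-root reflection $t'_i$ with $i > 2$ the calculation takes place inside a $\GL(2)$ factor where the Schr\"odinger model has no effect, so the argument of Lemma \ref{prop:GK-reduction} yields $r(t'_i,\cdot) = 1$. Proposition \ref{prop:comparison-representatives} (i) identifies $t(w)$ with the number of occurrences of $t'_2$, reducing the problem to showing $r(t'_2,\chi) = |2|_F^{1/2}$ inside $\tilde G = \Mp(4)$.

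In this rank-one case, set $z := \chi(\check\beta'_2(\varpi))$, so that $w\chi$ corresponds to $z^{-1}$ and $r(z) := r(t'_2,\chi)$ is a rational function of the single parameter $z$. Knapp--Stein theory gives $J'_w(w\chi) \circ J'_w(\chi) = \mu_{\tilde G}(\chi)^{-1} \cdot \identity$ up to explicit scalars coming from Haar-measure normalizations and $\mathring{w}^{2}$, with the analogous identity on the $G^-$-side. Combined with the matching of $\mu$-functions from \cite{TW18} and the computation of those scalars, this yields
\[ r(z)\,r(z^{-1}) \;=\; |2|_F. \]
Moreover, the $\mu$-matching shows that the poles and zeros of $d'(t'_2,\chi)$ coincide with those of $c'(t'_2,\chi)$, so $r(z)$ is a unit of $\CC[z,z^{-1}]$, hence of the form $r(z) = \lambda z^m$ with $\lambda \in \CC^{\times}$ and $m \in \Z$; the product relation forces $\lambda^2 = |2|_F$. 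To determine $m$ and the sign of $\lambda$, observe that both $c'(t'_2,\chi)$ and $d'(t'_2,\chi)$ are holomorphic and nonzero at $z = 0$ (by the Gindikin--Karpelevich formula \cite{Ca80} for $c'$, and by a convergent expansion analogous to Lemma \ref{prop:rank-one-integrals} for $d'$), whence $m = 0$. It then remains to compute the constant term $r(0) = \lambda$.

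This last computation is the content of Lemma \ref{prop:main-integral-odd}: the test-function trick of Lemma \ref{prop:main-integral-0} is applied inside $\Mp(4)$, with $\phi \in V_{\tau_1^-}$ chosen so that the Weil-representation twists along $I \cap U^1(F)$ trivialize, leaving a single $p$-adic integral that evaluates to $|2|_F^{1/2}$ times the leading term of $c'(t'_2,\chi)$. The hard part, I expect, is precisely this integral in $\Mp(4)$: in $\widetilde{\SL}(2,F)$ one had the explicit factorization \eqref{eqn:matrix-factorization-1} together with Lemma \ref{prop:d-Fourier}, but the long-root unipotent $x_{-\beta'_2}(\cdot)$ in $\Mp(4)$ admits no comparably elementary description, so the test function must be engineered to annihilate the oscillatory contributions of the surrounding short roots and isolate a compact region where $\omega_\psi$ acts trivially. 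A secondary obstacle is the careful bookkeeping of normalization constants entering the $\mu$-function matching, so that the factor $|2|_F^{t(w)}$ emerges with the correct exponent from the Knapp--Stein product relation; this should be manageable by tracking the Haar measures on the unipotent radicals in the spirit of Remark \ref{rem:K-pm}.
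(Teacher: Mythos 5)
Your proposal follows essentially the same route as the paper's proof: reduce via the reduced-expression factorization (Proposition \ref{prop:comparison-representatives} and Lemma \ref{prop:GK-reduction-odd}) to the rank-one case $w = t'_2$ in $\Mp(4)$, show the ratio $r(z) = d'(t'_2,\chi)/c'(t'_2,\chi)$ is a Laurent monomial by controlling its poles and zeros, and then pin down the constant $|2|_F^{1/2}$ from the explicit zeroth term (Lemma \ref{prop:main-integral-odd}). This is precisely the content of Lemma \ref{prop:GK-odd-Mp4}.

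The one place you undersell the difficulty is the assertion that ``the $\mu$-matching shows that the poles and zeros of $d'(t'_2,\chi)$ coincide with those of $c'(t'_2,\chi)$.'' Matching of $\mu$-functions controls the product $d'(w,\chi)d'(w,w\chi)$ against $c'(w,\chi)c'(w,w\chi)$, which by itself does not separate the pole/zero locations of the two factors. The paper's proof of the pole/zero statement (equation \eqref{eqn:GK-odd-Mp4}) therefore splits into three regimes: for $|z| < 1$ it invokes the Langlands-quotient description (the spherical vector of $i_{P^-_{\min}}(\chi)$ survives in the irreducible quotient, and similarly on the $\tilde G$-side via Hecke-module transport), which shows directly that $c'$ and $d'$ have neither zero nor pole there; for $|z| > 1$ it propagates this via the Knapp--Stein product relation and $\mu$-matching; for $|z| = 1$ it compares normalizing factors and reducibility loci via $\mu$-matching. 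So $\mu$-matching is only one ingredient, and the Langlands-quotient step is what gets the argument started. Your additional relation $r(z)\,r(z^{-1}) = |2|_F$ and the deduction $\lambda^2 = |2|_F$ are unnecessary once the monomial form $r(z) = \lambda z^m$ is established: the nonvanishing of the constant terms of $c'$ (by Gindikin--Karpelevich) and of $d'$ (by Lemma \ref{prop:main-integral-odd}) already forces $m = 0$, and the same lemma gives $\lambda$ directly; moreover the precise constant in that product relation would require a separate bookkeeping of Haar-measure and dimension factors that the paper avoids. Your description of Lemma \ref{prop:main-integral-odd} is otherwise on target, including the observation that one must choose the test function so that the short-root actions trivialize.
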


We will appeal to the same reduction steps as in the even case. The operators $\mathcal{J}'_w(\chi)$ and $\mathcal{J}^-_w(\chi)$ preserve spherical parts. Since the canonical generators $s'(\chi)$ and $s^-(\chi)$ (resp.\ $\sigma'(\chi)$ and $\sigma^-(\chi)$) match under \eqref{eqn:isom-Hecke-principal} (resp.\ \eqref{eqn:isom-rep-principal}) by Proposition \ref{prop:s-matching}, we are reduced to the following variant of Gindikin--Karpelevich formula, which is the odd counterpart of Theorem \ref{prop:GK}.

\begin{theorem}\label{prop:GK-odd}
	Let $w \in \Omega'_0$. Let $c'(w, \chi)$ and $d'(w, \chi)$ be the rational functions characterized by
	\begin{align*}
		J^-_w(\sigma^-(\chi)) & = c'(w, \chi) \sigma^-(w\chi), \\
		J'_w(\chi)(\sigma(\chi)) & = d'(w, \chi) \sigma'(w\chi).
	\end{align*}
	Then $d'(w, \chi) = |2|_F^{t(w)/2} c'(w, \chi)$.
\end{theorem}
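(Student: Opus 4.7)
The proof will follow the template of the even case (Theorem~\ref{prop:GK}), with the crucial difference that for the long root $t'_2$ the relevant intertwining integral does not admit a tractable Iwasawa decomposition. The plan is to bypass direct computation using Knapp--Stein theory together with the matching of $\mu$-functions from \cite{TW18}, reducing everything to one elementary ``zeroth term'' evaluation.

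First I would reduce to rank one in the style of Lemma~\ref{prop:GK-reduction}. Given a reduced expression $w = t'_{i_1} \cdots t'_{i_\ell}$ in $(\Omega'_0, \{t'_i\}_{i \geq 2})$, the operator $J'_w(\chi)$ factors as a composition of rank-one operators along the lines of \eqref{eqn:J-decomp-even}. For each short root $t'_i$ with $i > 2$, the relevant subgroup sits inside a Siegel Levi, so the covering splits over it, the Schr\"odinger action is given by \eqref{eqn:Siegel-action}, and an Iwasawa decomposition inside $\GL(2, F)$ yields $d'(t'_i, \cdot) = c'(t'_i, \cdot)$ by the same argument as in the even case. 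Combined with Proposition~\ref{prop:comparison-representatives}(i), this reduces matters to proving $d'(t'_2, \chi) = |2|_F^{1/2} c'(t'_2, \chi)$ in the rank-two situation $n = 2$ with $w = t'_2$.

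For the long root $t'_2$ I would replace direct integration by the following argument. Since $(t'_2)^2 = 1$, the Maass--Selberg / Knapp--Stein relation gives
\[ J'_{t'_2}(t'_2 \chi) \circ J'_{t'_2}(\chi) = \mu_\psi(\chi)^{-1} \cdot \identity, \]
and similarly on the $G^-$-side with $\mu^-(\chi)$. Because $\mathrm{TW}$ is an isomorphism of Hilbert algebras, the $\mu$-functions match up to an explicit normalization factor that accounts for the differing Haar-measure conventions between $K_1$ and $K^-$. Translated into our rank-one setup, this yields a product identity of the form
\[ d'(t'_2, \chi) \cdot d'(t'_2, t'_2\chi) = |2|_F \cdot c'(t'_2, \chi) \cdot c'(t'_2, t'_2\chi), \]
so that the rational function $r(\chi) := d'(t'_2, \chi)/c'(t'_2, \chi)$ is constrained by $r(\chi) r(t'_2 \chi) = |2|_F$.

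To pin down $r$ as the constant $|2|_F^{1/2}$ it suffices to evaluate one ``zeroth term'' (Lemma~\ref{prop:main-integral-odd}), namely the integral over the compact part $x_{-2\epsilon_2}(\mathfrak{o})$ of the relevant root subgroup against a suitable test vector built from $\omega_\psi^{\flat, -}$. This contribution is elementary, because the integrand already lies in $\widetilde{K_1}$ and no Iwasawa decomposition is needed, mirroring Lemma~\ref{prop:main-integral-0} in the even case. The main obstacle is not conceptual but bookkeeping: one must track the $|2|_F$-powers arising from the action of $\omega_\psi^{\flat,-}$ on the second coordinate (a Fourier-transform factor analogous to Lemma~\ref{prop:d-Fourier}), from the Haar-measure conventions entering $J'_w$ versus $J^-_w$, from the comparison $\mathring{w}$ versus $\tilde{w}$ of Proposition~\ref{prop:comparison-representatives}(ii), and from the $\mu$-function matching, and verify that these combine to give precisely $r(\chi) = |2|_F^{1/2}$ once the zeroth term is computed.
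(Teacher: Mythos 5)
Your global strategy---reduce to $\Mp(4)$ and $w = t'_2$, handle short roots as in the even case, then avoid direct Iwasawa computation for the long root by exploiting the matching of $\mu$-functions under $\mathrm{TW}$ together with a single zeroth-term evaluation---is precisely the one the paper follows. However, the step where you pin down the ratio $r(\chi) := d'(t'_2,\chi)/c'(t'_2,\chi)$ has a genuine gap. Writing $z = \chi(\check{\beta}'_2(\varpi))$, the involution $t'_2$ acts by $z \mapsto z^{-1}$, and your two constraints are $r(z)\,r(1/z) = |2|_F$ (from Knapp--Stein and $\mu$-matching) and $r(0) = |2|_F^{1/2}$ (from the zeroth term). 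These do \emph{not} determine $r$: for any $a \neq 0, \pm 1$ the rational function
\[
s(z) \;=\; \frac{(z-a)(z+a^{-1})}{(z-a^{-1})(z+a)}
\]
satisfies $s(z)s(1/z) = 1$ and $s(0) = 1$ while being non-constant, so $r(z) = |2|_F^{1/2}\,s(z)$ obeys both of your constraints. What is missing is a proof that $r$ has no zeros or poles away from $\{0,\infty\}$. The paper supplies exactly this in Lemma~\ref{prop:GK-odd-Mp4} by a three-region argument: for $|z|<1$, the Gindikin--Karpelevich formula for $G^-$ plus the survival of the spherical line in the Langlands quotient, transported through $\mathrm{TW}$, shows that $c'$ and $d'$ are both holomorphic and nonvanishing; the Knapp--Stein/$\mu$-function identity handles $|z|>1$; and on $|z|=1$ the possible order-one poles of the normalizing factor are controlled by the zero set of the $\mu$-function, which again matches across $\mathrm{TW}$. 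Only after $\rho(z) = d'/c'$ is shown to be a monomial $bz^h$ does the zeroth-term computation force $h=0$ and $b=|2|_F^{1/2}$.

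Two smaller points. First, the paper uses the $\mu$-function matching only qualitatively (same zero/pole locus), so it never needs the precise proportionality constant between $\mu_\psi$ and $\mu^-$; your numerical use of the product identity requires that constant to come out to exactly $|2|_F^{-1}$, which would be one more delicate piece of bookkeeping to verify. Second, the zeroth term is an integral over the full $3$-dimensional group $\overline{U^1}(F)\cap K_1$ (cf.\ Lemma~\ref{prop:rank-one-integrals-odd}), not merely the root subgroup $x_{-2\epsilon_2}(\mathfrak{o})$; the Heisenberg structure of $\overline{U^1}$ genuinely enters the computation in Lemma~\ref{prop:main-integral-odd}, and one also needs Lemma~\ref{prop:k-elimination} to rule out negative powers in the Laurent expansion before the zeroth term can be read off as $d'(t'_2,\chi)|_{z=0}$.
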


In the results above,the intertwining operators $J'_w(\omega_\psi^{\flat, -} \boxtimes \chi)$ are defined using the representative $\mathring{w}$ and the Haar measures normalized via $K_1$. We give a variant below for the representatives $\tilde{w}$ and the Haar measures normalized via $K_0$, as in Definition \ref{def:int-op-even}.

\begin{theorem}\label{prop:comparison-J-odd-2}
	Let $w \in \Omega'_0$. Define
	\[ \mathcal{J}_w(\chi): \Hom_{H_\psi^{\tilde{M}^1, -}}\left( H_\psi^-, \mathbf{M}_{\tau_1^{\flat, -}}(\omega_\psi^{\flat, -}) \otimes \chi \right) \to \Hom_{H_\psi^{\tilde{M}^1, -}}\left( H_\psi^-, \mathbf{M}_{\tau_1^{\flat, -}}(\omega_\psi^{\flat, -}) \otimes w\chi \right) \]
	by using the intertwining operator $J_w\left(\omega_\psi^{\flat, -} \boxtimes \chi\right)$ defined as in Definition \ref{def:int-op-even}. Then
	\[ \mathcal{J}_w(\chi) = (-q^{-1})^{t(w)} |2|_F^{t(w)/2} \mathcal{J}^-_w(\chi). \]
\end{theorem}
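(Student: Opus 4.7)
The plan is to compare $J_w(\omega_\psi^{\flat,-}\boxtimes\chi)$ and $J'_w(\omega_\psi^{\flat,-}\boxtimes\chi)$ directly as operators on $i_{\tilde{P}^1}(\omega_\psi^{\flat,-}\boxtimes\chi)$, and then to invoke Theorem \ref{prop:comparison-J-odd}. Both operators are the composition of the intertwining integral $J_{\tilde{P}^{1,w}|\tilde{P}^1}(\pi)$ (over the same unipotent coset space) with a twist $f \mapsto f(\tilde{w}^{-1}\cdot)$ or $f \mapsto f(\mathring{w}^{-1}\cdot)$, so they differ only in (a) the choice of representative of $w$ and (b) the normalization of the Haar measure on the unipotent radical. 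I expect each to contribute a scalar whose product equals $(-q^{-1})^{t(w)}$; combined with Theorem \ref{prop:comparison-J-odd} this will yield the claim.

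For (a), Proposition \ref{prop:comparison-representatives}(ii) furnishes $\tilde{w} = \mathring{w} x_2$ with $x_2 \in \tilde{T}$ whose coordinates lie in $\{\pm 1\}$ and whose first coordinate is $(-1)^{t(w)}$. Since $x_2 \in \tilde{M}^1$ and $\delta_{\tilde{P}^{1,w}}(x_2) = 1$ (as $\pr(x_2) \in T(\mathfrak{o})$), left-equivariance of sections of $i_{\tilde{P}^{1,w}}(\pi)$ gives $\iota_{\tilde{w}}(f) = \pi(x_2)^{-1}\iota_{\mathring{w}}(f)$. The unramified character $\chi$ takes value $1$ on the $T^1$-part of $x_2$, while its first coordinate, viewed as $(-1)^{t(w)} I_2 \in \SL(2, F) = \Sp(W^1)$, is central and by \eqref{eqn:Siegel-action} acts on $\omega_\psi^\flat$ via $\phi \mapsto \phi((-1)^{t(w)}\,\cdot\,)$, hence as the scalar $(-1)^{t(w)}$ on the odd summand $\omega_\psi^{\flat,-}$. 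Thus $\pi(x_2) = (-1)^{t(w)}$ and $\iota_{\tilde{w}} = (-1)^{t(w)}\iota_{\mathring{w}}$.

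For (b), let $V := (U^1 \cap U^{1,w}) \backslash U^{1,w}$ with Haar measures $\mu_0, \mu_1$ normalized by $\mu_i(V \cap K_i) = 1$. The roots of $V$ are $w^{-1}\Phi_{U^1} \setminus \Phi_{U^1}$, and since $w$ fixes $\epsilon_1$, every $\alpha = w^{-1}\beta$ satisfies $\alpha(z_1) = \beta(z_1) \in \{-\tfrac12, 0, \tfrac12\}$. For $\alpha(z_1) \geq 0$ the integrality of $\mathrm{val}_F$ gives $x_\alpha(F) \cap K_0 = x_\alpha(F) \cap K_1 = x_\alpha(\mathfrak{o})$, contributing trivially to the ratio; for $\alpha(z_1) = -\tfrac12$ one has $x_\alpha(F) \cap K_1 = x_\alpha(\varpi\mathfrak{o})$ of $\mu_0^\alpha$-measure $q^{-1}$. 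A short root-system check shows that the latter case arises from $\beta = \epsilon_i - \epsilon_1$ ($i \geq 2$) with $w^{-1}\beta \notin \Phi_{U^1}$, a condition equivalent to $w\epsilon_i$ being negative; by Definition \ref{def:tw-number} the number of such indices equals $t(w)$. Hence $\mu_0(V \cap K_1) = q^{-t(w)}$, so $\mu_1 = q^{t(w)}\mu_0$ and consequently $J^{K_0}_{\tilde{P}^{1,w}|\tilde{P}^1} = q^{-t(w)}\, J^{K_1}_{\tilde{P}^{1,w}|\tilde{P}^1}$.

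Combining (a) and (b) yields $J_w = (-q^{-1})^{t(w)} J'_w$ at the level of representations; applying $\mathbf{M}_{\tau_1^-}$ and using Proposition \ref{prop:principal-series-Hecke} transports this to $\mathcal{J}_w(\chi) = (-q^{-1})^{t(w)} \mathcal{J}'_w(\chi)$, and Theorem \ref{prop:comparison-J-odd} then provides the factor $|2|_F^{t(w)/2}$. The delicate step is (a): one must trace the Schrödinger splittings of $\pr: \tilde{M}^1 \to M^1(F)$ through Proposition \ref{prop:comparison-representatives}(ii) and confirm that the central element $(-1)^{t(w)} I_2$ of $\Sp(W^1)$ acts on $\omega_\psi^{\flat,-}$ precisely as the sign $(-1)^{t(w)}$. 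Once this is secured, step (b) reduces to the routine count of roots described above.
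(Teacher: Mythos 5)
Your proposal is correct and closely parallels the paper's own argument. Part (a) — the sign $(-1)^{t(w)}$ coming from the two choices of representative — is essentially identical to the paper's treatment: both invoke Proposition \ref{prop:comparison-representatives}(ii) and observe that the element $x_2$ (resp.\ $\xi$) of $\tilde{T}$ acts on $\omega_\psi^{\flat,-} \boxtimes \chi$ by $(-1)^{t(w)}$, since only the first coordinate $(-1)^{t(w)}I_2 \in \Sp(W^1)$ is felt (via $\phi\mapsto\phi(-\cdot)$ on the odd part) while the unramified $\chi$ kills the remaining $\pm 1$'s.

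Part (b) is where you genuinely diverge from the paper. The paper factorizes $J_{\tilde{P}^{1,w}|\tilde{P}^1}$ into rank-one intertwining operators along a reduced expression of $w$, and simply re-inspects the rank-one computations already done in Lemmas \ref{prop:GK-reduction-odd} and \ref{prop:main-integral-odd}: each occurrence of $t'_2$ contributes a single factor $q^{-1}$ (the $x_{\epsilon_2-\epsilon_1}$ direction), while the $t'_i$ with $i>2$ contribute nothing. You instead perform a direct root-by-root count on $V = (U^1 \cap U^{1,w})\backslash U^{1,w}$, observing that the only roots $\alpha$ with $\alpha(z_1)=-\tfrac12$ in $\Phi_V$ are those of the form $w^{-1}(\epsilon_i - \epsilon_1)$ with $w^{-1}\epsilon_i$ negative, and that there are $t(w)$ of these; this gives $\mu_0(V\cap K_1)=q^{-t(w)}$ in one stroke, bypassing the factorization. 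Both routes are sound; yours is more self-contained (it does not quote the earlier rank-one analysis), the paper's reuses prior work and is shorter to write. One small correction: the relevant condition is that $w^{-1}\epsilon_i$ be $B^\leftarrow$-negative, not $w\epsilon_i$ as you wrote; the two counts happen to coincide (the map $i\mapsto |w(i)|$ is a bijection between the two index sets), so your final tally $t(w)$ is right, but the condition as stated is not quite what the root-system check produces.
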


The proofs of Theorems \ref{prop:GK-odd} and \ref{prop:comparison-J-odd-2} will be given at the end of \S\ref{sec:odd-computations}. As a preparation, we present an analogue of Lemma \ref{prop:d-formula}. The proof is the same.

\begin{lemma}\label{prop:d-formula-odd}
	With the notation above, $d'(w, \chi)$ is characterized by the following equality in the underlying space of $\omega_\psi^{\flat, -}$:
	\[ d'(w, \chi) (\omega_\psi(\tilde{w}) \phi)(\cdot, 0, \ldots, 0) = \int_{U^1(F) \cap U^{1, w}(F) \backslash U^{1, w}(F)} \sigma'(\chi)(\phi)(u) \dd u \]
	for all $\phi \in V_{\tau_1^-}$.
\end{lemma}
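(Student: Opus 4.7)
The plan is to mimic the proof of Lemma \ref{prop:d-formula} in the even case. Starting from the defining identity $J'_w(\chi)(\sigma'(\chi)) = d'(w, \chi)\,\sigma'(w\chi)$, I would apply both sides to an arbitrary $\phi \in V_{\tau_1^-}$ and evaluate the resulting sections of $i_{\tilde{P}^1}(\omega_\psi^{\flat, -} \boxtimes w\chi)$ at a single well-chosen element of $\widetilde{K_1}$. The cleanest choice is the representative $\mathring{w}$ itself: by Definition \ref{def:int-op-odd}, $J'_w$ is obtained by composing the standard intertwining integral with right translation by $\mathring{w}^{-1}$, so evaluating at $\mathring{w}$ cancels that factor. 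Invoking Lemma \ref{prop:sigma-generator} on the right-hand side then yields
\[
    d'(w, \chi)\,(\omega_\psi(\mathring{w})\phi)(\cdot, 0, \ldots, 0) = \int_{U^1(F) \cap U^{1, w}(F) \backslash U^{1, w}(F)} \sigma'(\chi)(\phi)(u) \dd u,
\]
valid for $\chi$ in the standard convergence range; rationality in $\chi$ then extends the identity.

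To pass from $\omega_\psi(\mathring{w})$ to $\omega_\psi(\tilde{w})$ as stated, I would use Proposition \ref{prop:comparison-representatives}(ii): $\tilde{w} = \mathring{w} x_2$ with $x_2 \in T(\mathfrak{o})$ whose coordinates lie in $\{\pm 1\}$ and whose first coordinate is $(-1)^{t(w)}$. By \eqref{eqn:Siegel-action}, the action of $\omega_\psi(x_2)$ rescales each coordinate of $F^n \simeq \bigoplus_{j=1}^n Ff_j$ by the corresponding sign. After projecting to $(\cdot, 0, \ldots, 0)$ only the first sign $(-1)^{t(w)}$ survives, and it is absorbed by the oddness of functions in $V_{\omega_\psi^{\flat, -}}$. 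An equal sign factor appears on the right if one evaluates at $\tilde{w}$ directly: the integrand $\sigma'(\chi)(\phi)(u\mathring{w}^{-1}\tilde{w}) = \sigma'(\chi)(\phi)(u x_2)$ is rewritten via the $\tilde{P}^1$-transformation law as $(\omega_\psi^{\flat, -} \boxtimes \chi)(x_2)\,\sigma'(\chi)(\phi)(x_2^{-1} u x_2)$, and the change of variables $u \mapsto x_2^{-1} u x_2$ (which preserves Haar measure since $x_2 \in T(\mathfrak{o})$ normalizes $U^{1, w}$) restores the integral to its original form. The two sign contributions then cancel, producing the identity as stated.

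The main subtlety, and the step I expect to require the most care, is this sign bookkeeping coming from the discrepancy between $\tilde{w}$ and $\mathring{w}$; the remaining manipulations are formal. That $d'(w, \chi)$ is characterized by the displayed identity follows by choosing $\phi \in V_{\tau_1^-}$ for which $(\omega_\psi(\tilde{w})\phi)(\cdot, 0, \ldots, 0) \neq 0$, whose existence is guaranteed by the invertibility of $\omega_\psi(\tilde{w})$ together with the surjectivity of the restriction map in Lemma \ref{prop:res-equivariance}.
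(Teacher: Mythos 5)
Your first step is exactly the paper's intended argument: the paper's proof is a one-liner ("The proof is the same") pointing back to the even case, and the faithful mirror of Lemma \ref{prop:d-formula} in the odd setting is to evaluate the defining identity at the representative used in $J'_w$, which by Definition \ref{def:int-op-odd} is $\mathring{w}$ (not $\tilde{w}$). This correctly yields
\[ d'(w,\chi)\,(\omega_\psi(\mathring{w})\phi)(\cdot,0,\ldots,0)=\int_{U^1(F)\cap U^{1,w}(F)\backslash U^{1,w}(F)}\sigma'(\chi)(\phi)(u)\dd u, \]
and this $\mathring{w}$-version is in fact what the paper uses downstream, in Lemma \ref{prop:rank-one-integrals-odd} and equation \eqref{eqn:rank-one-integrals-odd}.

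Your second step, passing from $\mathring{w}$ to $\tilde{w}$, does not close, and the sign bookkeeping is precisely where the argument breaks. Writing $\tilde w = x_1\mathring{w}$ as in Proposition \ref{prop:comparison-representatives}(ii), with $x_1\in T(\mathfrak{o})$ of coordinates $\pm 1$ and $(x_1)_1=(-1)^{t(w)}$, the Siegel action \eqref{eqn:Siegel-action} gives $(\omega_\psi(\tilde w)\phi)(y_1,0,\ldots,0)=(\omega_\psi(\mathring{w})\phi)\bigl((x_1)_1 y_1,0,\ldots,0\bigr)=(-1)^{t(w)}(\omega_\psi(\mathring{w})\phi)(y_1,0,\ldots,0)$, using that $\omega_\psi(\mathring{w})\phi$ is odd. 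This factor has nothing to cancel against: the right side of the lemma, $\int\sigma'(\chi)(\phi)(u)\dd u$, is the same under both formulations. Your alternative route --- evaluating the defining identity directly at $\tilde w$ --- produces, via your correct transformation-law computation, $(-1)^{t(w)}\int\sigma'(\chi)(\phi)(u)\dd u$ on the integral side; but equating this to $d'(w,\chi)(\omega_\psi(\tilde w)\phi)(\cdot,0,\ldots,0)$ and cancelling the common $(-1)^{t(w)}$ brings you \emph{back} to the $\mathring{w}$-formula, not to the statement as printed. The two versions thus genuinely differ by $(-1)^{t(w)}$, which is $-1$ in the very case $w=t'_2$ that the paper later reduces to. The statement as printed appears to contain a typo: $\tilde w$ should be $\mathring{w}$, as the downstream usage confirms. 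Your instinct that the $\tilde w$-vs-$\mathring{w}$ discrepancy needed care was right; the resolution is a correction to the statement, not a cancellation.
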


Next, we take a reduced expression $w = t'_{i_1} \cdots t'_{i_\ell}$. Then
\[ J'_w(\chi) = J'_{i_1}(t'_{i_2} \cdots t'_{i_\ell} \chi) \cdots J'_{i_{\ell - 1}}(\chi) \]
as in \eqref{eqn:J-decomp-even}; here we make use of Proposition \ref{prop:comparison-representatives} (iii) to factorize standard intertwining operators.

There is a similar decomposition for $J^-_w(\chi)$, and below is the odd counterpart of Lemma \ref{prop:GK-reduction}.

\begin{lemma}\label{prop:GK-reduction-odd}
	The assertion in Theorem \ref{prop:GK-odd} holds true if it holds for $\tilde{G} = \Mp(4)$ (i.e.\ the case $n=2$) and $w$ being the nontrivial element in $\Omega'_0$ (i.e.\ $w = t'_2$).
\end{lemma}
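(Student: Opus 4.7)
The plan is to mimic Lemma \ref{prop:GK-reduction} from the even case, with the role of short/long roots swapped appropriately. Fix a reduced expression $w = t'_{i_1} \cdots t'_{i_\ell}$ in $(\Omega'_0, \{t'_i\}_{i \geq 2})$. Since each simple reflection $t'_i$ fixes $M^1$ and $\mathring{w} = \mathring{t}'_{i_1} \cdots \mathring{t}'_{i_\ell}$ by construction, Proposition \ref{prop:comparison-representatives} (iii) gives the additivity of reduced-root counts along the factorization, so that $J'_w(\chi)$ decomposes as a composition of rank-one intertwining operators $J'_{i_j}(\cdots)$ and, consequently, $d'(w, \chi)$ factorizes into a product of $d'(t'_{i_j}, \cdot)$ evaluated at the appropriate twists of $\chi$. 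Exactly the same factorization holds for $c'(w, \chi)$ on the $G^-$-side, with root subgroups naturally identified as $\mathfrak{o}$-schemes and Haar measures matched. In view of Proposition \ref{prop:comparison-representatives} (i), the total factor $|2|_F^{t(w)/2}$ distributes as one factor $|2|_F^{1/2}$ per occurrence of $t'_2$, so the task reduces to proving, for each simple reflection $t'_i$:
\begin{itemize}
    \item (short roots, $i > 2$) $d'(t'_i, \chi) = c'(t'_i, \chi)$,
    \item (long root, $i = 2$) $d'(t'_2, \chi)$ equals $|2|_F^{1/2}$ times its $G^-$-avatar, and moreover coincides with its avatar computed inside $\Mp(4)$.
\end{itemize}

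For a short root $t'_i$ with $i > 2$, the subgroup $G'_i \subset G$ generated by $T$ and the $\pm\beta'_i$ root subgroups lies in a Siegel Levi, so via \eqref{eqn:Levi-splitting} we have $\widetilde{G'_i} \simeq \GL(2, F) \times \text{torus} \times \bmu_8$. The relevant intertwining integral in Lemma \ref{prop:d-formula-odd} is supported on this $\GL(2, F)$, the Iwasawa decomposition needed to evaluate $\sigma'(\chi)$ can be carried out within it, and the Siegel formula \eqref{eqn:Siegel-action} shows that only the ``second-coordinate'' factor of $\phi$ is acted on. The upshot is the same computation as for $\GL(2)$, giving $d'(t'_i, \chi) = c'(t'_i, \chi)$ with no $|2|_F$ factor, exactly parallel to the short-root case of Lemma \ref{prop:GK-reduction}.

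For the long root $t'_2$, one has $\widetilde{G'_2} \simeq \Mp(4) \times \text{torus}$ (the $\Mp(4)$ factor acting on the first two symplectic coordinates). Choose a test function $\phi = \phi_1 \otimes \phi_2 \otimes \phi_3 \otimes \cdots \otimes \phi_n$ with $\phi_j \in \Schw(\mathfrak{o}/(2))$ and $\phi_3(0) = \cdots = \phi_n(0) = 1$ (note the convention for $\sigma'(\chi)$ requires restriction to the first coordinate only). Plugging into Lemma \ref{prop:d-formula-odd} and using \eqref{eqn:root-action}, \eqref{eqn:Siegel-action} to check that the integrand leaves $\phi_3, \ldots, \phi_n$ untouched, everything localizes to the $\Mp(4)$-factor acting on $\phi_1 \otimes \phi_2$. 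This reduces the computation of $d'(t'_2, \cdot)$ to its $\Mp(4)$-avatar.

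The main obstacle — more bookkeeping than real difficulty — is verifying that the two representative conventions on $t'_2$ are consistent across the factorization: namely, that in passing from the global representative $\mathring{w}$ (used on $\tilde{G}$) to the product of local $\mathring{t}'_{i_j}$'s (used inside $\widetilde{G'_{i_j}}$), no scalars in $\bmu_8$ are introduced. This is handled exactly as in the even case, since each $\mathring{t}'_{i_j}$ was defined precisely so that its action via $\omega_\psi$ matches the Fourier transform in the relevant coordinate. Once this is checked, the assertion reduces to the claim for $\Mp(4)$ with $w = t'_2$, which is the content of the excluded case.
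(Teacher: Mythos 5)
Your proposal follows the paper's proof essentially verbatim: factorize $J'_w(\chi)$ along a reduced expression in $\Omega'_0$ using Proposition \ref{prop:comparison-representatives} (iii), localize each rank-one operator in the appropriate subgroup $\widetilde{G'_i}$, match the short-root cases ($i>2$) with their $\GL(2)$ avatars via the Siegel formula, and reduce the long-root case ($i=2$) to $\Mp(4)$. Two small slips to correct: the subgroup $G'_i$ should be generated by $M^1$ (not just $T$) together with the relevant root subgroups — for short roots the inducing factor $\omega_\psi^{\flat,-}$ lives on $\tilde{M}^1$ and one must include it, even though the intertwining integral itself sits in the commuting $\GL(2, F)$ factor; and in your test function for the long root, $\phi_1$ should lie in $\Schw(\mathfrak{o}/(2\varpi))$ rather than $\Schw(\mathfrak{o}/(2))$, since that is the first $\otimes$-slot of $V_{\tau_1^-}$, and the short-root case affects coordinates $i, i-1$, not the second coordinate.
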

\begin{proof}
	As in the even case, one easily reduces to the case when $w = t'_i$ and computes in $\widetilde{G'_i}$, where $G_i \subset G$ is the subgroup generated by $M^1$ and the root subgroups for the roots $\pm\alpha \in X^*(T)$ such that $\alpha > 0$ and $t'_i \alpha < 0$ relative to $B^{\leftarrow}$.
	\begin{description}
		\item[Long root] If $i=2$, we have
		\[ \widetilde{G'_2} = \Mp(4) \times T^1(F) \]
		where $\Mp(4)$ is associated with the symplectic subspace $Fe_1 \oplus Fe_2 \oplus Ff_2 \oplus Ff_1$, hence contains $\tilde{M}^1$. Cf.\ the proof of Proposition \ref{prop:comparison-representatives}.
		\item[Short roots] If $i > 2$, we have
		\[ \widetilde{G'_i} = \tilde{M}^1 \times \GL(2, F) \times \text{torus} \]
		where $\GL(2, F)$ is embedded in the Siegel Levi subgroup of $G$, and is associated with $Fe_i \oplus Fe_{i-1}$ and $Ff_i \oplus Ff_{i-1}$.
	\end{description}
	
	When $i > 2$, all computations occur in the Siegel Levi subgroup
	\[ \GL(n-1, F) \stackrel{\text{\eqref{eqn:Levi-splitting}}}{\hookrightarrow} \Mp\left( \bigoplus_{i=2}^n Fe_i \oplus Ff_i \right), \]
	without affecting the first slot of the inducing representation $\omega_\psi^{\flat, -} \boxtimes \chi$. Hence the resulting $d'(t_i, \cdot)$ matches $c'(t_i, \cdot)$. Note that the Haar measures on unipotent radicals here are the same as in the even case: it boils down to the standard $\mathfrak{o}$-structure of $\GL(n-1)$.
	
	When $i = 2$, all computations occur in the factor $\Mp(4)$. The other slots of the inducing representation $\omega_\psi^{\flat, -} \boxtimes \chi$ are unaffected. In view of Proposition \ref{prop:comparison-representatives} (i) that deals with $t(w)$, the problem thus reduces to $\Mp(4)$.
\end{proof}

Denote by $\overline{U^1}$ the unipotent radical of the opposite $\overline{P^1}$. Below is the odd counterpart of Lemma \ref{prop:rank-one-integrals}.

\begin{lemma}\label{prop:rank-one-integrals-odd}
	Assume $n=2$ and $w = t'_2 \in \Omega'_0$. Set
	\[ z := \chi\left( \check{\beta}'_2(\varpi) \right). \]
	
	When $|z| < 1$, one can expand $d'(w, \chi)$ into a convergent sum $\sum_{k=0}^\infty r'_k z^k$. The sequence $(r'_k)_{k=0}^\infty$ is characterized by the equalities below in the underlying space of $\omega_\psi^{\flat, -}$:
	\begin{align*}
		\int_{\overline{U^1}(F) \cap K_1} \omega_\psi(v)\phi(\cdot, 0) \dd v & = r'_0 \cdot \omega_\psi(\mathring{w})\phi(\cdot, 0), \\
		\delta_{P^1}(\check{\beta}'_2(\varpi))^{k/2} \int_{\mathcal{C}_k} \omega_\psi^{\flat, -}(s(v))\left( \omega_\psi(r(v))\phi(\cdot, 0)\right) \dd v & = r'_k \cdot \omega_\psi(\mathring{w})\phi(\cdot, 0), \quad k \geq 1.
	\end{align*}
	for all $\phi \in \Schw^-(\mathfrak{o}/(2\varpi)) \otimes \Schw(\mathfrak{o}/(2))$. Here we define
	\[ \mathcal{C}_k := \left\{\begin{array}{r|l}
		v \in \overline{U^1}(F) & v = u s \check{\beta}'_2(\varpi^k) r, \\
		& u \in U^1(F), \; s \in \Mp(W^1), \; r \in \widetilde{K_1}
	\end{array}\right\}, \quad k \in \Z, \]
	and $(s(v), r(v))$ in the integral means any choice of $(s, r)$ in such a factorization of $v \in \mathcal{C}_k$.
\end{lemma}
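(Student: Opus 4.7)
The plan is to mirror the rank-one computation in Lemma~\ref{prop:rank-one-integrals}, now inside $\Mp(4)$ with base point $\mathring{w}\in\widetilde{K_1}$. The starting point is Lemma~\ref{prop:d-formula-odd}, read with $\mathring{w}$ in place of $\tilde{w}$ (consistent with Definition~\ref{def:int-op-odd}). For $n=2$ and $w=t'_2$, the $B^{\leftarrow}$-positive roots of $U^1$ are $\{\epsilon_2-\epsilon_1,\,\epsilon_2+\epsilon_1,\,2\epsilon_2\}$, each of which is flipped to a negative root by $t'_2$. Hence $U^{1,w}=\overline{U^1}$ and $U^1(F)\cap U^{1,w}(F)=\{1\}$, so the integral in Lemma~\ref{prop:d-formula-odd} reduces to $\int_{\overline{U^1}(F)}\sigma'(\chi)(\phi)(v)\,\dd v$.

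The next step is to stratify $\overline{U^1}(F)=\bigsqcup_{k\ge 0}\mathcal{C}_k$ via the Iwasawa decomposition $\tilde{G}=U^1(F)\tilde{M}^1\tilde{K}_1$. The $T^1$-part of the $\tilde{M}^1$-component of $v\in\overline{U^1}(F)$, taken modulo $T^1(\mathfrak{o})$, is captured by a single integer $k\ge 0$ of the form $\check{\beta}'_2(\varpi^k)$; this is just the standard Iwasawa decomposition of $\SL(2)$ applied to the rank-one subgroup attached to $Fe_2\oplus Ff_2$. The resulting stratification has $\mathcal{C}_0=\overline{U^1}(F)\cap\tilde{K}_1$ and, for $k\ge 1$, $\mathcal{C}_k$ exactly as in the statement.

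On each stratum, evaluate $\sigma'(\chi)(\phi)$ using Lemma~\ref{prop:sigma-generator} and the defining cocycle of normalized parabolic induction. For $v\in\mathcal{C}_0\subset\tilde{K}_1$ one gets $\sigma'(\chi)(\phi)(v)=(\omega_\psi(v)\phi)(\cdot,0)$ directly. For $v\in\mathcal{C}_k$ with $k\ge 1$, writing $v=u\cdot s\check{\beta}'_2(\varpi^k)\cdot r$ with $u\in U^1(F)$, $s\in\Mp(W^1)$, $r\in\widetilde{K_1}$, the cocycle yields
\[
 \sigma'(\chi)(\phi)(v)=\delta_{P^1}(\check{\beta}'_2(\varpi^k))^{1/2}\,z^k\,\omega_\psi^{\flat,-}(s)\bigl((\omega_\psi(r)\phi)(\cdot,0)\bigr),
\]
using the unramifiedness of $\chi$ on $T^1(\mathfrak{o})$ (which absorbs the $T^1$-part of $r$), the identity $\chi(\check{\beta}'_2(\varpi^k))=z^k$, and the triviality of $\delta_{P^1}$ on the semisimple factor of $M^1$. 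Integrating stratum by stratum gives the asserted formulas for $r'_0$ and $r'_k$ as the coefficients of $z^k$. Convergence of the resulting series for $|z|<1$ is the standard convergence of the intertwining integral; rationality of $d'(w,\chi)$ pins the $r'_k$ down uniquely once we pick some $\phi$ with $(\omega_\psi(\mathring{w})\phi)(\cdot,0)\not\equiv 0$, which exists because $\omega_\psi(\mathring{w})$ acts as a unitary Fourier transform in the second variable, the analog of Lemma~\ref{prop:d-Fourier}.

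The main technical care point is checking that the partition $\overline{U^1}(F)=\bigsqcup_{k\ge 0}\mathcal{C}_k$ is well-defined (each $v$ lies in exactly one $\mathcal{C}_k$) and compatible with the restriction of the Haar measure normalized by $\mes(\overline{U^1}(F)\cap K_1)=1$; no additional Jacobian appears because the stratification merely partitions $\overline{U^1}(F)$, so the $\delta_{P^1}(\check{\beta}'_2(\varpi))^{k/2}$ in the statement comes entirely from the normalized-induction cocycle. This reduces the verification to a rank-one Iwasawa calculation inside the $\SL(2)$ generated by $\pm\beta'_2$. A secondary care point is keeping straight that here we work with $\mathring{w}$ and $K_1$ rather than the even-case $\tilde{w}$ and $K_0$; Proposition~\ref{prop:comparison-representatives}(ii) provides the bridge to the even-case conventions should a direct comparison be needed.
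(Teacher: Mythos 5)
Your proposal is correct and takes essentially the same route as the paper: use Lemma~\ref{prop:d-formula-odd} (with the representative $\mathring{w}$ rather than $\tilde{w}$, as you rightly point out), observe that $U^{1,w}=\overline{U^1}$ so the intertwining integral runs over $\overline{U^1}(F)$, stratify $\overline{U^1}(F)$ via the Iwasawa decomposition relative to $\tilde{P}^1\cdot\widetilde{K_1}$, and then read off $r'_k$ from the cocycle of normalized induction together with Lemma~\ref{prop:sigma-generator}. The handling of $\mathcal{C}_0$, of the factor $z^k\delta_{P^1}(\check{\beta}'_2(\varpi))^{k/2}$, of the absence of any Jacobian, and of the final characterization via a test $\phi$ with $(\omega_\psi(\mathring{w})\phi)(\cdot,0)\neq 0$ all match the paper.

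The one step you underestimate is the claim that $\mathcal{C}_k=\emptyset$ for $k<0$, which you attribute to ``the standard Iwasawa decomposition of $\SL(2)$ applied to the rank-one subgroup attached to $Fe_2\oplus Ff_2$.'' That reduction is not available here: $\overline{U^1}$ is a $3$-dimensional Heisenberg group, and a general $v=\nu^-(u,v,t)\in\overline{U^1}(F)$ does not lie in that rank-one $\SL(2)$, so the $T^1$-Cartan exponent of its Iwasawa factorization cannot simply be read from the $2\times2$ picture. The paper isolates exactly this point as Lemma~\ref{prop:k-elimination} and settles it by a direct $4\times 4$ matrix computation using the lattice description of $K_1$ (inspect the $(4,4)$-entry); that is the argument you actually need. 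With that replacement your proof is complete. It would also be worth noting, as the paper does, that supercuspidality of $\omega_\psi^{\flat,-}$ is what ensures the convergence region is exactly $|z|<1$.
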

\begin{proof}
	This is a crude variant of Lemma \ref{prop:rank-one-integrals} in the odd setting. First of all, in order to ensure the convergence of intertwining integral in Lemma \ref{prop:d-formula-odd} when $|z| < 1$, recall that $\omega_\psi^{\flat, -}$ is supercuspidal.
	
	We claim that for all $\phi \in \Schw^-(\mathfrak{o}/(2\varpi)) \otimes \Schw(\mathfrak{o}/(2))$, Lemma \ref{prop:d-formula-odd} yields an identity in the underlying space of $\omega_\psi^{\flat, -}$:
	\begin{multline}\label{eqn:rank-one-integrals-odd}
		d'(w, \chi) \omega_\psi(\mathring{w})(\phi)(\cdot, 0) =
		\int_{\overline{U^1}(F) \cap K_1} \omega_\psi(v)(\phi)(\cdot, 0) \dd v \\
		+ \sum_{k \geq 1} z^k \delta_{P^1}(\check{\beta}'_2(\varpi))^{k/2} \int_{\mathcal{C}_k} \omega_\psi^{\flat, -}(s(v)) \left( \omega_\psi(r(v))\phi(\cdot, 0) \right) \dd v.
	\end{multline}
	
	Indeed, we will see in \S\ref{sec:odd-computations} that $U^{1, w} = \overline{U^1}$, hence the intertwining integral is over $\overline{U^1}(F)$, Iwasawa-decomposed into disjoint cells $\mathcal{C}_k$. For $k=0$, note that since $K_1$ is in good position relative to $M^1$, we have
	\[ \left(\overline{U^1}(F) M^1(F) U^1(F)\right) \cap K_1 = (\overline{U^1}(F) \cap K_1) (M^1(F) \cap K_1) (U^1(F) \cap K_1), \]
	and it follows readily that $\mathcal{C}_0 = \overline{U^1}(F) \cap K_1$.
	
	Hence the integral over $\mathcal{C}_0$ is simply
	\[ \int_{\overline{U^1}(F) \cap K_1} \omega_\psi(v)(\phi)(\cdot, 0) \dd v. \]

	Over $\mathcal{C}_k$ with $k \geq 1$, the Iwasawa decomposition yields $z^k \delta_{P^1}(\check{\beta}'_2(\varpi))^{k/2}$ and the action of $\omega_\psi^{\flat, -}(s(v))$. The remaining contribution from $\sigma'(\chi)(\phi)(r(v))$ is dealt with using Lemma \ref{prop:sigma-generator}.

	On the other hand, we will show $k < 0 \implies \mathcal{C}_k = \emptyset$ in Lemma \ref{prop:k-elimination}. This justifies \eqref{eqn:rank-one-integrals-odd}.
	
	The rest follows by expanding $d'(w, \chi)$ into a Laurent series $\sum_{k \gg -\infty} r'_k z^k$ and comparing the coefficients in \eqref{eqn:rank-one-integrals-odd}. We have seen that $k < 0 \implies r'_k = 0$. To see that the equations characterize $(r'_k)_{k=0}^\infty$, choose $\phi$ such that $\omega_\psi(\mathring{w})\phi(\cdot, 0)$ is nonzero.
\end{proof}

\subsection{Computations}\label{sec:odd-computations}
Hereafter, we assume $n=2$ and let $w = t'_2 \in \Omega'_0$.

Express the elements of $\Sp(4)$ in the ordered basis $e_2, e_1, f_1, f_2$. The affine $3$-space is isomorphic to $U^1$ through the map $\nu$ below:
\[ \nu(u, v, t) = \begin{pmatrix}
	1 & u & v & t \\
	0 & 1 & 0 & v \\
	0 & 0 & 1 & -u \\
	0 & 0 & 0 & 1
\end{pmatrix}.\]

We have
\begin{gather*}
	x_{\epsilon_2 - \epsilon_1}(u) = \nu(u, 0, 0), \quad x_{\epsilon_2 + \epsilon_1}(v) = \nu(0, v, 0), \quad x_{2\epsilon_2}(t) = \nu(0, 0, t), \\
	\check{\beta}'_2(a) = \begin{pmatrix}
		a & 0 & 0 & 0 \\
		0 & 1 & 0 & 0 \\
		0 & 0 & 1 & 0 \\
		0 & 0 & 0 & a^{-1}
	\end{pmatrix}, \quad a \in F^{\times}.
\end{gather*}

In fact, $\nu$ makes $U^1$ isomorphic to the Heisenberg group attached to the symplectic vector space $Fe_1 \oplus Ff_1$. Therefore, the Haar measure on $U^1(F)$ can be given as
\[ \nu_*\left(\dd u \dd v \dd t \right), \quad \dd u, \dd v, \dd u: \text{Haar measures on}\; F. \]

Since $(\epsilon_2 \pm \epsilon_1)(z_1) = \pm\frac{1}{2}$ and $2\epsilon_2(z_1) = 0$, the Haar measure on $U^1(F)$ giving $\mes(U^1(F) \cap K_1) = 1$ is given by $\nu_*\left(\dd u \dd v \dd t \right)$ where
\begin{align*}
	\dd u: & \mes((\varpi)) = 1, \\
	\dd v, \; \dd t: & \mes(\mathfrak{o}) = 1.
\end{align*}
Cf.\ the proof of Proposition \ref{prop:coinvariant-U1}. Similarly, $\overline{U^1}$ is described by
\[ \nu^-(u, v, t) = \begin{pmatrix}
	1 & 0 & 0 & 0 \\
	v & 1 & 0 & 0 \\
	u & 0 & 1 & 0 \\
	t & u & -v & 1
\end{pmatrix}. \]

The element $w = t'_2$ has the following representative in $K_1$
\[ \dot{w} := \begin{pmatrix}
	0 & 0 & 0 & -1 \\
	0 & 1 & 0 & 0 \\
	0 & 0 & 1 & 0 \\
	1 & 0 & 0 & 0
\end{pmatrix} \]
and $\mathring{w} = \mathring{t}'_2$ is a preimage of $\dot{w}$. It follows that
\begin{equation}\label{eqn:w-permute-U1}
	w \overline{U^1} w^{-1} = U^1, \quad \dot{w}(\overline{U^1}(F) \cap K_1)\dot{w}^{-1} = U^1(F) \cap K_1.
\end{equation}

\begin{lemma}\label{prop:k-elimination}
	Let $k \in \Z$. If there exist $u \in U^1(F)$, $s \in \Sp(W^1)$ and $r \in K_1$ such that $u s \check{\beta}'_2(\varpi^k) r \in \overline{U^1}(F)$, then $k \geq 0$.
\end{lemma}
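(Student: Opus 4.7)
The plan is to test against the vector $f_2 \in W$. Since $g := u s \check{\beta}'_2(\varpi^k) r$ is assumed to lie in $\overline{U^1}(F)$, the explicit matrix formula for $\nu^-(u,v,t)$ (whose fourth column is $(0,0,0,1)^{\mathrm{t}}$) shows $g f_2 = f_2$. In particular, the coefficient of $f_2$ in the expansion of $gf_2$ in the basis $e_2, e_1, f_1, f_2$ equals $1$. I will then compute this coefficient by pushing $f_2$ through the factorization $r$, then $\check{\beta}'_2(\varpi^k)$, then $s$, then $u$, and read off a constraint on $k$.

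Three observations suffice. First, $r \in K_1 = \Stab(\mathcal{L}_1)$, so $r f_2 \in \mathcal{L}_1 = \mathfrak{o}e_2 \oplus \mathfrak{o}e_1 \oplus (\varpi)f_1 \oplus \mathfrak{o}f_2$; in particular, the $f_2$-coordinate $b$ of $rf_2$ lies in $\mathfrak{o}$. Second, $\check{\beta}'_2(\varpi^k) = \mathrm{diag}(\varpi^k, 1, 1, \varpi^{-k})$ multiplies the $f_2$-coordinate by $\varpi^{-k}$. Third, $s \in \Sp(W^1)$ acts only on $Fe_1 \oplus Ff_1$ and thus fixes the $f_2$-coordinate, while the fourth row of the matrix $\nu(u_0, v_0, t_0)$ representing $u$ is $(0,0,0,1)$, so $u$ also preserves the $f_2$-coordinate of any vector.

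Combining, the $f_2$-coordinate of $gf_2$ equals $\varpi^{-k} b$, which must equal $1$; hence $b = \varpi^k \in \mathfrak{o}$, forcing $k \geq 0$. The argument is entirely elementary; the only thing to notice is that $f_2$ is a tailor-made test vector, being simultaneously $\overline{U^1}$-fixed, invariant in the $f_2$-direction under both the $\Sp(W^1)$ and $U^1$ factors, and compatible with the lattice $\mathcal{L}_1$ defining $K_1$. There is no real obstacle.
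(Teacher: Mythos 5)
Your argument is correct and is, in substance, identical to the paper's proof: testing the $f_2$-coordinate of $gf_2$ is exactly reading off the $(4,4)$-entry of the matrix identity the paper writes down, and both proofs rest on the same three facts — the fourth row of $us\check{\beta}'_2(\varpi^k)$ is $(0,0,0,\varpi^{-k})$, the $(4,4)$-entry of $r$ lies in $\mathfrak{o}$ because $r \in K_1 = \Stab(\mathcal{L}_1)$, and the $(4,4)$-entry of any element of $\overline{U^1}(F)$ is $1$. Phrasing it as pushing a test vector through the factorization rather than as a matrix-entry computation is a cosmetic difference only.
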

\begin{proof}
	Since $K_1$ is the stabilizer of the lattice $\mathcal{L}_1$, by unraveling definitions we see
	\[ K_1 = \begin{pmatrix}
		\mathfrak{o} & \mathfrak{o} & (\varpi^{-1}) & \mathfrak{o} \\
		\mathfrak{o} & \mathfrak{o} & (\varpi^{-1}) & \mathfrak{o} \\
		(\varpi) & (\varpi) & \mathfrak{o} & (\varpi) \\
		\mathfrak{o} & \mathfrak{o} & (\varpi^{-1}) & \mathfrak{o}
	\end{pmatrix} \cap \Sp(4, F). \]
	
	If $u s \check{\beta}'_2(\varpi^k) r \in \overline{U^1}(F)$, we have
	\[\begin{pmatrix}
		\varpi^k & * & * & * \\
		0 & * & * & * \\
		0 & * & * & * \\
		0 & 0 & 0 & \varpi^{-k}
	\end{pmatrix} r = \begin{pmatrix}
		1 & 0 & 0 & 0 \\
		* & 1 & 0 & 0 \\
		* & 0 & 1 & 0 \\
		* & * & * & 1
	\end{pmatrix}.\]
	Let $r_{44} \in \mathfrak{o}$ be the $(4, 4)$-entry of $r$. It follows that $\varpi^{-k} r_{44} = 1$, hence $k \geq 0$.
\end{proof}

\begin{lemma}\label{prop:main-integral-odd}
	With the notation of Lemma \ref{prop:rank-one-integrals-odd}, we have $r'_0 = |2|_F^{1/2}$.
\end{lemma}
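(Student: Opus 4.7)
The plan is to mimic Lemma \ref{prop:main-integral-0}, but routed through the Weyl element $\mathring{w}$ since $\omega_\psi$ on the negative-root subgroups in $\overline{U^1}$ is not amenable to direct computation via \eqref{eqn:root-action}. First, I would use \eqref{eqn:w-permute-U1} together with the functoriality of canonical unipotent sections under conjugation by $\mathring{w} \in \widetilde{K_1}$ to substitute $v = \mathring{w}^{-1} u \mathring{w}$ with $u \in U^1(F) \cap K_1$. Since both compact subgroups have total measure one, the substitution is measure-preserving and gives
\[ I(\phi) := \int_{\overline{U^1}(F) \cap K_1}\omega_\psi(v)\phi\dd v = \omega_\psi(\mathring{w})^{-1}\Psi, \qquad \Psi := \int_{U^1(F) \cap K_1} \omega_\psi(u)\Phi \dd u, \]
with $\Phi := \omega_\psi(\mathring{w})\phi$ and the $U^1$-action now described by \eqref{eqn:root-action}.

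Next I would pin down $\Psi$. As a normalized average, $\Psi$ is the projection of $\Phi$ onto the finite-dimensional space $V_{\tau_1^-}^{U^1(F) \cap K_1}$; by semisimplicity this space maps isomorphically to the coinvariants $(V_{\tau_1^-})_{U^1(F) \cap K_1}$, which by Proposition \ref{prop:coinvariant-U1} are identified with $V_{\tau_1^{\flat, -}}$ via $\phi \mapsto \phi(\cdot, 0)$. Hence $\Psi(\cdot, 0) = \Phi(\cdot, 0)$. To handle $y_2 \neq 0$, I would use the invariance of $\Psi$ under $\tilde{x}_{2\epsilon_2}(t)$ for $t \in \mathfrak{o}$ (the relevant elements of $K_1 \cap U^1(F)$ already exploited in the proof of Proposition \ref{prop:coinvariant-U1}), which acts by the scalar $\psi(ty_2^2)$: the conductor assumptions $\psi|_{4\mathfrak{o}} = 1$, $\psi|_{4\varpi^{-1}\mathfrak{o}} \not\equiv 1$ force $\Psi(y_1, y_2) = 0$ whenever $y_2 \neq 0$ in $\mathfrak{o}/(2)$, so $\Psi$ is supported at $y_2 = 0$.

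Finally, to evaluate $I(\phi)(y_1, 0) = (\omega_\psi(\mathring{w})^{-1}\Psi)(y_1, 0)$, I would invoke the analogue of Lemma \ref{prop:d-Fourier} inside the $\Mp(2)$-factor generated by the roots $\pm 2\epsilon_2$: on the $y_2$-slot of $\Schw(\mathfrak{o}/(2))$, $\omega_\psi(\mathring{w})^{-1}$ acts as $|2|_F^{1/2}$ times the counting-measure inverse Fourier transform. Because $\Psi$ is supported at $y_2 = 0$, the Fourier sum collapses to a single term, giving
\[ I(\phi)(y_1, 0) = |2|_F^{1/2}\Psi(y_1, 0) = |2|_F^{1/2}\omega_\psi(\mathring{w})\phi(y_1, 0), \]
so $r'_0 = |2|_F^{1/2}$ after choosing $\phi$ with $\omega_\psi(\mathring{w})\phi(\cdot, 0) \not\equiv 0$. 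The trickiest verifications will be the compatibility of the canonical sections in the conjugation substitution (so that no spurious Weil constants appear) and the precise normalization of $\omega_\psi(\mathring{w})$ on $\Schw(\mathfrak{o}/(2))$ matching Lemma \ref{prop:d-Fourier}; once these are settled, the $\tilde{x}_{2\epsilon_2}$-invariance does the real work, playing the role that the computation of $\omega_\psi(\tilde{w})\mathbf{1}_\mathfrak{o}$ played in the even case.
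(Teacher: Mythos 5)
Your proposal is correct, and it reaches the same formula by a genuinely different route than the paper's proof. Both proofs start with the conjugation trick: write
\[ I(\phi) := \int_{\overline{U^1}(F) \cap K_1} \omega_\psi(v)\phi\,\dd v = \omega_\psi(\mathring{w})^{-1}\Psi, \qquad \Psi := \int_{U^1(F)\cap K_1}\omega_\psi(u)\omega_\psi(\mathring{w})\phi\,\dd u, \]
and close with the second-coordinate analogue of Lemma \ref{prop:d-Fourier}. Where you diverge is the evaluation of $\Psi$. The paper specializes to $\phi = \phi_1 \otimes \mathbf{1}_{\mathfrak{o}}$ (so that $\Phi := \omega_\psi(\mathring{w})\phi$ is a positive multiple of $\phi_1 \otimes \mathbf{1}_{2\mathfrak{o}}$), writes $\nu \in U^1(F) \cap K_1$ via the Heisenberg parametrization $\nu(u,v,t)$, and checks by hand from \eqref{eqn:root-action} and \eqref{eqn:Siegel-action} that each of the three one-parameter subgroups fixes $\phi_1 \otimes \mathbf{1}_{2\mathfrak{o}}$, so that $\Psi = \Phi$ outright. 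You instead treat arbitrary $\phi$ and argue structurally: $\Psi$ is the projection of $\Phi$ onto $(K_1 \cap U^1(F))$-invariants; since this projection and the coinvariant map of Proposition \ref{prop:coinvariant-U1} have the same kernel (the augmentation ideal), $\Psi(\cdot, 0) = \Phi(\cdot, 0)$; and $\tilde{x}_{2\epsilon_2}(\mathfrak{o})$-invariance together with the conductor assumption on $\psi$ kills $\Psi$ off the slice $y_2 = 0$, making the Fourier sum collapse. Your version is shorter and shows that only the long-root subgroup of $U^1$ does any work, at the cost of leaning on the earlier coinvariant computation; the paper's version is more self-contained and keeps the calculation concrete. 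The two issues you flag as delicate — the $\mathring{w}$-equivariance of canonical unipotent sections (automatic, because unipotent radicals split uniquely in any covering) and the $|2|_F^{1/2}$-normalization of $\omega_\psi(\mathring{w})$ on $\Schw(\mathfrak{o}/(2))$ in the second slot (literally the same argument as Lemma \ref{prop:d-Fourier}) — are indeed routine as you suspected.
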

\begin{proof}
	Take $\phi = \phi_1 \otimes \mathbf{1}_{\mathfrak{o}}$ where $\phi_1 \in \Schw^-(\mathfrak{o}/(2\varpi))$ is arbitrary. The goal is to show that
	\begin{equation*}
		\int_{\overline{U^1}(F) \cap K_1} \omega_\psi(v)(\phi)(\cdot, 0) \dd v = |2|_F^{1/2} \omega_\psi(\mathring{w})(\phi)(\cdot, 0).
	\end{equation*}

	For the right hand side, observe that $\omega_\psi(\mathring{w})$ is the unitary Fourier transform in the second variable. By Lemma \ref{prop:d-Fourier}, we get $\phi_1$ times
	\[ |2|_F \sum_{y \in \mathfrak{o}/(2)} \mathbf{1}_{\mathfrak{o}}(y) = |2|_F \cdot |\mathfrak{o}/(2)| = |2|_F |2|_F^{-1} = 1. \]

	Consider the left hand side. We know that $\omega_\psi(\mathring{w})(\phi_1 \otimes \mathbf{1}_{\mathfrak{o}}) = c \phi_1 \otimes \mathbf{1}_{2\mathfrak{o}}$ for some $c \in \R_{> 0}$. Make the change of variable $\nu = \mathring{w} v \mathring{w}^{-1}$. In view of \eqref{eqn:w-permute-U1}, the left hand side equals
	\begin{equation*}
		\left( c \int_{U^1(F) \cap K_1} \omega_\psi(\mathring{w})^{-1} \omega_\psi(\nu)(\phi_1 \otimes \mathbf{1}_{2\mathfrak{o}}) \dd \nu \right) (\cdot, 0).
	\end{equation*}
	
	To carry out the integral in parentheses, use the multiplication in Heisenberg group to express
	\[ \nu = \nu(u, v, t) = x_{\epsilon_2 - \epsilon_1}(u) x_{\epsilon_2 + \epsilon_1}(v) x_{2\epsilon_2}(t - uv), \]
	lift this to $\tilde{G}$, and then integrate in the order $\int_{u \in (\varpi)} \int_{v \in \mathfrak{o}} \int_{t \in \mathfrak{o}}$.
	
	We claim that given $(u, v)$, the innermost integral yields $\phi_1 \otimes \mathbf{1}_{2\mathfrak{o}}$. Indeed, it equals
	\[ \int_{\mathfrak{o}} \omega_\psi(\tilde{x}_{2\epsilon_2}(t))(\phi_1 \otimes \mathbf{1}_{2\mathfrak{o}}) \dd t; \]
	the action of $\omega_\psi(\tilde{x}_{2\epsilon_2}(t))$ only affects the second slot $\mathbf{1}_{2\mathfrak{o}}$, and is trivial by \eqref{eqn:root-action}. The claim follows.
	
	Next, let $y = (y_1, y_2) \in \mathfrak{o}^2 \simeq \mathfrak{o}f_1 \oplus \mathfrak{o}f_2$. For all $v \in \mathfrak{o}$, we have
	\[ \omega_\psi\left( \tilde{x}_{\epsilon_2 + \epsilon_1}(v) \right)(\phi_1 \otimes \mathbf{1}_{2\mathfrak{o}})(y) = \psi(2v y_1 y_2) (\phi_1 \otimes \mathbf{1}_{2\mathfrak{o}})(y). \]
	This is zero if $y_2 \notin 2\mathfrak{o}$. If $y_2 \in 2\mathfrak{o}$ then $\psi(2v y_1 y_2) = 1$. Therefore
	\[ \omega_\psi\left( \tilde{x}_{\epsilon_2 + \epsilon_1}(v) \right)(\phi_1 \otimes \mathbf{1}_{2\mathfrak{o}}) = \phi_1 \otimes \mathbf{1}_{2\mathfrak{o}}. \]
	
	Next, for all $u \in (\varpi)$ we have
	\[ \omega_\psi\left( \tilde{x}_{\epsilon_2 - \epsilon_1}(u) \right)(\phi_1 \otimes \mathbf{1}_{2\mathfrak{o}})(y) = (\phi_1 \otimes \mathbf{1}_{2\mathfrak{o}})(y + uy_2 f_1) = \phi_1(y_1 + uy_2) \mathbf{1}_{2\mathfrak{o}}(y_2). \]
	This is zero unless $y_2 \in 2\mathfrak{o}$, in which case we get $\phi_1(y)$ since $uy_2 \in (2\varpi)$. Therefore
	\[ \omega_\psi\left( \tilde{x}_{\epsilon_2 - \epsilon_1}(u) \right)(\phi_1 \otimes \mathbf{1}_{2\mathfrak{o}}) = \phi_1 \otimes \mathbf{1}_{2\mathfrak{o}}. \]
	
	After integration, we are left with $c \omega_\psi(\mathring{w})^{-1} (\phi_1 \otimes \mathbf{1}_{2\mathfrak{o}}) = \phi_1 \otimes \mathbf{1}_{\mathfrak{o}}$. Restriction to $(\cdot, 0)$ yields $\phi_1$, as desired.
\end{proof}

Recall that in the case $n=2$, we have $\tilde{G} = \Mp(4)$ and $G^- = \SO(V^-)$ where $\dim V^- = 5$.

\begin{lemma}\label{prop:GK-odd-Mp4}
	We have $d'(w, \chi) = |2|_F^{1/2} c'(w, \chi)$ as rational functions in $\chi$.
\end{lemma}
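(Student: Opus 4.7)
The strategy is to bypass direct evaluation of the coefficients $r'_k$ for $k \geq 1$, which would require unravelling an Iwasawa decomposition in $\Mp(4)$ against the non-trivial action of $\omega_\psi^{\flat,-}$ on $\mathcal{C}_k$ and is not amenable to direct computation. Instead, I would identify $\lambda(\chi) := d'(w,\chi)/c'(w,\chi)$ as a constant by combining Knapp--Stein theory with the matching of $\mu$-functions (a consequence of $\mathrm{TW}$ being an isomorphism of Hilbert algebras, cf.\ \cite{TW18}), and then pin down this constant using the zeroth term supplied by Lemma \ref{prop:main-integral-odd}.

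First, observe that $w = t'_2$ is an involution acting on $z = \chi(\check{\beta}'_2(\varpi))$ by $z \mapsto z^{-1}$, and that both $d'(w,\chi)$ and $c'(w,\chi)$ are rational in $z$; hence so is $\lambda(\chi)$. Since $w$ is an involution, the Maass--Selberg (Knapp--Stein) functional equation asserts that $J'_w(w\chi) \circ J'_w(\chi)$ is a scalar operator equal to $\gamma_{\tilde{G}}\, \mu_{\tilde{G}}(\chi)^{-1}$, where $\mu_{\tilde{G}}$ is the Harish-Chandra $\mu$-function of $i_{\tilde{P}^1}(\omega_\psi^{\flat,-} \boxtimes \chi)$ and $\gamma_{\tilde{G}}$ depends only on the Haar-measure normalization via $K_1$. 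Evaluating on the spherical vector $\sigma'(\chi)$ of Lemma \ref{prop:sigma-generator} yields
\[
	d'(w, w\chi)\, d'(w, \chi) \;=\; \gamma_{\tilde{G}}\, \mu_{\tilde{G}}(\chi)^{-1},
\]
and the parallel identity on $G^-$ reads
\[
	c'(w, w\chi)\, c'(w, \chi) \;=\; \gamma_{G^-}\, \mu_{G^-}(\chi)^{-1}.
\]
Since $\mathrm{TW}$ preserves Hilbert-algebra structures and hence Plancherel data, one has $\mu_{\tilde{G}}(\chi) = \mu_{G^-}(\chi)$ (transferred along the identifications of Propositions \ref{prop:principal-series-Hecke}--\ref{prop:s-matching}). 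Taking the ratio gives $\lambda(\chi)\, \lambda(w\chi) = \gamma_{\tilde{G}}/\gamma_{G^-} =: C$, a constant in $\chi$.

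Moreover, the matching of $\mu$-functions forces the poles and zeros of $d'(w,\chi)$ to coincide with those of $c'(w,\chi)$, since in both cases they are dictated by the common normalizing factor attached to $\mu$. Consequently $\lambda$ has no poles or zeros on $\CC^\times$, so $\lambda(z) = a z^k$ for some $a \in \CC^\times$ and $k \in \Z$. By Lemma \ref{prop:main-integral-odd}, $d'(w,\chi) \to r'_0 = |2|_F^{1/2}$ as $z \to 0$; and the Gindikin--Karpelevich formula \cite[Theorem 3.1]{Ca80} for $G^-$, evaluated at $z = 0$, gives $c'(w,\chi) \to 1$. Thus $\lambda$ is regular and nonzero at $z = 0$, forcing $k = 0$ and $a = |2|_F^{1/2}$, which completes the proof.

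The principal technical hurdle lies in making the third paragraph precise: one must verify that the $\mu$-function matching of \cite{TW18}, transferred via $\mathrm{TW}$ to the present principal-series setting, truly yields both the functional equation $\lambda(\chi)\lambda(w\chi) = C$ and the coincidence of pole/zero locations between $d'(w,\chi)$ and $c'(w,\chi)$. This requires carefully tracking the Haar-measure normalizations underlying $J'_w$ (via $K_1$) and $J^-_w$ (via $K^-$) so that the ratio $\gamma_{\tilde{G}}/\gamma_{G^-}$ is an absolute constant, and consistently aligning the transferred Plancherel data with the inducing character $\chi$. Once these matchings are in place, the pole/zero analysis and the zeroth-term evaluation provided by Lemma \ref{prop:main-integral-odd} together reduce the Gindikin--Karpelevich comparison to a single-point verification.
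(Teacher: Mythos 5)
Your overall strategy — study $\lambda(z) := d'(w,\chi)/c'(w,\chi)$ as a rational function, show it has neither zeros nor poles away from $z=0$, and pin down the remaining monomial $a z^{k}$ via the zeroth term $r'_0$ and the explicit Gindikin--Karpelevich formula for $G^-$ — coincides with the paper's. So does your use of the $\mu$-function matching supplied by $\mathrm{TW}$. The gap is in the middle step, and you acknowledge it only as a ``technical hurdle'' when it is actually where the real content lies.

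The claim that ``the matching of $\mu$-functions forces the poles and zeros of $d'(w,\chi)$ to coincide with those of $c'(w,\chi)$, since in both cases they are dictated by the common normalizing factor attached to $\mu$'' does not hold as stated. The $\mu$-function controls the \emph{product} $d'(w,\chi)\,d'(w,w\chi)$, not $d'(w,\chi)$ alone; your Maass--Selberg identity $\lambda(\chi)\lambda(w\chi)=C$ is compatible, for instance, with $\lambda$ having a pole at some $z_0$ and a zero at $z_0^{-1}$. Likewise, the normalizing factor $r(\chi)$ constrains $J'_w$ only on the unitary circle $|z|=1$, where $R=r^{-1}J'_w$ is unitary and hence has no poles or kernel; off the circle $R$ itself can contribute zeros and poles, so $r$ alone does not account for those of $d'$. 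In short, $\mu$-matching by itself proves neither that $\lambda$ is nowhere zero nor that it is nowhere infinite on $\CC^\times$.

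The paper closes this gap by treating the three regions separately, and you would need to do the same. On $|z|<1$ the intertwining integral converges so $d'$ has no pole, and the non-vanishing of $d'$ is established by transferring, via the Hecke-module identifications, the fact that the spherical vector of $i_{P^-_{\min}}(\chi)$ survives in the Langlands quotient — a fact read off from the explicit $G^-$ formula, where the root labels satisfy $q_\alpha, q_{\alpha/2}^{1/2}q_\alpha \geq 1$ (Remark \ref{rem:K-pm}). Your proposal omits this $|z|<1$ step entirely, yet it is the anchor: only once $\rho$ is known to be regular and nonzero on $|z|<1$ does the functional equation $\rho(z)\rho(z^{-1})=C$ extend the conclusion to $|z|>1$. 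Finally, on $|z|=1$ the paper uses Knapp--Stein theory: the possible pole of $J'_w$ when $w\chi=\chi$ is of order $\le 1$ and is exactly the pole of $r(\chi)$, i.e.\ a zero of $\mu(\chi)$; since $\mu$ matches and the normalized operator is unitary (hence invertible) there, the pole and zero locations of $d'$ and $c'$ coincide on the circle. Your sketch lumps all of this into a single appeal to $\mu$-matching, which is where the argument would actually break down if one tried to carry it through.
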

\begin{proof}
	Introduce $z = \chi\left( \check{\beta}'_2(\varpi) \right)$ as before. Note that the range of convergence $|z| < 1$ is the same for both $J'_w(\chi)$ and $J^-_w(\chi)$. Define
	\[ \rho(z) := d'(w, \chi) c'(w, \chi)^{-1}. \]
	By the rationality of intertwining operators (see \cite[Théorème 2.4.1]{Li12b}), this is a rational function in $z$, not identically zero. We claim that
	\begin{equation}\label{eqn:GK-odd-Mp4}
		\rho \;\text{has neither zero nor pole when}\; z \neq 0.
	\end{equation}
	
	The Gindikin--Karpelevich formula \cite[Theorem 3.1]{Ca80} for $G^-$ implies that $c'(w, \chi)$ has neither poles nor zeros in the region $|z| < 1$. Indeed, by Remark \ref{rem:K-pm}, the factors $q_{\alpha/2}^{1/2} q_\alpha$ and $q_\alpha$ in that formula are always $\geq 1$.
	
	Since $J^-_w(\chi)(\sigma^-(\chi)) = c'(w, \chi) \sigma^-(w\chi)$, by the description of Langlands quotients as the image of $J^-_w(\chi)$ (see eg.\ \cite[Corollary 3.2]{Kon03}), we infer that the spherical part of $i_{P^-_{\min}}(\chi)$ survives in the unique irreducible quotient thereof, i.e.\ the Langlands quotient, when $|z| < 1$.
	
	The statement above translates into Hecke modules: the spherical part of the $H^-$-module $i_{P^-_{\min}}(\chi)^{I^-}$ survives in its unique irreducible quotient when $|z| < 1$.
	
	On the side of $\tilde{G}$, the basic theory of intertwining operators and Langlands quotients remains valid, as explained in \cite{Li12b, Luo20}. Hence the spherical part of $\mathbf{M}_{\tau_1^-}\left(i_{\tilde{P}^1}(\omega_\psi^{\flat, -} \boxtimes \chi)\right)$ survives in the unique irreducible quotient when $|z| < 1$, by passing to Hecke modules and recalling that all these Hecke modules together with their spherical parts match well under $\mathrm{TW}$.
	
	As before, this unique irreducible quotient is the image of $J'_w(\chi)$. Since $J'_w(\chi)(\sigma'(\chi)) = d'(w, \chi) \sigma'(w\chi)$, we conclude that $d'(w, \chi)$ has neither zero nor pole when $|z| < 1$. Consequently, $\rho(z)$ has neither zero nor pole in the region $|z| < 1$.
	
	Next, consider the region $|z| > 1$. The endomorphism $J_w(w\chi) J_w(\chi)$ can be identified with a rational function in $\chi$, and it equals the inverse of Harish-Chandra $\mu$-function of $i_{\tilde{P}^1}(\omega_\psi^{\flat, -} \boxtimes \chi)$, up to some positive constant depending on Haar measures. The same holds on the side of $G^-$. By combining
	\begin{itemize}
		\item the isomorphism $\mathrm{TW}$ and its avatars for Levi subgroups that preserve Hilbert structures (see \cite[\S 2]{TW18}),
		\item Proposition \ref{prop:ri-compatibility}, and
		\item the general formalism in \cite[Theorem B]{BHK11},
	\end{itemize}
	we know that the $\mu$-functions for $i_{\tilde{P}^1}(\omega_\psi^{\flat, -} \boxtimes \chi)$ and $i_{P^-_{\min}}(\chi)$ match up to an explicit positive constant.
	
	Since $w\chi$ falls in the convergence range, it follows from the previous case that $\rho(z)$ has neither zero nor poles in the region $|z| > 1$.
	
	Now consider the unitary range $|z| = 1$. By the general theory (see eg.\ \cite[Corollaire IV.1.2]{Wa03}), $J'_w(\chi)$ is nonzero, and the order of pole is $\leq 1$; moreover, if $w\chi \neq \chi$ then $J'_w(\chi)$ has no pole there.
	
	Before dealing with the remaining case $w\chi = \chi$, recall that $J'_w(\omega_\psi^{\flat, -} \boxtimes \chi)$ equals a normalizing factor $r(\chi)$ times an intertwining operator $R(\chi)$, the latter being the ``normalized intertwining operator''. Both are rational in $\chi$, the operator $R(\chi)$ is unitary when $|z| = 1$, and the factor $r(\chi)$ can be constructed from Harish-Chandra $\mu$-function; we refer to \cite[\S 3.3]{Li12b} or \S\ref{sec:normalization} for details.
	
	Thus the possible pole of $d'(w, \chi)$ when $|z|=1$ and $w\chi = \chi$ is accounted by that of $r(\chi)$. We infer that, in this case
	\begin{equation*}
		d'(w, \chi) = \infty \iff r(\chi) = \infty \\
		\iff \mu(\chi) = 0.
	\end{equation*}
	See \textit{loc.\ cit.}. More precisely, the second equivalence is based on the fact that $\mu(\chi)^{-1} = |r(\chi)|^2$ up to a positive constant. One can also relate this to the reducibility of $i_{\tilde{P}^1}(\omega_\psi^{\flat, -} \boxtimes \chi)$ by Knapp--Stein theory (see \cite[Theorem 2.4]{Luo20} and \cite[p.832]{Li12b}).
	
	Of course, the same story holds for $G^-$ and $i_{P^-_{\min}}(\chi)$ when $|z|=1$. Since
	\begin{itemize}
		\item the action of $w$ on $\chi$ is the same for both sides, and
		\item the inducing data that match under $\mathrm{TW}$ have the same $\mu$-functions and reducibilities,
	\end{itemize}
	we conclude that $\rho(z)$ has neither zero nor pole on the circle $|z| = 1$.

	Summing up, \eqref{eqn:GK-odd-Mp4} holds true. Hence $\rho(z) = b z^h$ for some $h \in \Z$ and $b \in \CC^{\times}$. By Lemma \ref{prop:rank-one-integrals-odd},
	\[ d'(w, \chi) = |2|_F^{1/2} (1 + \text{higher terms in}\; z), \]
	whilst by \cite[Theorem 3.1]{Ca80},
	\[ c'(w, \chi) = 1 + \text{higher terms in}\; z. \]
	
	Hence $\rho(z) = |2|_F^{1/2}$ as desired.
\end{proof}

\begin{proof}[Proof of Theorem \ref{prop:GK-odd}]
	The case $n=2$ and $w = t'_2$ has been settled by Lemma \ref{prop:GK-odd-Mp4}. This implies the general case by Lemma \ref{prop:GK-reduction-odd}.
\end{proof}

This also completes the proof of Theorem \ref{prop:comparison-J-odd}.

\begin{proof}[Proof of Theorem \ref{prop:comparison-J-odd-2}]
	Given Theorem \ref{prop:comparison-J-odd}, it suffices to show
	\[ J_w(\omega_\psi^{\flat, -} \boxtimes \chi) = (-q^{-1})^{t(w)} J'_w(\omega_\psi^{\flat, -} \boxtimes \chi). \]
	
	The only differences between them come from
	\begin{enumerate}[(a)]
		\item different representatives $\tilde{w}$ and $\mathring{w}$ of $w \in \Omega'_0$;
		\item different Haar measures on unipotent subgroups involved in $J_{\tilde{P}^{1, w} | \tilde{P}^1}(\omega_\psi^{\flat, -} \boxtimes \chi)$.
	\end{enumerate}
	
	Concerning (a), by Proposition \ref{prop:comparison-representatives} (ii), $J_w(\omega_\psi^{\flat, -} \boxtimes \chi)$ and $J'_w(\omega_\psi^{\flat, -} \boxtimes \chi)$ differ by
	\begin{align*}
		i_{\tilde{P}^1}\left( \omega_\psi^{\flat, -} \boxtimes \chi \right) & \rightiso i_{\tilde{P}^1}\left( \omega_\psi^{\flat, -} \boxtimes \chi \right) \\
		f & \mapsto \left[ x \mapsto f(\xi x) \right],
	\end{align*}
	where $\xi \in T(F) \simeq (F^{\times})^n$ has all coordinates in $\{\pm 1\} \subset F^{\times}$, and the first coordinate equals $(-1)^{t(w)}$. The first coordinate acting by $\omega_\psi^{\flat, -}$ yields $(-1)^{t(w)}$; the remaining $\pm 1$ have no effect since $\chi$ is unramified.
	
	As to (b), factorize $J_{\tilde{P}^{1, w} | \tilde{P}^1}(\omega_\psi^{\flat, -} \boxtimes \chi)$ and reduce to the cases $w = t'_i$ as in Lemma \ref{prop:GK-reduction-odd}, where $2 \leq i \leq n$. For $i > 2$, it has been observed in the proof of Lemma \ref{prop:GK-reduction-odd} that the choices of Haar measures are the same in the even and odd cases.
	
	For $i = 2$, an inspection of the proof of Lemma \ref{prop:main-integral-odd} reveals that the only difference occurs in the subgroup $x_{\epsilon_2 - \epsilon_1}(F) \simeq F$ of $U^1(F)$. In the even case $\mes(\mathfrak{o}) = 1$. In the odd case $\mes(\mathfrak{o}) = |\varpi|^{-1} \mes((\varpi)) = q$.
	
	Since $t'_2$ occurs $t(w)$ times in any reduced expression of $w$, they give the factor $(-q^{-1})^{t(w)}$. 
\end{proof}

\section{Applications}\label{sec:applications}
\subsection{Preservation of Aubert involution}\label{sec:Aubert}
Our brief review of the duality involution of Aubert follows \cite[\S A.2]{KMSW}. Set $\tilde{G} := \Mp(W)$ and $G^{\pm} := \SO(V^\pm)$ as before. Let $\mathrm{Groth}(\tilde{G})$ (resp.\ $\mathrm{Groth}(G^\pm)$) be the Grothendieck group of the abelian subcategory of $\tilde{G}\dcate{Mod}$ (resp.\ $G^\pm\dcate{Mod}$) consisting of objects of finite length. On the level of Grothendieck groups, the Aubert involutions are the endomorphisms below of $\mathrm{Groth}(\tilde{G})$, $\mathrm{Groth}(G^\pm)$, respectively.
\begin{align*}
	\mathbf{D}^{\tilde{G}} & := \sum_{\substack{P \supset B^{\leftarrow} \\ \text{parabolic}}} (-1)^{\dim A_{B^{\leftarrow}}/A_P} \; i_{\tilde{P}} r_{\tilde{P}}, \\
	\mathbf{D}^{G^\pm} & := \sum_{\substack{P^\pm \supset P_{\min}^\pm \\ \text{parabolic}}} (-1)^{\dim A_{P_{\min}^\pm}/A_{P^\pm}} \; i_{P^\pm} r_{P^\pm}.
\end{align*}

Denote by $[\pi]$ the image of $\pi$ in the Grothendieck group. It is known that if $\pi$ is irreducible in $\tilde{G}\dcate{Mod}$, then $\beta(\pi)\mathbf{D}^{\tilde{G}}[\pi] = [\hat{\pi}]$ for some irreducible $\hat{\pi}$. Here
\[ \beta(\pi) := (-1)^{\dim A_{B^{\leftarrow}}/A_{M_\pi}} \]
where $M_\pi$ is a standard Levi subgroup relative to $B^{\leftarrow}$ such that the cuspidal support of $\pi$ has the form $[M_\pi, \pi']$. Ditto for $\mathbf{D}^{G^\pm}$ upon replacing $B^{\leftarrow}$ by $P_{\min}^{\pm}$.

The involutions above preserve the Grothendieck groups of each Bernstein block. We will compare them via the equivalences
\begin{equation*}
	\mathcal{G}_\psi^{\pm} \simeq H_\psi^{\pm}\dcate{Mod} \stackrel{\mathrm{TW}^*}{\simeq} H^{\pm}\dcate{Mod} \simeq \mathcal{G}^{\pm}.
\end{equation*}

\begin{corollary}\label{prop:Aubert-involution-1}
	Let $\pi$ be an object of $\mathcal{G}_\psi^{\pm}$ of finite length. Let $\sigma$ be an object of $\mathcal{G}^\pm$ which matches $\pi$ via $\mathcal{G}_\psi^{\pm} \simeq \mathcal{G}^{\pm}$. Then $\mathbf{D}^{\tilde{G}}[\pi]$ matches $\pm \mathbf{D}^{G^\pm}[\sigma]$ under the induced isomorphism between Grothendieck groups.
\end{corollary}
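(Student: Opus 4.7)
The plan is to verify the identity termwise in the alternating sums defining the two Aubert involutions. The key input is Proposition \ref{prop:ri-compatibility}: for every reverse-standard parabolic $P = MU \supset B^{\leftarrow}$ of $G$ and its naturally corresponding $P^{\pm} = M^{\pm} U^{\pm}$ of $G^{\pm}$, the diagrams for normalized parabolic induction and for normalized Jacquet functors commute up to canonical isomorphism via the TW-equivalences $\mathcal{G}_\psi^{\tilde{M},\pm} \simeq \mathcal{G}^{M^{\pm}}$ and $\mathcal{G}_\psi^{\pm} \simeq \mathcal{G}^{\pm}$. Composing them shows that the endofunctor $i_{\tilde{P}} r_{\tilde{P}}$ on $\mathcal{G}_\psi^{\pm}$ is intertwined with $i_{P^{\pm}} r_{P^{\pm}}$ on $\mathcal{G}^{\pm}$, so that at the level of Grothendieck groups the class $[i_{\tilde{P}} r_{\tilde{P}}(\pi)]$ corresponds to $[i_{P^{\pm}} r_{P^{\pm}}(\sigma)]$.

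Next I would match the indexing sets and compare the signs. Writing $M = \Sp(W^\flat) \times \prod_{i=1}^r \GL(n_i)$ with $n^\flat + \sum_i n_i = n$, the symplectic exponent is
\[ \dim A_{B^{\leftarrow}}/A_P = n - r = n^\flat. \]
In the $+$ case $M_{\min}^+ = T^+$ has $\dim A_{T^+} = n$, so $\dim A_{P_{\min}^+}/A_{P^+} = n^\flat$ agrees termwise with the symplectic exponent; the $+$ assertion follows at once. In the $-$ case $M_{\min}^- = \SO(V_1^-) \times \GL(1)^{n-1}$ has $\dim A_{M_{\min}^-} = n-1$, giving $\dim A_{P_{\min}^-}/A_{P^-} = n^\flat - 1$, one less than on the symplectic side.

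The one subtlety is that on the symplectic side one sums over all $P \supset B^{\leftarrow}$, whereas on the orthogonal side one sums only over $P^- \supset P_{\min}^-$; the bijection between reverse-standard parabolics sends $P$ to $P^-$ only when $n^\flat \geq 1$. I would dispose of the $n^\flat = 0$ terms by a cuspidal-support argument: any $\pi \in \mathcal{G}_\psi^-$ has cuspidal support at a conjugate of $M^1 = \Sp(W^1) \times \GL(1)^{n-1}$, but if $n^\flat = 0$ then $M = \prod_i \GL(n_i)$ contains no conjugate of $M^1$, so $r_{\tilde{P}}(\pi) = 0$ in $\mathcal{G}_\psi^{\tilde{M},-}$ (which has no cuspidal support compatible with $M^1$). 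Consequently both sums range over $\{n^\flat \geq 1\}$, matched by the natural bijection, and the uniform shift of the exponent by $1$ produces the global sign $-1$, yielding $\mathbf{D}^{\tilde{G}}[\pi] = -\mathbf{D}^{G^-}[\sigma]$.

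The argument is almost entirely bookkeeping: the substantial content has already been packaged into Proposition \ref{prop:ri-compatibility}. The only mildly delicate point is the vanishing of the $n^\flat = 0$ terms in the $-$ case, which is what forces the minus sign rather than an ostensibly nonmatching pair of sums.
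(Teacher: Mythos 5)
Your argument is the same as the paper's: use Proposition~\ref{prop:ri-compatibility} to match $i_{\tilde P}r_{\tilde P}$ with $i_{P^\pm}r_{P^\pm}$ termwise, then compare the exponents in the alternating sums, with the $-$ case requiring the observation that $r_{\tilde P}$ vanishes on $\mathcal G_\psi^-$ whenever $n^\flat=0$ (your cuspidal-support justification for this is fine). One slip: you write $\dim A_{B^\leftarrow}/A_P = n-r = n^\flat$, but $n-r = n^\flat + \sum_i(n_i-1)$, which equals $n^\flat$ only when every $n_i=1$. Since you make the same substitution on the orthogonal side (writing $\dim A_{P^\pm_{\min}}/A_{P^\pm}$ as $n^\flat$ or $n^\flat-1$ where it should be $n-r$ or $n-r-1$), the error cancels in the comparison and your conclusions --- equal exponents in the $+$ case, a uniform shift by $1$ in the $-$ case --- remain correct. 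What actually carries the argument is just $\dim A_P = \dim A_{P^\pm}$ together with $\dim A_{B^\leftarrow}=\dim A_{P^+_{\min}}=n$ and $\dim A_{P^-_{\min}}=n-1$, exactly as the paper states; the explicit value of the exponent is immaterial.
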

\begin{proof}
	In the case of $\mathcal{G}_\psi^+$, recall that the parabolic subgroups $P \supset B^{\leftarrow}$ and $P^+ \supset P_{\min}^+$ are in natural bijection, say by matching the patterns of $\GL(n_i)$'s. Also, $\dim A_P = \dim A_{P^+}$.
	
	There is a variant of $\mathcal{G}_\psi^{\pm} \simeq \mathcal{G}^{\pm}$ for Levi subgroups. Since the functors $r_{\tilde{P}}$ (resp.\ $i_{\tilde{P}}$) match $r_{P^+}$ (resp.\ $i_{P^+}$) by Proposition \ref{prop:ri-compatibility} under these correspondences, the duality operators also match.
	
	As for the case of $\mathcal{G}_\psi^-$, only the parabolic subgroups $P = MU \supset B^{\leftarrow}$ with $M = \Sp(W^{\flat}) \times \prod_i \GL(n_i)$ such that $\dim W^{\flat} \geq 2$ are in bijection with $P^- \supset P_{\min}^-$. However, $r_{\tilde{P}}|_{\mathcal{G}_\psi^-}$ is nonzero only for such $P$. We still have $\dim A_P = \dim A_{P^-}$, whilst $\dim A_{B^{\leftarrow}} = \dim A_{P_{\min}^-} + 1$. This suffices to conclude the proof.
\end{proof}

\begin{remark}
	Assume $\pi$ is irreducible. One readily verifies that
	\begin{enumerate}
		\item If $\pi$ is in $\mathcal{G}_\psi^+$ then $\beta(\pi) = \beta(\sigma) = 1$;
		\item If $\pi$ is in $\mathcal{G}_\psi^-$ then $\beta(\pi) = -1$ whilst $\beta(\sigma) = 1$.
	\end{enumerate}
\end{remark}

When restricted to Bernstein blocks of a given rank, the Aubert involution can be lifted to the level of representations $\pi \mapsto \hat{\pi}$. A precise recipe can be found in \cite[(A.2.2)]{KMSW}. Suppose that all irreducible subquotients of $\pi$ has cuspidal support of the form $[L, \pi']$, for a fixed Levi subgroup $L$. The formula in \textit{loc.\ cit.} expresses $\hat{\pi}$ as a canonical quotient of
\[ \bigoplus_{\substack{P \supset B^{\leftarrow} \\ \dim A_P = \dim A_L}} i_{\tilde{P}} r_{\tilde{P}}(\pi), \]
involving images of $i_{\tilde{Q}} r_{\tilde{Q}}(\pi)$ with $\dim A_Q < \dim A_L$. When $\pi$ is irreducible, $[\hat{\pi}] = \beta(\pi) \mathbf{D}^{\tilde{G}}[\pi]$ in $\mathrm{Groth}(\tilde{G})$.

The same recipe also applies to $G^{\pm}$ as well. The endofunctor $\pi \mapsto \hat{\pi}$ (resp.\ $\sigma \mapsto \hat{\sigma}$) preserves $\mathcal{G}_\psi^{\pm}$ (resp.\ $\mathcal{G}^{\pm}$). The following property is immediate.

\begin{corollary}\label{prop:TW-vs-Aubert}
	The following diagram commutes up to a canonical isomorphism
	\[\begin{tikzcd}
		\mathcal{G}_\psi^{\pm} \arrow[d, "{(\cdot)^\wedge}"'] \arrow[r, "\sim"] & \mathcal{G}^{\pm} \arrow[d, "{(\cdot)^\wedge}"] \\
		\mathcal{G}_\psi^{\pm} \arrow[r, "\sim"'] & \mathcal{G}^{\pm}
	\end{tikzcd}\]
	where the horizontal equivalences are induced by $\mathrm{TW}: H^{\pm} \rightiso H_\psi^{\pm}$.
\end{corollary}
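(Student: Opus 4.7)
The plan is to upgrade Corollary \ref{prop:Aubert-involution-1}, which matches the two duality operators on the level of Grothendieck groups, to the level of functors, by using the canonical construction of $\hat{\pi}$ recalled after the proof of Corollary \ref{prop:Aubert-involution-1} (see \cite[(A.2.2)]{KMSW}). The key technical ingredient is Proposition \ref{prop:ri-compatibility}: the functors $i_{\tilde{P}} r_{\tilde{P}}: \mathcal{G}_\psi^{\pm} \to \mathcal{G}_\psi^{\pm}$ and $i_{P^\pm} r_{P^\pm}: \mathcal{G}^{\pm} \to \mathcal{G}^{\pm}$ are intertwined by $\mathrm{TW}$ through a canonical isomorphism, functorial in $\pi$.

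First I would fix an object $\pi$ of finite length in $\mathcal{G}_\psi^{\pm}$, let $\sigma$ be the corresponding object of $\mathcal{G}^{\pm}$, and set $L$ to be a common Levi subgroup carrying the cuspidal support, in the sense that every irreducible subquotient of $\pi$ (equivalently, of $\sigma$) has cuspidal support of the form $[L, \pi']$. In the $+$ case there is a natural bijection between parabolic subgroups $P \supset B^{\leftarrow}$ and $P^+ \supset P^+_{\min}$ preserving $\dim A_P$; in the $-$ case the same bijection holds once we restrict to those $P$ for which $r_{\tilde{P}}|_{\mathcal{G}_\psi^-} \neq 0$, which are precisely the $P = MU$ with $M \supset M^1$, and these exhaust the parabolics relevant to the defining sums.

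Next I would assemble the canonical isomorphisms from Proposition \ref{prop:ri-compatibility} over all $P$ with $\dim A_P = \dim A_L$ into a single isomorphism
\[
\bigoplus_{\substack{P \supset B^{\leftarrow} \\ \dim A_P = \dim A_L}} i_{\tilde{P}} r_{\tilde{P}}(\pi) \;\rightiso\; \bigoplus_{\substack{P^\pm \supset P^\pm_{\min} \\ \dim A_{P^\pm} = \dim A_L}} i_{P^\pm} r_{P^\pm}(\sigma),
\]
together with compatible isomorphisms between the lower-rank terms $i_{\tilde{Q}} r_{\tilde{Q}}(\pi)$ and $i_{Q^\pm} r_{Q^\pm}(\sigma)$ whose images control the ``denominator'' in the Aubert quotient. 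Functoriality of the matching of $i_{\tilde{P}} r_{\tilde{P}}$ with $i_{P^\pm} r_{P^\pm}$ (both in $\pi$ and in the parabolic, via the second adjunction and transitivity of Jacquet modules) will ensure that the canonical quotient defining $\hat{\pi}$ on the metaplectic side is transported to the canonical quotient defining $\hat{\sigma}$ on the orthogonal side, yielding an isomorphism $\hat{\pi} \rightiso \hat{\sigma}$ natural in $\pi$.

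The main subtlety, rather than obstacle, will be the sign in the $-$ case. On Grothendieck groups, $\mathbf{D}^{\tilde{G}}[\pi]$ and $\mathbf{D}^{G^-}[\sigma]$ differ by a sign (Corollary \ref{prop:Aubert-involution-1}), but the functorial Aubert involutions $\hat{(\cdot)}$ are obtained by multiplying by $\beta(\pi)$ and $\beta(\sigma)$ respectively, with $\beta(\pi) = -1$ and $\beta(\sigma) = 1$; hence the signs cancel, and $\hat{\pi}$ genuinely matches $\hat{\sigma}$ (not $-\hat{\sigma}$). I expect the hardest point to write out carefully is the verification that the ``canonical'' quotient construction is itself natural with respect to the matchings $i_{\tilde{P}} r_{\tilde{P}} \simeq i_{P^\pm} r_{P^\pm}$ coming from Proposition \ref{prop:ri-compatibility}; this will reduce to the compatibility of the canonical maps $i_{\tilde{Q}} r_{\tilde{Q}} \Rightarrow i_{\tilde{P}} r_{\tilde{P}}$ (for $Q \subset P$) with their $G^\pm$-counterparts, which in turn follows from the naturality in \eqref{eqn:r-compatibility}--\eqref{eqn:i-compatibility} and the transitivity statement in Proposition \ref{prop:q-map}.
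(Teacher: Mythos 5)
Your proposal is correct and follows essentially the same route as the paper: fix the common cuspidal-support Levi ($T$ in the $+$ case, $M^1$ in the $-$ case), use Proposition \ref{prop:ri-compatibility} to match the building blocks $i_{\tilde{P}}r_{\tilde{P}}$ with $i_{P^\pm}r_{P^\pm}$ functorially, observe that in the $-$ case only parabolics with $M\supset M^1$ contribute nontrivially so that the bijection with parabolics on the $G^-$ side is exact, and then transport the canonical quotient of \cite[(A.2.2)]{KMSW}. One small clarification worth making: the functorial $\hat{(\cdot)}$ is defined directly by that quotient construction, with no $\beta$-scaling entering the definition; the identity $[\hat{\pi}]=\beta(\pi)\mathbf{D}^{\tilde{G}}[\pi]$ is a consequence, so your $\beta$-sign computation is a useful consistency check against Corollary \ref{prop:Aubert-involution-1} rather than a step needed to \emph{cancel} anything inside the argument.
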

\begin{proof}
	Consider $\pi$ in $\mathcal{G}_\psi^{\pm}$ and $\sigma \in \mathcal{G}^{\pm}$. The maps involved in $\hat{\pi}$ are built from the functorial maps $r_{\tilde{P}} \to r_{\tilde{P}'}$ when $P' \subset P$, together with units and co-units of adjunctions. The same holds for $\hat{\sigma}$. Some signs are also involved in \cite[(A.2.2)]{KMSW}, but this does not affect the quotient $\hat{\pi}$.
	
	The case of $\mathcal{G}_\psi^+$ corresponds to taking $L = T$ in the construction above. Hence it reduces to the matching between $i_{\tilde{P}} r_{\tilde{P}}$ and $i_{P^\pm} r_{P^\pm}$ as before.
	
	The case of $\mathcal{G}_\psi^-$ corresponds to taking $L = M^1$. It follows that a parabolic subgroup $P \supset B^{\leftarrow}$ appears in the construction of $\hat{\pi}$ only when $P$ matches some $P^- \supset P_{\min}^-$. Therefore, the same arguments carry over verbatim.
\end{proof}

\subsection{Intertwining operators for general Levi subgroups}
Let $P = MU \supset B^{\leftarrow}$ be a parabolic subgroup of $G$. Let $P^{\pm} = M^{\pm} U^{\pm} \supset P_{\min}^\pm$ be the corresponding one for $G^{\pm}$. Specifically,
\begin{align*}
	M & = \Sp(W^\flat) \times \prod_{i=1}^r \GL(n_i), \\
	M^{\pm} & = \SO(V^{\pm, \flat}) \times \prod_{i=1}^r \GL(n_i), \quad \dim V^{\pm, \flat} = \dim W^\flat + 1.
\end{align*}
It is tacitly assumed that $n^\flat := \frac{1}{2}\dim W^{\flat} \geq 1$ in the $-$ case.

Let $\Omega^M_0$ (resp.\ $(\Omega^M_0)'$) be the avatar of $\Omega_0$ (resp.\ $\Omega'_0$) for $M$.

Consider the $+$ case first. Let $w \in \Omega_0$ be such that ${}^w M = M$, and $w$ has minimal length in its $\Omega^M_0$-coset. This implies that $w$ acts on the factor $\Sp(W^\flat)$ by conjugation of some $t \in T(F)$. Take a reduced expression
\[ w = t_{i_1} \cdots t_{i_\ell}, \]
and define $\tilde{w} = \tilde{t}_{i_1} \cdots \tilde{t}_{i_\ell} \in \widetilde{K_0}$ as in \S\ref{sec:statement-int-op-even}. The conjugation action of $\tilde{w}$ on $\tilde{M}$ depends only on its image $\dot{w} \in K_0$.

\begin{lemma}\label{prop:hatw-flat}
	Let $T^\flat := T \cap \Sp(W^\flat)$ be the standard maximal torus of $\Sp(W^\flat)$.	The element $\dot{w}$ acts on the factor $\Sp(W^\flat)$ of $M$ by conjugation by some element of $T^\flat(F) \simeq (F^\times)^{n^\flat}$, whose coordinates are all $\pm 1$.
\end{lemma}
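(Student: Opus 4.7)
The plan is to prove the statement in three steps: first show that $w$ acts trivially on the subtorus $T^\flat$ as a Weyl group element, then identify the conjugation action of $\dot{w}$ on $\Sp(W^\flat)$ with conjugation by a specific element $t$ of $T^\flat(F)$, and finally inspect the explicit construction to see that $t$ has entries $\pm 1$.

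For the first step, I would show $w\epsilon_i = \epsilon_i$ for $1 \leq i \leq n^\flat$. The hypothesis $wMw^{-1} = M$ forces $w$ to preserve the root subsystem of $\Sp(W^\flat)$ inside $\Phi_M$, since it is the unique irreducible component of type C (the remaining components being of type A). The minimality of $w$ in its $\Omega^M_0$-coset means $w$ (or $w^{-1}$) sends each $B^\leftarrow$-simple root of $M$ to a $B^\leftarrow$-positive root of $G$. Applying this to the $\Sp(W^\flat)$-simple roots $2\epsilon_1, \epsilon_2 - \epsilon_1, \ldots, \epsilon_{n^\flat} - \epsilon_{n^\flat-1}$ and using the $w$-invariance of the $\Sp(W^\flat)$-subsystem, a short induction on the index forces $w\epsilon_i = \epsilon_i$ for all $1 \leq i \leq n^\flat$.

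For the second step, I would exploit a centralizer decomposition. Let $W' := W^{\flat,\perp}$, an $F$-symplectic complement. Since $w$ acts trivially on $T^\flat$, the representative $\dot{w} \in K_0$ lies in $Z_G(T^\flat)$. A weight-space analysis of $T^\flat$ acting on $W = W^\flat \oplus W'$ (weights $\pm\epsilon_i$ on the lines $Fe_i, Ff_i$ for $i \leq n^\flat$, and weight zero on $W'$) gives the internal direct product decomposition $Z_G(T^\flat) = T^\flat \times \Sp(W')$; the two factors commute because they act on complementary symplectic summands. Writing $\dot{w} = t \cdot s$ with $t \in T^\flat(F)$ and $s \in \Sp(W')(F)$, and noting that $\Sp(W^\flat)$ commutes with $\Sp(W')$ inside $\Sp(W)$, I conclude that the conjugation action of $\dot{w}$ on $\Sp(W^\flat)$ coincides with conjugation by $t$.

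For the third step, I would show that the entries of $t$ are $\pm 1$ by inspecting $\dot{w}$ as a matrix. Each simple-reflection lift $\dot{t}_i = x_{\beta_i}(1) x_{-\beta_i}(-1) x_{\beta_i}(1)$ is, in the basis $e_1, \ldots, e_n, f_n, \ldots, f_1$, a signed permutation matrix with entries in $\{0, \pm 1\}$; the rescaling in the definition of $\tilde{t}_1$ lies in $\bmu_8$ and disappears after projection to $G(F)$. Products of signed permutations remain such, so $\dot{w}$ is itself a signed permutation matrix. Combined with $w\epsilon_i = \epsilon_i$ for $i \leq n^\flat$, $\dot{w}$ must preserve each weight line $Fe_i$ and $Ff_i$ individually, acting by a sign $\epsilon_i$ on $Fe_i$; symplectic form preservation ($\lrangle{\dot{w}e_i | \dot{w}f_i} = \lrangle{e_i|f_i} = 1$) then forces the action on $Ff_i$ to be by the same sign $\epsilon_i = \epsilon_i^{-1}$. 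This is precisely the action of $t = \mathrm{diag}(\epsilon_1, \ldots, \epsilon_{n^\flat}) \in T^\flat(F)$ with all entries in $\{\pm 1\}$, completing the proof.

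No significant obstacle is expected: the argument is a direct unraveling of the explicit constructions of $\dot{w}$ and of root-system combinatorics. The most delicate point is the first step, where one must correctly interpret ``minimal length in its $\Omega^M_0$-coset'' — but either the left- or right-coset convention yields $w|_{T^\flat} = \identity$, since the conditions are symmetric under $w \leftrightarrow w^{-1}$.
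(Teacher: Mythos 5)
Your proof is correct and takes essentially the same route as the paper's: it ultimately rests on the observation that $\dot{w}$ is a signed permutation of the symplectic basis, combined with the minimal-length hypothesis to force $\dot{w}|_{W^\flat}$ to be diagonal with $\pm 1$ entries. The paper's proof compresses your three steps into one short paragraph (it does not spell out the root-theoretic reason that $w\epsilon_i = \epsilon_i$, nor does it invoke the centralizer decomposition $Z_G(T^\flat) = T^\flat \times \Sp(W')$, simply asserting that minimality makes $\dot{w}$ act by $\pm 1$ on each $e_i, f_i$), but the underlying ideas are the same.
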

\begin{proof}
	An inspection of the construction of $\tilde{w}$ shows that $\dot{w} = \pr(\tilde{w}) \in \Sp(W)$ is a product of elements of the following two types:
	\begin{itemize}
		\item permutations of the subscripts $1, \ldots, n$ of the symplectic basis,
		\item mapping $e_1$ to $f_1$ and $f_1$ to $-e_1$, leaving the other basis elements intact.
	\end{itemize}
	Moreover, the action must stabilize $W^\flat$. Since $w$ has minimal length in its $\Omega^M_0$-coset, one easily sees that $\dot{w}$ must send $e_i$ (resp.\ $f_i$) to $\pm e_i$ (resp.\ $\pm f_i$) for each $1 \leq i \leq n^\flat$.
\end{proof}

On the $G^+$ side, one has $\Omega_0^+ \simeq \Omega_0$, and ditto for Levi subgroups. For $w$ as above, one constructs a representative $\ddot{w} \in K^+$ of $w$ in the same (standard) way as \S\ref{sec:statement-int-op-even}, namely by using an $F$-pinning. It also acts on $\SO(V^{+, \flat})$ via conjugation by an $\mathfrak{o}$-point of the maximal torus.

For every automorphism $\eta$ of an algebra $H$, define the transport of structures for $H$-modules $N \mapsto {}^\eta N := (\eta^{-1})^* N$ so that ${}^{\eta\eta'} N = {}^\eta ({}^{\eta'} N)$.

\begin{lemma}\label{prop:Hecke-w}
	Denote by $H_i$ the Iwahori--Hecke algebra of $\GL(n_i)$ for each $1 \leq i \leq r$. Let $w$ be as above. There exists a canonical automorphism $\xi$ of $\bigotimes_{i=1}^r H_i$ such that for each $\pi$ in $\tilde{M}\dcate{Mod}$ (resp.\ $\sigma$ in $M^+\dcate{Mod}$), we have the equality
	\[ \mathbf{M}_{\tau_{0, \tilde{M}}}\left( {}^{\tilde{w}} \pi \right) = {}^{\identity \otimes \xi} \mathbf{M}_{\tau_{0, \tilde{M}}}(\pi) , \quad \text{resp.} \quad
	\left( {}^{\ddot{w}}\sigma \right)^{I^{M^+}} = {}^{\identity \otimes \xi} \left( \sigma^{I^{M^+}}\right) \]
	of $H_\psi^{\tilde{M}, +}$-modules (resp.\ $H^{M^+}$-modules).
\end{lemma}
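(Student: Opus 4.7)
The plan is to decompose the conjugations $\Ad(\tilde w)$ on $\tilde M$ and $\Ad(\ddot w)$ on $M^+$ slot by slot according to the canonical Schrödinger splittings
\[
\tilde M = \Mp(W^\flat) \times \prod_{i=1}^r \GL(n_i, F), \qquad
M^+ = \SO(V^{+,\flat}) \times \prod_{i=1}^r \GL(n_i, F),
\]
and then to apply Proposition \ref{prop:Hecke-transport} on each slot. By Lemma \ref{prop:hatw-flat}, the restriction of $\Ad(\dot w)$ to $\Sp(W^\flat)$ is conjugation by some $t \in T^\flat(\mathfrak{o})$ with entries in $\{\pm 1\}$; since the Schrödinger splitting is canonical and $\bmu_8$ is central, the $\Mp(W^\flat)$-component of $\Ad(\tilde w)$ equals $\Ad(\tilde t)$ for a representative $\tilde t \in \widetilde{T^\flat(\mathfrak{o})} \subset \widetilde{I^\flat}$, uniquely determined modulo $\bmu_8$. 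On the $G^+$-side the same (simpler) analysis, with no metaplectic complication, produces $t^+ \in T^{+,\flat}(\mathfrak{o}) \subset I^{+,\flat}$ with $\pm 1$ entries.

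On the $\GL$-slots, the actions of $\tilde w$ and $\ddot w$ both factor through the same combinatorial action of $w \in \Omega_0/\Omega_0^M \cong \Omega_0^+/\Omega_0^{M^+}$ on $\prod_i \GL(n_i, F)$, because the representatives $\tilde w$ and $\ddot w$ are built from the Coxeter generators on the matched root data of $\Sp(2n)$ and $\SO(2n+1)$; call this common automorphism $\theta_w$. Define $\xi$ to be the automorphism of $\bigotimes_i H_i$ induced by $\theta_w$ via the transport-of-structure recipe \eqref{eqn:Hecke-transport}; since $\theta_w$ preserves the standard Iwahori subgroup and acts on the trivial type trivially, $\xi$ is well-defined and the same $\xi$ appears on both sides. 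Proposition \ref{prop:Hecke-transport} then gives the $\xi$-transport of Hecke modules on the $\GL$-slot directly.

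On the metaplectic slot, $\tilde t \in \widetilde{I^\flat}$ so $\Ad(\tilde t)$ is inner; combined with the canonical intertwining $\omega_\psi^\flat(\tilde t)\colon {}^{\Ad(\tilde t)}\tau_0^\flat \rightiso \tau_0^\flat$, it induces an automorphism of $H_\psi^{+,\flat} = \mathcal{H}(\Mp(W^\flat) \sslash \widetilde{I^\flat}, \tau_0^\flat)$, which we claim is the identity. The analogous (and easier) claim on the $\SO(V^{+,\flat})$-slot is immediate because $t^+ \in I^{+,\flat}$ and the type is trivial. Combining these two contributions with the $\xi$-transport on the $\GL$-slot via Proposition \ref{prop:Hecke-transport} yields the desired equalities.

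The main subtlety is the final claim on the metaplectic slot: the composite of the transport-of-structure automorphism of $H_\psi^{+,\flat}$ induced by $\Ad(\tilde t)$ with conjugation by $\omega_\psi^\flat(\tilde t)$ must be verified to be the identity. This is a direct computation using the integral formula \eqref{eqn:Hecke-action-tensor} for the Hecke-module structure and the explicit description \eqref{eqn:Siegel-action} of $\omega_\psi^\flat(\tilde t)$ on the space $\Schw\bigl(\bigoplus_j \mathfrak{o} f_j/(2)f_j\bigr)$ underlying $\tau_0^\flat$, and the result is insensitive to the $\bmu_8$-ambiguity in $\tilde t$ since $\bmu_8$ acts on $\tau_0^\flat$ by scalars which cancel in the composition. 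This step, though routine, is the only non-formal input; the rest of the argument is a bookkeeping exercise combining Proposition \ref{prop:Hecke-transport} with the structural analysis of $\tilde w$ and $\ddot w$ from \S\ref{sec:statement-int-op-even} and Lemma \ref{prop:hatw-flat}.
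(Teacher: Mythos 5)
Your proposal follows essentially the same route as the paper's proof: apply Proposition \ref{prop:Hecke-transport}, decompose along the factors $\Mp(W^\flat) \times \prod_i \GL(n_i, F)$, use Lemma \ref{prop:hatw-flat} to handle the $\Mp(W^\flat)$ slot (where the key observation is that $\tau_0^\flat$ kills the torus element $\tilde t$, so in fact ${}^{\Ad(\tilde t)}\tau_0^\flat = \tau_0^\flat$ on the nose and no intertwining operator is needed — your extra layer there collapses trivially), and read off $\xi$ from the $\GL$-slots. The one spot where you gloss is the final matching: you assert that $\tilde w$ and $\ddot w$ induce literally ``the same'' action $\theta_w$ on $\prod_i \GL(n_i, F)$, whereas the paper is more careful and only shows the two actions differ by conjugation by $\mathfrak{o}$-points of the standard maximal torus — which is what is actually true and suffices since such elements lie in the Iwahori subgroup and hence act trivially on $\bigotimes_i H_i$; your argument implicitly relies on this same fact, so the conclusion is unaffected, but the unqualified ``same combinatorial action'' claim is not quite what one can directly verify from the construction of the two pinnings.
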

\begin{proof}
	Begin with the case of $\pi$. The idea is to apply Proposition \ref{prop:Hecke-transport} to $\tilde{M}$ and the automorphism $\theta(x) = \dot{w}x\dot{w}^{-1}$. We claim that ${}^{\dot{w}} I = I$ and ${}^{\dot{w}} \tau_{0, \tilde{M}} = \tau_{0, \tilde{M}}$.
	
	By decomposing $I \cap M(F)$ using Lemma \ref{prop:cpt-intersection-Levi}, it suffices to consider the factors $\Mp(W^\flat)$ and $\prod_i \GL(n_i, F)$ separately. The action of $\dot{w}$ on $\Mp(W^\flat)$ is conjugation by some $a \in T^\flat(F)$ with coordinates $\pm 1$. In particular, $a \in I \cap \Sp(W^\flat)$. Since $\tau^\flat_0$ is $\omega_\psi$ restricted to $\Schw\left( \bigoplus_{i=1}^{n^\flat} \frac{\mathfrak{o} f_i}{(2) f_i} \right)$, the explicit formula \eqref{eqn:Siegel-action} for $\omega_\psi$ implies $\tau^\flat_0(\tilde{a}) = \identity$ where $\tilde{a} \in \widetilde{T^\flat}$ is the image of $a$ under \eqref{eqn:Levi-splitting} (for $M=T$). The claim about ${}^{\dot{w}} \tau_{0, \tilde{M}}$ follows for the $\Mp(W^\flat)$ factor.
	
	As for $\prod_i \GL(n_i, F)$, note that $\dot{w}$-conjugation preserves $T \cap \prod_i \GL(n_i)$; it also preserves the opposite of the standard Borel subgroup of $\prod_i \GL(n_i)$ by the assumption on length, hence the standard Borel subgroup is preserved as well. Everything in the construction of $\dot{w}$ is defined over $\mathfrak{o}$, hence it follows that $I \cap \prod_i \GL(n_i, F)$ is preserved.
	
	The $\GL$-components of $\tau_{0, \tilde{M}}$ are trivial, hence ${}^{\tilde{w}} \tau_{0, \tilde{M}} = \tau_{0, \tilde{M}}$ holds trivially on $\prod_i \GL(n_i, F)$. The claim is proved.
	
	Applying Proposition \ref{prop:Hecke-transport} and the claim, we see $\mathbf{M}_{\tau_{0, \tilde{M}}}\left( {}^{\dot{w}} \pi \right) \xrightarrow{=} \mathbf{M}_{\tau_{0, \tilde{M}}}\left( {}^{\dot{w}} \pi \right)$ is equivariant with respect to the isomorphism
	\begin{align*}
		\Xi: \mathcal{H}(\tilde{G} \sslash \tilde{I}, \tau_0) & \rightiso \mathcal{H}(\tilde{G} \sslash \tilde{I}, \tau_0) \\
		f & \mapsto [x \mapsto f(\dot{w}^{-1} x \dot{w})],
	\end{align*}
	cf.\ \eqref{eqn:Hecke-transport}. Again, we inspect $\Xi$ on the factors $\Mp(W^\flat)$ and $\prod_i \GL(n_i, F)$ separately.
	
	For $\Mp(W^\flat)$, it is given by conjugation by $a \in T^\flat(F) \cap I$. We have seen $\tau^\flat_0(\tilde{a}) = \identity$, hence $\Xi$ is trivial there. For $\prod_i \GL(n_i, F)$, we obtain the automorphism $\xi$ of $\bigotimes_i H_i$.
	
	Now consider the case of $\sigma$. The same arguments carry over; in fact it is simpler as one can put $\tau = \mathbf{1}$ in Proposition \ref{prop:Hecke-transport}. However, since the actions of $\dot{w}$ and $\ddot{w}$ on $\prod_i \GL(n_i)$ can differ, the resulting automorphisms of $\bigotimes_i H_i$ could be different, a priori.
	
	To show that the same automorphism $\xi$ works, one analyzes the action of $\dot{w}$ (resp.\ $\ddot{w}$) on the symplectic basis (resp.\ hyperbolic basis) to determine its effect on $\bigotimes_i H_i$. The upshot is that two actions differ only up to $\mathfrak{o}$-points in the standard maximal torus of $\prod_i \GL(n_i)$. Such elements are harmless since they belong to the standard Iwahori subgroup of $\prod_i \GL(n_i, F)$.
\end{proof}

\begin{lemma}\label{prop:Hecke-Weyl}
	Let $w \in \Omega_0$ be such that ${}^w M = M$, and $w$ has minimal length in its $\Omega^M_0$-coset. For all $\pi$ in $\tilde{M}\dcate{Mod}$ and $\sigma$ in $M^\pm\dcate{Mod}$, we have
	\[ \left\{ \text{Hecke-equivariant}\; \mathbf{M}_{\tau_{0, \tilde{M}}}(\pi) \rightiso \sigma^{I^{M^+}} \right\} = \left\{ \text{Hecke-equivariant}\; \mathbf{M}_{\tau_{0, \tilde{M}}}({}^{\tilde{w}} \pi) \rightiso ({}^{\ddot{w}} \sigma)^{I^{M^+}} \right\} \]
	where the equivariance is understood via $\mathrm{TW}: H^{M^+} \rightiso H_\psi^{\tilde{M}, +}$.
\end{lemma}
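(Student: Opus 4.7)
The plan is to reduce the assertion to a simple commutation identity between the Takeda--Wood isomorphism and the twist automorphism furnished by Lemma \ref{prop:Hecke-w}. We treat the $+$ case; the $-$ case is parallel, with $\mathring{w}$, $\tau_{1,\tilde{M}}^-$, $I^{M^-}$ and the $-$ variant of Lemma \ref{prop:Hecke-w} replacing their $+$ counterparts, and the relation between $\tilde{w}$ and $\mathring{w}$ supplied by Proposition \ref{prop:comparison-representatives}(ii).

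First I would invoke Lemma \ref{prop:Hecke-w} to record that there is a single canonical automorphism $\xi$ of $\bigotimes_{i=1}^r H_i$ such that, as modules over $H_\psi^{\tilde{M},+}$ and $H^{M^+}$ respectively,
\[ \mathbf{M}_{\tau_{0,\tilde{M}}}({}^{\tilde{w}}\pi) = {}^{\identity \otimes \xi} \mathbf{M}_{\tau_{0,\tilde{M}}}(\pi), \qquad ({}^{\ddot{w}}\sigma)^{I^{M^+}} = {}^{\identity \otimes \xi} \bigl(\sigma^{I^{M^+}}\bigr). \]
The decisive point is that the \emph{same} $\xi$ appears on both sides, so the Hecke actions on source and target are twisted by one and the same automorphism of the $\GL$-tensor slot.

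Next I would observe that the Takeda--Wood isomorphism factors as $\mathrm{TW}^{\tilde{M}} = \mathrm{TW}^{\flat} \otimes \bigotimes_{i=1}^r \identity_{H_i}$, hence is the identity on the $\bigotimes_i H_i$-slot on which $\xi$ acts, and is arbitrary on the $H^{\flat,+}$-slot on which $\identity$ acts. Consequently
\[ \mathrm{TW}^{\tilde{M}} \circ (\identity \otimes \xi) = (\identity \otimes \xi) \circ \mathrm{TW}^{\tilde{M}} \]
as isomorphisms $H^{M^+} \rightiso H_\psi^{\tilde{M},+}$. Given a linear map $A$ between the underlying vector spaces of $\mathbf{M}_{\tau_{0,\tilde{M}}}(\pi)$ and $\sigma^{I^{M^+}}$ (which are unchanged by twisting), the $\mathrm{TW}$-equivariance $A(\mathrm{TW}^{\tilde{M}}(h)\cdot x) = h\cdot A(x)$ is then equivalent, by substituting $h$ with $(\identity\otimes\xi)^{-1}(h)$ and using the displayed commutation, to the analogous condition for the $(\identity\otimes\xi)$-twisted actions on both sides. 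The latter is precisely the $\mathrm{TW}$-equivariance for $A$ regarded as a map $\mathbf{M}_{\tau_{0,\tilde{M}}}({}^{\tilde{w}}\pi) \rightiso ({}^{\ddot{w}}\sigma)^{I^{M^+}}$, so the two sets of Hecke-equivariant isomorphisms coincide inside $\Hom_{\CC}$.

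The main (and only) potential obstacle is verifying that \emph{the same} $\xi$ governs both twists; this is exactly what Lemma \ref{prop:Hecke-w} asserts, and it rests on the observation that, on the $\GL(n_i)$-factors, the actions of $\dot{w} = \pr(\tilde{w})$ and $\ddot{w}$ differ only by $\mathfrak{o}$-points of the diagonal maximal torus, which lie inside the standard Iwahori subgroups and therefore induce the trivial automorphism of each $H_i$. Once this input is in hand, the remainder of the argument is a formal manipulation requiring no computation.
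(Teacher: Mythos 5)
Your proof is correct and follows essentially the same route as the paper: invoke Lemma \ref{prop:Hecke-w} to identify both twists with the single automorphism $\identity\otimes\xi$, then use that $\mathrm{TW}^{\tilde{M}}$ restricts to the identity on the $\GL$-slot and hence commutes with $\identity\otimes\xi$. Your write-up merely spells out the formal manipulation that the paper compresses into ``Immediate from the Hecke-equivariant equalities in Lemma \ref{prop:Hecke-w}.''
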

\begin{proof}
	Immediate from the Hecke-equivariant equalities in Lemma \ref{prop:Hecke-w}, by noting that $\mathrm{TW}: H^{M^+} \rightiso H_\psi^{\tilde{M}, +}$ is $\identity$ on $\bigotimes_i H_i$, hence it commutes with $\identity \otimes \xi$.
\end{proof}

Also observe that if $\Phi: \mathbf{M}_{\tau_{0, \tilde{M}}}(\pi) \rightiso \sigma^{I^{M^+}}$ is Hecke-equivariant, $\Phi$ also defines $\mathbf{M}_{\tau_{0, \tilde{M}}}(\pi \otimes \chi) \rightiso (\sigma \otimes \chi)^{I^{M^+}}$ for every unramified character $\chi: \prod_i \GL(n_i, F) \to \CC^{\times}$. Moreover, all these results become trivial when $M=T$.

\begin{corollary}\label{prop:comparison-J-general}
	Let $\pi$ (resp.\ $\sigma$) be of finite length in $\mathcal{G}_\psi^{\tilde{M}, +}$ (resp.\ $\mathcal{G}^{M^+}$), and assume that there is a chosen Hecke-equivariant isomorphism
	\[ \Phi: \mathbf{M}_{\tau_{0, \tilde{M}}}(\pi) \rightiso \sigma^{I^{M^+}}. \]
	
	Let $w$ be as in Lemma \ref{prop:Hecke-Weyl}. Following Definition \ref{def:int-op-even}, we have the rational family
	\[ J_w(\pi \otimes \chi) \in \Hom_{\tilde{G}}\left(i_{\tilde{P}}(\pi \otimes \chi), i_{\tilde{P}}({}^{\tilde{w}}(\pi \otimes \chi))\right) \]
	in unramified characters $\chi: \prod_{i=1}^r \GL(n_i, F) \to \CC^{\times}$. Denote by $\mathcal{J}_w(\pi \otimes \chi)$ the corresponding rational family of homomorphisms of $H_\psi^+$-modules and interpret $i_{\tilde{P}}$ as $\Hom_{H_\psi^{\tilde{M}, +}}(H_\psi^+, \cdot)$.
	
	As in \S\ref{sec:statement-int-op-even}, there are also counterparts $J^+_w(\sigma \otimes \chi)$ and $\mathcal{J}^+_w(\sigma \otimes \chi)$. By using $\Phi$ and the Hecke-equivariant $\mathbf{M}_{\tau_0}({}^{\dot{w}} (\pi \otimes \chi)) \to {}^{\ddot{w}} (\sigma \otimes \chi)^{I^{M^+}}$ given by $\Phi$ (combine Lemma \ref{prop:Hecke-Weyl} and the earlier observation), identifying Hecke algebras via $\mathrm{TW}$, the families $\mathcal{J}_w(\sigma \otimes \chi)$ and $\mathcal{J}^+_w(\sigma \otimes \chi)$ can be compared: we have
	\[ \mathcal{J}_w(\pi \otimes \chi) = |2|_F^{t(w)/2} \mathcal{J}^+_w(\sigma \otimes \chi). \]
\end{corollary}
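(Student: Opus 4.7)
The plan is to reduce Corollary \ref{prop:comparison-J-general} to the minimal Levi case settled in Theorem \ref{prop:comparison-J} by exploiting transitivity of normalized parabolic induction.

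First I would reduce to the case where $\pi$ is a standard principal series $\pi_0 := i_{\tilde{B}^{\leftarrow} \cap \tilde{M}}(\chi_0)$ for some unramified genuine character $\chi_0$ of $\tilde{T}$. By Theorem \ref{prop:TW-components} applied to $\tilde{M}$, every finite-length object of $\mathcal{G}_\psi^{\tilde{M}, +}$ appears as a subquotient of such a $\pi_0$, and the corresponding statement on the $M^+$-side yields $\sigma_0 := i_{P_{\min}^+ \cap M^+}(\chi_0)$, with the two embeddings compatible via the canonical $\Phi_0$ furnished by $\mathrm{TW}^{\tilde{M}}$ and Proposition \ref{prop:Hecke-induction}. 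Since the formation of $J_w(\pi \otimes \chi)$ is functorial in $\pi$ and the passage to $\mathcal{J}_w$ is rational in $\chi$, the conclusion for general $\pi$ propagates from $\pi = \pi_0$; independence from the specific choice of the Hecke-equivariant $\Phi$ (as opposed to $\Phi_0$) follows from Lemma \ref{prop:Hecke-Weyl}, which ensures the $w$-twisted identification uses $\Phi$ consistently on both source and target.

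Second, for $\pi = \pi_0$, transitivity of normalized parabolic induction gives canonical isomorphisms $i_{\tilde{P}}(\pi_0 \otimes \chi) \simeq i_{\tilde{B}^{\leftarrow}}(\chi_0 \otimes \chi)$ and $i_{P^+}(\sigma_0 \otimes \chi) \simeq i_{P_{\min}^+}(\chi_0 \otimes \chi)$. Since $w$ is of minimal length in its $\Omega_0^M$-coset and ${}^w M = M$, the element $w$ sends $B^{\leftarrow} \cap M$-positive roots to positive roots of $M$, hence ${}^w(B^{\leftarrow} \cap M) = B^{\leftarrow} \cap M$; in particular the integration cycle $U^{M, \leftarrow}(F) \cap U^{M, \leftarrow, w}(F) \backslash U^{M, \leftarrow, w}(F)$ is a point. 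The standard factorization of intertwining operators then collapses into
\[ J_w^{\tilde{G}, \tilde{B}^{\leftarrow}}(\chi_0 \otimes \chi) = J_w^{\tilde{G}, \tilde{P}}(\pi_0 \otimes \chi), \]
modulo the transitivity identification, and the parallel identity holds on the $G^+$-side with $P_{\min}^+$ and $P^+$. Applying Theorem \ref{prop:comparison-J} to the rational family $\chi_0 \otimes \chi$ produces
\[ \mathcal{J}_w^{\tilde{G}, \tilde{B}^{\leftarrow}}(\chi_0 \otimes \chi) = |2|_F^{t(w)/2} \mathcal{J}_w^+(\chi_0 \otimes \chi), \]
and Proposition \ref{prop:Hecke-induction} together with its $M^+$-counterpart guarantees that the transitivity on the representation level matches iteration of $\Hom$-functors on the Hecke-module level, while $\mathrm{TW}$ and its Levi analogue intertwine these iterations. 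Because $t(w)$ depends only on $w$, not on $M$, the factor $|2|_F^{t(w)/2}$ transfers unchanged to the comparison for $(\pi_0, \sigma_0)$, and then to arbitrary $(\pi, \sigma)$ by the first step.

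The main obstacle is the bookkeeping of Haar measure normalizations and representatives during the factorization. Concretely, one must check that the Haar measure on $U(F) \cap U^w(F) \backslash U^w(F)$ normalized by $\mathrm{mes}(K_0 \cap -) = 1$ (used in Definition \ref{def:int-op-even}) is compatible with the measure inherited from the full $U^{\leftarrow, w}(F)$-integration via the Iwahori-type decomposition of $K_0$ relative to $M$; and that the representative $\tilde{w}$ chosen by the reduced expression convention acts on the $\tilde{M}$-factor in a manner consistent with Lemma \ref{prop:hatw-flat} when $w$ is viewed both as an element of $\Omega_0$ and through its inner action on $\tilde{M}$. The analogous verifications on the $G^+$-side (using $K^+$ and $\ddot{w}$) are parallel but must be tracked in order to ensure the proportionality constant is exactly $|2|_F^{t(w)/2}$ with no stray factors. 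The $-$ case (Corollary \ref{prop:comparison-J-general-odd}) is treated identically under the extra hypothesis $M \supset M^1$, replacing $B^{\leftarrow}$ by $P^1$, $\tilde{w}$ by $\mathring{w}$, and Theorem \ref{prop:comparison-J} by Theorem \ref{prop:comparison-J-odd}.
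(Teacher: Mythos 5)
Your proposal matches the paper's own proof in both strategy and the sequence of reductions: (i) reduce to the case $\pi = i^{\tilde M}_{\tilde B^\leftarrow\cap\tilde M}(\eta)$ with $\eta$ unramified by embedding an irreducible $\pi$ into such a principal series and using functoriality of standard intertwining operators, (ii) collapse via induction-in-stages so that $w$-conjugation acts trivially on the $\tilde M$-piece (since $w$ is minimal-length and $M$-stabilizing), and (iii) invoke Theorem \ref{prop:comparison-J} for the rational family $\eta\otimes\chi$. This is exactly the paper's proof. One small sharpening: you say every finite-length object "appears as a subquotient" of $\pi_0$ and that the identity then "propagates"; the clean version (as in the paper) is to first reduce from finite-length to irreducible $\pi$ — easy because both families of intertwining operators are compatible with short exact sequences — and then use that an irreducible in $\mathcal{G}_\psi^{\tilde M,+}$ \emph{embeds} into a principal series (Theorem \ref{prop:TW-components}), so functoriality with respect to subrepresentations (not arbitrary subquotients) suffices and one need not worry about poles on the quotient side.
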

\begin{proof}
	The following reduction steps are standard. Cf.\ \cite[pp.29--30]{Ar89-IOR1}.
	
	First, when $P = B^{\leftarrow}$, $P^+ = P_{\min}^+$ and $\pi$ arises from an unramified character of $T(F)$, this is simply Theorem \ref{prop:comparison-J}.
	
	Next, assume that $\pi$ is of the form $i^{\tilde{M}}_{\tilde{B}^{\leftarrow} \cap \tilde{M}}(\eta)$ for some unramified character $\eta$ of $T(F)$ (the exponent means induction within $\tilde{M}$). Then $\sigma = i^{M^+}_{P_{\min}^+ \cap M^+}(\eta)$. By parabolic induction in stages and its compatibility with standard intertwining operators, the assertion reduces to the previous case.
	
	In general, we first perform a routine reduction to the case of irreducible $\pi$. There exists $\eta$ as above such that $\pi$ embeds into $i^{\tilde{M}}_{\tilde{B}^{\leftarrow} \cap \tilde{M}}(\eta)$. Accordingly, $\sigma$ embeds into $i^{M^+}_{P_{\min}^+ \cap M^+}(\eta)$. By the functoriality of standard intertwining operators with respect to these embeddings, the equality reduces to the previous case.
\end{proof}

Now consider the $-$ case. Again, suppose that ${}^w M = M$ and $w$ has minimal length in its $\Omega^M_0$-coset. A basic observation is that $w \in \Omega'_0$ automatically. Indeed, $w$ cannot alter $\epsilon_1, \ldots, \epsilon_{n^\flat}$. Hence $w$ is also of minimal length in its $(\Omega^M_0)'$-coset. Therefore we have
\begin{align*}
	J'_w(\pi \otimes \chi) & \in \Hom_{\tilde{G}}\left( i_{\tilde{P}}(\pi \otimes \chi), i_{\tilde{P}}({}^{\mathring{w}} (\pi \otimes \chi) \right), \\
	J^-_w(\sigma \otimes \chi) & \in \Hom_{G^-}\left( i_{P^-}(\sigma \otimes \chi), i_{P^-}({}^{\ddot{w}} (\sigma \otimes \chi)) \right),
\end{align*}
as rational families in unramified characters $\chi: \prod_{i=1}^r \GL(n_i, F) \to \CC^{\times}$. The operators $J'_w$ are as in Definition \ref{def:int-op-odd}, whilst $J^-_w$ is the standard intertwining operator for $P^- \subset G^-$, using representatives $\ddot{w}$ and Haar measures normalized by $K^- \subset G^-(F)$.

\begin{lemma}
	The counterparts of Lemmas \ref{prop:Hecke-w}, \ref{prop:hatw-flat} and \ref{prop:Hecke-Weyl} hold in the $-$ case. It suffices to use the Hecke algebras $H_\psi^-$, $H^-$, the corresponding Hecke modules and the representatives $\mathring{w}$, $\ddot{w}$ above.
\end{lemma}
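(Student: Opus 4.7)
The plan is to follow the proofs of Lemmas \ref{prop:hatw-flat}, \ref{prop:Hecke-w} and \ref{prop:Hecke-Weyl} step by step, importing the $+$-case results through Proposition \ref{prop:comparison-representatives} (ii) and handling the two novelties that arise: the use of $\mathring{w}$ in place of $\tilde{w}$, and the fact that $\tau_1^{\flat, -}$ is odd, so that elements of $T^\flat(\mathfrak{o})$ may act by $\pm\identity$ rather than by $\identity$.

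First I would derive the analogue of Lemma \ref{prop:hatw-flat}. By Proposition \ref{prop:comparison-representatives} (ii), $\pr(\mathring{w})$ and $\pr(\tilde{w})$ differ by an element of $T(\mathfrak{o})$ whose coordinates all lie in $\{\pm 1\}$. Since Lemma \ref{prop:hatw-flat} already provides such a description of the $\pr(\tilde{w})$-action on the $\Sp(W^\flat)$-factor, the required statement for $\pr(\mathring{w})$ follows by composition. Note that in the odd setting $w \in \Omega'_0$ automatically, as remarked just before Corollary \ref{prop:comparison-J-general}.

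Next, for the analogue of Lemma \ref{prop:Hecke-w}, I would apply Proposition \ref{prop:Hecke-transport} with $(K, \tau) = (\widetilde{J_M}, \tau_{1, \tilde{M}}^-)$ and $\theta = \text{conjugation by } \pr(\mathring{w})$. The verification that $\theta$ preserves $J \cap M(F)$ is essentially the same as in the $+$ case: by Lemma \ref{prop:cpt-intersection-Levi} (using $n^\flat \geq 1$) we have $J \cap M(F) = J^\flat \times \prod_i \GL(n_i, \mathfrak{o})$; the $J^\flat$-factor is preserved because the conjugation on $\Sp(W^\flat)$ is by an element of $T^\flat(\mathfrak{o}) \subset J^\flat$, and the $\GL(n_i, \mathfrak{o})$-factors are preserved by the same combinatorial argument as in the proof of Lemma \ref{prop:Hecke-w}. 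For the representation, the $\GL$-components are trivial; on the $\Mp(W^\flat)$-factor, the explicit Schrödinger formula \eqref{eqn:Siegel-action} together with the identities $-y \equiv y \bmod 2\mathfrak{o}$ (for coordinates $j \geq 2$) and the oddness of elements of $V_{\tau_1^{\flat, -}}$ yield $\tau_1^{\flat, -}(\tilde{b}) \in \{\pm\identity\}$. This is the crux: the value is only a scalar rather than the identity, but because it is central in $\End(V_{\tau_1^{\flat, -}})$, we still obtain $\tau_1^{\flat, -}(b^{-1} k b) = \tau_1^{\flat, -}(k)$, so that ${}^{\dot{w}}\tau_{1, \tilde{M}}^- = \tau_{1, \tilde{M}}^-$ holds literally as representations of $\widetilde{J_M}$.

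Proposition \ref{prop:Hecke-transport} then produces an automorphism $\Xi$ of $H_\psi^{\tilde{M}, -}$. Its $H_\psi^{\flat, -}$-component is $f \mapsto [x \mapsto f(b^{-1} x b)]$. Since $b \in J^\flat$, this automorphism preserves the support of each basis element $T'_w$, so by Corollary \ref{prop:TW-support} it must send $T'_w \in \CC^\times T'_w$; the quadratic relations recorded in \S\ref{sec:Hecke-Mp} then rigidify the scalar to $1$, whence $\Xi$ is the identity on the $H_\psi^{\flat, -}$-factor. We may thus write $\Xi = \identity \otimes \xi'$ for a canonical automorphism $\xi'$ of $\bigotimes_i H_i$, which coincides with its orthogonal counterpart by the verbatim repetition of the final paragraph of the proof of Lemma \ref{prop:Hecke-w}. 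The analogue of Lemma \ref{prop:Hecke-Weyl} is then a formal consequence, using that $\mathrm{TW}^{\tilde{M}}$ acts as the identity on each $H_i$ and therefore commutes with $\identity \otimes \xi'$.

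The main obstacle I anticipate is the scalar-valuedness of $\tau_1^{\flat, -}(\tilde{b})$, which is a genuinely new phenomenon compared with the $+$-case analysis in Lemma \ref{prop:Hecke-w}; the resolution is the rigidity argument above, exploiting both the support-preservation of $\Xi$ (via Corollary \ref{prop:TW-support}) and the explicit quadratic relations.
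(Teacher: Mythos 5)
Your proof is correct, but it takes a noticeably more circuitous route than the paper's. The paper's argument hinges on a single structural observation that short-circuits all the subtlety you wrestle with: the representative $\mathring{w}$ is, by construction, a word in $\mathring{t}'_2, \ldots, \mathring{t}'_n$, all of which live in the subgroup $\Mp(W')$ with $W' = \bigoplus_{j \geq 2}(Fe_j \oplus Ff_j)$. Consequently $\pr(\mathring{w})$ acts on the $\Sp(W^\flat)$-factor by conjugation by a torus element whose \emph{first} coordinate is $1$, and the induced action on $\tau_{1,\tilde{M}}^-$ leaves the first tensor slot $\Schw(\mathfrak{o}/(2\varpi))$ untouched. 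One then literally repeats the $+$-case computations of Lemma \ref{prop:Hecke-w} in coordinates $j \geq 2$, where the type is realized on $\mathfrak{o}/(2)$ and the arguments go through verbatim. No scalar ambiguity ever appears.

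Your route instead passes through $\tilde{w}$ via Proposition \ref{prop:comparison-representatives} (ii), which contaminates the first coordinate with a possible sign $(-1)^{t(w)}$, and you must then argue that $\tau_1^{\flat,-}$ sends the corrected torus element to $\pm\identity$. That step is correct — the crux being that for $j \geq 2$ the slot is $\mathfrak{o}/(2)$, on which negation is trivial, so oddness of $\phi$ transfers the full sign flip into a sign flip in the first variable alone — but it is a genuinely more delicate computation than what the paper needs. Moreover, your closing rigidity argument (support preservation via Corollary \ref{prop:TW-support} plus quadratic relations) is superfluous: once $\check{\tau}_1^{\flat,-}(\tilde{b}) = \pm\identity$ is known, bi-$\tau$-equivariance of $f \in H_\psi^{\flat,-}$ and $b \in J^\flat$ give directly $f(b^{-1}xb) = \check{\tau}(b)^{-1}f(x)\check{\tau}(b) = f(x)$, so $\Xi$ is the identity on $H_\psi^{\flat,-}$ without any appeal to the quadratic relations. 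In short: the proof stands, but the paper's observation that $\mathring{w} \in \Mp(W')$ renders both of your extra ingredients — the $\pm\identity$ analysis and the rigidity argument — unnecessary.
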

\begin{proof}
	The arguments are identical to the $+$ case. In more details, one should first break $J \cap M(F)$ into $\Sp(W^\flat)$ and $\GL$ factors using Lemma \ref{prop:cpt-intersection-Levi}. Next, the representative $\mathring{w}$ lies in the subgroup $\Mp(W')$ where $W' = \bigoplus_{i=2}^n Fe_i \oplus Ff_i$. Hence the effect of $\mathring{w}$ on $\tau_{1, \tilde{M}}^-$, Hecke algebras and Hecke modules can be analyzed in the same way as in the $+$ case. More precisely, $\mathring{w}$ does not affect the first $\otimes$-slot $\Schw(\mathfrak{o}/(2\varpi))$ of $\tau_{1, \tilde{M}}^-$.
\end{proof}

The following result can thus be proved in the same way as Corollary \ref{prop:comparison-J-general}.

\begin{corollary}\label{prop:comparison-J-general-odd}
	Let $\pi$ (resp.\ $\sigma$) be of finite length in $\mathcal{G}_\psi^{\tilde{M}, -}$ (resp.\ $\mathcal{G}^{M^+}$), and assume that there is a chosen Hecke-equivariant isomorphism
	\[ \Phi: \mathbf{M}_{\tau_{1, \tilde{M}}^-}(\pi) \rightiso \sigma^{I^{M^-}}, \]
	where we identify $H_\psi^{\tilde{M}, -}$ and $H^{M^-}$ via $\mathrm{TW}$. Let $w \in \Omega'_0$ be as above. Denote by $\mathcal{J}'_w(\pi \otimes \chi)$ (resp.\ $\mathcal{J}^-_w(\sigma \otimes \chi)$) the corresponding rational family of homomorphisms of $H_\psi^-$-modules (resp.\ $H^-$-modules).

	By using $\Phi$ and the Hecke-equivariant $\mathbf{M}_{\tau_{1, \tilde{M}}^-}({}^{\mathring{w}} (\pi \otimes \chi)) \to {}^{\ddot{w}} (\sigma \otimes \chi)^{I^{M^-}}$ given by $\Phi$, identifying Hecke algebras via $\mathrm{TW}$, the families $\mathcal{J}'_w(\sigma \otimes \chi)$ and $\mathcal{J}^-_w(\sigma \otimes \chi)$ can be compared: we have
	\[ \mathcal{J}'_w(\pi \otimes \chi) = |2|_F^{t(w)/2} \mathcal{J}^-_w(\sigma \otimes \chi). \]
\end{corollary}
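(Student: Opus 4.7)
My plan is to mimic the reduction strategy used in the proof of Corollary \ref{prop:comparison-J-general}, replacing everywhere the role of the unramified principal series $i_{\tilde{B}^{\leftarrow}}$ and $i_{P_{\min}^+}$ by the ``semi-principal series'' $i_{\tilde{P}^1}(\omega_\psi^{\flat,-} \boxtimes \eta)$ and $i_{P_{\min}^-}(\eta)$, where $\eta$ is an unramified character of $T^1(F) \simeq \GL(1,F)^{n-1}$. The base of the reduction is Theorem \ref{prop:comparison-J-odd}, which provides the required equality $\mathcal{J}'_w(\pi \otimes \chi) = |2|_F^{t(w)/2}\, \mathcal{J}^-_w(\sigma \otimes \chi)$ in the minimal case $M = M^1$ and $\pi = \omega_\psi^{\flat,-} \boxtimes \eta$. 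Here I use that for $w \in \Omega_0$ with $wMw^{-1}=M$ and minimal length in its $\Omega_0^M$-coset, the earlier observation that $w \in \Omega'_0$ automatically places us in the framework of \S\ref{sec:statement-int-op-odd}.

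Next I treat the intermediate case in which $\pi$ is itself of the form $i^{\tilde M}_{\tilde P^1 \cap \tilde M}(\omega_\psi^{\flat,-} \boxtimes \eta)$ for some unramified $\eta$; the corresponding $\sigma$ is then $i^{M^-}_{P_{\min}^- \cap M^-}(\eta)$ (incorporating the trivial representation on the $\SO(V_1^-)$ factor, matched to $\omega_\psi^{\flat,-}$ under $\mathrm{TW}$ via Proposition \ref{prop:Weil-rep-M}). Induction in stages together with the standard multiplicativity of intertwining operators in reduced expressions (as used already in \eqref{eqn:J-decomp-even} and its odd counterpart) reduces this to the base case. One has to verify that the chosen Hecke-equivariant isomorphism $\Phi$ and its $w$-twist, supplied by the $-$ analogue of Lemma \ref{prop:Hecke-Weyl}, are compatible with the composition of intertwining operators along a reduced word; this is where Proposition \ref{prop:comparison-representatives}(iii) is used to guarantee that the measure normalizations (via $K_1$ on the metaplectic side and via $K^-$ on the orthogonal side, which are isomorphic as $\mathfrak{o}$-schemes factor by factor) match across the factorization, so that the scalar $|2|_F^{t(w)/2}$ accumulates in exactly the same way on both sides via Proposition \ref{prop:comparison-representatives}(i).

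For the general case, I first reduce to $\pi$ irreducible by additivity of intertwining operators and traces in short exact sequences (both sides of the desired identity are defined as rational families). Then, by the classification in Theorem \ref{prop:TW-components}, every irreducible $\pi$ in $\mathcal{G}_\psi^{\tilde M,-}$ embeds into some $i^{\tilde M}_{\tilde P^1 \cap \tilde M}(\omega_\psi^{\flat,-} \boxtimes \eta)$, and correspondingly $\sigma$ embeds into the matching $i^{M^-}_{P_{\min}^- \cap M^-}(\eta)$ (matching under $\mathrm{TW}^{\tilde M}$). The functoriality of $J'_w$ and $J^-_w$ with respect to these embeddings reduces the statement to the intermediate case just handled.

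The main obstacle I anticipate is bookkeeping rather than substance: one must carefully check that the Hecke-equivariant isomorphism $\Phi$ in the general case, when restricted (via the $\tilde M$-analogue of the equivalence $\mathcal{G}_\psi^{\tilde M,-} \simeq H_\psi^{\tilde M,-}\text{-}\mathsf{Mod}$) to the inducing data, produces isomorphisms on the minimal-Levi side that are themselves compatible with the canonical generators $\sigma'(\eta), \sigma^-(\eta)$ of Proposition \ref{prop:s-matching}; without this, the universal scalar $|2|_F^{t(w)/2}$ might get dressed by additional spurious constants. This is resolved by noting that, up to the automorphism $\identity \otimes \xi$ of Lemma \ref{prop:Hecke-w} which is $\mathrm{TW}$-equivariant and hence harmless, the choice of $\Phi$ determines the inducing-level isomorphism uniquely (as all relevant $\Hom$-spaces are one-dimensional for generic $\chi$ after restricting to appropriate isotypic components), so the base-case scalar $|2|_F^{t(w)/2}$ from Theorem \ref{prop:comparison-J-odd} propagates unchanged through induction in stages and the functorial embedding.
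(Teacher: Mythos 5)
Your proposal is correct and follows exactly the reduction strategy the paper employs: the paper's own proof is just the sentence ``the following result can thus be proved in the same way as Corollary~\ref{prop:comparison-J-general},'' and your three-step scheme --- base case from Theorem~\ref{prop:comparison-J-odd} at $M = M^1$, the intermediate full-induced case via induction in stages and multiplicativity, then the general case via reduction to irreducible $\pi$ and functoriality of the embedding into $i^{\tilde M}_{\tilde P^1 \cap \tilde M}(\omega_\psi^{\flat,-} \boxtimes \eta)$ --- is precisely the unpacked content of that citation. The only small inaccuracy is attributing the measure-matching to Proposition~\ref{prop:comparison-representatives}(iii); part (iii) justifies factorizing the intertwining integral along a reduced word, while the matching of Haar measures on the $\GL$-root groups (common to the even and odd cases) and the bookkeeping of $t(w)$ via part~(i) are handled already inside Lemma~\ref{prop:GK-reduction-odd}, so no new verification is needed at this level.
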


\begin{remark}\label{rem:general-Levi-change}
	If one replaces $\mathcal{J}'_w$ by $\mathcal{J}_w$, i.e.\ by taking the representatives and Haar measures as in the $+$ case, the comparison reads
	\[ \mathcal{J}_w(\pi \otimes \chi) = (-q^{-1})^{t(w)} |2|_F^{t(w)/2} \mathcal{J}^-_w(\sigma \otimes \chi). \]
	
	Indeed, by unraveling the proof of Corollary \ref{prop:comparison-J-general}, it is enough to deal with the case $P = P^1$ and $\pi = \omega_\psi^{\flat, -} \boxtimes \eta$ where $\eta$ is an unramified character of $T^1(F)$. Theorem \ref{prop:comparison-J-odd-2} can now be applied.
\end{remark}

\subsection{Normalization}\label{sec:normalization}
By \cite[Corollary C]{BHK11}, the equivalence $\mathcal{G}_\psi^{\pm} \rightiso \mathcal{G}^{\pm}$ induced by $\mathrm{TW}: H^{\pm} \rightiso H_\psi^{\pm}$ preserves Plancherel measures up to an explicit positive constant.

The same result holds on Levi subgroups. Therefore, the formal degrees for discrete series of Levi subgroups are preserved. In turn, this implies that Harish-Chandra $\mu$-functions for parabolic inductions of discrete series are preserved. Again, all these are up to a positive constant depending on the dimension of various types (for $\tilde{G}$ or its Levi subgroups), and on the convention for Haar measures.

Recall the natural bijection
\[ \left\{\begin{array}{r|l}
	M & \text{Levi subgroup of}\; G \\
	& \text{standard relative to}\; B^{\leftarrow} \\
	& M \supset M^1 \;\text{in the $-$ case}
\end{array}\right\} \xleftrightarrow{1:1}
\left\{\begin{array}{r|l}
	M^{\pm} & \text{Levi subgroup of}\; G^{\pm} \\
	& \text{standard relative to}\; P_{\min}^{\pm}
\end{array}\right\}, \]
and if $M \leftrightarrow M^{\pm}$ in this manner, then there is a natural bijection $\mathcal{P}(M) \xleftrightarrow{1:1} \mathcal{P}(M^{\pm})$ where $\mathcal{P}(M)$ is the set of parabolic subgroups of $G$ with Levi $M$, and so on.

For the next result, we cite \cite[\S 3.1]{Li12b} for the notion of normalizing factors $r_{\tilde{Q}|\tilde{P}}(\pi)$ for $\tilde{G}$ or its Levi subgroups, where $P, Q \in \mathcal{P}(M)$ and $\pi$ in $\tilde{M}\dcate{Mod}$ is genuine and irreducible. It is a simple generalization of \cite[p.28]{Ar89-IOR1}, and can be compared with normalizing factors for $G^{\pm}$. We only consider normalizing factors whose inducing datum $\pi$ is taken from a given block.

\begin{proposition}\label{prop:matching-r}
	Let $M$ and $M^{\pm}$ be standard Levi subgroups relative to $B^{\leftarrow}$ and $P_{\min}^{\pm}$ respectively, $M \supset M^1$ in the $+$ case, such that $M \leftrightarrow M^{\pm}$. Suppose that a family of normalizing factors $r_{Q^{\pm}|P^{\pm}}(\sigma)$ for $G^{\pm}$ is given, where $P^{\pm}, Q^{\pm} \in \mathcal{P}(M^\pm)$ and $\sigma$ in $\mathcal{G}^{M^{\pm}}$ is irreducible.
	
	There are positive constants $c_{Q|P, M}$ such that
	\[ r_{\tilde{Q}|\tilde{P}}(\pi) := c_{Q|P, M} r_{Q^{\pm}|P^{\pm}}(\sigma), \quad P, Q \in \mathcal{P}(M) \]
	provide a family of normalizing factors for $\tilde{G}$, where $(M, P, Q) \leftrightarrow (M^{\pm}, P^{\pm}, Q^{\pm})$ and $\pi \in \mathcal{G}_\psi^{\tilde{M}, \pm}$ corresponds to $\sigma$. Ditto if $\tilde{G}$ is replaced by some Levi subgroup containing $\tilde{M}$.
\end{proposition}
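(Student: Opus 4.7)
The plan is to translate Arthur's axioms for normalizing factors (as recalled in \cite[\S 3.1]{Li12b}) into relations between the constants $c_{Q|P, M}$, and then exhibit such a family of positive constants using the matching of Plancherel data via $\mathrm{TW}$. Recall that a family of normalizing factors must satisfy, among others: $r_{\tilde{P}|\tilde{P}}(\pi) = 1$; the cocycle condition $r_{\tilde{Q}|\tilde{P}}(\pi) = r_{\tilde{Q}|\tilde{R}}(\pi) r_{\tilde{R}|\tilde{P}}(\pi)$ for $R \in \mathcal{P}(M)$; the Harish-Chandra relation $r_{\tilde{Q}|\tilde{P}}(\pi) r_{\tilde{P}|\tilde{Q}}(\pi) = \mu_{\tilde{M}}(\pi)^{-1}$ (up to a chosen constant depending on Haar measures); the Weyl-equivariance $r_{\tilde{wQ}|\tilde{wP}}({}^{\tilde{w}} \pi) = r_{\tilde{Q}|\tilde{P}}(\pi)$; and the transitivity through larger Levi subgroups. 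Ansatz $r_{\tilde{Q}|\tilde{P}}(\pi) := c_{Q|P, M} r_{Q^{\pm}|P^{\pm}}(\sigma)$ translates each of these axioms into a condition purely on the constants, e.g.\ $c_{P|P, M} = 1$, $c_{Q|P, M} = c_{Q|R, M} c_{R|P, M}$, and a condition relating $c_{Q|P, M} c_{P|Q, M}$ to the ratio between the two $\mu$-functions.

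The key input is that for irreducible $\pi \in \mathcal{G}_\psi^{\tilde{M}, \pm}$ corresponding to $\sigma \in \mathcal{G}^{M^\pm}$, the ratio $\mu_{\tilde{M}}(\pi) / \mu_{M^\pm}(\sigma)$ is a positive constant $\kappa_M$ independent of $\pi$. This is precisely the content of the Plancherel-preservation result \cite[Corollary C]{BHK11} applied to $\mathrm{TW}$ on the Levi subgroups, together with the fact that on $\GL$-factors (which are common to both sides) the $\mu$-functions coincide. With $\kappa_M$ in hand, the condition coming from the Harish-Chandra relation becomes $c_{Q|P, M} c_{P|Q, M} = \kappa_M^{-1}$ up to another explicit positive constant accounting for the Haar measure conventions on unipotent radicals $U_P(F), U_Q(F)$ chosen in \S\ref{sec:statement-int-op-even} and \S\ref{sec:statement-int-op-odd}; call this constant $\lambda_{Q|P, M}$.

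I would then define $c_{Q|P, M}$ explicitly. First choose a base parabolic $P_M \in \mathcal{P}(M)$ (say the unique one whose unipotent radical sits inside $U^{\leftarrow}$) and set $c_{P_M|P_M, M} := 1$. For arbitrary $Q \in \mathcal{P}(M)$, set $c_{Q|P_M, M} := (\kappa_M \lambda_{Q|P_M, M})^{1/2}$ and $c_{P_M|Q, M} := (\kappa_M \lambda_{P_M|Q, M})^{1/2}$, both being the unique positive square roots; then for general $P, Q$ define $c_{Q|P, M} := c_{Q|P_M, M} c_{P_M|P, M}$. The cocycle condition is then automatic; the Harish-Chandra relation holds by construction; the identity $c_{P|P, M} = 1$ follows provided we verify $\kappa_M \lambda_{P|P, M} = 1$, which is true since $\mu$ equals $1$ in the degenerate case $P = Q$. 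For Weyl-equivariance, I would exploit that conjugation by $\tilde{w}$ (resp.\ $\ddot{w}$) sends the chosen measures and the chosen base parabolic to parabolics already accounted for in the matched family, so that both sides of the equivariance relation rescale by identical positive scalars; uniqueness of positive square roots forces the constants to agree. Compatibility with induction through a larger Levi $\tilde{L} \supset \tilde{M}$ is handled similarly, by an inductive descent on the parabolic rank.

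The main obstacle will be the Weyl-equivariance condition, since the representatives $\tilde{w}, \mathring{w}$ on the metaplectic side and $\ddot{w}$ on the orthogonal side act differently on $\tilde{M}$ versus $M^\pm$ (compare Lemma \ref{prop:hatw-flat} and Lemma \ref{prop:Hecke-w}): one needs to check that the discrepancy is absorbed by factors that are independent of $\pi$ and that only depend on $(M, P, Q)$. The analysis already carried out in \S\ref{sec:statement-int-op-odd} shows that this discrepancy reduces to $\mathfrak{o}$-points of the maximal torus, which act trivially on the relevant spherical data, so the verification is ultimately bookkeeping. Once this is settled, the remaining axioms (twisting by unramified characters, compatibility with contragredient, and holomorphic/meromorphic behaviour in $\pi$) are automatic because $r_{Q^\pm|P^\pm}(\sigma)$ already satisfies them and the $c_{Q|P, M}$ are constants.
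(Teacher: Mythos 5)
Your key input---that $\mathrm{TW}$ preserves Plancherel measures and hence $\mu$-functions up to positive constants---is the same as the paper's, but the construction you then attempt is problematic, and the paper's actual proof is much shorter and cleaner. The paper simply cites Arthur's reduction \cite{Ar89-IOR1} (and \cite[\S 3]{Li12b}): normalizing factors are built up from rank-one pieces indexed by reduced roots, and in each rank-one case ($\dim A_M/A_G = 1$) the only input is a square root of the Harish-Chandra $\mu$-function of a discrete series. Since the rank-one $\mu$'s match up to positive constants under $\mathrm{TW}$, so do the square roots, and then the product structure over reduced roots automatically produces a compatible family $c_{Q|P,M}$ satisfying all the axioms, including the Weyl-equivariance and the ``cocycle''-type compatibilities. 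Nothing needs to be verified by hand.

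There is a genuine gap in your axiom list and hence in your construction. You assert an unconditional cocycle relation $r_{\tilde{Q}|\tilde{P}}(\pi) = r_{\tilde{Q}|\tilde{R}}(\pi)\, r_{\tilde{R}|\tilde{P}}(\pi)$ for all $R \in \mathcal{P}(M)$; this is false. The product decomposition $r_{Q|P} = \prod_{\alpha \in \Sigma_Q^{\mathrm{red}} \cap \Sigma_{\overline{P}}^{\mathrm{red}}} r_\alpha$ gives the cocycle only for \emph{aligned} triples, i.e.\ when $d(Q,R) = d(Q,P) + d(P,R)$---precisely the condition tracked in Proposition \ref{prop:comparison-representatives}~(iii). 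Your base-parabolic definition $c_{Q|P,M} := c_{Q|P_M,M}\, c_{P_M|P,M}$, if plugged into a cocycle for arbitrary triples, would force $c_{P_M|P,M}\, c_{P|P_M,M} = 1$; but your square-root formula gives $c_{P_M|P,M}\, c_{P|P_M,M} = \kappa_M (\lambda_{P_M|P,M}\lambda_{P|P_M,M})^{1/2}$, which cannot be normalized to $1$ while simultaneously satisfying the Harish-Chandra relation $c_{Q|P,M}\, c_{P|Q,M} \sim \kappa_M^{-1}$ (that relation requires $\mu^{-1}$-type growth, not $1$). The two constraints are incompatible under your ansatz because you are overdetermining a single scalar $c_{Q|P,M}$ rather than building it as a product over reduced roots. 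Once one works at the level of rank-one pieces, as the paper does, both the (conditional) cocycle and the Weyl-equivariance are automatic, and your worry about the discrepancy between $\tilde{w}$, $\mathring{w}$, $\ddot{w}$ actions becomes irrelevant to the definition of the constants $c_{Q|P,M}$ (they enter only into the comparison of the $J$-operators, handled elsewhere in \S\S\ref{sec:int-op-even}--\ref{sec:int-op-odd}).
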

\begin{proof}
	According to \cite{Ar89-IOR1} or \cite[\S 3]{Li12b}, the construction of normalizing factor reduces to the problem of ``taking square roots'' of Harish-Chandra $\mu$-function of a genuine discrete series $\pi$, when $\dim A_M/A_G = 1$. One can now conclude by applying the earlier discussion about matching $\mu$-functions up to positive constant.
\end{proof}

Once the normalizing factors are chosen as above, the normalized intertwining operators are defined as
\begin{align*}
	R_{\tilde{Q}|\tilde{P}}(\pi) & := r_{\tilde{Q}|\tilde{P}}(\pi)^{-1} J_{\tilde{Q}|\tilde{P}}(\pi), \\
	R_{Q^{\pm}|P^{\pm}}(\sigma) & := r_{Q^{\pm} | P^{\pm}}(\sigma)^{-1} J_{Q^{\pm}|P^{\pm}}(\sigma).
\end{align*}
They should actually be viewed as rational families of operators, by incorporating twists of $\pi$ (resp.\ $\sigma$) by unramified characters of $\prod_i \GL(n_i, F)$. Normalized intertwining operators are unitary when the inducing datum is unitarizable.

\subsection{An intertwining relation}\label{sec:int-relation}
Retain the conventions in \S\ref{sec:normalization}, and suppose that the normalizing factors for the blocks $\mathcal{G}^{\pm}$ and $\mathcal{G}_\psi^{\pm}$ are chosen compatibly as in Proposition \ref{prop:matching-r}.

\begin{definition}
	Let $\pi$ be irreducible in $\mathcal{G}_\psi^{\tilde{M}, +}$ (resp.\ $\mathcal{G}_\psi^{\tilde{M}, -}$), with $P \in \mathcal{P}(M)$ and $P \supset B^{\leftarrow}$ (resp.\ $P \supset P^1$). For $w \in \Omega_0$ (resp.\ $\Omega'_0$) such that
	\begin{itemize}
		\item ${}^w M = M$,
		\item $w$ has minimal length in its $\Omega^M_0$-coset,
	\end{itemize}
	we define
	\begin{equation}\label{eqn:R-operator}
		R_w(\pi) := r_{\tilde{P}^w|\tilde{P}}(\pi)^{-1} J_w(\pi), \quad \text{resp.}\; R'_w(\pi) := r_{\tilde{P}^w|\tilde{P}}(\pi)^{-1} J'_w(\pi),
	\end{equation}
	where $J_w(\pi)$ (resp.\ $J'_w(\pi)$) is as in Definition \ref{def:int-op-even} (resp.\ Definition \ref{def:int-op-odd}).
	
	On the side of $G^{\pm}$ we define the normalized avatar $R_w^\pm(\sigma)$ of $J_w^\pm(\sigma)$, for $\sigma \in \mathcal{G}^{M^{\pm}}$ that corresponds to $\pi$. These normalized operators are all viewed as rational families, and they are unitary when $\pi$ (equivalently, $\sigma$) is unitarizable.
\end{definition}

Let $\pi$, $\sigma$ and $w$ be as above. Consider the $+$ case, with the assumption that ${}^{\tilde{w}} \pi \simeq \pi$. By functoriality of $(\cdot)^{\wedge}$ (see \S\ref{sec:Aubert}) and the fact \cite[\S A.2]{KMSW} that $\tilde{w}$-conjugation commutes with $(\cdot)^\wedge$, every isomorphism $A(\tilde{w}): {}^{\tilde{w}} \pi \rightiso \pi$ induces
\[ {}^{\tilde{w}} \hat{\pi} \simeq \left( {}^{\tilde{w}} \pi\right)^{\wedge} \rightiso \hat{\pi}. \]
We denote the composition above by $\hat{A}(\tilde{w})$. The same applies to $\sigma$ and $\ddot{w}$-conjugation, and the resulting isomorphisms are denoted by $A(\ddot{w})$, $\hat{A}(\ddot{w})$.

The $-$ case is the similar, with different choices of representatives. We obtain the isomorphisms $A(\mathring{w})$ and $A(\ddot{w})$ in this way.

\begin{theorem}\label{prop:intertwining-relation}
	Let $\pi$, $\sigma$ and $w$ be as above. Assume that $\pi$ and $\hat{\pi}$ are both unitarizable (equivalently, $\sigma$ and $\hat{\sigma}$ are both unitarizable).
	
	In the $+$ case, assume furthermore that ${}^{\tilde{w}} \pi \simeq \pi$ (equivalently, ${}^{\ddot{w}} \sigma \simeq \sigma$). Choose isomorphisms $A(\tilde{w})$ and $A(\ddot{w})$ as before. There exists $c^+(w) \in \CC^{\times}$ such that
	\begin{align*}
		\Tr\left( \hat{A}(\tilde{w}) R_w(\hat{\pi}) i_{\tilde{P}}(\hat{\pi}, f) \right) & = c^+(w) \Tr\left( \left(A(\tilde{w}) R_w(\pi) \right)^{\wedge} \; i_{\tilde{P}}(\pi)^\wedge(f) \right), \\
		\Tr\left( \hat{A}(\ddot{w}) R^+_w(\hat{\sigma}) i_{P^+}(\hat{\sigma}, f^+) \right) & = c^+(w) \Tr\left( \left(A(\ddot{w}) R^+_w(\sigma)\right)^\wedge \; i_{P^+}(\sigma)^{\wedge} (f^+) \right)
	\end{align*}
	for all anti-genuine $f \in \mathcal{H}(\tilde{G})$ (resp.\ $f^+ \in \mathcal{H}(G^+)$); see \S\ref{sec:operations-Hecke}. Here $P \in \mathcal{P}(M)$ satisfies $P \supset B^{\leftarrow}$ and $P \leftrightarrow P^+ \in \mathcal{P}(M^+)$.
	
	Similarly, in the $-$ case, assume furthermore that ${}^{\mathring{w}} \pi \simeq \pi$ (equivalently, ${}^{\ddot{w}} \sigma \simeq \sigma$).Choose isomorphisms $A(\mathring{w})$ and $A(\ddot{w})$ as before. There exists $c^-(w) \in \CC^{\times}$ such that
	\begin{align*}
		\Tr\left( \hat{A}(\mathring{w}) R'_w(\hat{\pi}) i_{\tilde{P}}(\hat{\pi}, f) \right) & = c^-(w) \Tr\left( \left(A(\mathring{w}) R'_w(\pi)\right)^{\wedge} \; i_{\tilde{P}}(\pi)^\wedge(f) \right), \\
		\Tr\left( \hat{A}(\ddot{w}) R^-_w(\hat{\sigma}) i_{P^-}(\hat{\sigma}, f^-) \right) & = c^-(w) \Tr\left( \left(A(\ddot{w}) R^-_w(\sigma)\right)^{\wedge} \; i_{P^-}(\sigma)^{\wedge} (f^-) \right)
	\end{align*}
	with the same conventions.
	
	The constants $c^{\pm}(w)$ depend on $\pi$, but are independent of the choices of $A(\tilde{w})$, $A(\ddot{w})$ and $A(\mathring{w})$.
\end{theorem}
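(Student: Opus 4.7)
The plan is to transfer all four traces in the identities to the level of Hecke modules through the master functors $\mathbf{M}_\tau$ ($\tau = \tau_0$ in the $+$ case, $\tau = \tau_1^-$ in the $-$ case) and then show that on the Hecke-module level the $\tilde G$-identity becomes literally equivalent to the $G^{\pm}$-identity, up to an inessential scalar that cancels from the proportionality relation defining $c^{\pm}(w)$. First I would restrict attention to the test functions $f = \Upsilon_\tau(h \otimes 1)$ for $h \in H_\psi^{\pm}$ (Proposition \ref{prop:Hecke-Morita}), and to the corresponding $f^{\pm} = \Upsilon_{\mathbf{1}}(\mathrm{TW}^{-1}(h) \otimes 1) \in \mathcal{H}(G^{\pm})$. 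Proposition \ref{prop:trace-type} converts each trace in the statement, up to the common factor $\dim\tau$, into a trace of an operator on the appropriate Hecke module; for instance
\begin{equation*}
\dim\tau \cdot \Tr\bigl( \hat A(\tilde w) R_w(\hat\pi) \, i_{\tilde P}(\hat\pi, f) \bigr) = \Tr\bigl( \mathbf{M}_\tau(\hat A(\tilde w) R_w(\hat\pi)) \cdot \mathbf{M}_\tau(i_{\tilde P}(\hat\pi))(h) \bigr),
\end{equation*}
and an analogous formula holds on the $G^{\pm}$-side with $\mathbf{1}$ in place of $\tau$.

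Second, I would identify all Hecke-module operators with their $G^{\pm}$-counterparts using the matching theorems proved earlier. Proposition \ref{prop:ri-compatibility} identifies $\mathbf{M}_\tau(i_{\tilde P}(\pi))$ with $i_{P^{\pm}}(\sigma)^{I^{\pm}}$; Corollary \ref{prop:TW-vs-Aubert} identifies $\mathbf{M}_\tau(\hat\pi)$ with $\hat\sigma^{I^{M^{\pm}}}$; and Corollaries \ref{prop:comparison-J-general}, \ref{prop:comparison-J-general-odd} combined with Proposition \ref{prop:matching-r} yield a single nonzero scalar $\kappa(w)$, depending only on $(M, P, w)$, such that $\mathbf{M}_\tau(R_w(\pi))$ corresponds under $\mathrm{TW}$ to $\kappa(w) R_w^{\pm}(\sigma)$; the same $\kappa(w)$ relates $\mathbf{M}_\tau(R_w(\hat\pi))$ to $R_w^{\pm}(\hat\sigma)$. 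Using Lemma \ref{prop:Hecke-Weyl} to choose $A(\tilde w)$ (resp.\ $A(\mathring w)$) and $A(\ddot w)$ compatibly, the functoriality of Aubert duality in Corollary \ref{prop:TW-vs-Aubert} forces $\hat A(\tilde w)$ and $\hat A(\ddot w)$ to correspond, as do $(A(\tilde w) R_w(\pi))^{\wedge}$ and $(A(\ddot w) R_w^{\pm}(\sigma))^{\wedge}$. Consequently both the LHS and the RHS of the $\tilde G$-identity equal $\kappa(w)$ times the corresponding quantities on the $G^{\pm}$-side, so the scalar drops out of the ratio and the existence of $c^{\pm}(w)$ for the $\tilde G$-identity is equivalent to its existence for the $G^{\pm}$-identity with the same value. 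Independence of $c^{\pm}(w)$ on the chosen $A$ is then immediate: rescaling $A$ by $\lambda \in \CC^{\times}$ multiplies both sides of each trace identity by $\lambda$.

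It remains to produce $c^{\pm}(w)$ on the $G^{\pm}$-side and to extend from Hecke algebra functions to all $f^{\pm}$, $f$. On the $G^{\pm}$-side, for $f^{\pm}$ in the image of $\Upsilon_{\mathbf{1}}$, the identity is the standard intertwining--Aubert relation reviewed in \cite[\S A.3]{KMSW}: since $\sigma$ and $\hat\sigma$ are irreducible with ${}^{\ddot w}\sigma \simeq \sigma$, both $(A(\ddot w) R_w^{\pm}(\sigma))^{\wedge}$ and $\hat A(\ddot w) R_w^{\pm}(\hat\sigma)$ are self-intertwining operators on the irreducible $i_{P^{\pm}}(\hat\sigma)$, hence proportional with ratio $c^{\pm}(w) \in \CC^{\times}$. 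The extension to arbitrary $f^{\pm} \in \mathcal{H}(G^{\pm})$ (resp.\ $f \in \mathcal{H}(\tilde G)$) is routine: both sides of the identity depend only on the projection of $f^{\pm}$ (resp.\ $f$) onto the block $\mathcal{G}^{\pm}$ (resp.\ $\mathcal{G}_\psi^{\pm}$), which in turn is captured by the Morita-equivalent $H^{\pm}$-action (resp.\ $H_\psi^{\pm}$-action), whence the case already treated suffices. The principal technical obstacle is the precise bookkeeping of $\kappa(w)$: one must verify that the scalar enters symmetrically into the pairs $(R_w(\pi), R_w^{\pm}(\sigma))$ and $(R_w(\hat\pi), R_w^{\pm}(\hat\sigma))$; this holds because $\kappa(w)$ depends only on $(M, P, w)$ and because Aubert duality commutes with $\mathrm{TW}$ by the results of \S\ref{sec:Aubert}.
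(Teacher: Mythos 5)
Your overall strategy coincides with the paper's, and the first two stages (reducing to $f$ in the image of $\Upsilon_\tau$ via Proposition~\ref{prop:trace-type}, matching the normalized intertwining operators with explicit scalars via Corollaries~\ref{prop:TW-vs-Aubert}, \ref{prop:comparison-J-general}, Proposition~\ref{prop:matching-r}, and using Lemma~\ref{prop:Hecke-Weyl} to align the $A$-isomorphisms) are essentially what the paper does, and your observation that $\kappa(w)$ enters symmetrically for $(\pi, \hat\pi)$ because it only depends on $(M, P, w)$ is correctly identified.

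However, the last sentence of your proof (``the extension to arbitrary $f$ is routine: both sides depend only on the projection onto the block, which is captured by the Morita-equivalent $H_\psi^{\pm}$-action, whence the case already treated suffices'') hides a genuine gap. After establishing the trace identity for $f = \Upsilon_\tau(h \otimes 1)$, you cannot directly pass to arbitrary $f$ by ``projecting onto the block''. For $\pi$ irreducible in the block and $f \in \mathcal{H}(\tilde G)$, the trace $\Tr\pi(f)$ is not the same as $\Tr\pi(e_\tau \star f \star e_\tau)$ (the latter is a trace on the finite-dimensional space $\pi^{\tau}$, the former on all of $V_\pi$), and the Bernstein block projector is a different operation than $e_\tau \star (\cdot) \star e_\tau$. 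What the paper actually does is prove a \emph{virtual character identity}: rewrite the claimed identity as $\sum_i c_i \Tr\pi_i(f) = 0$ with $\pi_i$ irreducible, then (a) extend from $\Upsilon_\tau(h \otimes 1)$ to $\Upsilon_\tau(h \otimes a)$ by writing $\Pi_i = \Pi'_i \boxtimes V_\tau$, (b) invoke linear independence of characters of $e_\tau \star \mathcal{H}(\tilde G) \star e_\tau$-modules (citing Bourbaki) to conclude $\sum_i c_i [\pi_i^\tau] = 0$ in the Grothendieck group of that algebra, and (c) use that $e_\tau$ is a \emph{special idempotent} (Bushnell--Kutzko, verified in~\cite{TW18}) to transfer the vanishing to $\sum_i c_i [\pi_i] = 0$ in $\mathrm{Groth}(\tilde G)$, which then yields the trace identity for all $f$. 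None of steps (a)--(c) appear in your proposal, and they are not automatic --- the special idempotent property is what makes the Morita equivalence compatible with passage to the full category, and without it the argument fails. You should supply these steps explicitly.
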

\begin{proof}
	Since $\mathrm{TW}$ is an isomorphism of Hilbert algebras and commutes with $(\cdot)^\wedge$ by Corollary \ref{prop:TW-vs-Aubert}, $\pi$ (resp.\ $\hat{\pi}$) is unitarizable if and only if $\sigma$ (resp.\ $\hat{\sigma}$) is; see \cite[Corollary C]{BHK11}. In particular, all the normalized intertwining operators in view are unitary.
	
	The equivalences concerning $w$-twists are immediate in view of Lemma \ref{prop:Hecke-w}.
	
	Also note that the isomorphism denoted by $A(\cdots)$ are all unique up to $\CC^{\times}$; rescaling them does not affect the trace identities.
	
	Let us show that there exists $c^{\pm}(w) \in \CC^{\times}$ satisfying the equalities involving $G^{\pm}$. We argue as in \cite[\S A.3.3]{KMSW} below. For the ease of notation, only the $+$ case is discussed.
	
	By \cite[(A.2.3)]{KMSW}, there is a natural isomorphism $i_{P^+}(\sigma)^\wedge \simeq i_{\overline{P^+}}(\hat{\sigma})$. We claim that the diagram
	\[\begin{tikzcd}[column sep=large]
		i_{P^+}(\sigma)^\wedge \arrow[r, "{( A(\ddot{w}) R^+_w(\sigma) )^\wedge }" inner sep=0.6em] \arrow[d, "\sim"' sloped] & i_{P^+}(\sigma)^\wedge \arrow[d, "\sim" sloped] \\
		i_{\overline{P^+}}(\hat{\sigma}) \arrow[r, "{ [\hat{A}(\ddot{w}) R^+_w(\hat{\sigma})]^-}"' inner sep=0.6em] & i_{\overline{P^+}}(\hat{\sigma})
	\end{tikzcd}\]
	commutes in $G^+\dcate{Mod}$ up to some $c^+(w) \in \CC^{\times}$. The notation $[\cdots]^-$ means that the operator acts on parabolic induction from $\overline{P^+}$, not $P^+$.
	
	Indeed, by \cite[Proposition A.2.1]{KMSW} the composed operators in both ways give rational families if unramified twists on $\sigma$ are incorporated. Hence they differ by a rational function by generic irreducibility. There is neither pole nor zero at the given $\sigma$, and this yields $c^+(w) \in \CC^{\times}$.
	
	The claim implies for all $f^+$, the operators
	\[ \left( A(\ddot{w}) R^+_w(\sigma) \right)^\wedge \; i_{P^+}(\sigma)^{\wedge}(f^+), \quad \left[ \hat{A}(\ddot{w}) R^+_w(\hat{\sigma}) \right]^- i_{\overline{P^+}}(\hat{\sigma}, f^+) \]
	have the same trace up to $c^+(w)$. However, the second one equals
	\[ \underbracket{R_{\overline{P^+}|P^+}(\hat{\sigma})}_{= R_{P^+|\overline{P^+}}(\hat{\sigma})^{-1}} \hat{A}(\ddot{w}) R^+_w(\hat{\sigma}) i_{P^+}(\hat{\sigma}, f^+) R_{P^+|\overline{P^+}}(\hat{\sigma}) \]
	by the properties $\mathrm{(R)}_1$ and $\mathrm{(R)}_2$ in \cite[p.28]{Ar89-IOR1} for normalized intertwining operators. The required trace identity follows at once.
	
	The $-$ case is the same and gives $c^-(w) \in \CC^{\times}$. The method actually applies to the equalities involving $\tilde{G}$; the point is to show that the same $c^{\pm}(w)$ works for both $\tilde{G}$ and $G^{\pm}$.
	
	Again, we only discuss the $+$ case. Fix a Hecke-equivariant isomorphism $\Phi: \mathbf{M}_{\tau_{0, \tilde{M}}}(\pi) \rightiso \sigma^{I^{M^+}}$ relative to $\mathrm{TW}$. Interpreting $(\cdot)^\wedge$ in terms of Hecke modules, $\Phi$ induces $\mathbf{M}_{\tau_{0, \tilde{M}}}(\hat{\pi}) \rightiso \hat{\sigma}^{I^{M^+}}$ by Corollary \ref{prop:TW-vs-Aubert}.
	
	In view of Lemma \ref{prop:Hecke-Weyl}, given $A(\ddot{w})$, one may choose $A(\tilde{w})$ so that the following diagram commutes:
	\[\begin{tikzcd}
		\mathbf{M}_{\tau_{0, \tilde{M}}}({}^{\tilde{w}} \pi) \arrow[d, "\Phi"'] \arrow[r, "{\mathbf{M}_{\tau_{0, \tilde{M}}}(A(\tilde{w})) }" inner sep=0.8em] & \mathbf{M}_{\tau_{0, \tilde{M}}}(\pi) \arrow[d, "\Phi"] \\
		\left( {}^{\ddot{w}} \sigma\right)^{I^{M^+}} \arrow[r, "{A(\ddot{w})}"'] & \sigma^{I^{M^+}}
	\end{tikzcd}\]
	
	Now, by combining Corollary \ref{prop:TW-vs-Aubert}, \ref{prop:comparison-J-general} and Proposition \ref{prop:matching-r}, the operators below are matched via $\mathcal{G}_\psi^+ \simeq \mathcal{G}^+$:
	\begin{align*}
		c_{P^w | P, M} A(\tilde{w}) R_w(\pi) & \quad \text{and} \quad |2|_F^{t(w)/2} A(\ddot{w}) R^+_w(\sigma), \\
		c_{P^w | P, M} \hat{A}(\tilde{w}) R_w(\hat{\pi}) & \quad \text{and} \quad |2|_F^{t(w)/2} \hat{A}(\ddot{w}) R^+_w(\hat{\sigma}).
	\end{align*}
	
	Applying Proposition \ref{prop:trace-type}, we infer from the known trace identity for $G^+$ that
	\begin{equation}\label{eqn:intertwining-relation}
		\Tr\left( \hat{A}(\tilde{w}) R_w(\hat{\pi}) i_{\tilde{P}}(\hat{\pi}, f) \right) = c^+(w) \Tr\left( \left(A(\tilde{w}) R_w(\pi) \right)^{\wedge} \; i_{\tilde{P}}(\pi)^\wedge(f) \right)
	\end{equation}
	holds for all
	\[ f = \Upsilon_{\tau_0}(h \otimes 1) \in e_{\tau_0} \star \mathcal{H}(\tilde{G}) \star e_{\tau_0}, \quad h \in H_\psi^+. \]
	
	To proceed, observe that $i_{\tilde{P}}(\pi)$ and $i_{\tilde{P}}(\hat{\pi})$ are semi-simple since $\pi$ and $\hat{\pi}$ are unitarizable, so \eqref{eqn:intertwining-relation} can be rewritten as
	\[ \sum_{i=1}^n c_i \Tr \pi_i(f) = 0. \]
	where $\sum_i c_i [\pi_i] \in \mathrm{Groth}(\tilde{G}) \otimes \CC$ and each $\pi_i$ is irreducible in $\mathcal{G}_\psi^+$.
	
	We contend that \eqref{eqn:intertwining-relation} extends to all $f \in e_{\tau_0} \star \mathcal{H}(\tilde{G}) \star e_{\tau_0}$. To see this, we use $\Upsilon_{\tau_0}$ and Proposition \ref{prop:Hecke-Morita} to transport the $\pi_i$ above to irreducible $H_\psi^+ \otimes \End_{\CC}(V_{\tau_0})$-modules $\Pi_i$; we know that $\sum_i c_i \Tr \Pi_i(h \otimes 1) = 0$ for all $h \in H_\psi^+$. By Morita equivalence, we may write $\Pi_i = \Pi'_i \boxtimes V_{\tau_0}$, and it follows that for all $h \otimes a$,
	\[ \sum_i c_i \Tr \Pi_i(h \otimes a) = \Tr(a) \sum_i c_i \Tr \Pi'_i(h) = 0. \]
	
	Next, we contend that  \eqref{eqn:intertwining-relation} extends from $e_{\tau_0} \star \mathcal{H}(\tilde{G}) \star e_{\tau_0}$ to $\mathcal{H}(\tilde{G})$. In fact, we shall prove
	\begin{equation}\label{eqn:intertwining-relation-aux}
		\sum_i c_i [\pi_i] = 0.
	\end{equation}
	
	Set $\pi_i^{\tau_0} := \pi_i(e_{\tau_0}) V_{\pi_i}$. We have seen that $\sum_i c_i \Tr \pi_i^{\tau_0}(f) = 0$ for all $f \in e_{\tau_0} \star \mathcal{H}(\tilde{G}) \star e_{\tau_0}$. In view of \cite[VIII, p.375, Proposition 6]{BouA8}, this implies $\sum_i c_i [\pi_i^{\tau_0}] = 0$ in the Grothendieck group of $e_{\tau_0} \star \mathcal{H}(\tilde{G}) \star e_{\tau_0}$-modules. On the other hand, the results in \cite[p.1119]{TW18} imply that $e_{\tau_0}$ is a special idempotent in the sense of \cite[Definition 3.11]{BK98}: the subcategory of $\tilde{G}\dcate{Mod}$ cut out by $e_{\tau_0}$ is exactly $\mathcal{G}_\psi^+$. The identity \eqref{eqn:intertwining-relation-aux} follows.

	The $-$ case is completely analogous, the special idempotent in question being $e_{\tau_1^-}$; see \cite[p.1120]{TW18}.
\end{proof}

When $\sigma$ is tempered, $\hat{\sigma}$ is known to be unitarizable and the factor $c^{\pm}(w)$ in Theorem \ref{prop:intertwining-relation} can be uniquely and precisely determined via endoscopy. This is part of Arthur's local intertwining relation: see \cite[(7.1.14)]{Ar13} for the case of $G^+$; as for $G^-$, see \cite[\S 4]{Is23} for partial results.

\begin{remark}\label{rem:intertwining-relation-odd}
	The statements hold true if we use $R_w(\cdots)$ instead of $R'_w(\cdots)$ in the $-$ case. Indeed, by Remark \ref{rem:general-Levi-change}, this just causes both sides of the equality
	\[ \Tr\left( \hat{A}(\mathring{w}) R'_w(\hat{\pi}) i_{\tilde{P}}(\hat{\pi}, f) \right) = c^-(w) \Tr\left( \left(A(\mathring{w}) R'_w(\pi)\right)^{\wedge} \; i_{\tilde{P}}(\pi)^\wedge(f) \right) \]
	to be multiplied by $(-q^{-1})^{t(w)}$.
\end{remark}


\printbibliography[heading=bibintoc]

\vspace{1em}
\begin{flushleft} \small
	F. Chen: School of Mathematical Sciences, Peking University. No.\ 5 Yiheyuan Road, Beijing 100871, People's Republic of China. \\
	E-mail address: \href{mailto:cf_97@pku.edu.cn}{\texttt{cf\_97@pku.edu.cn}}
\end{flushleft}

\begin{flushleft} \small
	W.-W. Li: Beijing International Center for Mathematical Research / School of Mathematical Sciences, Peking University. No.\ 5 Yiheyuan Road, Beijing 100871, People's Republic of China. \\
	E-mail address: \href{mailto:wwli@bicmr.pku.edu.cn}{\texttt{wwli@bicmr.pku.edu.cn}}
\end{flushleft}

\end{document}